\newtheorem{theorem}{Theorem}[section]
\newtheorem{lemma}[theorem]{Lemma}
\newtheorem{proposition}[theorem]{Proposition}
\newtheorem{corollary}[theorem]{Corollary}
\newtheorem{problem}[theorem]{Problem}
\newtheorem{question}[theorem]{Question}
\theoremstyle{definition}
\newtheorem{definition}[theorem]{Definition}
\newtheorem{remark}[theorem]{Remark}
\numberwithin{equation}{section}
\newcommand{\w}{\omega}
\newcommand{\LC[1]}{\mathrm{LC}^{#1}}
\newcommand{\U}{\mathcal U}
\newcommand{\C}{\mathcal C}
\newcommand{\V}{\mathcal V}
\newcommand{\K}{\mathcal K}
\newcommand{\pr}{\operatorname{pr}}
\newcommand{\e}{\varepsilon}
\newcommand{\Ra}{\Rightarrow}
\newcommand{\IZ}{\mathbb Z}
\newcommand{\IN}{\mathbb N}
\newcommand{\IQ}{\mathbb Q}
\newcommand{\IR}{\mathbb R}
\newcommand{\simU}{\underset{\U}{\sim}}
\newcommand{\diam}{\mathrm{diam}}
\newcommand{\dist}{\mathrm{dist}}
\newcommand{\edim}{\mbox{e-dim}}
\newcommand{\trind}{\mathrm{trind}\,}
\newcommand{\trt}{\mathsf{trt}}
\newcommand{\Z}{\mathcal Z}
\newcommand{\DZ}{\overline{\mathcal Z}}
\newcommand{\Tor}{\mathrm{Tor}}
\newcommand{\pTor}{p\mbox{-}\mathrm{Tor}}
\newcommand{\Top}{\mathsf{Top}}
\newcommand{\Baire}{\mathsf{Br}}
\newcommand{\Rp}{R_p}
\newcommand{\zetaa}{z}
\begin{document}

\title[On homotopical and homological $Z_n$-sets]%
{On homotopical and homological $Z_n$-sets}

%    Information for first author:
\author{Taras Banakh}
\address{Ivan Franko National University of Lviv (Ukraine),\newline
\indent Uniwersytet Humanistyczno-Przyrodniczy, (Kielce, Poland),\newline  \indent Nipissing University, (North Bay, Canada)
}
\email{tbanakh@yahoo.com; t.o.banakh@gmail.com}
\thanks{The substantial part of this paper was written during the stay of the first author in Nipissing University (North Bay, Canada)}

\author{Robert Cauty}
\address{Universit\'e Paris VI (France)}
\email{cauty@math.jussieu.fr}
\author{Alex Karassev}
\address{Nipissing University (North Bay, Canada)}
\email{alexandk@nipissingu.ca}

%    Current address (if needed): 
%\curraddr{}
%\thanks{The first author was supported in part by NSF Grant \#000000.}

%    Information for second author (if needed): 
%\author{Author Two}
%\address{}
%\email{}
%\thanks{Support information for the second author.}

%    General info
%%%%%%%%%%%%%%%%%%%%%%%%%%%%%%%%%%%%%%%%%%%%%%%%%%%
\subjclass[2010]{Primary 57N20; Secondary 54C50; 54C55; 54F35; 54F45; 55M10; 55M15; 55M20; 55N10}

\keywords{$Z_n$-set, homological $Z_n$-set}
                                  %
%                                                                                                                           %
%         Please use the current 2010 Mathematics Subject Classification:             %
%         http://www.ams.org/mathscinet/msc/                                                        %
%         http://www.zentralblatt-math.org/msc/en/                                                 %
%%%%%%%%%%%%%%%%%%%%%%%%%%%%%%%%%%%%%%%%%%%%%%%%%%%

\begin{abstract}
A closed subset $A\subset X$ is called a {\em homological} ({\em
homotopical\/}) {\em $Z_n$-set} if for any $k<n+1$ and any open
set $U\subset X$ the relative homology (homotopy) group
$H_k(U,U\setminus A)$ (~$\pi_k(U,U\setminus A)$~) vanishes. A
closed subset $A$ of an $\LC[n]$-space $X$ is a homotopical
$Z_n$-set if and only if $A$ is a $Z_n$-set in $X$ in the sense
that each map $f:[0,1]^n\to X$ can be uniformly approximated by
maps into $X\setminus A$. Applying Hurewicz Isomorphism Theorem we
prove that a homotopical $Z_2$-subset of an $\LC[1]$-space is a homotopical
$Z_n$-set iff it is a homological $Z_n$-set in $X$. From the K\"unneth Formula we derive Multiplication, Division and $k$-Root Formulas for homological $Z_n$-sets.
We prove that the set $\Z_n^{\IZ}(X)$ of homological $Z_n$-points of a metrizable separable $\lc[n]$-space $X$ is of type $G_\delta$ in $X$.
We introduce and study the classes $\Z^{\IZ}_n$ (resp. $\DZ^{\IZ}_n$) of topological spaces $X$ with $\Z^{\IZ}_n(X)=X$ (resp. $\overline{\Z^{\IZ}_n(X)}=X$) and prove Multiplication, Division and $k$-Root Formulas for such classes. We also show that a (locally compact $\lc[n]$-)space $X\in\Z^{\IZ}_n$ has Steinke dimension $t(X)\ge n+1$ (has cohomological dimension $\dim_G(X)\ge n+1$ for any coefficient group $G$). A locally compact ANR-space $X\in\Z_\infty^{\IZ}$ is not a $C$-space and has extension dimension $\edim X\not\le L$ for any non-contractible CW-complex $L$.
\end{abstract}

\maketitle

In this paper we focus on applications of homological methods to
studying $Z_n$-sets in topological spaces. Being higher-dimensional counterparts of closed nowhere dense subsets, $Z_n$-sets are of
crucial importance in infinite-dimensional and geometric
topologies \cite{BRZ}, \cite{BP}, \cite{Dav}, \cite{DW}, \cite{vM}, \cite{BV}
and play a role also in Dimension Theory \cite{Ba} and Theory of Selections
\cite{Us}. $Z_n$-Sets were introduced by H.Toru\'nczyk in
\cite{To}. He defined a closed subset $A$ of a topological space
$X$ to be a {\em $Z_n$-set\/} if any map $f:I^n\to X$ from the
$n$-dimensional cube $I^n=[0,1]^n$ can be uniformly approximated
by maps into $X\setminus A$. Actually, $Z_n$-sets work properly
only in $\LC[n]$-spaces where they coincide with so-called
homotopical $Z_n$-sets. By definition, a closed subset $A$ of a
topological space $X$ is a {\em homotopical $Z_n$-set\/} if for
any open cover $\U$ of $X$ every map $f:I^n\to X$ can be
approximated by a map $f':I^n\to X\setminus A$, $\U$-homotopic to
$f$. In fact, homotopical $Z_n$-sets are nothing else but closed
locally $n$-negligible sets in the sense of H.Toru\'nczyk
\cite{To}.

In Section~\ref{s:Zsets} we apply the Hurewicz Isomorphism Theorem
to characterize homotopical $Z_n$-sets $A$ in Tychonov
$\LC[1]$-spaces $X$ as homotopical $Z_{\min\{n,2\}}$-sets such
that the relative homology groups \mbox{$H_k(U,U\setminus A)$} vanish
for all $k<n+1$ and all open sets $U\subset X$. Having in mind this
characterization of homotopical $Z_n$-sets, we define a closed
subset $A$ of a topological space $X$ to be a {\em homological
$Z_n$-set} (more generally, a {\em $G$-homological $Z_n$-set} for a coefficient group $G$) if $H_k(U,U\setminus A)=0$ (resp. $H_k(U,U\setminus A;G)=0$) for all $k<n+1$ and all
open sets $U\subset X$. Therefore, a homotopical $Z_2$-set in a
Tychonov $\LC[1]$-space $X$ is a homotopical $Z_n$-set if and only
if it is a homological $Z_n$-set. It should be mentioned that
under some restrictions on the space $X$ this characterization of
$Z_n$-sets has been exploited in mathematical literature
\cite[4.2]{DW}, \cite{Dob}, \cite{Kro}. The homological
characterization of $Z_n$-sets makes possible to apply powerful
tools of Algebraic Topology for studying $Z_n$-sets. Homological $Z_n$-sets behave like usual $Z_n$-sets: the union of two $G$-homological $Z_n$-sets is a $G$-homological $Z_n$-set and so is each closed subset of a $G$-homological $Z_n$-set.

In Section~\ref{irred}, applying the technique of irreducible homological barriers, we prove that a closed subset $A\subset
X$ is a homological $Z_n$-set in $X$ if each point $a\in A$ is a
homological $Z_n$-point in $X$ and each closed subset $B\subset A$ with $|B|>1$ can be separated by a homological $Z_{n+1}$-set. This characterization makes it possible to apply Steinke's separation dimension $\mathsf{t}(\cdot)$ and its transfinite extension $\mathsf{trt}(\cdot)$ to studying homological $Z_n$-sets. In particular, we prove that a closed subset $A\subset X$ with finite separation dimension $d=\mathsf{t}(A)$ is a $G$-homological $Z_n$-set in $X$ if each point $a\in A$ is a $G$-homological $Z_{n+d}$-point in $X$. An infinite version of this result asserts that a closed subset $A\subset X$ having transfinite separation dimension $\mathsf{trt}(A)$ is a $G$-homological $Z_\infty$-set in $X$ if and only if each point $a\in A$ is a $G$-homological $Z_\infty$-point in $X$.

In Section~\ref{bock} we develop the Bockstein theory for $G$-homological $Z_n$-sets. The main result is Theorem~\ref{bockstein} asserting that
a subset $A\subset X$ is a $G$-homological $Z_n$-set in $X$ if and only if $A$ is an $H$-homological $Z_n$-set for all groups $H\in\sigma(G)$, where $\sigma(G)\subset\{\IQ,\IZ_p,\IQ_p,\Rp:p$ is prime$\}$ is the Bockstein family of $G$.

The main result of Section~\ref{products} is Multiplication Theorem~\ref{multZn}:  for a homological
(homotopical) $Z_n$-set $A$ in a space $X$ and a homological
(homotopical) $Z_m$-set in $Y$ the product $A\times B$ is a
homological (homotopical) $Z_{n+m+1}$-set in $X\times Y$. For ANRs
the homotopical version of this result has been proved by T.Banakh
and Kh.Trushchak in \cite{BT}.
It is interesting that the Multiplicative Theorem for homological $Z_n$-sets can be partly reversed, which leads to Division and $k$-Root Theorems proved in Section~\ref{div}.

In Section~\ref{Zpoints} we apply the obtained results on
$Z_n$-sets to study $Z_n$-points. We show that the set of
$Z_n$-points in a metrizable separable space $X$ always is a
$G_\delta$-set. Moreover, if $X$ is an $\LC[n]$-space, then the
sets of homological and homotopical points also are $G_\delta$ in $X$.

In Sections~\ref{Zn} and \ref{DZn} we introduce and study the classes $\Z_n$, $\Z_n^{\IZ}$ of spaces whose all points are homotopical (resp. homological) $Z_n$-points, and classes $\DZ_n$ (resp. $\DZ_n^{\IZ}$) of spaces containing dense sets of homotopical (resp. homological) $Z_n$-points. Applying the results from the preceding sections, we prove Multiplication, Division and $k$-Root Formulas for these classes.

In Sections~\ref{sect:dim} and \ref{dimension} we study dimension
properties of spaces from the class $\Z^{\IZ}_n$ (i.e., spaces
whose all points are homological $Z_n$-points). We show that for
any space $X\in\Z_n^{\IZ}$ the transfinite separation dimension
$\mathsf{trt}(X)\ge 1+n$. If, in addition, $X$ is a locally
compact $\LC[n]$-space, then $X$ has cohomological dimension
$\dim_G(X)\ge n+1$ for any group $G$. Also the inequality
$\edim(X)\le L$ for a CW-complex $L$ implies that $\pi_k(L)=0$
for all $k\le n$. Each locally compact locally contractible space
$X\in\Z^{\IZ}_\infty$ is infinite-dimensional in a rather strong
sense: $X$ fails to be a $C$-space and has extension dimension
$\edim X\not\le L$ for any non-contractible CW-complex $L$, see
Theorem~\ref{infdim}.

\section{Preliminaries}

All topological spaces considered in this paper are Tychonov; $I$
stands for the closed interval $[0,1]$, $n$ will denote a
non-negative integer or infinity. In the paper we use singular
homology $H_*(X;G)$ with coefficients in a non-trivial abelian
group $G$. If $G=\IZ$ is the group of integers, we omit the symbol
$\IZ$ and write $H_*(X)$ instead of $H_*(X;\IZ)$. By $\widetilde
H_*(X;G)$ we  denote the singular homology of $X$, reduced
in dimension zero.

Let $\U$ be a cover of a space $X$. Two maps $f,g:Z\to X$ are called
\begin{itemize}
\item {\em $\U$-near} (denoted by $(f,g)\prec\U$) if for any $z\in Z$ there is $U\in\U$ with $\{f(z),g(z)\}\subset U$;
\item {\em $\U$-homotopic} (denoted by $f\simU g$) if there is a homotopy $h:Z\times [0,1]\to X$ such that $h(z,0)=f(z)$, $h(z,1)=g(z)$ and $h(\{z\}\times[0,1])\subset U\in\U$ for all $z\in Z$.
\end{itemize}

There is also a (pseudo)metric counterpart of these notions. Let $\rho$ be a continuous pseudometric on a space $X$ and $\e>0$ be a real number. Two maps $f,g:Z\to X$ are called
\begin{itemize}
\item {\em $\e$-near} if $\dist(f,g)<\e$ where $\dist(f,g)=\sup_{z\in Z}\rho(f(z),g(z))$;
\item {\em $\e$-homotopic} if there is a homotopy $h:Z\times [0,1]\to X$ such that $h(z,0)=f(z)$, $h(z,1)=g(z)$ and $\diam_\rho h(\{z\}\times[0,1])<\e$ for all $z\in Z$.
\end{itemize}

%An open cover $\U$ of a space $X$ is called a {\em uniform cover} if for some continuous pseudometric the cover $\{B_\rho(x,1):x\in X\}$ by 1-balls is inscribed into $X$. By the fundamental Tukey result \cite{??}, each open cover of a paracompact space is uniform.

The following easy lemma helps to reduce the ``cover'' version of (homotopical) nearness to the ``pseudometric''  one.

\begin{lemma}\label{metric} For any open cover $\U$ of a Tychonov space $X$ and any compact set $K\subset X$ there is a continuous pseudometric $\rho$ on $X$ such that each 1-ball $B(x,1)=\{x'\in X:\rho(x,x')<1\}$ centered at a point $x\in K$ lies in some set $U\in\U$.
\end{lemma}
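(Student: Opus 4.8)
The plan is to prove Lemma~\ref{metric}.The plan is to build $\rho$ from finitely many Urysohn-type functions, using complete regularity of the Tychonov space $X$ to produce the functions and compactness of $K$ to keep their number finite, so that the resulting pseudometric is genuinely continuous and real-valued.

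First I would use the cover to pick, for each point $x\in K$, a set $U_x\in\U$ with $x\in U_x$. Since $X$ is Tychonov and $X\setminus U_x$ is closed and misses $x$, complete regularity yields a continuous function $f_x\colon X\to[0,1]$ with $f_x(x)=0$ and $f_x\equiv 1$ on $X\setminus U_x$. The open sets $V_x=f_x^{-1}\big([0,\tfrac13)\big)$ then form an open cover of $K$, so by compactness finitely many of them, say $V_{x_1},\dots,V_{x_m}$, already cover $K$.

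Next I would define $\rho(y,z)=\tfrac32\max_{1\le i\le m}|f_{x_i}(y)-f_{x_i}(z)|$. Each expression $|f_{x_i}(y)-f_{x_i}(z)|$ is a continuous pseudometric on $X$, and a finite maximum of continuous pseudometrics scaled by a positive constant is again a continuous pseudometric (the triangle inequality passes through the maximum), so $\rho$ is of the required form.

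Finally I would check the ball condition. Given $x\in K$, pick $j$ with $x\in V_{x_j}$, i.e. $f_{x_j}(x)<\tfrac13$. If $\rho(x,z)<1$, then in particular $|f_{x_j}(x)-f_{x_j}(z)|<\tfrac23$, whence $f_{x_j}(z)<1$; since $f_{x_j}\equiv 1$ off $U_{x_j}$, this forces $z\in U_{x_j}\in\U$. Thus $B(x,1)\subset U_{x_j}$, as desired. I expect no serious obstacle here: the only point requiring care is the bookkeeping of the constants $\tfrac13$, $\tfrac23$, $\tfrac32$, which are tuned precisely so that membership in $V_{x_j}$ together with a $\rho$-distance below $1$ keeps the value of $f_{x_j}$ strictly below $1$. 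The essential ideas are simply that complete regularity supplies the separating functions while compactness reduces them to a finite family.
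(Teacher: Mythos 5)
Your proof is correct, but it takes a genuinely different route from the paper's. The paper embeds $X$ into a Tychonov cube $I^\kappa$, uses compactness of $K$ to find a single finite face $I^F$ such that finitely many basic open sets $V_{x_i}=\pr_F^{-1}(W_i)$ refine $\U$ and cover $K$, and then pulls back a metric from $I^F$, with the scaling constant supplied by a Lebesgue-number argument for the compact image $\pr_F(K)\subset\bigcup_i W_i$. You instead invoke complete regularity directly to manufacture Urysohn-type functions $f_{x}$ separating each $x\in K$ from the complement of a chosen $U_x\in\U$, extract a finite subfamily by compactness, and take $\rho=\tfrac32\max_i|f_{x_i}(\cdot)-f_{x_i}(\cdot)|$; the explicit constants $\tfrac13$, $\tfrac23$, $\tfrac32$ play the role that the Lebesgue number $\e$ plays in the paper, and your verification of the ball condition is airtight ($f_{x_j}(x)<\tfrac13$ and $|f_{x_j}(x)-f_{x_j}(z)|<\tfrac23$ force $f_{x_j}(z)<1$, hence $z\in U_{x_j}$). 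Your argument is more elementary and self-contained: it avoids both the embedding theorem for Tychonov spaces and the Lebesgue number lemma, at the cost of hand-tuning constants. The paper's approach, by routing everything through one projection $\pr_F$, produces a pseudometric that is a pullback of a genuine metric on a finite-dimensional cube, which is conceptually tidy but buys nothing extra for the statement at hand; in fact your max-of-finitely-many-pseudometrics construction is essentially what the paper's pulled-back metric unwinds to when $d$ is taken to be the max metric on $I^F$.
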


\begin{proof} Embed the Tychonov space $X$ into a Tychonov cube $I^\kappa$
for a suitable cardinal $\kappa$. For each $x\in K$ find a finite
index set $F(x)\subset\kappa$ and an open subset $W_x\subset I^{F(x)}$ whose
preimage $V_x=\pr^{-1}_{F(x)}(W_x)$ under the projection
$\pr_{F(x)}:X\to I^{F(x)}$ contains the point $x$ and lies in some
$U\in\U$. By the compactness of $K$, the open cover $\{V_x:x\in
K\}$ of $K$ contains a finite subcover
$\{V_{x_1},\dots,V_{x_m}\}$. Now consider the finite set
$F=\bigcup_{i=1}^m F(x_i)$ and note that each set $V_{x_i}$ is the
preimage of the some open set $W_i\subset I^F$ under the projection
$\pr_F:X\to I^F$. Let $d$ be any metric on the finite-dimensional
cube $I^F$. By the compactness of $C=\pr_F(K)\subset
\bigcup_{i=1}^m W_i$ there is $\e>0$ such that each $\e$-ball
centered at a point $z\in C$ lies in some $W_i$. Finally, define
the pseudometric $\rho$ on $X$ letting $\rho(x,x')=\frac1{\e}\cdot
d(\pr_F(x),\pr_F(x'))$ for $x,x'\in X$. It is easy to see that
each 1-ball centered at any point $x\in K$ lies in some $U\in\U$.
\end{proof}

Let us recall that a space $X$ is called an {\em $\LC[n]$-space} if for each point $x\in X$, each neighborhood $U$ of $x$, and each $k<n+1$ there is a neighborhood $V\subset U$ of $x$ such that each map $f:\partial I^k\to V$ from the boundary of the $k$-dimensional cube extends to a map $f:I^k\to U$.

The following homotopy approximation theorem for $\LC[n]$-spaces
is well-known, see \cite[p.159]{Hu} or \cite{BV}.

\begin{lemma}\label{LCnear} For any open cover $\U$ of a paracompact $\LC[n]$-space $X$ and any $k< n+1$ there is an open cover $\V$ of $X$ such that any two $\V$-near maps $f,g:K\to X$ from a simplicial complex $K$ of dimension $\dim K\le k$ are $\U$-homotopic.
\end{lemma}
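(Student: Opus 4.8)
The plan is to produce, from the given cover $\U$, a single cover $\V$ so that any two $\V$-near maps $f,g\colon K\to X$ admit a homotopy $h\colon K\times I\to X$ from $f$ to $g$ all of whose tracks $h(\{z\}\times I)$ lie in members of $\U$; such an $h$ witnesses $f\simU g$. The homotopy will be built by induction over the cells of $K\times I$, filling them one dimension at a time with the help of the $\LC[n]$-property. First I would upgrade the pointwise $\LC[n]$-property to a uniform, cover-theoretic form. Using paracompactness of $X$, I would build a finite descending chain of open covers $\U=\mathcal W_0\succ\mathcal W_1\succ\dots\succ\mathcal W_N=\V$, for a suitable $N$ of order $n$, furnishing at each step the two operations the induction needs: a \emph{merging} operation (a star-refinement, so that a union of boundedly many pairwise-intersecting $\mathcal W_t$-sets lies in a single $\mathcal W_{t-1}$-set) and a \emph{filling} operation (an application of the $\LC[n]$-extension property, so that every $\mathcal W_t$-small map $\partial I^m\to X$ extends to a $\mathcal W_{t-1}$-small map $I^m\to X$, for every cube-dimension $m\le n+1$). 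This range $m\le n+1$ is exactly the local connectivity the $\LC[n]$-hypothesis provides, and the chain is the numerical engine of the argument. On an individual simplex, which is compact, Lemma~\ref{metric} lets me, if convenient, replace the covers $\mathcal W_t$ by genuine pseudometrics and read the fillings as honest $\e$-small extensions.

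Next, after passing to a sufficiently fine subdivision of $K$ (which changes neither the polyhedron nor the maps), I may assume that $f$ and $g$ are $\V$-small on every simplex, so that each $f(\sigma)\cup g(\sigma)$ lies in a single $\mathcal W_N$-set. I then set $h=f$ on $K\times\{0\}$ and $h=g$ on $K\times\{1\}$ and extend $h$ over the relative cells $\sigma\times I$ of the pair $(K\times I,\,K\times\{0,1\})$, in order of increasing dimension $m=\dim\sigma+1$. At the bottom ($\sigma$ a vertex) I join $f(\sigma)$ to $g(\sigma)$ by a short path; at a general stage the boundary $\partial(\sigma\times I)$ is assembled from the two caps $f|_\sigma,g|_\sigma$ and from the fillings already produced over the facets of $\sigma$. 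A merging step collapses this finite, pairwise-meeting union into a single cover-element, and the filling step at cube-dimension $m$ then extends $h$ over $\sigma\times I$ into a single, one step coarser, cover-element. Since each filling dimension consumes a bounded number of covers from the chain and there are only $\dim K+1\le n+1$ such dimensions, the chain is long enough never to run out of fineness; and since every $\mathcal W_t$-set is contained in a member of $\U$, each completed track $h(\{z\}\times I)$ automatically lies in one member of $\U$, which is exactly what $f\simU g$ demands.

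The step I expect to be the main obstacle is precisely this size bookkeeping: it is not any single filling but the requirement that, at every stage, the already-built boundary data (the two caps together with the neighbouring facet-fillings) reassemble into one element of the next cover before the extension can be applied. Calibrating how much fineness each filling is permitted to lose — which fixes the length $N$ of the chain and the number of star-refinements inserted — so that data starting out $\V$-small still admits all $\dim K+1$ fillings, and then carrying this out uniformly over a possibly infinite, locally finite complex $K$ by way of paracompactness, is where the real work concentrates. A second, structural point to watch is the top dimension: the cells over top simplices of $K$ have dimension $\dim K+1\le n+1$, so the final fillings call upon the $\LC[n]$-extension at its highest cube-dimension; this is exactly the step that ties the admissible domain dimension $\dim K\le k<n+1$ to the order $n$ of local connectivity of $X$.
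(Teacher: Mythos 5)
The paper offers no proof of Lemma~\ref{LCnear} to compare against: it is quoted as well known, with references to Hu's \emph{Theory of Retracts} and \cite{BV}. Your sketch reconstructs precisely the classical argument that lives in those sources, and it is the right one. The two mechanisms you isolate are the standard ones: a finite tower of open covers of $X$ obtained by alternating star-refinements (available by paracompactness) with uniformized $\LC[n]$-fillings, and the extension of the homotopy cell by cell over the pairs $\bigl(\sigma\times I,\ \sigma\times\{0,1\}\cup\partial\sigma\times I\bigr)$ after a subdivision making $f,g$ simplexwise small. Two details you leave implicit do work out routinely: for the subdivision over a possibly infinite, not necessarily locally finite complex one invokes Whitehead's subdivision theorem (pull back the cover $\{f^{-1}(V)\cap g^{-1}(V):V\in\V\}$, which covers $|K|$ because $f,g$ are $\V$-near); and your merging step is sound as stated, since finitely many pairwise-intersecting members of $\mathcal W_t$ all meet the first one, so their union lies in $\St(W_1,\mathcal W_t)$ and hence, after a single star-refinement, in one member of $\mathcal W_{t-1}$. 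Each dimension thus consumes one merge and one fill, so a tower of length of order $2(k+1)$ suffices, and since every level refines $\U$, the top-dimensional fillings place each track $h(\{z\}\times I)$ inside a member of $\U$. The ``size bookkeeping'' you flag as the main obstacle therefore closes, and closes in a routine way; your sketch is a correct proof plan for the cited result.

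One point deserves explicit care. Your filling range $m\le n+1$ is exactly what the argument needs (the cells $\sigma\times I$ with $\dim\sigma\le k\le n$ have dimension up to $n+1$), and it is what the standard convention for $\LC[n]$ (local null-homotopy of maps of $S^m$ for $m\le n$) provides. The paper's printed definition, however, asks only for extensions of maps $\partial I^m\to V$ over $I^m$ for $m<n+1$, which fills cells only up to dimension $n$ --- one short of your top-dimensional step. This is an off-by-one in the paper's definition rather than a gap in your proof: the paper's own appeal to the $\LC[0]$-property to obtain local path-connectedness (in the proof of Theorem~\ref{torunczyk}) already requires the case $m=1$, confirming the standard convention under which both the lemma and your argument are correct.
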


This lemma has a homological counterpart. A space $X$ is defined
to be an {\em $\lc[n]$-space} if for each point $x\in X$, each
neighborhood $U$ of $x$, and each $k<n+1$ there is a neighborhood
$V\subset U$ of $x$ such that the homomorphism $i_*:\widetilde H_k(V)\to
\widetilde H_k(U)$ of singular homologies induced by the inclusion map
$i:V\to U$ is trivial.

It is known that each $\LC[n]$-space is an $\lc[n]$-space, see
\cite{Un}. The converse is true for $\LC[1]$-spaces, see
\cite{Un}. The proof of the following lemma can be found in \cite[5.4]{Bow}

\begin{lemma}\label{lcn} For any paracompact $\lc[n]$-space $X$ and any $k<n+1$ there is an open cover $\U$ of $X$ such that any two $\U$-near maps $f,g:K\to X$ defined on a simplicial complex $K$ of dimension $\dim K\le k$ induce the same homomorphisms $f_{*},g_{*}:H_*(K)\to H_*(X)$ on homologies.
\end{lemma}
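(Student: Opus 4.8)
The plan is to establish this homological counterpart of Lemma~\ref{LCnear} by producing, at the level of singular chains, an explicit chain homotopy $D$ between $f_\#$ and $g_\#$: once we have $\partial D+D\partial=g_\#-f_\#$ on $C_*(K)$, passing to homology immediately gives $f_*=g_*$. The entire role of the $\lc[n]$ hypothesis is to supply, in each dimension $i\le k$, just enough acyclicity to kill the small cycles that arise in the inductive construction of $D$.

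First I would convert the pointwise acyclicity built into the definition of an $\lc[n]$-space into a finite tower of genuine open covers. Using the defining property (triviality of $i_*:\widetilde H_i(V)\to\widetilde H_i(U)$ for $i\le n$) together with the paracompactness of $X$ (which guarantees the existence of star-refinements), I would build open covers $\U=\U_0\succ\U_1\succ\cdots\succ\U_N$, each a star-refinement of the preceding one, with $N$ depending only on $k$, such that whenever a singular $i$-cycle ($i\le k$) is supported in a single member $V$ of a sufficiently fine cover in the tower, it bounds an $(i+1)$-chain supported in a single member $U$ of a coarser cover containing $V$. For $i\ge1$ this is exactly the triviality of $\widetilde H_i(V)\to\widetilde H_i(U)$; for $i=0$ the vanishing of reduced $\widetilde H_0$ says that any two points of $V$ are joined by a path (a $1$-chain) in $U$. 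Globalizing the local choices of acyclic neighborhoods into honest covers, uniformly over $X$, and arranging a long enough tower so that the unavoidable growth of chain supports with dimension is absorbed by successive star-refinements, is the delicate part of the argument and its main obstacle.

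Taking the finest cover $\U:=\U_N$ as the cover asserted by the lemma, suppose $f,g:K\to X$ are $\U$-near with $\dim K\le k$. The family $\{f^{-1}(U)\cap g^{-1}(U):U\in\U\}$ is an open cover of $K$, so after replacing $K$ by a sufficiently fine subdivision I may assume that for every simplex $\sigma$ of $K$ both images $f(\sigma)$ and $g(\sigma)$ lie in a common member of $\U$; this replacement is harmless since subdivision does not change $H_*(K)$ and the subdivision operator is naturally chain homotopic to the identity. I then define $D\colon C_i(K)\to C_{i+1}(X)$ by induction on $i$. For a vertex $v$, the points $f(v)$ and $g(v)$ lie in a common small set, so I join them by a $1$-chain $Dv$ in a slightly coarser member, achieving $\partial Dv=g_\# v-f_\# v$. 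For an $i$-simplex $\sigma$ the element $z_\sigma:=g_\#\sigma-f_\#\sigma-D\partial\sigma$ is a cycle — a direct boundary computation using the inductive identity $\partial D+D\partial=g_\#-f_\#$ on $(i-1)$-chains together with $\partial\partial=0$ gives $\partial z_\sigma=0$ — and, by the star-refinement bookkeeping, its support lies in a single member of the tower; the acyclicity property then furnishes $D\sigma\in C_{i+1}(X)$ with $\partial D\sigma=z_\sigma$, supported in a coarser member. By construction $\partial D+D\partial=g_\#-f_\#$ on all of $C_*(K)$.

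This chain homotopy shows that $f_\#$ and $g_\#$ are chain homotopic, hence induce the same homomorphism $f_*=g_*\colon H_*(K)\to H_*(X)$, which is the assertion of the lemma. As noted, the genuine difficulty lies entirely in the first step: turning the local, pointwise acyclicity of an $\lc[n]$-space into a finite tower of open covers exhibiting uniform ``cycle-in-a-member bounds in a coarser member'' behaviour, while controlling the supports of chains (rather than merely of individual points) across the inductive ascent through dimensions. This is precisely the content of the result cited from \cite[5.4]{Bow}.
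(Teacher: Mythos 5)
The paper gives no internal proof of this lemma at all: it simply points to \cite[5.4]{Bow}, so there is nothing in the text to compare your argument against line by line. Your sketch is exactly the standard acyclic-carrier/chain-homotopy argument that underlies Bowers' result: subdivide $K$ so that $f$ and $g$ carry each simplex into a common member of the finest cover, then inductively build $D$ with $\partial D+D\partial=g_\#-f_\#$, pushing supports up a finite tower of star-refined covers in which a small cycle in a member of one cover bounds in a member of the next coarser one; passing to homology (and using that subdivision is naturally chain homotopic to the identity) gives $f_*=g_*$. The outline is sound, and the step you single out as the main obstacle is in fact routine rather than delicate: since triviality of inclusion homomorphisms is inherited by smaller neighborhoods --- the composite $\widetilde H_i(V')\to\widetilde H_i(V)\to\widetilde H_i(U)$ factors through a trivial map --- each point admits a single neighborhood working for all $i\le k$ simultaneously, and paracompactness then produces each cover $\U_{j+1}$ as a star-refinement of a cover by such neighborhoods subordinated to $\U_j$; a tower whose length depends only on $k$ absorbs the growth of chain supports through the inductive ascent in dimension. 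The one blemish is your closing sentence: deferring that construction to \cite[5.4]{Bow} is circular, since that citation \emph{is} the lemma being proved; but as just indicated, the tower construction you describe can be completed directly from the $\mathrm{lc}^n$ definition and paracompactness, so the gap is one of unexecuted detail rather than of a missing idea.
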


In the sequel we shall need another three homological properties of $\lc[n]$-spaces. First, we recall one well-known fact from homological algebra, see Lemma 16.3 in \cite{Bre}.

If the commutative diagram in the category of abelian groups
$$\begin{CD}
 @. A_2 @>>> A_3\\
@. @V{i_2}VV @ V{i_3}VV\\
B_1 @>>> B_2 @>>> B_3\\
@V{i_1}VV @V{i_4}VV\\
C_1 @>>> C_2
\end{CD}
$$ has exact middle row, then the finite generacy (or triviality) of the groups $i_1(B_1)$ and $i_3(A_3)$ implies the finite generacy (triviality) of the group $i_4\circ i_2(A_2)$.

A subset $A$ of a topological space $X$ is called {\em precompact} if $A$ has compact closure in $X$.

\begin{lemma}\label{lcn-compact} Any precompact set $C$ in a Tychonov $\lc[n]$-space $X$ with $n<\infty$ has an open neighborhood $U$ such that the inclusion homomorphisms $H_k(U)\to H_k(X)$ have finitely-generated image for all $k\le n$.
\end{lemma}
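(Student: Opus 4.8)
The plan is to reduce to the compact case and then assemble $U$ from finitely many homologically trivial pieces, gluing them with the Mayer--Vietoris sequence and the homological-algebra fact recalled just above the lemma.

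Since $\overline C$ is compact and every neighborhood of $\overline C$ is a neighborhood of $C$, I replace $C$ by $K=\overline C$ and look for an open $U\supseteq K$. The first step is a base case for single points: using the defining property of an $\lc[n]$-space, I build for each $x\in X$ a neighborhood $V_x$ with $\mathrm{im}(H_k(V_x)\to H_k(X))=0$ for all $1\le k\le n$ and with $V_x$ contained in a single path-component of $X$. Starting from $V^{n+1}=X$ and descending through $k=n,n-1,\dots,0$, I choose nested neighborhoods $V_x=V^0\subset V^1\subset\cdots\subset V^n\subset X$ of $x$ with $\widetilde H_k(V^k)\to\widetilde H_k(V^{k+1})$ trivial; then for each $k\le n$ the composite $\widetilde H_k(V_x)\to\widetilde H_k(X)$ factors through this trivial map and hence vanishes, which for $k=0$ says precisely that $V_x$ meets only one path-component of $X$.

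Covering the compactum $K$ by finitely many such sets $V_1,\dots,V_m$ and setting $U=\bigcup_{i\le m}V_i$, the degree-zero case is immediate: $U$ meets at most $m$ path-components, so $\mathrm{im}(H_0(U)\to H_0(X))$ is finitely generated. For $1\le k\le n$ I argue by induction on $k$ and, within it, by induction on the number of pieces. Writing $P=V_1\cup\cdots\cup V_{m-1}$, $Q=V_m$, I plug the exact Mayer--Vietoris segment
\[
H_k(P)\oplus H_k(Q)\xrightarrow{\ \psi\ }H_k(U)\xrightarrow{\ \partial\ }H_{k-1}(P\cap Q)
\]
into the recalled fact as the middle row, with $B_2=H_k(U)$, $A_2=B_2$, $i_2=\mathrm{id}$, and $i_4=j_*\colon H_k(U)\to H_k(X)=C_2$, so that the conclusion controls exactly $\mathrm{im}(H_k(U)\to H_k(X))$. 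The ``horizontal'' hypothesis is that $\mathrm{im}(H_k(P)\to H_k(X))+\mathrm{im}(H_k(Q)\to H_k(X))$ is finitely generated, which holds by the inductive hypothesis on the number of pieces together with the vanishing $\mathrm{im}(H_k(Q)\to H_k(X))=0$ from the base case.

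The main obstacle is the remaining ``diagonal'' hypothesis, that $i_3(A_3)=\mathrm{im}(\partial)=\ker\bigl(H_{k-1}(P\cap Q)\to H_{k-1}(P)\oplus H_{k-1}(Q)\bigr)$ be finitely generated. This is a statement about the homology of the intersection $P\cap Q$, and it is strictly stronger than finiteness of the image of $H_{k-1}(P\cap Q)$ in $H_{k-1}(X)$, since $\mathrm{im}(\partial)$ lies in the kernel of that latter map; so the pieces must be chosen with care and the $\lc[n]$-hypothesis used a second time. The intended device is to enlarge each $V_i$ to a nice set $V_i^+\supseteq V_i$ with $\widetilde H_j(V_i)\to\widetilde H_j(V_i^+)$ trivial for $j\le n$, all lying in a neighborhood $N$ of $K$ on which $H_j(N)\to H_j(X)$ has finitely generated image for $j<k$ (available from the induction on $k$), and then to compare the Mayer--Vietoris sequence of $(P,Q)$ with that of $(P^+,Q^+)$: one shows that the connecting classes $\mathrm{im}(\partial)$ are carried, through the inclusion $P\cap Q\hookrightarrow P^+\cap Q^+$, injectively into a finitely generated image furnished by the degree-$(k-1)$ case. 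Making this comparison precise—so that the connecting homomorphism $\partial$ is controlled—is the technical heart of the proof; granting it, the recalled homological-algebra fact gives that $\mathrm{im}(H_k(U)\to H_k(X))$ is finitely generated, which closes the double induction and hence establishes the lemma.
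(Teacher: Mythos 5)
Your diagnosis of the obstacle is exactly right---in a single Mayer--Vietoris sequence the subgroup $\mathrm{im}(\partial)\subset H_{k-1}(P\cap Q)$ lies in the kernel of the map to $H_{k-1}(P)\oplus H_{k-1}(Q)$, hence also of the map to $H_{k-1}(X)$, so an inductive hypothesis phrased only in terms of images in $H_*(X)$ says nothing about it---but your proposed repair is a gap, not a proof. The claim that the inclusion $P\cap Q\hookrightarrow P^{+}\cap Q^{+}$ carries $\mathrm{im}(\partial)$ \emph{injectively} into a finitely generated image is unjustified (nothing prevents $\mathrm{im}(\partial)$ from dying under this inclusion while being infinitely generated upstairs), and in fact no argument of this shape is needed: the paper never controls the connecting homomorphism at all. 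Moreover, your pieces $V_i$ carry no associated compacta, so when you form $P\cap Q=(V_1\cup\dots\cup V_{m-1})\cap V_m$ you have no freedom to shrink this intersection into a set whose homology you control; that freedom is the other missing ingredient.

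The paper's proof strengthens the induction in precisely the two ways your version lacks. First, the statement inducted on (the degree $n$) is the lemma itself for \emph{arbitrary} Tychonov $\lc[\,\cdot\,]$-spaces; since any open subspace of $X$ is again $\lc[n]$, the degree-$(n-1)$ hypothesis may be applied with ambient space the open set $O_A\cap O_B$, giving a neighborhood $O_{A\cap B}$ of the compactum $A\cap B$ such that $H_{n-1}(O_{A\cap B})\to H_{n-1}(O_A\cap O_B)$ has finitely generated range---an image in the homology of an \emph{intermediate open set}, which is exactly the currency your induction (images in $H_*(X)$ only) cannot produce. Second, additivity is run over compacta: one keeps compact sets $A,B$ with chosen good neighborhoods $O_A,O_B$ and then \emph{shrinks} to neighborhoods $U_A\subset O_A$, $U_B\subset O_B$ of $A,B$ with $U_A\cap U_B\subset O_{A\cap B}$, forcing the troublesome intersection inside the controlled set. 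The three-row commutative diagram whose rows are the Mayer--Vietoris sequences of $\{U_A,U_B\}$, $\{O_A,O_B\}$ and $\{X,X\}$, with the \emph{enlarged} pair's exact sequence as the middle row, then feeds into the recalled algebraic fact: $i_1$ has finitely generated range by the choice of $O_A,O_B$, and $i_3\colon H_{n-1}(U_A\cap U_B)\to H_{n-1}(O_A\cap O_B)$ has finitely generated range because it factors through $H_{n-1}(O_{A\cap B})$. The conclusion for $i_4\circ i_2\colon H_n(U_A\cup U_B)\to H_n(X)$ follows with no mention of $\partial$. So while your reduction to the compact case, your construction of the homologically trivial pieces $V_x$, and your degree-zero step all match the paper, the ``technical heart'' you defer cannot be completed along the route you sketch; it requires reformulating the induction as above.
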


\begin{proof} We shall prove this lemma by induction on $n$. For $n=0$ the assertion of the lemma follows from the local connectedness of $\lc[0]$-spaces. Assume that for some $n$ the lemma has been proved for all $k<n$. Consider the family $\mathcal K_n$ of compact subsets $K$ of $X$ having an open neighborhood $U$ such that the inclusion homomorphism $H_n(U)\to H_n(X)$ has finitely generated range. We claim that for any compact subsets $A,B\in\mathcal K_n$ the union $A\cup B\in\mathcal K_n$.
To show this, fix open neighborhoods $O_A,O_B$ of the compacta $A,B$ such that the inclusion homomorphisms $H_n(O_A)\to H_n(X)$ and $H_n(O_B)\to H_n(X)$ have finitely generated ranges. By the inductive assumption, the compact set $A\cap B$ has a neighborhood $O_{A\cap B}\subset O_A\cap O_B$ such that the inclusion homomorphism $H_{n-1}(O_{A\cap B})\to H_{n-1}(O_A\cap O_B)$ has finitely generated range.
Using the complete regularity of $X$, find two open neighborhoods $U_A\subset O_A$ and $U_B\subset O_B$ of the compacta $A,B$ such that $U_A\cap U_B\subset O_{A\cap B}$.
The proof will be completed as soon as we check that the inclusion homomorphism $H_n(U_A\cup U_B)\to H_n(X)$ has finitely-generated range. This follows from the diagram whose rows are Mayer-Vietoris sequences  of the pairs $\{U_A,U_B\}$, $\{O_A,O_B\}$ and $\{X,X\}$:
$$
\xymatrix{
H_n(U_A)\oplus H_n(U_B)\ar[d]\ar[r]& H_n(U_A\cup U_B)\ar[d]^{i_2}\ar[r]& H_{n-1}(U_A\cap U_B)\ar[d]^{i_3}\\
H_n(O_A)\oplus H_n(O_B)\ar[r]\ar[d]_{i_1} &H_n(O_A\cup O_B)\ar[r]\ar[d]^{i_4} &H_{n-1}(O_A\cap O_B)\ar[d]\\
H_n(X)\oplus H_n(X)\ar[r]& H_n(X\cup X)\ar[r]& H_{n-1}(X\cap X)
}
$$
Since the homomorphisms $i_1$ and $i_3$ have finitely generated ranges, so does the homomorphism $i_4\circ i_2:H_n(U_A\cup U_B)\to H_n(X)$.
This proves the additivity of the family $\mathcal K_n$.

To finish the proof of the lemma it remains to check that each compact subset $C$ of $X$ belongs to the family $\mathcal K_n$. By the $\lc[n]$-property of $X$ each point $x\in X$ has an open neighborhood $O_x\subset X$ such that the inclusion homomorphism $H_n(O_x)\to H_n(X)$ has finitely generated range. Take any closed neighborhood $C_x\subset O_x$ of $x$ in the compact subset $C$. It is clear that $C_x\in \mathcal K_n$ for all $x\in C$. By the compactness of $C$ the cover $\{C_x:x\in C\}$ has finite subcover $C_{x_1},\dots,C_{x_m}$. Now the additivity of the family $\mathcal K_n$ implies that $C=\bigcup_{i\le m}C_{x_i}\in\mathcal K_n$.
\end{proof}

\begin{lemma}\label{fingen} Let $X$ be a locally compact $\lc[n]$-space and $V\subset U$ be open subsets of $X$ such that $\overline{V}\subset U$ and $\overline{U}$ is compact. Then for any $k\le n$ the inclusion homomorphism $H_k(X,X\setminus \bar U)\to H_k(X,X\setminus \bar V)$ has finitely generated range.
\end{lemma}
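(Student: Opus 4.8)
The plan is to compare the long exact homology sequences of the pairs $(X,X\setminus\overline U)$ and $(X,X\setminus\overline V)$ determined by the inclusion $X\setminus\overline U\subset X\setminus\overline V$:
\[
\begin{CD}
H_k(X) @>j_U>> H_k(X,X\setminus\overline U) @>\partial_U>> H_{k-1}(X\setminus\overline U)\\
@| @VVfV @VVqV\\
H_k(X) @>j_V>> H_k(X,X\setminus\overline V) @>\partial_V>> H_{k-1}(X\setminus\overline V)
\end{CD}
\]
Functoriality gives $j_V=f\circ j_U$, so $\operatorname{im}j_V\subseteq\operatorname{im}f$, and since $\ker\partial_V=\operatorname{im}j_V$ the boundary map yields a short exact sequence $0\to\operatorname{im}j_V\to\operatorname{im}f\xrightarrow{\partial_V}\partial_V(\operatorname{im}f)\to0$. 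As an extension of a finitely generated group by a finitely generated group is again finitely generated, the lemma reduces to the two claims: (a) $\operatorname{im}j_V$ is finitely generated, and (b) $\partial_V(\operatorname{im}f)$ is finitely generated.

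Both claims I would attack after localizing near $\overline V$. Using local compactness choose open sets $\overline V\subset G_0\subset\overline{G_0}\subset G\subset\overline G\subset U$ with $\overline G$ compact, and, invoking Lemma~\ref{lcn-compact} for the precompact set $\overline V$ (shrinking the resulting neighborhood inside $U$), arrange that the inclusion $r\colon H_j(G)\to H_j(X)$ has finitely generated image for all $j\le n$. Excision gives an isomorphism $\iota\colon H_k(G,G\setminus\overline V)\xrightarrow{\ \cong\ }H_k(X,X\setminus\overline V)$, and the relation $\iota\circ j^G=j_V\circ r$ (where $j^G\colon H_k(G)\to H_k(G,G\setminus\overline V)$) shows that $\iota(\operatorname{im}j^G)=j_V(\operatorname{im}r)$ is finitely generated; this is the ``$H_k$'' part of $\operatorname{im}j_V$.

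The decisive geometric point handles (b). Given a relative cycle $c$ in $(X,X\setminus\overline U)$ (so $\partial c\subset X\setminus\overline U$), subdivide it finely with respect to the open cover $\{G,\,X\setminus\overline{G_0}\}$ and let $c_G$ be the part supported in $G$; then $\iota^{-1}f[c]=[c_G]$. Because $\partial c$ lies in $X\setminus\overline U\subset X\setminus G$, it contributes nothing inside $G$, so $\partial c_G$ consists only of subdivision faces and is a cycle supported in the \emph{compact} shell $K:=\overline G\setminus G_0\subset X\setminus\overline V$. Thus every class of $\partial_V(\operatorname{im}f)$ is the image of a class of $H_{k-1}(\Omega)$ for a fixed neighborhood $\Omega$ of $K$, and Lemma~\ref{lcn-compact} applied to the compact set $K$ in the locally compact $\lc[n]$-space $X\setminus\overline V$ shows that the image of $H_{k-1}(\Omega)\to H_{k-1}(X\setminus\overline V)$ — hence $\partial_V(\operatorname{im}f)$ — is finitely generated.

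The main obstacle is the remaining half of (a): finite generation of $\operatorname{im}j_V$ beyond $j_V(\operatorname{im}r)$. Applying the Mayer--Vietoris sequence to the open cover $X=G\cup(X\setminus\overline V)$ identifies the quotient $\operatorname{im}j_V/j_V(\operatorname{im}r)\cong H_k(X)/\bigl(\operatorname{im}r+\operatorname{im}(H_k(X\setminus\overline V)\to H_k(X))\bigr)$ with $\ker\bigl(H_{k-1}(G\setminus\overline V)\to H_{k-1}(G)\oplus H_{k-1}(X\setminus\overline V)\bigr)$, i.e.\ with the subgroup of the homology of the collar $G\setminus\overline V$ that dies both after filling in $\overline V$ and after enlarging to $X\setminus\overline V$. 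Proving this kernel finitely generated is the delicate part; I expect it to require a further, careful application of Lemma~\ref{lcn-compact} to nested compact shells around $\overline V$ (controlling the collar homology that is simultaneously trivial in $G$ and in $X\setminus\overline V$), together with the same subdivision/localization device used for (b). This collar-homology estimate is where the local compactness and the $\lc[n]$-hypothesis are genuinely used, and is the step I would expect to be the crux of the proof.
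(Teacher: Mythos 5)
Your ladder reduction is sound, and your step (b) is essentially correct: subdividing a relative cycle over the cover $\{G,\,X\setminus\overline{G_0}\}$ so that its boundary lands in the compact shell $\overline{G}\setminus G_0\subset X\setminus\overline{V}$, and then invoking Lemma~\ref{lcn-compact} inside the locally compact $\lc[n]$-space $X\setminus\overline{V}$, does put $\partial_V(\operatorname{im}f)$ inside a fixed finitely generated subgroup. But the proof is incomplete exactly where you flag it, and the gap is genuine: your decomposition forces you to prove that the \emph{full} image $\operatorname{im}j_V$ of $H_k(X)\to H_k(X,X\setminus\overline{V})$ is finitely generated, and Lemma~\ref{lcn-compact} gives no purchase on $H_k(X)$ itself, since it only controls images of the homology of \emph{precompact} pieces. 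The Mayer--Vietoris reformulation does not help: it converts the missing statement into finite generation of the kernel $\ker\bigl(H_{k-1}(G\setminus\overline{V})\to H_{k-1}(G)\oplus H_{k-1}(X\setminus\overline{V})\bigr)$, a subgroup of the (possibly enormous) collar homology, whereas the only available tool bounds images of maps from precompact sets, not kernels or subgroups. Note also that since $\operatorname{im}j_V\subseteq\operatorname{im}f$, your claim (a) is a consequence of the lemma, so given your (b) it is essentially equivalent in strength to the lemma itself --- the ``nested shells'' plan is not a finishing touch but the whole problem.

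The paper's missing idea is to excise on the \emph{source} side as well, which removes $H_k(X)$ from the picture entirely. Choose $\overline{U}\subset W_1\subset\overline{W}_1\subset W_2\subset\overline{W}_2\subset W_3$ with $\overline{W}_3$ compact; excision identifies $H_k(X,X\setminus\overline{U})\cong H_k(W_1,W_1\setminus\overline{U})$ and $H_k(X,X\setminus\overline{V})\cong H_k(W_3,W_3\setminus\overline{V})$, and the map in question factors through the middle pair $(W_2,W_2\setminus\overline{V})$. Running your skeleton (image $=$ extension of a ``$j$-part'' by a ``$\partial$-part'') on the exact sequence of this \emph{middle} pair, the $j$-part now comes from $H_k(W_2)$ with $W_2$ precompact in $W_3$, so Lemma~\ref{lcn-compact} makes its further image in $H_k(W_3,W_3\setminus\overline{V})$ finitely generated; the corner map $H_{k-1}(W_1\setminus\overline{U})\to H_{k-1}(W_2\setminus\overline{V})$ likewise has finitely generated range because $W_1\setminus\overline{U}$ is precompact in $W_2\setminus\overline{V}$ (your subdivision argument is a hands-on version of this corner). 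The algebraic lemma quoted from Bredon then assembles the two bounds. In short: keeping the ambient space $X$ on both sides of your ladder leaves you with exactly the uncontrollable group $H_k(X)$ that the paper's three-pair diagram is designed to avoid, and that is the step your proposal cannot close as written.
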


\begin{proof}  Take open sets $W_1\subset W_2\subset W_3\subset X$ such that $W_3$ has compact closure in $X$ and $\bar U\subset W_1\subset\overline{W}_1\subset W_2\subset\overline{W}_2\subset W_3$. The excision property for singular homology (see Theorem 2.20 of \cite{Hat}) implies that the inclusion homomorphisms $H_k(W_1,W_1\setminus \bar U)\to H_k(X,X\setminus \bar U)$ and $H_k(W_3,W_3\setminus \bar V)\to H_k(X,X\setminus \bar V)$ are isomorphisms. Thus it suffices to check that the inclusion homomorphism $H_k(W_1,W_1\setminus\bar U)\to H_k(W_3,W_3\setminus\bar V)$ has finitely generated range. This will be done with help of the commutative diagram whose rows are the exact sequences of the pairs $(W_1,W_1\setminus \bar U)$, $(W_2,W_2\setminus\bar V)$, $(W_3,W_3\setminus\bar V)$ and columns are inclusion homomorphisms in homologies:
%\vskip-20pt
$$
\begin{CD}
@. H_k(W_1,W_1\setminus \bar U)@>{}>> H_{k-1}(W_1\setminus \bar U)\\
@. @ V{i_2}VV @V{i_3}VV\\
H_k(W_2)@>>> H_k(W_2,W_2\setminus \bar V)@>>> H_{k-1}(W_2\setminus \bar V)\\
@ V{i_1}VV @V{i_4}VV\\
H_k(W_3)@>>> H_k(W_3,W_3\setminus\bar V)
\end{CD}
$$
Lemma~\ref{lcn-compact} implies that the homomorphisms  $i_1$  and $i_3$ have finitely generated ranges (because $W_2$ and $W_1\setminus\bar U$ have compact closures in $W_3$ and $W_2\setminus \bar V$, respectively). Consequently, the inclusion homomorphism
$i_4\circ i_2:H_k(W_1,W_1\setminus\bar U)\to H_k(W_3,W_3\setminus\bar V)$ also has finitely generated range.
\end{proof}

\begin{lemma}\label{homtriv} Let $X$ be a locally compact $\lc[n]$-space and $x$ be a point with $H_k(X,X\setminus\{x\})=0$ for some $k\le n$. Then for any neighborhood $U\subset X$ of $x$  there is a neighborhood $V\subset U$ of $x$ such that the inclusion homomorphism $H_k(X,X\setminus \bar U)\to H_k(X,X\setminus\bar V)$ is trivial.
\end{lemma}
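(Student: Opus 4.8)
The plan is to combine the finiteness furnished by Lemma~\ref{fingen} with the fact that relative singular homology with fixed total space commutes with the directed family of open sets $X\setminus\bar V$ that exhaust $X\setminus\{x\}$. The hypothesis $H_k(X,X\setminus\{x\})=0$ will force each individual homology class to die at some finite stage $\bar V$, while the finite generation coming from the $\lc[n]$-structure will allow me to annihilate all relevant generators at once by descending to a single small neighborhood $V$.

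First I would reduce to the case that $\bar U$ is compact. Using local compactness, pick a neighborhood $U'\ni x$ with compact closure $\overline{U'}\subset U$; for every $V$ with $\bar V\subset U'$ the inclusion-induced map $H_k(X,X\setminus\bar U)\to H_k(X,X\setminus\bar V)$ factors through $H_k(X,X\setminus\overline{U'})$, so it suffices to trivialize the map emanating from $H_k(X,X\setminus\overline{U'})$. Renaming $U'$ as $U$, I may thus assume $\bar U$ is compact. Choosing then a neighborhood $V_0\ni x$ with $\overline{V_0}\subset U$, Lemma~\ref{fingen} guarantees that the homomorphism $H_k(X,X\setminus\bar U)\to H_k(X,X\setminus\overline{V_0})$ has finitely generated range, say generated by classes $\alpha_1,\dots,\alpha_m\in H_k(X,X\setminus\overline{V_0})$.

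The central step is a compact-support argument. Since $X$ is Hausdorff, $\bigcap_V\bar V=\{x\}$ as $V$ runs over the neighborhoods of $x$, so the open sets $X\setminus\bar V$ form a family directed by inclusion with union $X\setminus\{x\}$; because singular chains have compact support, one obtains $H_k(X,X\setminus\{x\})=\varinjlim_V H_k(X,X\setminus\bar V)$. Concretely, if a relative cycle $\zeta$ for $(X,X\setminus\overline{V_0})$ bounds in $(X,X\setminus\{x\})$, then $\zeta=\partial\beta+\gamma$ with $\gamma$ a finite chain supported on a compact set $K\subset X\setminus\{x\}$; separating $x$ from $K$ by a neighborhood $V$ with $\bar V\cap K=\emptyset$ shows that $\zeta$ already bounds in $(X,X\setminus\bar V)$. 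Applying this to each $\alpha_i$, which maps to $0$ in $H_k(X,X\setminus\{x\})=0$, I get neighborhoods $V_i\subset V_0$ trivializing $\alpha_i$, and by directedness a single neighborhood $V\subset\bigcap_{i\le m}V_i$ in which all the $\alpha_i$ vanish.

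To finish, I would note that $H_k(X,X\setminus\bar U)\to H_k(X,X\setminus\bar V)$ is the composite of $H_k(X,X\setminus\bar U)\to H_k(X,X\setminus\overline{V_0})$ with $H_k(X,X\setminus\overline{V_0})\to H_k(X,X\setminus\bar V)$; its range is the image of the subgroup generated by $\alpha_1,\dots,\alpha_m$, hence $0$, so the homomorphism is trivial. I expect the only delicate point to be the colimit identity $H_k(X,X\setminus\{x\})=\varinjlim_V H_k(X,X\setminus\bar V)$: although it is standard (compact supports of singular chains together with the Hausdorff separation of $x$ from a compact set), it is exactly what compels the reduction to compact $\bar U$, and it is the place where the finite generation from Lemma~\ref{fingen} is indispensable, since without it infinitely many generators could not be killed at one common stage $V$.
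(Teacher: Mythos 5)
Your proposal is correct and follows essentially the same route as the paper's proof: reduce to the case of compact $\bar U$, invoke Lemma~\ref{fingen} to get a finitely generated image with generators $g_1,\dots,g_m$, use the compact-support (direct-limit) property of singular homology together with $H_k(X,X\setminus\{x\})=0$ to kill each generator at some stage $V_i$, and pass to $V=\bigcap_{i\le m}V_i$. The only difference is cosmetic: the paper leaves the compact-support step implicit when producing the neighborhoods $V_i$, whereas you spell it out via the decomposition $\zeta=\partial\beta+\gamma$ with $\gamma$ supported in a compact set missing $x$.
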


\begin{proof} Without loss of generality the neighborhood $U$ has compact closure in $X$. By Lemma~\ref{fingen} there is a neighborhood $W\subset U$ of $x$ such that  the inclusion homomorphism  $i_{UW}:H_k(X,X\setminus \bar U)\to H_k(X,X\setminus\bar W)$ has finitely generated range $\mathrm{Im}(i_{UW})$. Pick up finitely many generators $g_1,\dots,g_m$ of the group $\mathrm{Im}(i_{UW})$. The triviality of the homology group $H_k(X,X\setminus\{x\})$ implies the triviality of  the inclusion homomorphism $j:H_k(X,X\setminus \bar W)\to H_k(X,X\setminus\{x\})$. Then for every $i\le m$ the image $j(g_i)=0$ and we can find a neighborhood $V_{i}\subset W$ of $x$ such that the image of $g_i$ under the inclusion homomorphism $H_k(X,X\setminus\overline{W})\to H_k(X,X\setminus \bar V_{i})$ is trivial. For the neighborhood $V=\bigcap_{i\le m}V_{i}$ of $x$ the elements $g_1,\dots,g_m$ have trivial images under the  homomorphism $i_{WV}:H_k(X,X\setminus\overline{W})\to H_k(X,X\setminus \bar V)$. Since these elements generate the group $\mathrm{Im}(i_{UW})$, the inclusion homomorphism $i_{UV}=i_{WV}\circ i_{UW}:H_k(X,X\setminus \bar U)\to H_k(X,X\setminus \bar V)$ is trivial.
\end{proof}

\section{Characterizing locally $n$-negligible sets}

In this section we present a homological characterization of locally $n$-negligible sets. Following H.~Toru\'n\-czyk \cite{To} we define a subset $A$ of a space $X$ to be {\em locally $n$-negligible} if given $x\in X$, $k<n+1$, and a neighborhood $U$ of $x$ there is a neighborhood $V\subset U$ of $x$ such that for each $f:(I^k,\partial I^k)\to (V,V\setminus A)$ there is a homotopy $(h_t):(I^k,\partial I^k)\to (U,U\setminus A)$ with $h_0=f$ and $h_1(I^k)\subset U\setminus A$. Following \cite{Spa} we shall say that a pair $(X,A)$ is {\em $n$-connected} if $\pi_i(X,A)=0$ for all $i\le n$.

\begin{theorem}\label{torunczyk} For a subset $A$ of a Tychonov space $X$ the following conditions are equivalent:
\begin{enumerate}
\item $A$ is locally $n$-negligible;
\item given: a simplicial pair $(K,L)$ with $\dim(K)\le n$, a continuous pseudometric $\rho$ on $X$ and maps $\e:|K|\to(0,\infty)$ and $f:|K|\times\{0\}\cup|L|\times I\to X$ with $\rho(f(x,0),f(x,t))<\e(x)$ and $f(x,1)\notin A$ for all $(x,t)\in|L|\times I$ there is $\bar f:|K|\times I\to X$ which extends $f$ and satisfies $\rho(\bar f(x,t),f(x,0))<\e(x)$ and $\bar f(x,1)\notin A$ for all $(x,t)\in|K|\times I$;
\item for each open set $U\subset X$ and $k<n+1$ the relative homotopy group $\pi_k(U,U\setminus A)$ vanishes;
\item each $x\in X$ has a basis $\mathfrak U_x$ of open neighborhoods with $\pi_k(U,U\setminus A)=0$ for all $U\in\mathfrak U_x$ and $k<n+1$.
\smallskip

\hskip-45pt If, in addition,  $X$ is an $\LC[1]$-space and $A$ is
locally $2$-negligible in $X$, then the conditions \emph{(1)--(4)}
are equivalent to
\smallskip

\item for each open $U\subset X$ and $k<n+1$ the relative homology group  $H_k(U,U\setminus A)$ vanishes;
\item each $x\in X$ has a basis $\mathfrak U_x$ of open neighborhoods with $H_k(U,U\setminus A)=0$ for all $U\in\mathfrak U_x$ and $k<n+1$.
\end{enumerate}
\end{theorem}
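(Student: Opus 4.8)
The plan is to establish the homotopical block (1)--(4) first, for an arbitrary Tychonov space $X$ and arbitrary $n$, and only afterwards attach the homological conditions (5)--(6) under the extra hypotheses. For the homotopical block I would prove the cycle $(4)\Rightarrow(1)\Rightarrow(2)\Rightarrow(3)\Rightarrow(1)$ together with the trivial $(3)\Rightarrow(4)$. Three of these links are routine. For $(4)\Rightarrow(1)$, given $x$, $k<n+1$ and a neighborhood $U$, I pick $V\in\mathfrak U_x$ with $V\subset U$; then $\pi_k(V,V\setminus A)=0$ lets every $f\colon(I^k,\partial I^k)\to(V,V\setminus A)$ be deformed inside $(V,V\setminus A)\subset(U,U\setminus A)$ into $V\setminus A\subset U\setminus A$, which is local $n$-negligibility. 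For $(3)\Rightarrow(1)$ I simply take $V=U$. For $(2)\Rightarrow(3)$, to see $\pi_k(U,U\setminus A)=0$ I triangulate $I^k$ as $|K|$ with $L=\partial I^k$, set $f(\cdot,0)=g$ for a given $g\colon(I^k,\partial I^k)\to(U,U\setminus A)$ and keep $f$ stationary on $|L|\times I$; Lemma~\ref{metric}, applied to the cover $\{U,\,X\setminus g(I^k)\}$ and the compactum $g(I^k)$, supplies a pseudometric $\rho$ confining the unit balls about $g(I^k)$ to $U$, so with $\e\equiv1$ condition (2) produces a homotopy $\bar f$ inside $U$, stationary on $\partial I^k$, ending in $U\setminus A$, i.e.\ $[g]=0$. (The $\pi_0$- and $\pi_1$-basepoint bookkeeping for small $k$ is routine.)

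The real work of the homotopical block is $(1)\Rightarrow(2)$, which I expect to be the first main obstacle. I would prove it by induction over the simplices of $K\setminus L$ in order of increasing dimension, extending the partial homotopy over one cylinder $\sigma\times I$ at a time, where it is already prescribed on the bottom-and-sides $(\sigma\times\{0\})\cup(\partial\sigma\times I)$. Subdividing $K$ finely enough (using continuity of $f(\cdot,0)$ together with $\rho$ and $\e$) arranges that each closed simplex is carried into a neighborhood small enough for the negligibility data to apply and with $\e$-room to spare. The key move is to re-coordinatize the box $\sigma\times I\cong I^{k+1}$ so that its prescribed bottom-and-sides become a single face $I^k$; local $n$-negligibility, applied to the resulting map $(I^k,\partial I^k)\to(V,V\setminus A)$, then supplies a filling whose opposite face (the genuine top $\sigma\times\{1\}$) lands in $U\setminus A$, all within the prescribed $\e$-balls. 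The coherence of these fillings across faces under the subdivision, and the simultaneous control for all dimensions $\le n$, is where the bookkeeping concentrates; this is the relative, pseudometric-controlled form of Toru\'nczyk's deformation theorem.

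For the homological half I add the hypotheses that $X$ is an $\LC[1]$-space and $A$ is locally $2$-negligible, and I would close the enlarged scheme as $(3)\Rightarrow(5)\Rightarrow(6)\Rightarrow(1)$. By the equivalence $(1)\Leftrightarrow(3)$ at level $2$ (the homotopical block used with parameter $2$), every pair $(U,U\setminus A)$ with $U$ open is then $2$-connected; in particular $\pi_1(U\setminus A)\to\pi_1(U)$ is an isomorphism. The implication $(3)\Rightarrow(5)$ is the easy, ``homotopy kills homology'' direction: each pair $(U,U\setminus A)$ is $n$-connected, so the relative Hurewicz theorem (path components being handled by $\pi_0$-surjectivity, and the $\pi_1(U\setminus A)$-coinvariants of a vanishing $\pi_k$ being trivial) yields $H_k(U,U\setminus A)=0$ for all $k<n+1$. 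The implication $(5)\Rightarrow(6)$ is trivial, taking $\mathfrak U_x$ to consist of all neighborhoods of $x$.

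The genuine obstacle is the converse $(6)\Rightarrow(1)$, recovering homotopical from homological vanishing, and I would prove it by induction on $k$ running from $2$ up to $n$, the base case $k=2$ being supplied by the standing hypothesis of local $2$-negligibility. Assume $A$ is locally $(k-1)$-negligible, so by the homotopical block the pairs $(U,U\setminus A)$ are $(k-1)$-connected for every open $U$. Fixing $x$ and a neighborhood $U\ni x$, I interleave the two sources of good neighborhoods to build a nested basis $U_1\supset U_2\supset\cdots$ at $x$ inside $U$ with $H_k(U_i,U_i\setminus A)=0$ for every $i$ (from (6)) and with each $\pi_1(U_{i+1})\to\pi_1(U_i)$ trivial (from $\LC[1]$, where small loops contract in larger neighborhoods). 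Relative Hurewicz applied to the $(k-1)$-connected pair $(U_{i+1},U_{i+1}\setminus A)$ identifies $0=H_k(U_{i+1},U_{i+1}\setminus A)$ with the $\pi_1(U_{i+1}\setminus A)$-coinvariants of $\pi_k(U_{i+1},U_{i+1}\setminus A)$, so this group is generated by the elements $\gamma\cdot\xi-\xi$. Since $\pi_1(U_{i+1}\setminus A)\cong\pi_1(U_{i+1})\to\pi_1(U_i)\cong\pi_1(U_i\setminus A)$ is trivial, every such generator dies under $\pi_k(U_{i+1},U_{i+1}\setminus A)\to\pi_k(U_i,U_i\setminus A)$, whence this homomorphism is trivial. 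That is exactly local $k$-negligibility (take $V=U_{i+1}\subset U_i\subset U$), completing the induction step; feeding it back through the homotopical block delivers $(3)$ and $(4)$, and with $(3)\Rightarrow(5)$ the full equivalence. I expect the coherent control of the $\pi_1$-action across the nested neighborhoods, together with the low-dimensional basepoint and path-component bookkeeping, to be the subtlest part of the whole proof.
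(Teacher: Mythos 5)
Your proposal is correct and follows essentially the same route as the paper: the homological half coincides with the paper's proof --- $(3)\Rightarrow(5)$ via the relative Hurewicz theorem, $(5)\Rightarrow(6)$ trivially, and $(6)\Rightarrow(1)$ by induction on $k\ge 2$ starting from the standing local $2$-negligibility, using that for a $(k-1)$-connected pair Hurewicz identifies $H_k$ with the $\pi_1$-coinvariants of $\pi_k$, so that $\pi_k(V,V\setminus A)$ is generated by elements $[\gamma]\cdot[\alpha]-[\alpha]$, which die under inclusion because the $\mathrm{LC}^1$-choice of neighborhoods together with the $2$-connectivity of the pairs makes the induced $\pi_1$-homomorphism trivial, exactly as in the paper. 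The only organizational difference is that for the block $(1)$--$(4)$ the paper does not reprove anything but invokes Toru\'nczyk's theorem \cite{To}, observing that Lemma~\ref{metric} upgrades his argument from normal to Tychonov spaces, whereas you sketch the simplex-by-simplex induction for $(1)\Rightarrow(2)$ yourself --- which is precisely Toru\'nczyk's original method, so this is a re-derivation of the cited result rather than a genuinely different approach.
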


\begin{proof} The equivalence $(1)\Leftrightarrow(2)\Leftrightarrow(3)\Leftrightarrow(4)$ have
been proved by Toru\'nczyk \cite[2.3]{To} in case of a normal space $X$.
But because of Lemma~\ref{metric} his proof works also for any Tychonov $X$.
\smallskip

Now assume that  $A$ is a locally $2$-negligible set in an $\LC[1]$-space $X$.
\smallskip

The implication $(5)\Ra(6)$ is trivial while $(3)\Ra(5)$ easily follows from the Hurewicz Isomorphism Theorem 4 in \cite[\S 7.4]{Spa} (see also Theorem 4.37 \cite{Hat}) because $X$ is an $\LC[0]$-space and $A$ is  locally 1-negligible in $X$.
\smallskip

It remains to prove the implication  $(6)\Ra (1)$.
Take any point $x\in X$ and a neighborhood $U\subset X$ of $x$. We lose no generality assuming that $U$ is connected.

The $\LC[1]$-property of $X$ yields a neighborhood $V\subset U$ of $x$ such that
any map $f:\partial I^2\to V$ extends to a map $\bar f:I^2\to U$. Moreover, by (6) we can choose the neighborhood $V$ so that $H_k(V,V\setminus A)=0$ for all $k<n+1$. Replacing $V$ by a connected component containing the point $x$, if necessary, we may assume that $V$ is connected. Then the $\LC[0]$-property of $X$ implies that  $V$ is path connected and the local 1-negligibility of $A$ in $X$ implies that $V\setminus A$ is path-connected too.

We claim that every map $f:(I^k,\partial I^k)\to (V,V\setminus A)$ with $k<n+1$ is homotopic in $U$ to a map $\tilde f:(I^k,\partial I^k)\to (U\setminus A,U\setminus A)$ which will imply the local $n$-negligibility of $A$ in $X$.  This will be done by induction. For $k\le 2$ the local $k$-negligibility of $A$ in $X$ follows from the local 2-negligibility and there is nothing to prove. So we assume that the local $(k-1)$-negligibility of $A$ has been proved for some $k>2$. Then the implication $(1)\Ra(3)$ yields $\pi_i(V,V\setminus A)=0$ for all $i< k$. This means that the pair $(V,V\setminus A)$ is $(k-1)$-connected, which makes legal applying the Relative Hurewicz Isomorphism Theorem 4 from \cite[\S7.5]{Spa} to this pair.

Fix any point $*\in\partial I^k$ and let $x_0=f(*)$. Then the map $f$ can be considered as an element of the relative homotopy group $\pi_k(V,V\setminus A,*)$. The Relative Hurewicz Isomorphism Theorem applied to the pair $(V,V\setminus A)$ implies that $\pi_k'(V,V\setminus A,x_0)=0$, where $\pi_k'(V,V\setminus A,x_0)$ is the quotient group of the homotopy group  $\pi_k(V,V\setminus A,x_0)$ by the normal subgroup $G$ generated by the elements $[\gamma]\cdot[\alpha]-[\alpha]$, $[\alpha]\in \pi_k(V,V\setminus A,x_0)$, $[\gamma]\in\pi_1(V\setminus A,x_0)$, where $\cdot:\pi_1(V\setminus A,x_0)\times \pi_k(V,V\setminus A,x_0)\to \pi_k(V,V\setminus A,x_0)$ is the left action of the fundamental group on the relative $k$-th homotopy group, see \cite[\S 7.3]{Spa} or \cite{Hat}.
It follows from $0=\pi'_k(V,V\setminus A,x_0)=\pi_k(V,V\setminus A,x_0)/G$ that the subgroup $G$ coincides with $\pi_k(V,V\setminus A,x_0)$. Now consider the homomorphism $i_*:\pi_k(V,V\setminus A,x_0)\to\pi_k(U,U\setminus A,x_0)$ induced by the inclusion of pairs $i:(V,V\setminus A)\to (U,U\setminus A)$.
We claim that $i_*$ is a null-homomorphism. Since the group $\pi_k(V,V\setminus A,x_0)$ is generated by the elements $[\gamma]\cdot[\alpha]-[\alpha]$, $[\alpha]\in \pi_k(V,V\setminus A,x_0)$, $[\gamma]\in\pi_1(V\setminus A,x_0)$, it suffices to check that $i_*([\gamma]\cdot[\alpha]-[\alpha])=0$ for any such $[\alpha],[\gamma]$.
Let $j_*:\pi_1(V\setminus A,x_0)\to \pi_1(U\setminus A,x_0)$ be the homomorphism induced by the inclusion $j:(V,x_0)\to (U,x_0)$.
The local 2-negligibility of $A$ in $U$ and the choice of the set $V$ implies that $j_*=0$ and thus the action of the element $j_*([\gamma])\in\pi_1(U\setminus A,x_0)$ on $\pi_k(U,U\setminus A,x_0)$ is trivial. The naturality of the action of the fundamental group on the homotopy groups (see Lemma~1 in \cite[\S7.3]{Spa}) implies that $$i_*([\gamma]\cdot[\alpha]-[\alpha])=j_*([\gamma])\cdot i_*([\alpha])-i_*([\alpha])=i_*([\alpha])-i_*([\alpha])=0.$$

Thus the homomorphism $i_*:\pi_k(V,V\setminus A,x_0)\to \pi_k(U,U\setminus A,x_0)$ is trivial, which means that the map $f:(I^k,\partial I^k,*)\to(V,V\setminus A,x_0)$ is null homotopic in $(U,U\setminus A,x_0)$ and this completes the proof of the local $k$-negligibility of $A$ in $X$.
\end{proof}

\section{Homotopical and Homological $Z_n$-sets}\label{s:Zsets}

In this section we apply the characterization of locally $n$-negligible sets established in the preceding section to study $Z_n$-sets. We recall the definition of a $Z_n$-set and  its homotopical and homological versions.

\begin{definition} A closed subset $A$ of a topological space $X$ is defined to be
\begin{itemize}
\item a {\em $Z_n$-set} if for any open cover $\U$ of $X$ and any map $f:I^n\to X$ there is a map $f':I^n\to X\setminus A$ which is $\U$-near to $f$;
\item a {\em homotopical $Z_n$-set} in $X$ if for any open cover $\U$ of $X$ and any map $f:I^n\to X$ there is a map $f':I^n\to X\setminus A$ which is $\U$-homotopic to $f$;
\item a {\em $G$-homological $Z_n$-set} in $X$, where $G$ is a coefficient group, if for any open set $U\subset X$ and any $k<n+1$ the relative homology group $H_k(U,U\setminus A;G)=0$;
\item a {\em homological $Z_n$-set} in $X$ if it is a $\IZ$-homological $Z_n$-set in $X$.
\end{itemize}
\end{definition}

A point $x$ in a space $X$ is called a {\em $Z_n$-point} if the singleton $\{x\}$ is a $Z_n$-set in $X$. By analogy we define homotopical and homological $Z_n$-points.
\smallskip

The following theorem reveals interplay between various versions of $Z_n$-sets.

\begin{theorem}\label{Zsets} Let $G$ be a non-trivial Abelian group and $X$ be a Tychonov space.
\begin{enumerate}
\item A subset $A$ of $X$ is a homotopical $Z_n$-set in $X$ iff $A$ is closed and locally $n$-negligible set in $X$.
%\smallskip

\item Each homotopical $Z_n$-set in $X$ is a $Z_n$-set in $X$.
\item If $X$ is an $\LC[n]$-space, then each $Z_n$-set in $X$ is a homotopical $Z_n$-set.
%\smallskip

\item Each homotopical $Z_n$-set in $X$ is a $G$-homological $Z_n$-set.
\item Each $G$-homological $Z_0$-set in $X$ is a homotopical $Z_0$-set.
\item Each $G$-homological $Z_1$-set in $X$ is a $Z_1$-set.
\end{enumerate}
\end{theorem}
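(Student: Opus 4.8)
Throughout I would lean on Theorem~\ref{torunczyk}, which identifies local $n$-negligibility with its condition (3): the vanishing of $\pi_k(U,U\setminus A)$ for every open $U\subset X$ and every $k<n+1$. The two easy items come first. Statement (2) is immediate, since each track $h(\{z\}\times I)$ of a $\U$-homotopy $h$ from $f$ to $f'$ lies in a single member of $\U$, so $f(z),f'(z)$ share a member of $\U$ and $\U$-homotopic maps are automatically $\U$-near. For statement (3), given $\U$ and $f:I^n\to X$ I would invoke the homotopy-approximation property of $\LC[n]$-spaces (Lemma~\ref{LCnear}, applied near the compact image $f(I^n)$, where the paracompactness hypothesis is inessential) to get a cover $\V$ such that $\V$-near maps $I^n\to X$ are $\U$-homotopic; the $Z_n$-property applied to $\V$ produces $f':I^n\to X\setminus A$ with $(f,f')\prec\V$, whence $f\simU f'$.

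Statement (1) is the heart. A homotopical $Z_n$-set is closed by definition, so by Theorem~\ref{torunczyk} it remains only to match the homotopical $Z_n$-property with the vanishing of all $\pi_k(U,U\setminus A)$, $k<n+1$. For ``$\Leftarrow$'' I would use condition (2) of Theorem~\ref{torunczyk} with $(K,L)=(I^n,\emptyset)$: given $\U$ and $f:I^n\to X$, choose by Lemma~\ref{metric} a pseudometric $\rho$ whose unit balls centered on $f(I^n)$ refine $\U$, regard $I^n$ as a complex $K$, set $\e\equiv\frac12$, and obtain an extension $\bar f:I^n\times I\to X$ with $\rho(\bar f(x,t),f(x))<\frac12$ and $\bar f(x,1)\notin A$; each track then sits in a unit ball about $f(x)$, so $\bar f$ is a $\U$-homotopy of $f$ onto a map missing $A$. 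The harder direction ``$\Ra$'' is the main obstacle: given $\phi:(I^k,\partial I^k)\to(U,U\setminus A)$ with $k<n+1$, extend it to $\Phi=\phi\circ\pr:I^n\to U$ along the projection $\pr:I^n=I^k\times I^{n-k}\to I^k$, so that $\Phi$ carries $\partial I^k\times I^{n-k}$ into $U\setminus A$, and apply the homotopical $Z_n$-property with the tailored cover
$$\U_0=\{\,O,\ U\setminus\phi(\partial I^k),\ X\setminus\phi(I^k)\,\},\qquad \phi(\partial I^k)\subset O\subset U\setminus A,\ \ O\text{ open}.$$
The resulting $\U_0$-homotopy, restricted to $I^k\times\{0\}\times I$, compresses $\phi$ into $U\setminus A$: every track starting on $\phi(\partial I^k)$ must lie in $O\subset U\setminus A$, the only member of $\U_0$ meeting $\phi(\partial I^k)$, while tracks starting on $\phi(I^k)$ remain inside $U$. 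Simultaneously guaranteeing ``stay in $U$'' and ``keep the boundary off $A$'' is exactly what makes this step delicate, and it yields $\pi_k(U,U\setminus A)=0$.

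Statements (4)--(6) are then homological consequences. For (4): by (1) and Theorem~\ref{torunczyk} every pair $(U,U\setminus A)$ is $n$-connected, so the inclusion $U\setminus A\hookrightarrow U$ is an $n$-equivalence and induces, for arbitrary coefficients $G$, isomorphisms on $H_j(\,\cdot\,;G)$ for $j<n$ and an epimorphism for $j=n$; the long exact homology sequence of the pair then annihilates $H_k(U,U\setminus A;G)$ for all $k\le n$. Statements (5) and (6) exploit the naturality of $H_0(Y;G)\cong\bigoplus_{\pi_0(Y)}G$ for nontrivial $G$. For (5) the exact sequence gives $H_0(U,U\setminus A;G)=0$ iff $U\setminus A$ meets every path-component of $U$, i.e.\ iff $\pi_0(U,U\setminus A)=0$, which by Theorem~\ref{torunczyk} and (1) is homotopical $Z_0$-ness. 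For (6), the additional vanishing of $H_1(U,U\setminus A;G)$ forces the inclusion $U\setminus A\hookrightarrow U$ to induce a \emph{bijection} $\pi_0(U\setminus A)\to\pi_0(U)$ for every open $U$; to approximate a path $f:I\to X$ within $\U$, I would refine $\U$ by a pseudometric near $f(I)$ (Lemma~\ref{metric}), subdivide $I$ into tiny arcs, move each $f(s_j)$ to a nearby point $a_j\in X\setminus A$ by $\pi_0$-surjectivity, and join consecutive $a_j,a_{j+1}$ by a path off $A$ inside a small ball, legitimate because $a_j,a_{j+1}$ lie in one path-component of that ball and hence, by $\pi_0$-\emph{injectivity}, in one path-component of its $A$-complement. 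Concatenating yields the required $\U$-near map $f':I\to X\setminus A$.
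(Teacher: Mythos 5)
Your proposal is correct, and its skeleton coincides with the paper's: (2) is the same triviality, (3) is the same appeal to Lemma~\ref{LCnear} (the paper likewise glosses the paracompactness mismatch by citing \cite[3.3]{To}, so your ``compact image'' remark is an adequate patch), (5) is verbatim the paper's observation that $H_0(U,U\setminus A;G)=0$ with $G\ne 0$ says exactly that $U\setminus A$ meets every path-component of $U$, and (6) has the identical mechanism --- $H_0$-vanishing for surjectivity on path-components, $H_1$-vanishing (via exactness of $H_1(U,U\setminus A;G)\to H_0(U\setminus A;G)\to H_0(U;G)$) for injectivity, then concatenation of small paths --- with your pseudometric balls from Lemma~\ref{metric} replacing the paper's Lebesgue-type subdivision through the sets $W_i=U_i\cap U_{i+1}$, a purely cosmetic difference. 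Where you genuinely add something is item (1): the paper disposes of it with ``follows immediately from Theorem~\ref{torunczyk}'', which is literally immediate only in the direction you prove via condition (2) with $(K,L)=(I^n,\emptyset)$ and $\e\equiv\tfrac12$; the converse implication (homotopical $Z_n$-set $\Ra$ $\pi_k(U,U\setminus A)=0$) is silently delegated to Toru\'nczyk's own arguments in \cite{To}. Your tailored cover $\{O,\;U\setminus\phi(\partial I^k),\;X\setminus\phi(I^k)\}$ applied to $\phi\circ\pr$ gives a clean self-contained proof of it: since $O$ is the only member containing points of the compact (hence closed) set $\phi(\partial I^k)$ and the third member misses $\phi(I^k)$, every track based at a boundary point stays in $O\subset U\setminus A$ and every track based in $\phi(I^k)$ stays in $U$, so the restricted homotopy is a homotopy of pairs compressing $\phi$ into $U\setminus A$, killing $[\phi]\in\pi_k(U,U\setminus A)$ by the compression criterion, after which Theorem~\ref{torunczyk} applies. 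Finally, for (4) the paper cites Corollary 10.6 of \cite{Bre2}; your derivation via the inclusion $U\setminus A\hookrightarrow U$ being an $n$-equivalence is the same theorem repackaged --- just be aware that in the absence of simple connectivity the passage from $n$-connectedness of the pair to vanishing integral homology is exactly the $\pi_1$-action (``$\pi'_k$'') form of the relative Hurewicz theorem, i.e.\ precisely Bredon's corollary, and the extension to arbitrary $G$ then follows from the Universal Coefficients Formula applied to the pair. I see no gaps.
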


\begin{proof} The first item follows immediately from Theorem~\ref{torunczyk} while the second is trivial. The third item was proved in \cite[3.3]{To} and follows from Lemma~\ref{LCnear}. The fourth item can be easily derived from Corollary 10.6 \cite{Bre2} (of  the Relative Hurewicz Theorem). The fifth item follows from the fact that a relative homology group $H_0(U,U\setminus A;G)$ vanishes if and only if each path-connected component of $U$ meets the set $U\setminus A$.
\smallskip

To prove the sixth item assume that $A$ is a $G$-homological $Z_1$-set in $X$.
Being a $G$-homological $Z_0$-set, $A$ is a homotopical $Z_0$-set in $X$.
To show that $A$ is a $Z_1$-set in $X$, fix an open cover $\U$ of $X$ and a map $f:I\to X$. Consider the open cover $f^{-1}(\U)=\{f^{-1}(U):U\in\U\}$ of the interval $I=[0,1]$ and find a sequence $0=t_0<t_1<\dots<t_m=1$ such that for each $i\le m$ the interval $[t_{i-1},t_i]$ lies in $f^{-1}(U_i)$ for some set $U_i\in\U$. Let $W_m=U_m$ and  $W_i=U_{i}\cap U_{i+1}$ for $i<m$.

Since $H_0(W_i,W_i\setminus A;G)=0$, the path-connected component
of $W_i$ containing the point $f(t_i)$ meets the set $W_i\setminus
A$ at some point $x_i$. We claim that the points $x_{i-1},x_i$ lie
in the same path-connected component of $U_i\setminus A$. Assuming
the converse we would get a nontrivial cycle $\alpha=g\cdot
x_{i-1}-g\cdot x_i$ in $H_0(U_i\setminus A;G)$ with $g\in G$ being
any non-zero element. On the other hand, this cycle is the
boundary of an obvious 1-chain $\beta$ in $U_i$ and thus vanishes
in the homology group $H_0(U_i;G)$. But this contradicts the exact
sequence $$0=H_1(U_i,U_i\setminus A;G)\to H_0(U_i\setminus A;G)\to
H_0(U_i;G)$$ for the pair
$(U_i,U_i\setminus A)$.

Therefore $x_{i-1},x_i$ lie in the same path-connected component
of $U_i\setminus A$, ensuring the existence of a continuous map
$g_i:[t_{i-1},t_i]\to U_i\setminus A$ with $g_i(t_{i-1})=x_{i-1}$
and $g_i(t_i)=x_i$. The maps $g_i$, $i\le m$, compose a single
continuous map $g:[0,1]\to X\setminus A$ which is $\U$-near to
$f$, witnessing the $Z_1$-set property of $A$.
\end{proof}

Combining the first item of Theorem~\ref{Zsets} with Theorem~\ref{torunczyk}, we get the following important characterization of homotopical $Z_n$-sets.

\begin{theorem}\label{Z2+hZn=Zn} A homotopical $Z_2$-set $A$ in a  Tychonov $\LC[1]$-space $X$ is a homotopical $Z_n$-set in $X$ if and only if $A$ is a homological $Z_n$-set in $X$.
\end{theorem}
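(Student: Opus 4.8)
The plan is to recognize this theorem as an immediate consequence of the two main results already established, namely Theorem~\ref{Zsets}(1) and the ``in addition'' clause of Theorem~\ref{torunczyk}; the only work is to align the hypotheses and translate both sides of the desired equivalence into the numbered conditions of Theorem~\ref{torunczyk}.

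First I would unpack the standing hypothesis. Since $A$ is assumed to be a homotopical $Z_2$-set in $X$, Theorem~\ref{Zsets}(1) tells us that $A$ is closed and locally $2$-negligible in $X$. Together with the assumption that $X$ is a Tychonov $\LC[1]$-space, this is precisely the extra hypothesis required to activate the final clause of Theorem~\ref{torunczyk}. Consequently, for this particular pair $(X,A)$, all six conditions (1)--(6) of Theorem~\ref{torunczyk} are mutually equivalent.

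Next I would identify each side of the stated equivalence with one of these conditions. On the one hand, applying Theorem~\ref{Zsets}(1) once more, ``$A$ is a homotopical $Z_n$-set in $X$'' is equivalent to ``$A$ is closed and locally $n$-negligible'', i.e.\ to condition~(1) of Theorem~\ref{torunczyk}. On the other hand, taking the coefficient group $G=\IZ$, the definition of a homological $Z_n$-set, that $H_k(U,U\setminus A)=0$ for all $k<n+1$ and all open $U\subset X$, is verbatim condition~(5) of Theorem~\ref{torunczyk}. The equivalence $(1)\Leftrightarrow(5)$, valid under our hypotheses, then yields the theorem.

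There is no genuine obstacle here; the content of the result was already absorbed into Theorem~\ref{torunczyk}, and the present statement merely repackages it in the language of homotopical and homological $Z_n$-sets. The one point demanding care is the bookkeeping of hypotheses: I must verify that being a homotopical $Z_2$-set (rather than merely locally $2$-negligible as a possibly non-closed set) supplies exactly the ``locally $2$-negligible'' premise that Theorem~\ref{torunczyk} requires, which it does since a homotopical $Z_2$-set is by definition closed and, via Theorem~\ref{Zsets}(1), locally $2$-negligible.
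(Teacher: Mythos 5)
Your proof is correct and is exactly the paper's own argument: the paper derives Theorem~\ref{Z2+hZn=Zn} by ``combining the first item of Theorem~\ref{Zsets} with Theorem~\ref{torunczyk},'' which is precisely your identification of the two sides of the equivalence with conditions (1) and (5) of Theorem~\ref{torunczyk}, activated by the locally $2$-negligible hypothesis. Your extra care about closedness (a homotopical $Z_2$-set is closed by definition, so ``locally $n$-negligible'' upgrades to ``homotopical $Z_n$-set'') is the right bookkeeping and is implicit in the paper.
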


\begin{remark} Theorem~\ref{Z2+hZn=Zn} has been known as a folklore and its particular cases have appeared in literature,
see e.g. \cite{Kro}, \cite{Dob}, \cite{DW}. It should be also mentioned that a substantial part of \cite{DW} is devoted to closed sets of infinite codimension (coinciding with our homological $Z_\infty$-sets).
\end{remark}

Next, we establish some elementary properties of $G$-homological $Z_n$-sets. From now on,  $G$ is a non-trivial abelian group.

\begin{proposition}\label{subset} Let $A,B$ be $G$-homological $Z_n$-sets in a topological space $X$.
\begin{enumerate}
\item Any closed subset $F\subset A$ is a $G$-homological $Z_n$-set in $X$.
\item The union $A\cup B$ is a $G$-homological $Z_n$-set in $X$.
\end{enumerate}
\end{proposition}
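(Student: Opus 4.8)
The plan is to deduce both statements from the long exact homology sequence of a suitably chosen triple, exploiting the fact that the defining condition for a $G$-homological $Z_n$-set is quantified over \emph{all} open sets $U \subset X$. The key observation is that if $C$ is closed and $U$ is open, then $U \setminus C$ is again open, so the $Z_n$-set hypothesis may be applied to it directly; combining the vanishing on such an intermediate open set with the vanishing on $U$ via an exact sequence will yield the vanishing for the reduced set $F$ in (1) and the enlarged set $A \cup B$ in (2).

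For part (1), I would fix an open set $U \subset X$ and an integer $k < n+1$ and consider the triple $U \setminus A \subset U \setminus F \subset U$; both inclusions hold because $F \subset A$, and $U \setminus F$, $U \setminus A$ are open since $F$ and $A$ are closed. The long exact sequence of this triple contains the fragment
\[
H_k(U, U \setminus A; G) \to H_k(U, U \setminus F; G) \to H_{k-1}(U \setminus F, U \setminus A; G).
\]
The leftmost group vanishes because $A$ is a $G$-homological $Z_n$-set. For the rightmost group, note that $U \setminus F$ is open and $(U \setminus F) \setminus A = U \setminus A$ since $F \subset A$; hence it equals $H_{k-1}(U \setminus F, (U \setminus F) \setminus A; G) = 0$ for all $k-1 < n+1$, which covers every $k < n+1$. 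By exactness the middle group $H_k(U, U \setminus F; G)$ vanishes, and since $F$ is closed this shows $F$ is a $G$-homological $Z_n$-set.

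For part (2), with $U$ and $k < n+1$ fixed, I would instead use the triple $U \setminus (A \cup B) \subset U \setminus B \subset U$ (the inclusions hold since $B \subset A \cup B$), whose exact sequence contains
\[
H_k(U \setminus B, U \setminus (A \cup B); G) \to H_k(U, U \setminus (A \cup B); G) \to H_k(U, U \setminus B; G).
\]
The rightmost group vanishes because $B$ is a $G$-homological $Z_n$-set. For the leftmost group, observe that $U \setminus B$ is open and $(U \setminus B) \setminus A = U \setminus (A \cup B)$, so applying the $Z_n$-set hypothesis for $A$ to the open set $U \setminus B$ gives $H_k(U \setminus B, (U \setminus B) \setminus A; G) = 0$ for $k < n+1$. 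Exactness then forces the central group $H_k(U, U \setminus (A \cup B); G)$ to vanish, and since $A \cup B$ is closed this completes the argument.

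Both arguments are entirely formal once the correct triples are identified, so I do not expect a serious obstacle; the only points requiring care are the elementary identities $(U \setminus F) \setminus A = U \setminus A$ and $(U \setminus B) \setminus A = U \setminus (A \cup B)$, the remark that deleting a closed set keeps $U$ open so that the hypotheses genuinely apply to the intermediate term, and the index bookkeeping guaranteeing that the neighbouring groups vanish throughout the range $k < n+1$. I would also note that iterating part (2) together with part (1) gives, by induction, that any finite union of $G$-homological $Z_n$-sets is again a $G$-homological $Z_n$-set.
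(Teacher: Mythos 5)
Your proposal is correct and takes essentially the same route as the paper: part (1) is exactly the paper's argument via the long exact sequence of the triple $(U,\,U\setminus F,\,U\setminus A)$ together with the identity $(U\setminus F)\setminus A = U\setminus A$, and part (2) matches the paper's use of the triple $(U,\,U\setminus A,\,U\setminus(A\cup B))$ up to the immaterial swap of the roles of $A$ and $B$. All the key points you flag (openness of the intermediate sets, the set-theoretic identities, and the index bookkeeping) are handled correctly, so there is nothing to fix.
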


\begin{proof} 1. We should check that $H_k(U,U\setminus F;G)=0$ for all open sets $U\subset X$ and all $k<n+1$. The $G$-homology $Z_n$-set property of $A$ yields $H_k(U,U\setminus A;G)=0$.
Since $(U\setminus F,U\setminus A)=(U\setminus F,(U\setminus F)\setminus A)$, we get also  $H_{k-1}(U\setminus F,U\setminus A;G)=0$. Writing the exact sequence
of the triple $(U,U\setminus F,U\setminus A)$ gives us $H_k(U,U\setminus F;G)=0$.
\smallskip

2. Given any $k<n+1$ and any open set $U\subset X$ consider the exact sequence of the triple \mbox{$(U,U\setminus A,U\setminus(A\cup B))$}:
{\small $$
0=H_k(U\setminus A, U\setminus (A\cup B);G)\to H_k(U,U\setminus(A\cup B);G)\to H_k(U,U\setminus A;G)=0$$}
and conclude that $H_k(U,U\setminus(A\cup B);G)=0$.
\end{proof}

Next, we show that in the definition of a $G$-homological $Z_n$-set we can require that $U$ runs over some base for $X$.

\begin{proposition}\label{mult} Let $\mathcal B$ be a base of the topology
of a topological space $X$. A closed set $A\subset X$ is a $G$-homological $Z_n$-set
in $X$ if and only if $H_k(U,U\setminus A;G)=0$ for all $k<n+1$
and $U\in\mathcal B$.
\end{proposition}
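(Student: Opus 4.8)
The nontrivial direction is the ``if'' part; the ``only if'' part is immediate, since every $B\in\mathcal B$ is open. Call an open set $V$ \emph{$n$-good} if $H_k(V,V\setminus A;G)=0$ for all $k<n+1$. The hypothesis says every $B\in\mathcal B$ is $n$-good, and we must show every open $U\subset X$ is $n$-good. The plan is to induct on $n$, the crucial point being that the inductive hypothesis will supply exactly the lower-dimensional vanishing needed to control the intersection terms in a Mayer--Vietoris argument.

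I would first record two reductions. First, singular homology commutes with direct limits: the family $\mathcal W$ of finite unions $W=B_1\cup\dots\cup B_m$ of base sets contained in $U$ is directed by inclusion with $\bigcup\mathcal W=U$, and since every singular chain has compact support lying in some such $W$, one gets $H_*(U;G)=\varinjlim_{W\in\mathcal W}H_*(W;G)$ and likewise $H_*(U\setminus A;G)=\varinjlim_{W}H_*(W\setminus A;G)$ (here $A$ closed makes each $W\setminus A$ open with $\bigcup_W(W\setminus A)=U\setminus A$). As $\varinjlim$ is exact it commutes with the long exact sequences of the pairs, so $H_k(U,U\setminus A;G)=\varinjlim_{W}H_k(W,W\setminus A;G)$; thus it suffices to prove every finite union of base sets is $n$-good. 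Second, for the gluing step I would use the relative Mayer--Vietoris sequence of the open cover $\{V_1,V_2\}$ of $W=V_1\cup V_2$, relating $H_*(V_1\cap V_2,(V_1\cap V_2)\setminus A;G)$, the sum $H_*(V_1,V_1\setminus A;G)\oplus H_*(V_2,V_2\setminus A;G)$ and $H_*(W,W\setminus A;G)$; this is legitimate because $\{V_1\setminus A,V_2\setminus A\}$ covers $W\setminus A$.

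Now I induct on $n$. For $n=0$, ``$V$ is $0$-good'' means every path component of $V$ meets $V\setminus A$; given a path component $P$ of $U$ and $p\in P$, choose $B\in\mathcal B$ with $p\in B\subseteq U$ and let the path component of $B$ through $p$ meet $B\setminus A\subseteq U\setminus A$ in a point $q$, so $q\in P\cap(U\setminus A)$ and $U$ is $0$-good. For the step, assume the result at level $n-1$. Since every $B\in\mathcal B$ is $n$-good it is $(n-1)$-good, so by the inductive hypothesis \emph{every} open subset of $X$ is $(n-1)$-good; in particular $H_{n-1}(V,V\setminus A;G)=0$ for every open $V$. It remains to kill $H_n$, and by the first reduction only for $W=B_1\cup\dots\cup B_m$, which I treat by a nested induction on $m$: the case $m=1$ is the hypothesis, and for the step I set $V_1=B_1\cup\dots\cup B_{m-1}$ and $V_2=B_m$. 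In the relative Mayer--Vietoris sequence the term $H_n(V_1,V_1\setminus A;G)\oplus H_n(V_2,V_2\setminus A;G)$ preceding $H_n(W,W\setminus A;G)$ vanishes (by the inner hypothesis and the hypothesis on $B_m$), while the term $H_{n-1}(V_1\cap V_2,(V_1\cap V_2)\setminus A;G)$ following it vanishes because $V_1\cap V_2$ is open, hence $(n-1)$-good. Exactness then forces $H_n(W,W\setminus A;G)=0$, completing both inductions. The case $n=\infty$ follows at once, as $\infty$-goodness is $n$-goodness for every finite $n$.

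The main obstacle is exactly the intersection term $V_1\cap V_2$ in Mayer--Vietoris: a plain induction on the number of base sets fails, since $V_1\cap V_2$ need not be a finite union of base sets (intersections of base sets are not base sets), and one cannot deduce $n$-goodness of the intersection from $n$-goodness on the base. The device that removes this obstacle is the outer induction on $n$, arranged so that at level $n$ one already knows that \emph{all} open sets are $(n-1)$-good; this is precisely the vanishing of the $H_{n-1}$ intersection term needed to push the Mayer--Vietoris sequence through.
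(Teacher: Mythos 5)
Your proof is correct and follows essentially the same route as the paper: the paper also inducts on the homological degree $k$ (via the family $\U_k$ of open sets with $H_k(U,U\setminus A;G)=0$), kills the intersection term in the relative Mayer--Vietoris sequence using the lower-degree inductive hypothesis that \emph{all} open sets are already good, and passes from finite unions of base sets to arbitrary open sets by compact supports of singular homology. Your only variations are organizational: you make the compact-support step explicit as a direct-limit reduction and run a nested induction on the number of base sets, whereas the paper phrases this as closure of $\U_k$ under finite unions.
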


\begin{proof} The ``only if'' part of the theorem is trivial. To prove
the ``if'' part, assume that $H_k(U,U\setminus A;G)=0$ for all
$k<n+1$ and all sets $U\in\mathcal B$. Let $\U_k$ be the family of
all open subsets $U\subset X$ such that $H_k(U,U\setminus A;G)=0$.
By induction on $k<n+1$ we shall show that $\U_k$ consists of all
open sets in $X$.

First we verify the case $k=0$. Take any non-empty open set
$U\subset X$. The equality  $H_0(U,U\setminus A;G)=0$ will follow
as soon as we show that each path-connected component $C$ of $U$
intersects the set $U\setminus A$. Fix any point $c\in C$ and find
a neighborhood $V\in\mathcal B$ of $x$ lying in $U$. The equality
$H_0(V,V\setminus A;G)=0$ implies that $V\setminus A$ meets each
path-connected component of the set $V$, in particular the
path-connected component $C'$ of the point $c$ in $V$. Since
$C'\subset C$, we conclude that $U\setminus A\supset V\setminus A$
meets the component $C$, which completes the proof of the case
$k=0$.

Assuming that for some positive $k<n+1$ the family $\U_{k-1}$
consists of all open subsets of $X$, we first show that the family
$\U_k$ is closed under finite unions. Indeed, given any two sets
$U,V\in\U_k$ we can write down the piece of the relative
Mayer-Vietoris exact sequence $$
H_k(U,U\setminus A;G)\oplus H_k(V,V\setminus A;G)\to H_k(U\cup V,(U\cup V)\setminus A;G)\to H_{k-1}(U\cap V,U\cap V\setminus A;G)
$$
and conclude that $U\cup V\in\U_k$ because $U\cap V\in\U_{k-1}$.
Since $\U_k$ contains the base $\mathcal B$, it contains all possible finite unions of sets of the base. Because the singular homology theory has compact support, $\U$ contains all possible unions of sets from the base $\mathcal B$ and consequently, $\U_k$ consists of all open sets in $X$.
\end{proof}

\begin{corollary}\label{local} A subset $A$ of a topological space $X$ is a $G$-homolo\-gical $Z_n$-set in $X$ if and only if there is an open cover $\U$ of $X$ such that for every $U\in\U$ the intersection $U\cap A$ is a $G$-homological $Z_n$-set in $U$.
\end{corollary}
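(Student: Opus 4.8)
The plan is to reduce both implications to the local-to-global principle supplied by Proposition~\ref{mult}, which lets us verify the vanishing of the relative homology groups only on a base for the topology of $X$, rather than on all open sets.

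The ``only if'' direction is essentially immediate: if $A$ is a $G$-homological $Z_n$-set in $X$, I would take the trivial cover $\U=\{X\}$, whose unique member $X$ satisfies $X\cap A=A$, a $G$-homological $Z_n$-set in $X$ by hypothesis. (In fact the computation below shows every open cover works.) For the ``if'' direction, suppose $\U$ is an open cover with $U\cap A$ a $G$-homological $Z_n$-set in $U$ for each $U\in\U$. First I would record that $A$ is closed in $X$: since each $U\cap A$ is closed in $U$, any point $x\notin A$ lies in some $U\in\U$, and then $U\setminus A=U\setminus(U\cap A)$ is an open neighborhood of $x$ missing $A$. Next I would introduce the collection $\mathcal B$ of all open sets $W\subset X$ that are contained in some member of $\U$. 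This is a base: given $x\in X$ and an open neighborhood $O$ of $x$, choosing $U\in\U$ with $x\in U$ yields $O\cap U\in\mathcal B$ with $x\in O\cap U\subset O$.

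The heart of the argument is then to verify the hypothesis of Proposition~\ref{mult} on $\mathcal B$. Fixing $W\in\mathcal B$ and $k<n+1$, I would choose $U\in\U$ with $W\subset U$; since $W\subset U$ we have the identity $W\setminus A=W\setminus(U\cap A)$, and $W$ is open in $U$. As $U\cap A$ is a $G$-homological $Z_n$-set in $U$, the group $H_k(W,W\setminus(U\cap A);G)$ vanishes, whence $H_k(W,W\setminus A;G)=0$. Since this holds for all $W\in\mathcal B$ and all $k<n+1$, Proposition~\ref{mult} yields that $A$ is a $G$-homological $Z_n$-set in $X$.

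The only delicate points — and hence the (mild) main obstacle — are confirming that $\mathcal B$ is a genuine base and applying the restriction identity $W\setminus A=W\setminus(U\cap A)$ with $W$ open in $U$ rather than merely in $X$, so that the hypothesis on $U\cap A$ is invoked correctly. Once these are settled, Proposition~\ref{mult} carries out all the remaining homological bookkeeping, and no further computation is required.
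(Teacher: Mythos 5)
Your proof is correct and follows exactly the route the paper intends: Corollary~\ref{local} is stated without proof as an immediate consequence of Proposition~\ref{mult}, applied to the base of all open sets contained in some member of the cover $\U$, which is precisely your argument (including the observations that $A$ is closed and that $W\setminus A=W\setminus(U\cap A)$ for $W\subset U$). No gaps.
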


\section{Detecting $G$-homological $Z_n$-sets with help of partitions}\label{irred}

In this section we apply the technique of irreducible homological barriers to detect $G$-homological $Z_n$-sets with help of their partitions.

A closed subset $B$ of a topological space $X$ is called an {\em irreducible barrier} for a non-zero homology element $\alpha\in H_n(X,X\setminus B;G)$ if for every closed subset $A\subset B$ with $A\ne B$ the image $i^B_A(\alpha)$ under the inclusion homomorphism $i^B_A:H_n(X,X\setminus B;G)\to H_n(X,X\setminus A;G)$ is trivial.  We shall say that a subset $A$ of a topological space $X$ is {\em separated} by a subset $B\subset X$ if the complement $A\setminus B$ is disconnected. In Dimension  Theory closed separating sets also are referred to as {\em partitions}, see \cite[1.1.3]{En}.

We shall need two elementary properties of irreducible barriers, which were also exploited in  \cite{Cat} and \cite{BC01}.

\begin{lemma}\label{irreducible} Let $X$ be a topological space and $G$ be a non-trivial abelian group.
\begin{enumerate}
\item Each closed subset $A$ of $X$ with $H_n(X,X\setminus A;G)\ne 0$ for some $n\ge 0$ contains an irreducible barrier $B\subset A$ for some element $\alpha\in H_n(X,X\setminus B;G)$.
\item If $A$ is an irreducible barrier for some element $\alpha\in H_n(X,X\setminus A;G)$, then $H_{n+1}(X,X\setminus B;G)\ne0$ for any closed subset $B\subset A$ separating $A$.
\end{enumerate}
\end{lemma}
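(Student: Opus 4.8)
The plan is to prove both items by exploiting the long exact sequences attached to the relevant pairs and triples, together with the minimality built into the notion of irreducible barrier.

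For item (1), I would proceed by a maximality/minimality argument. The hypothesis says $H_n(X,X\setminus A;G)\neq 0$, so fix a non-zero element $\alpha_0\in H_n(X,X\setminus A;G)$. I would consider the family $\mathcal F$ of all closed subsets $B\subset A$ such that the image $i^A_B(\alpha_0)$ of $\alpha_0$ under the inclusion homomorphism $H_n(X,X\setminus A;G)\to H_n(X,X\setminus B;G)$ is non-zero. This family is non-empty since $A\in\mathcal F$ (the map $i^A_A$ being the identity). Because singular homology has compact supports, the element $i^A_B(\alpha_0)$ is represented by a relative cycle whose chains have compact carrier, so the condition $i^A_B(\alpha_0)\neq 0$ is controlled by a compact ``witness'' set, and one checks that the intersection $B_*=\bigcap_{B\in\mathcal C}B$ of any chain $\mathcal C$ in $\mathcal F$ (ordered by reverse inclusion) still lies in $\mathcal F$; here the key point is that $X\setminus B_*=\bigcup_{B\in\mathcal C}(X\setminus B)$ is a directed union of open sets, so by continuity of singular homology with respect to directed unions the image of $\alpha_0$ in $H_n(X,X\setminus B_*;G)$ is non-zero provided it is non-zero in each $H_n(X,X\setminus B;G)$. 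Zorn's Lemma then yields a minimal element $B\in\mathcal F$. Setting $\alpha=i^A_B(\alpha_0)\neq 0$, minimality gives precisely that for every proper closed subset $A'\subsetneq B$ the image $i^B_{A'}(\alpha)=i^A_{A'}(\alpha_0)$ vanishes (otherwise $A'\in\mathcal F$ would contradict minimality of $B$), which is exactly the defining property of an irreducible barrier.

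For item (2), let $B\subset A$ be a closed subset separating $A$, so that $A\setminus B$ is disconnected, say $A\setminus B=P\sqcup Q$ with $P,Q$ non-empty relatively clopen in $A\setminus B$. Writing $A_P=B\cup P$ and $A_Q=B\cup Q$, these are proper closed subsets of $A$ with $A_P\cup A_Q=A$ and $A_P\cap A_Q=B$. I would then exploit the relative Mayer--Vietoris sequence for the excisive triad associated with the complements $X\setminus A_P$ and $X\setminus A_Q$, whose union is $X\setminus B$ and whose intersection is $X\setminus A$, giving the piece
$$
H_{n+1}(X,X\setminus B;G)\to H_n(X,X\setminus A;G)\xrightarrow{\psi} H_n(X,X\setminus A_P;G)\oplus H_n(X,X\setminus A_Q;G).
$$
Since $A_P$ and $A_Q$ are proper closed subsets of the irreducible barrier $A$, the definition of irreducibility forces $i^A_{A_P}(\alpha)=0$ and $i^A_{A_Q}(\alpha)=0$, so $\psi(\alpha)=0$. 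By exactness, $\alpha$ lifts to some $\beta\in H_{n+1}(X,X\setminus B;G)$; and because $\alpha\neq 0$, the connecting homomorphism cannot be zero on $\beta$, whence $\beta\neq 0$ and $H_{n+1}(X,X\setminus B;G)\neq 0$.

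The main obstacle I anticipate is justifying the two technical inputs for item (1): first, that the relevant singular-homology functor is continuous along the directed system of complements $X\setminus B$ (which is where the compact-support property of singular homology is genuinely used), and second, verifying that the Mayer--Vietoris sequence in item (2) applies to the triad $\{X\setminus A_P,\,X\setminus A_Q\}$. The latter requires that this pair be excisive for singular theory; since $(X\setminus A_P)\cup(X\setminus A_Q)=X\setminus B$ and both sets are open in $X\setminus B$, excision is automatic, so the sequence is valid without further hypotheses. The crux of the whole argument is simply the interplay between the minimality defining an irreducible barrier and the exactness that converts the vanishing of $\psi(\alpha)$ into a non-trivial class one dimension up.
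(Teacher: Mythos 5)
Your proof is correct and takes essentially the same route as the paper: item (1) via Zorn's Lemma applied to the family of closed sets on which the image of $\alpha_0$ survives, with the chain condition verified by the compact-supports (direct-limit) continuity of singular homology along $X\setminus B_*=\bigcup_{B\in\mathcal C}(X\setminus B)$, and item (2) via the relative Mayer--Vietoris sequence of the open triad $\{X\setminus A_P,\,X\setminus A_Q\}$ (the paper's sets $C,D$), where irreducibility kills the image of $\alpha$ and exactness produces the non-zero lift $\beta\in H_{n+1}(X,X\setminus B;G)$. Your excisiveness remark and the lifting step match the paper's argument, so there is nothing to correct.
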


\begin{proof} 1. Given a closed subset $A\subset X$ and a non-zero element $\alpha\in H_n(X,X\setminus A;G)\ne0$, consider the family $\mathcal B$ of closed subsets of $A$ such that for every $B\in\mathcal B$ the image $i^A_B(\alpha)$ under the inclusion homomorphism $i^A_B:H_n(X,X\setminus A;G)\to H_n(X,X\setminus B;G)$ is not trivial. We claim that $\mathcal B$ contains a minimal element. This will follow from Zorn Lemma as soon as we prove that the intersection $\cap\mathcal C$ of any linearly ordered subfamily $\C\subset\mathcal B$ belongs to $\mathcal B$. Since singular homology has compact support, the homology group $H_n(X,X\setminus \cap \C;G)$ is the direct limit of the groups $H_n(X,X\setminus C;G)$, $C\in\C$, see \cite[IV.\S8.13]{Bre2}. Since all the elements $i^A_C(\alpha)$, $C\in\C$, are not trivial, so is the element $i^A_{\cap C}(\alpha)$, which means that $\cap\C\in\mathcal B$. Now, the Zorn Lemma yields a minimal element $B$ in $\mathcal B$. Let $\beta=i^A_B(\alpha)\ne 0$. The minimality of $B$ implies that for any closed subset $C\subset B$ with $C\ne B$ we get $i^A_C(\alpha)=i^B_C(\beta)=0$, which means that $B$ is an irreducible barrier for $\beta$.
\smallskip

2. Assume that $A$ is an irreducible barrier for some element $\alpha\in H_n(X,X\setminus A;G)$ and let $B$ be a closed subset separating $A$. Write $A\setminus B=U\cup V$ as the union of two disjoint non-empty open sets $U,V\subset A$. Let $C=V\cup B=A\setminus U$ and $D=U\cup B=A\setminus V$. The irreducibility of $A$ for $\alpha$ yields $i^A_{C}(\alpha)=i^A_{D}(\alpha)=0$. Now writing a Mayer-Vietoris exact sequence for
the pair $(X,X\setminus B)=(X,X\setminus C\cup X\setminus D)$, we get
{\footnotesize $$
H_{n+1}(X,X\setminus B;G)\overset{\partial}{\longrightarrow} H_n(X,X\setminus A;G)\overset{f}{\longrightarrow} H_n(X,X\setminus C;G)\oplus H_n(X,X\setminus D;G).$$}
Since $f(\alpha)=(i^A_C(\alpha),-i^A_D(\alpha))=(0,0)$, $0\ne\alpha=\partial(\beta)$ for some nontrivial element $\beta\in H_{n+1}(X,X\setminus B;G)$.
\end{proof}

Irreducible barriers help us to prove the following theorem detecting $G$-homological $Z_n$-sets.

 \begin{theorem}\label{separate} A closed subset $A$ of a topological space $X$ is a $G$-homological $Z_n$-set in $X$ if each point of $A$ is a $G$-homological $Z_n$-point in $X$ and each closed subset $B$ of $A$ with $|B|>1$ can be separated by a $G$-homological $Z_{n+1}$-set.
\end{theorem}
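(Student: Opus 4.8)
The plan is to argue by contradiction using the machinery of irreducible barriers from Lemma~\ref{irreducible}. Suppose $A$ is \emph{not} a $G$-homological $Z_n$-set in $X$. Then there is an open set $U\subset X$ and an integer $k\le n$ with $H_k(U,U\setminus A;G)\ne0$. Since $A\cap U$ is closed in $U$ and $U\setminus A=U\setminus(A\cap U)$, I would regard $U$ as the ambient space and apply Lemma~\ref{irreducible}(1): there is a subset $B\subset A\cap U$, closed in $U$, that is an irreducible barrier for some nonzero $\alpha\in H_k(U,U\setminus B;G)$. As $H_k(U,U)=0$, the barrier $B$ is nonempty, so either $B$ is a single point or $|B|>1$.

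First I would dispose of the case $|B|=1$. Writing $B=\{b\}$ with $b\in A$, the class $\alpha$ is a nonzero element of $H_k(U,U\setminus\{b\};G)$ with $k\le n$; but $b$ is a $G$-homological $Z_n$-point of $X$, and since $U$ is open in $X$ this forces $H_k(U,U\setminus\{b\};G)=0$, a contradiction. Thus it remains to treat $|B|>1$, where the separation hypothesis must enter. The hard part is that $B$ is only closed in $U$, whereas the separation hypothesis speaks about subsets of $A$ closed in $X$. To bridge this I would pass to the closure $\bar B$ of $B$ in $X$: since $A$ is closed, $\bar B\subset A$, clearly $\bar B$ is closed in $X$ with $|\bar B|\ge|B|>1$, and $B=\bar B\cap U$ because $B$ is closed in $U$. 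The hypothesis then yields a $G$-homological $Z_{n+1}$-set $S$ in $X$ separating $\bar B$, i.e. with $\bar B\setminus S$ disconnected.

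The delicate step is to transfer this disconnection from $\bar B$ down to $B$. Writing $\bar B\setminus S=P\sqcup Q$ with $P,Q$ nonempty and disjoint and open in $\bar B$ (here one uses that $S$ is closed, so $\bar B\setminus S$ is open in $\bar B$), the density of $B$ in $\bar B$ guarantees $B\cap P\ne\emptyset\ne B\cap Q$. Since $B\subset U$, we have $B\setminus S=B\cap(\bar B\setminus S)=(B\cap P)\sqcup(B\cap Q)$, a genuine disconnection of $B$. Hence, setting $D:=B\cap S$, which is closed in $U$, contained in $B$, and satisfies $B\setminus D=B\setminus S$, the set $D$ separates $B$.

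Finally I would play the two consequences of $D$ against each other. Applying Lemma~\ref{irreducible}(2) in the ambient space $U$ to the irreducible barrier $B$ and its separating set $D$ gives $H_{k+1}(U,U\setminus D;G)\ne0$. On the other hand, $S\cap U$ is a $G$-homological $Z_{n+1}$-set in $U$ (the $Z_{n+1}$-property of $S$ localizes, since every open $W\subset U$ is open in $X$ and $W\setminus(S\cap U)=W\setminus S$), and $D$ is a closed-in-$U$ subset of $S\cap U$; so by Proposition~\ref{subset}(1) applied with ambient space $U$, the set $D$ is itself a $G$-homological $Z_{n+1}$-set in $U$, whence $H_{k+1}(U,U\setminus D;G)=0$ because $k+1\le n+1$. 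This contradiction completes the proof. Notably no induction on $k$ is needed: each candidate bad dimension $k\le n$ is refuted on its own, the $Z_n$-point hypothesis handling singleton barriers and the separation hypothesis handling larger ones.
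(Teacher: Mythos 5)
Your proof is correct and follows essentially the same route as the paper: argue by contradiction, produce an irreducible barrier $B\subset A\cap U$ via Lemma~\ref{irreducible}(1), rule out $|B|=1$ by the $Z_n$-point hypothesis, separate $\overline{B}$ by a $G$-homological $Z_{n+1}$-set, and derive a contradiction from Lemma~\ref{irreducible}(2). In fact you are slightly more careful than the paper's terse write-up at two points it glosses over: you verify via density of $B$ in $\overline{B}$ that both pieces of the separation survive intersection with $U$, and you replace the separator $S$ by $D=B\cap S\subset B$ (invoking Proposition~\ref{subset}(1) to keep the $Z_{n+1}$-property) so that Lemma~\ref{irreducible}(2), which requires the separating set to lie inside the barrier, genuinely applies.
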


\begin{proof} Assume that each point of a closed subset $A\subset X$ is a
$G$-homo\-logical $Z_{n}$-point and each closed subset $B\subset A$ with $|B|>1$ can be separated by a $G$-homological $Z_{n+1}$-set $B\subset A$ in $X$. Assuming that $A$ fails to be a $G$-homological $Z_n$-set, find an open subset $U\subset X$ and $k<n+1$ with $H_k(U,U\setminus A;G)\ne0$. By Lemma~\ref{irreducible}(1), the set $A\cap U$ contains an irreducible barrier $B$ for a non-zero element $\alpha\in H_k(U,U\setminus B;G)$. The set $B$ must contain more than one point, since singletons are $G$-homological $Z_n$-sets in $X$. By our assumption the closure $\overline{B}$ in $X$ can be separated by a $G$-homological $Z_{n+1}$-set $C$. Then $C\cap U$ separates $\overline{B}\cap U=B$ and hence $H_{k+1}(U,U\setminus C;G)\ne0$ by Lemma~\ref{irreducible}(2). But this is not possible because $C$ is a $G$-homological $Z_{n+1}$-set in $X$.
\end{proof}

Theorem~\ref{separate} will be applied to show that a subset
$A\subset X$ is a $G$-homological $Z_n$-set in $X$ if each point $a\in A$ is a $Z_{n+d}$-point in $X$ where $d=\trt(A)$ is the separation dimension of $A$.
The separation dimension $\mathsf{t}(\cdot)$ was introduced by G.~Steinke \cite{Sk} and later was extended to the transfinite separation dimension $\trt(\cdot)$ by Arenas, Chatyrko and Puertas \cite{ACP} as follows: given a topological space $X$  we write
\begin{itemize}
\item $\trt(X)=-1$ iff $X=\emptyset$;
\item $\trt(X)\le\alpha$ for an ordinal $\alpha$ if any closed subset $B\subset X$ with $|B|\ge2$ can be separated by a closed subset $P\subset B$  with $\trt(P)<\alpha$.
\end{itemize}
A space $X$ is defined to be {\em $\trt$-dimensional} if $\trt(X)\le\alpha$ for some ordinal $\alpha$. In this case the ordinal
$$\trt(X)=\min\{\alpha:\trt(X)\le\alpha\}$$ is called the (transfinite) separation dimension of $X$. If $X$ is not $\trt$-dimensional, then we write $\trt(X)=\infty$ and assume that $\alpha<\infty$ for all ordinals $\alpha$.

By transfinite induction one can show that $\trt(X)\le\mathrm{trind}(X)$ where $\mathrm{trind}(X)$ is the transfinite extension of the small inductive dimension $\mathrm{ind}(X)$, see \cite[2.9]{ACP}. This implies that each countable-dimensional completely-metrizable space is $\trt$-dimensional (because $\mathrm{trind}(X)<\omega_1$ \cite[7.1.9]{En}). On the other hand, each $\trt$-dimensional compact space is a $C$-space, see \cite[4.7]{ACP}.
Observe that $\trt(X)\le0$ if and only if $X$ is hereditarily disconnected space.
The strongly infinite-dimensional totally disconnected Polish space $X$ constructed in \cite[6.2.4]{En} has $\trt(X)=0$ and $\trind(X)=\infty$ while a compatification $c(X)$ of $X$ with strongly countable-dimensional remainder (the famous Pol's example) has $\trt(X)=\w$ and $\mathrm{trind}(X)=\infty$. Thus (even on the compact level) the gap between $\trt(X)$ and $\mathrm{trind}(X)$ can be huge. However, for finite-dimensional metrizable compacta $X$, the separation dimension $\trt(X)$ coincides with the usual dimension $\dim(X)$ (and hence with $\mathrm{trind}(X)$), see \cite{Sk}.

\begin{theorem}\label{indP1} A closed subspace $A$ of a space $X$ with $m=\trt(A)<\w$ is a $G$-homological $Z_n$-set in $X$ provided each point $a\in A$ is a $G$-homological $Z_{n+m}$-point in $X$.
\end{theorem}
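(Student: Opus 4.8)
The plan is to prove Theorem~\ref{indP1} by induction on $m=\trt(A)$, using Theorem~\ref{separate} as the inductive engine. The base case $m=0$ is immediate: a space $A$ with $\trt(A)=0$ is hereditarily disconnected, so every closed subset $B\subset A$ with $|B|>1$ is already disconnected, hence separated by the empty set $\emptyset$, which is trivially a $G$-homological $Z_{n+1}$-set in $X$. Since each point of $A$ is assumed to be a $G$-homological $Z_n$-point, Theorem~\ref{separate} immediately gives that $A$ is a $G$-homological $Z_n$-set in $X$.

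For the inductive step, suppose the statement holds for all closed subspaces with separation dimension less than $m$, and let $A$ be closed with $\trt(A)=m$ and every point of $A$ a $G$-homological $Z_{n+m}$-point. To apply Theorem~\ref{separate}, I must verify its two hypotheses. The first hypothesis---that each point of $A$ is a $G$-homological $Z_n$-point---follows at once, since a $G$-homological $Z_{n+m}$-point is automatically a $G$-homological $Z_n$-point (vanishing of $H_k(U,U\setminus\{a\};G)$ for $k<n+m+1$ implies it for $k<n+1$). The second hypothesis requires that each closed $B\subset A$ with $|B|>1$ be separated by a $G$-homological $Z_{n+1}$-set. Here is where the definition of $\trt$ enters: since $\trt(A)=m$, any such $B$ can be separated by a closed partition $P\subset B\subset A$ with $\trt(P)<m$, i.e.\ $\trt(P)\le m-1$.

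The key observation is that this partition $P$ satisfies the inductive hypothesis at level $n+1$. Indeed, $P$ is a closed subspace of $X$ with $\trt(P)\le m-1$, and every point of $P$ is a point of $A$, hence a $G$-homological $Z_{n+m}$-point, which is exactly a $G$-homological $Z_{(n+1)+(m-1)}$-point. Applying the inductive hypothesis with the shifted parameters $n'=n+1$ and $m'=\trt(P)\le m-1$ (noting monotonicity: if each point is a $Z_{(n+1)+(m-1)}$-point it is a fortiori a $Z_{(n+1)+m'}$-point), we conclude that $P$ is a $G$-homological $Z_{n+1}$-set in $X$. Thus every closed $B\subset A$ with $|B|>1$ is separated by the $G$-homological $Z_{n+1}$-set $P$, verifying the second hypothesis of Theorem~\ref{separate}. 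That theorem then yields that $A$ is a $G$-homological $Z_n$-set in $X$, completing the induction.

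The step I expect to be the main point requiring care is the bookkeeping of the two shifting indices $n$ and $m$ in the induction, since the inductive hypothesis must be invoked with $n$ raised by one and $m$ lowered by at least one, and one must check that the $Z_{n+m}$-point hypothesis on $A$ transfers correctly to the $Z_{(n+1)+\trt(P)}$-point hypothesis on the partition $P$. The monotonicity of the $G$-homological $Z_j$-point property in $j$ (a consequence of Proposition~\ref{subset}, since a closed subset of a $Z_j$-set is a $Z_j$-set, and more directly from the definition) is what makes these index manipulations legitimate, and it is worth stating explicitly to keep the induction clean.
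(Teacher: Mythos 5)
Your proof is correct and takes essentially the same route as the paper: the paper likewise inducts on $m=\trt(A)$, extracts from any closed $B\subset A$ a partition of separation dimension $<m$, and applies the inductive hypothesis with the indices shifted to $(n+1,\trt(P))$, exactly your bookkeeping. The only organizational difference is that the paper re-runs the irreducible-barrier argument (Lemma~\ref{irreducible}) directly inside the inductive step rather than citing Theorem~\ref{separate} as a black box, so your factorization through Theorem~\ref{separate} is a slightly cleaner packaging of the identical argument (and your treatment of the empty partition, trivially a $G$-homological $Z_{n+1}$-set, corresponds to the paper's base case $m=-1$).
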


\begin{proof} The proof is by induction of the number $m=\trt(A)$.
The assertion is trivial if $m=-1$ (which means that $A$ is empty). Assume that for some number $m$ the theorem has been proved for all sets $A$ with $\trt(A)<m$. Take a closed subset $A\subset X$ with $\trt(A)=m$ and all points $a\in A$ being $G$-homological $Z_{n+m}$-points in $X$. Assuming that $A$ fails to be a $Z_n$-set in $X$, find an open set $U\subset X$ and a number $k<n+1$ with $H_k(U,U\setminus A;G)\ne0$. By Lemma~\ref{irreducible}(1), the set $A\cap U$ contains an irreducible barrier $B\subset A\cap U$ for some non-zero element $\beta\in H_k(U,U\setminus B;G)$. Since singletons are $G$-homological $Z_n$-sets in $X$, $|B|>1$. Since $\trt(A)\le m$, there is a partition $C\subset B$ of $B$ with $d=\trt(C)<m$. Then all points of the set $C$ are $G$-homological $Z_{n+d+1}$-points. Now the inductive assumption guarantees that $C$ is a $G$-homological $Z_{n+1}$-set in $U$, which contradicts  Lemma~\ref{irreducible}(2) because $C$ separates the irreducible barrier $B$.
\end{proof}

By the same method one can prove an infinite version of this theorem.

\begin{theorem}\label{trans} A closed $\trt$-dimensional subspace $A$ of a space $X$ is  a $G$-homological $Z_\infty$-set in $X$ if and only if each point $a\in A$ is a $G$-homological $Z_\infty$-point in $X$.
\end{theorem}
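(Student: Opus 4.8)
The plan is to derive both implications from results already established, concentrating the real work in a transfinite induction that reduces the infinite statement to the single-index Theorem~\ref{separate}.

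The ``only if'' direction is immediate. If $A$ is a $G$-homological $Z_\infty$-set, then for each $a\in A$ the singleton $\{a\}$ is a closed subset of $A$, hence a $G$-homological $Z_\infty$-set by Proposition~\ref{subset}(1); thus every $a\in A$ is a $G$-homological $Z_\infty$-point.

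For the ``if'' direction I would fix the ambient space $X$ and prove, by transfinite induction on the ordinal $\alpha$, the statement that \emph{every closed subset $F\subset X$ with $\trt(F)\le\alpha$ all of whose points are $G$-homological $Z_\infty$-points of $X$ is a $G$-homological $Z_\infty$-set in $X$}. The base case $\alpha=-1$ is trivial, since then $F=\emptyset$. For the inductive step, assume the statement for all $\beta<\alpha$ and take $A$ with $\trt(A)\le\alpha$ whose points are all $Z_\infty$-points. Since a set is a $G$-homological $Z_\infty$-set exactly when it is a $G$-homological $Z_n$-set for every finite $n$, I fix an arbitrary finite $n$ and verify the two hypotheses of Theorem~\ref{separate}. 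The first is clear: every point of $A$ is a $Z_\infty$-point, hence a $Z_n$-point. For the second, let $D\subset A$ be closed with $|D|>1$; by the very definition of $\trt(A)\le\alpha$, the set $D$ can be separated by a closed subset $P\subset D$ with $\trt(P)<\alpha$. Because $P\subset D\subset A$ is closed in $X$ and all of its points (being points of $A$) are $Z_\infty$-points, the inductive hypothesis applies to $P$ and shows that $P$ is a $G$-homological $Z_\infty$-set, in particular a $G$-homological $Z_{n+1}$-set separating $D$. Hence Theorem~\ref{separate} yields that $A$ is a $G$-homological $Z_n$-set, and letting $n$ range over all finite numbers shows $A$ is a $G$-homological $Z_\infty$-set, completing the induction.

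The only delicate points are organizational rather than mathematical. One must phrase the induction so that the hypothesis is applied to the partition $P$ (a closed subset of the \emph{fixed} space $X$ of strictly smaller separation dimension) and not to $A$ itself; no monotonicity of $\trt$ under passing to subspaces is needed, because the definition of $\trt(A)\le\alpha$ already quantifies over closed subsets of $A$. It is also worth recording explicitly that a $Z_\infty$-set is automatically a $Z_{n+1}$-set, which is precisely the index demanded by Theorem~\ref{separate}. I do not expect a genuine obstacle here: Theorem~\ref{separate} already packages the irreducible-barrier argument of Lemma~\ref{irreducible} together with the localization from an open set $U$ to closures in $X$, so the transfinite induction merely has to feed it partitions of strictly lower $\trt$-dimension supplied by the separation dimension of $A$.
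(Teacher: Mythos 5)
Your proof is correct and takes essentially the approach the paper intends: the paper proves Theorem~\ref{trans} ``by the same method'' as Theorem~\ref{indP1}, namely a (now transfinite) induction on $\trt(A)$ in which the partition of strictly smaller separation dimension is shown inductively to be a $G$-homological $Z_{n+1}$-set and the irreducible-barrier machinery of Lemma~\ref{irreducible} closes the argument --- your version simply packages that barrier step as a black-box appeal to Theorem~\ref{separate}, which is the same mechanism. Both directions, including the easy ``only if'' via Proposition~\ref{subset}(1), are handled correctly, and your care in applying the inductive hypothesis to the partition $P$ (closed in the fixed $X$, with $\trt(P)<\alpha$) rather than to $A$ itself is exactly the right bookkeeping.
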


\section{Bockstein Theory for $G$-homological $Z_n$-sets}\label{bock}

In this section, we study the interplay between $G$-homological $Z_n$-sets for various coefficient groups $G$.
Our principal instrument here is the Universal Coefficients Formula expressing the homology with respect to an arbitrary coefficient group via homology with respect to the group $\IZ$ of integers.
The following its form is taken from \cite[3A.4]{Hat}.

\begin{lemma}[Universal Coefficients Formula]\label{univcoef} For each pair $(X,A)$ and all $n\ge 1$ there is a natural exact sequence
$$0\to H_n(X,A)\otimes G\to H_n(X,A;G)\to H_{n-1}(X,A)*G\to 0$$and this sequence splits (though non-naturally).
\end{lemma}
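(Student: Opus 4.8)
The plan is to deduce this topological statement from the purely algebraic Universal Coefficients Theorem for chain complexes, applied to the singular chain complex of the pair. The key structural fact that makes this possible is that, for every pair $(X,A)$, the relative singular chain group $C_n(X,A)$ is free abelian: the absolute chains $C_n(X)$ are free on the set of singular $n$-simplices, and $C_n(X,A)=C_n(X)/C_n(A)$ is free on (the cosets of) those simplices whose image is not contained in $A$. Thus the whole problem reduces to the algebraic assertion: for a chain complex $C_\ast$ of free abelian groups and an abelian group $G$, there is a natural short exact sequence $0\to H_n(C_\ast)\otimes G\to H_n(C_\ast\otimes G)\to \Tor(H_{n-1}(C_\ast),G)\to 0$ that splits non-naturally.

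First I would introduce the cycles $Z_n=\ker\partial_n$ and boundaries $B_n=\mathrm{im}\,\partial_{n+1}$, which are free abelian as subgroups of the free group $C_n$. For each $n$ the short exact sequence $0\to Z_n\to C_n\xrightarrow{\partial_n} B_{n-1}\to 0$ splits, since $B_{n-1}$ is free; hence applying $-\otimes G$ keeps it exact and produces a short exact sequence of chain complexes $0\to (Z_\ast\otimes G)\to (C_\ast\otimes G)\to (B_{\ast-1}\otimes G)\to 0$, in which $Z_\ast\otimes G$ and $B_\ast\otimes G$ carry the zero differential. The associated long exact homology sequence has connecting homomorphism equal to $i_{n-1}\otimes 1$, where $i_{n-1}\colon B_{n-1}\hookrightarrow Z_{n-1}$ is the inclusion; extracting the relevant four-term piece yields $0\to \mathrm{coker}(i_n\otimes 1)\to H_n(C_\ast\otimes G)\to \ker(i_{n-1}\otimes 1)\to 0$.

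It then remains to identify the two outer terms. The inclusion $i_n$ fits into the short exact sequence $0\to B_n\xrightarrow{i_n} Z_n\to H_n(C_\ast)\to 0$, which is a free resolution of $H_n(C_\ast)$ of length one. Tensoring this resolution with $G$ and reading off homology gives precisely $\mathrm{coker}(i_n\otimes 1)=H_n(C_\ast)\otimes G$ and $\ker(i_n\otimes 1)=\Tor(H_n(C_\ast),G)$; substituting these identifications produces the desired sequence. For the (non-natural) splitting I would use the chosen isomorphism $C_n\cong Z_n\oplus B_{n-1}$ to build a retraction $C_n\otimes G\to Z_n\otimes G$, descend it to homology, and check that it splits the injection $H_n(C_\ast)\otimes G\hookrightarrow H_n(C_\ast\otimes G)$. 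Naturality of the sequence in $(X,A)$ and in $G$ follows because every map of pairs induces a chain map and the whole construction is functorial, while the splitting depends on the choice of $C_n\cong Z_n\oplus B_{n-1}$ and hence cannot be made natural.

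The main obstacle here is bookkeeping rather than conceptual difficulty: one must verify that the connecting homomorphism of the long exact sequence is literally $i_{n-1}\otimes 1$, so that its kernel and cokernel are exactly the Tor- and tensor-terms, and one must keep track of the fact that the splitting of $0\to Z_n\to C_n\to B_{n-1}\to 0$ exists only because $B_{n-1}$ is free. This freeness is precisely what fails for a general complex of abelian groups, and it is the reason the topological statement must be phrased for the singular (hence free) chain model. I would also note that the statement is given for $n\ge 1$, the degenerate case being automatic since $H_{-1}=0$ forces the Tor-term to vanish.
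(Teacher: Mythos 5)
Your proof is correct and is precisely the standard argument behind the result: the paper itself offers no proof, citing Hatcher \cite[3A.4]{Hat}, and your reduction to the algebraic universal coefficients theorem via the freeness of the relative singular chains, the split sequences $0\to Z_n\to C_n\to B_{n-1}\to 0$, and the free resolution $0\to B_n\to Z_n\to H_n\to 0$ is exactly the proof given in that reference. Nothing is missing, including the correct handling of the connecting map $i_{n-1}\otimes 1$ and the non-naturality of the splitting.
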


Here $G\otimes H$ and $G*H$ stand for the tensor and torsion products of the groups $G,H$, respectively. We need some information of those products.

At first some notation. By $\Pi$ we denote the set of prime numbers. We recall that a group $G$ is {\em divisible} if it is {\em divisible by each number $n\in\IN$}. The latter means that for any $g\in G$ there is $x\in G$ with $n\cdot x=g$. By $\Tor(G)=\{x\in G:\exists n\in\IN$ with $nx=0\}$ we denote the {\em torsion part} of $G$. It is well-known that $\Tor(G)$ is the direct sum of $p$-torsion parts
$$\pTor(G)=\{x\in G:\exists k\in\IN\;\; p^kx=0\}$$where $p$ runs over prime numbers.

The following useful result can be found in \cite{Dyer}.

\begin{lemma}\label{dyer} Let $G_0,G_1$ be non-trivial abelian groups and $p$ be a prime number.
\begin{enumerate}
\item The tensor product $G_0\otimes G_1$ contains an element of infinite order if and only if both groups $G_0$ and $G_1$ contain elements of infinite order.
\item The torsion product $G_0*G_1$ contains an element of order $p$ if and only if $G_0$ and $G_1$ contain elements of order $p$.
\item The tensor product $G_0\otimes G_1$ contains an element of order $p$ if and only if for some $i\in\{0,1\}$ either $G_i$ is not divisibe by $p$ and $\pTor(G_{1-i})$ is not divisible by $p$ or else $G_i/\pTor(G_i)$ is not divisible by $p$ and $\pTor(G_{1-i})\ne0$ is divisible by $p$.
\end{enumerate}
\end{lemma}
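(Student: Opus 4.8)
The plan is to treat the three items separately, using only standard facts about $\otimes$ and $*=\Tor_1^{\IZ}$ over the PID $\IZ$: both functors commute with direct limits and arbitrary direct sums, $*$ carries injections to injections in each variable (since $\Tor_2^{\IZ}=0$), every torsion-free abelian group is flat, and $\IZ/m\otimes\IZ/n\cong\IZ/m * \IZ/n\cong\IZ/\gcd(m,n)$. I shall repeatedly use the identities $M\otimes\IZ/p=M/pM$ and $M*\IZ/p=M[p]$, where $M[p]:=\{x\in M:px=0\}$, obtained by tensoring $0\to\IZ\xrightarrow{p}\IZ\to\IZ/p\to0$ with $M$; in particular $M$ has an element of order $p$ iff $M[p]\ne0$ iff $\pTor(M)\ne0$, and $M$ has an element of infinite order iff $M$ is non-torsion iff $M\otimes\IQ\ne0$ (the last because $\IQ$ is flat). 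For item (1), I apply the last criterion to $M=G_0\otimes G_1$: since $(G_0\otimes G_1)\otimes\IQ\cong G_0\otimes(G_1\otimes\IQ)$ and $G_1\otimes\IQ$ is a $\IQ$-vector space which is nonzero exactly when $G_1$ is non-torsion, the identity $G_0\otimes\IQ^{(\kappa)}\cong(G_0\otimes\IQ)^{(\kappa)}$ shows $(G_0\otimes G_1)\otimes\IQ\ne0$ iff both $G_0$ and $G_1$ are non-torsion, which is item (1).

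For item (2) the key point is that $*$ sees only torsion and splits into primary components. Since a torsion-free group is flat, the extension $0\to\Tor(G)\to G\to G/\Tor(G)\to0$ gives $G_0*G_1\cong\Tor(G_0)*\Tor(G_1)$; writing each torsion group as the direct sum of its primary parts and using $\IZ/q^a * \IZ/r^b=0$ for distinct primes $q,r$, the $p$-primary component of $G_0*G_1$ is exactly $\pTor(G_0)*\pTor(G_1)$. Thus $G_0*G_1$ has an element of order $p$ iff $\pTor(G_0)*\pTor(G_1)\ne0$. If both $\pTor(G_i)\ne0$, order-$p$ elements give inclusions $\IZ/p\hookrightarrow G_i$, and since $*$ preserves injections we get $\IZ/p\cong\IZ/p * \IZ/p\hookrightarrow G_0*G_1$; conversely if some $\pTor(G_i)=0$ the product vanishes. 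This is item (2).

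Item (3) is the substantial one. I would first reduce to the $p$-local case by tensoring with $\IZ_{(p)}$, which is exact and preserves both $p$-torsion (as $M[p]=(M\otimes\IZ_{(p)})[p]$) and $p$-divisibility; thus I may assume all torsion of $G_0,G_1$ is $p$-primary, so $\Tor(G_i)=\pTor(G_i)=:T_i$ and $G_i/T_i$ is torsion-free, hence flat. Tensoring the extensions $0\to T_i\to G_i\to G_i/T_i\to0$ into $G_0\otimes G_1$ (all occurring $\Tor$-terms vanish by flatness) realizes $G_0\otimes G_1$ as an iterated extension with subquotients $T_0\otimes T_1$, $T_0\otimes(G_1/T_1)$, $(G_0/T_0)\otimes T_1$, $(G_0/T_0)\otimes(G_1/T_1)$; the last is a tensor product of flat torsion-free modules, hence torsion-free, while the first three are torsion. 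The heart of the argument is the following. \textbf{Claim.} $G_0\otimes G_1$ has nonzero $p$-torsion iff at least one of the three torsion subquotients is nonzero. For the Claim I use that, by flatness of $G_i/T_i$, both $T_0\otimes G_1$ and $G_0\otimes T_1$ are honest torsion subgroups of $G_0\otimes G_1$, with $T_0\otimes G_1\ne0$ exactly when one of the first two blocks is nonzero and $G_0\otimes T_1\ne0$ exactly when one of the first and third blocks is. Hence any nonzero torsion block forces nonzero torsion in $G_0\otimes G_1$; conversely, if all three blocks vanish the extensions collapse and $G_0\otimes G_1$ is isomorphic to the torsion-free block.

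It then remains to compute when each torsion block is nonzero and to match this with (a)--(b). One finds $T_0\otimes T_1\ne0$ iff neither $T_i$ is $p$-divisible; $T_0\otimes(G_1/T_1)\ne0$ iff $T_0\ne0$ and $G_1/T_1$ is not $p$-divisible; and symmetrically for the third block. Here the nonobvious computation is that for torsion-free $H$ one has $\IZ_{p^\infty}\otimes H\cong H[1/p]/H$, which is nonzero iff $H$ is not $p$-divisible; this handles a $p$-divisible (Pr\"ufer) torsion part, where the cruder ``reduction mod $p$'' test fails. Combining these with the elementary fact that $G_i$ is $p$-divisible iff both $T_i$ and $G_i/T_i$ are $p$-divisible, a routine case analysis rewrites ``some torsion block is nonzero'' as exactly the disjunction in (3). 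The main obstacle is precisely this Claim together with the block bookkeeping: since $\otimes$ is only right exact, $p$-torsion could a priori be created or destroyed along the extensions, and it is the torsion-freeness of the top block together with the two honest torsion subgroups $T_0\otimes G_1$ and $G_0\otimes T_1$ that keeps this under control; once the blocks are identified, the translation into conditions (a)--(b) is mechanical.
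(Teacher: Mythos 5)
Your proof is correct, but note that the paper contains no proof to compare it with: Lemma~\ref{dyer} is stated with the single remark that it ``can be found in \cite{Dyer}'', i.e.\ it is imported from Dyer's 1959 paper, so your argument is a self-contained substitute for a citation rather than an alternative to an in-paper proof. Items (1) and (2) are complete and standard. In item (3), the reduction to the $p$-local case is legitimate: tensoring with $\Rp=\IZ_{(p)}$ is exact and leaves $\pTor(G_i)$, the $p$-torsion of $G_0\otimes G_1$, and all the relevant $p$-divisibility conditions unchanged. The four-block filtration obtained from flatness of the torsion-free quotients $F_i=G_i/T_i$ is right, and your Claim is correctly proved in both directions --- in particular, when all three torsion blocks vanish one indeed gets $G_0\otimes G_1\cong F_0\otimes G_1\cong F_0\otimes F_1$, which is torsion-free, and in the other direction the honest $p$-primary subgroups $T_0\otimes G_1$ and $G_0\otimes T_1$ supply elements of order exactly $p$. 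The computation $\IQ_p\otimes H\cong H[1/p]/H$ for torsion-free $H$ is precisely what is needed where the mod-$p$ test fails, namely for a nonzero $p$-divisible torsion part, and you use it correctly. The only step you wave at is the final Boolean bookkeeping, and it does check out: abbreviating by $t_i$, $td_i$, $fd_i$ the statements ``$T_i\ne0$'', ``$T_i$ is $p$-divisible'', ``$F_i$ is $p$-divisible'', your three block conditions are $(\neg td_0\wedge\neg td_1)$, $(t_0\wedge\neg fd_1)$, $(t_1\wedge\neg fd_0)$, and using your (correct) observation that $G_i$ is $p$-divisible iff both $T_i$ and $F_i$ are, each of these implies, and their disjunction is implied by, the disjunction in (3); for instance $(t_0\wedge\neg fd_1)$ yields clause (b) with $i=1$ if $td_0$ holds and clause (a) with $i=1$ otherwise. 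In a final write-up you should include that short case analysis explicitly, but there is no gap.
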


The following fact follows immediately from the Universal Coefficients Formula.

\begin{proposition}\label{Z->G} Each homological $Z_n$-set in a space $X$ is a $G$-homological $Z_n$-set in $X$ for any coefficient group $G$.
\end{proposition}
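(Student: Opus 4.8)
The plan is to read the statement off directly from the Universal Coefficients Formula (Lemma~\ref{univcoef}). By definition, a homological $Z_n$-set is a $\IZ$-homological $Z_n$-set, so the hypothesis supplies $H_k(U,U\setminus A)=0$ for every open set $U\subset X$ and every $k<n+1$. What I must show is the same vanishing with an arbitrary coefficient group $G$ in place of $\IZ$, for the same range of indices.

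First I would fix an open set $U\subset X$ and an index $k$ with $1\le k< n+1$, and apply Lemma~\ref{univcoef} to the pair $(U,U\setminus A)$, obtaining the (split) short exact sequence
$$0\to H_k(U,U\setminus A)\otimes G\to H_k(U,U\setminus A;G)\to H_{k-1}(U,U\setminus A)*G\to 0.$$
Since $A$ is a $\IZ$-homological $Z_n$-set and $k-1<k<n+1$, both integral groups $H_k(U,U\setminus A)$ and $H_{k-1}(U,U\setminus A)$ vanish. Consequently the two outer terms $H_k(U,U\setminus A)\otimes G$ and $H_{k-1}(U,U\setminus A)*G$ are trivial, and exactness forces the middle term $H_k(U,U\setminus A;G)$ to vanish as well.

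The only index not covered by this sequence is $k=0$, which lies outside the range $n\ge 1$ for which Lemma~\ref{univcoef} is stated. Here the torsion term involves $H_{-1}(U,U\setminus A)=0$, so one has the isomorphism $H_0(U,U\setminus A;G)\cong H_0(U,U\setminus A)\otimes G$, and the vanishing of $H_0(U,U\setminus A)$ again yields $H_0(U,U\setminus A;G)=0$. Ranging over all open $U$ and all $k<n+1$ then gives exactly that $A$ is a $G$-homological $Z_n$-set.

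I do not anticipate any genuine obstacle: the whole argument is a line-by-line reading of the exact sequence, and the tensor and torsion products never enter nontrivially precisely because the relevant integral groups are already zero. The one point deserving a word of care is the boundary index $k=0$, which is disposed of separately as above.
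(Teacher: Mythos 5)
Your argument is correct and is precisely the paper's proof: the paper derives Proposition~\ref{Z->G} ``immediately from the Universal Coefficients Formula'' without further detail, and your write-up simply makes that one-line deduction explicit (both outer terms of the split exact sequence vanish because $H_k(U,U\setminus A)=H_{k-1}(U,U\setminus A)=0$ for $k<n+1$). Your separate handling of the boundary index $k=0$ via $H_0(U,U\setminus A;G)\cong H_0(U,U\setminus A)\otimes G$ is a legitimate and careful touch, since the formula as quoted in Lemma~\ref{univcoef} is stated only for $n\ge 1$.
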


Next, we show that the study of $G$-homological $Z_n$-sets for an arbitrary coefficient group $G$ can be reduced to studying $H$-homological $Z_n$-sets for coefficient groups $H$ from the countable family of so-called Bockstein groups.

The are many (more or less standard) notations for Bockstein groups. We follow those of \cite{Kuz} and \cite{Dyer}:
\begin{itemize}
\item $\IQ$ is the group of rational numbers;
\item $\IZ_p=\IZ/p\IZ$ is the cyclic group of a prime order $p$;
\item $\Rp$ is the group of rational numbers whose denominator is not divisible by $p$.
\item $\IQ_p=\IQ/\Rp$ the quasicyclic $p$-group.
\end{itemize}
By $\mathfrak B=\{\IQ,\IZ_p,\IQ_p,\Rp:p\in\Pi\}$ we denote the family of all Bockstein groups.
\smallskip

To each abelian group $G$ assign the Bockstein family $\sigma(G)\subset \mathfrak B$
containing the group:
\begin{itemize}
\item $\IQ$ if and only if the group $G/\Tor(G)\ne0$ is divisible;
\item $\Rp$ if and only if $G/\pTor(G)$ is not divisible by $p$;
\item $\IZ_p$ if and only if $\pTor(G)$ is not divisible by $p$;
\item $\IQ_p$ if and only if $\pTor(G)\ne0$ is divisible by $p$.
\end{itemize}
In particular, $\sigma(G)=\{G\}$ for every Bockstein group $G\in\mathfrak B$.

The Bockstein families are helpful because of the following fact which can be easily derived from Lemma~\ref{dyer}:

\begin{lemma}\label{dyer2} For abelian groups $G,H$ the tensor product $G\otimes H$ is trivial if and only if $G\otimes B$ is trivial for every group $B\in\sigma(H)$.
\end{lemma}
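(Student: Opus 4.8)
The plan is to reduce both sides of the equivalence to the elementary fact that an abelian group $M$ is trivial precisely when it contains neither an element of infinite order nor an element of prime order $p$ for any prime $p$. Applying this to $M=G\otimes H$ and to each $M=G\otimes B$, the assertion becomes: $G\otimes H$ contains an element of infinite order or of some prime order $p$ if and only if $G\otimes B$ does so for at least one $B\in\sigma(H)$. I would then treat the infinite-order case and each prime-order case separately, translating every statement ``$G\otimes(\cdot)$ contains an element of order $*$'' into a divisibility/torsion condition via Lemma~\ref{dyer}.

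For the infinite-order part, Lemma~\ref{dyer}(1) gives that $G\otimes H$ has an element of infinite order iff both $G$ and $H$ do. The key bookkeeping step is that $H$ has an element of infinite order (i.e.\ $H/\Tor(H)\ne0$) exactly when $\sigma(H)$ contains one of the torsion-free Bockstein groups $\IQ$ or $\Rp$. This rests on the observation that $H/\pTor(H)$ is divisible by $p$ iff $H/\Tor(H)$ is (because $\Tor(H)/\pTor(H)$ is uniquely $p$-divisible): non-divisibility of $H/\Tor(H)$ by some $p$ then forces $\Rp\in\sigma(H)$, while divisibility of $H/\Tor(H)$ forces $\IQ\in\sigma(H)$. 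Since $\IQ$ and $\Rp$ are torsion-free, $G\otimes\IQ$ (resp.\ $G\otimes\Rp$) has an infinite-order element iff $G$ does, and noting that a torsion Bockstein group can never contribute such an element, the equivalence for this case follows directly in both directions.

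For a fixed prime $p$, I would first observe that only the Bockstein groups $\IZ_p,\IQ_p,\Rp$ can yield an element of order $p$ in $G\otimes B$ (the groups $\IQ$, $\IZ_q$, $\IQ_q$, $R_q$ with $q\ne p$ give torsion-free or $q$-primary tensor products). Computing with Lemma~\ref{dyer}(3), I would record: $G\otimes\IZ_p\ne0$ iff $G$ is not $p$-divisible; $G\otimes\IQ_p\ne0$ iff $G/\pTor(G)$ is not $p$-divisible; and $G\otimes\Rp$ has an order-$p$ element iff $\pTor(G)\ne0$. Pairing these with the defining conditions $\IZ_p\in\sigma(H)$, $\IQ_p\in\sigma(H)$, $\Rp\in\sigma(H)$ reproduces exactly the two ``$i=0$'' alternatives that Lemma~\ref{dyer}(3) produces for $G\otimes H$ having an element of order $p$, plus one extra condition $R$ coming from $\Rp$.

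The hard part will be reconciling this right-hand collection with the two ``$i=1$'' alternatives that Lemma~\ref{dyer}(3) also contributes to $G\otimes H$ having an element of order $p$. I would dispatch these by a short case analysis resting on the divisibility lemmas ``$G$ is $p$-divisible $\Rightarrow$ $\pTor(G)$ is $p$-divisible'' and ``$H$ is $p$-divisible $\iff$ both $\pTor(H)$ and $H/\pTor(H)$ are $p$-divisible'': these let me show that the two $i=1$ alternatives are, modulo the already-matched $i=0$ conditions, exactly covered by the $\Rp$-condition $R$ (and conversely that $R$ forces one of the $i=1$ alternatives), yielding both implications. Combining the infinite-order equivalence with the prime-order equivalences for all $p$ then gives $G\otimes H=0\iff G\otimes B=0$ for all $B\in\sigma(H)$.
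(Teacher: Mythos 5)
Your proposal is correct: the reduction of triviality to the absence of infinite-order and prime-order elements, the computations of $G\otimes B$ for $B\in\{\IZ_p,\IQ_p,\Rp\}$ via Lemma~\ref{dyer}(3), and the two divisibility facts ($G$ $p$-divisible implies $\pTor(G)$ $p$-divisible; $H$ $p$-divisible iff both $\pTor(H)$ and $H/\pTor(H)$ are) do exactly what is needed to match the four alternatives of Lemma~\ref{dyer}(3) for $G\otimes H$ against the conditions defining $\sigma(H)$, and I verified that the $i=1$ alternatives are indeed equivalent, modulo the matched ones, to the $\Rp$-condition. This is essentially the paper's own approach: the paper gives no proof, stating only that the lemma ``can be easily derived from Lemma~\ref{dyer}'', and your argument is precisely that derivation carried out in full.
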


Combining this lemma with the Formula of Universal Coefficients we will obtain the principal result of this section.

\begin{theorem}\label{bockstein} A closed subset $A$ of a topological space $X$ is a $G$-homological $Z_n$-set in $X$ if and only if $A$ is an $H$-homological $Z_n$-set in $X$ for every group $H\in\sigma(G)$.
\end{theorem}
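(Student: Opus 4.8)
The plan is to reduce the statement to two purely algebraic facts about the integral homology groups $M_k:=H_k(U,U\setminus A)$: one concerning their tensor products with the coefficient groups (supplied by Lemma~\ref{dyer2}) and one concerning their torsion products (which I will have to establish from Lemma~\ref{dyer}(2)). First I would feed the pair $(U,U\setminus A)$, for each open $U\subset X$ and each $k$, into the Universal Coefficients Formula (Lemma~\ref{univcoef}), obtaining the exact sequence
$$0\to M_k\otimes G\to H_k(U,U\setminus A;G)\to M_{k-1}*G\to 0.$$
By exactness alone (the splitting is not needed), $H_k(U,U\setminus A;G)=0$ if and only if both $M_k\otimes G=0$ and $M_{k-1}*G=0$. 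Letting $k$ range over all $k<n+1$ yields the key translation: $A$ is a $G$-homological $Z_n$-set in $X$ if and only if, for every open $U\subset X$, one has $M_k\otimes G=0$ for all $k<n+1$ and $M_k*G=0$ for all $k<n$. The identical translation holds with $G$ replaced by any $H\in\sigma(G)$, so the theorem reduces to proving, for an arbitrary abelian group $M$ (to be applied with $M=M_k$), the two equivalences
$$M\otimes G=0\iff M\otimes H=0\ \ \forall H\in\sigma(G),\qquad M*G=0\iff M*H=0\ \ \forall H\in\sigma(G).$$

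The first of these is precisely Lemma~\ref{dyer2}. For the torsion equivalence I would use the standard reduction $M*N\cong\bigoplus_p\pTor(M)*\pTor(N)$, which holds because a torsion-free group is flat, so that applying the long exact sequence of $\Tor$ to $0\to\Tor(M)\to M\to M/\Tor(M)\to 0$ (and likewise in the second variable) collapses $M*N$ to $\Tor(M)*\Tor(N)=\bigoplus_p\pTor(M)*\pTor(N)$. By Lemma~\ref{dyer}(2) the $p$-group $\pTor(M)*\pTor(N)$ is nontrivial exactly when both $\pTor(M)$ and $\pTor(N)$ are nontrivial. Hence $M*N=0$ if and only if for every prime $p$ at least one of $\pTor(M),\pTor(N)$ vanishes; specializing to $N=G$, this says $M*G=0$ precisely when $\pTor(M)=0$ for every prime $p$ with $\pTor(G)\neq0$.

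It remains to match this condition against the Bockstein side. Inspecting the family $\mathfrak B$, only $\IZ_p$ and $\IQ_p$ carry nontrivial torsion, and their torsion is concentrated at the single prime $p$; therefore $M*H=0$ for all $H\in\sigma(G)$ is equivalent to requiring $\pTor(M)=0$ for every prime $p$ with $\{\IZ_p,\IQ_p\}\cap\sigma(G)\neq\emptyset$. The definition of $\sigma(G)$ gives $\{\IZ_p,\IQ_p\}\cap\sigma(G)\neq\emptyset$ exactly when $\pTor(G)\neq0$: indeed, if $\pTor(G)\neq0$ is divisible by $p$ then $\IQ_p\in\sigma(G)$, while if $\pTor(G)$ is not divisible by $p$ then $\IZ_p\in\sigma(G)$, and the case $\pTor(G)=0$ is $p$-divisible and so excludes both. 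This is exactly the trigger found above for $M*G=0$, which finishes the torsion equivalence and, combined with Lemma~\ref{dyer2}, the theorem.

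The main obstacle I anticipate is the torsion factor: the paper records a Bockstein reduction (Lemma~\ref{dyer2}) only for the tensor product, so I must supply the analogous statement for the torsion product myself. The delicate bookkeeping is the final dichotomy showing that $\pTor(G)\neq0$ is precisely the condition under which $\sigma(G)$ meets $\{\IZ_p,\IQ_p\}$, where the fact that the zero group counts as divisible by $p$ is what makes the two sides align.
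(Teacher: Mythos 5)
Your proposal is correct and follows essentially the same route as the paper: both reduce the statement via the Universal Coefficients sequence to the vanishing of $M_k\otimes G$ and $M_{k-1}*G$, handle the tensor side by Lemma~\ref{dyer2}, and settle the torsion side through Lemma~\ref{dyer}(2) together with the same dichotomy that $\sigma(G)$ meets $\{\IZ_p,\IQ_p\}$ exactly when $\pTor(G)\neq0$. The only difference is packaging: the paper argues contrapositively and inlines the torsion step, observing directly that a nonzero torsion product (being a torsion group) contains an element of prime order, so it never needs your reduction $M*N\cong\bigoplus_p \pTor(M)*\pTor(N)$ --- which is itself correct but slightly more machinery than required.
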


\begin{proof} Assume that a closed subset $A\subset X$  fails to be a $G$-homologi\-cal $Z_n$-set in $X$ and find $k<n+1$ and an open set $U\subset X$ with $H_k(U,U\setminus A;G)\ne0$. The Universal Coefficients Formula implies that either $H_k(U,U\setminus A)\otimes G\ne 0$ or $H_{k-1}(U,U\setminus A)*G\ne 0$.

In the latter case, the group
$H_{k-1}(U,U\setminus A)*G$ contains an element of a prime order $p$ and then both the groups $H_{k-1}(U,U\setminus A)$ and $G$ contain elements of order $p$ by Lemma~\ref{dyer}(2). Consequently, $\sigma(G)$ contains a (quasi)cyclic $p$-group $H\in\{\IZ_p,\IQ_p\}$ and then $H_{k-1}(U,U\setminus A)*H\ne0$. Applying the Formula of Universal Coefficients, we conclude that $H_k(U,U\setminus A;H)\ne0$.

Next, we assume that $H_k(U,U\setminus A)\otimes G\ne0$. Then Lemma~\ref{dyer}  implies that $H_k(U,U\setminus A)\otimes H\ne0$ for some group $H\in\sigma(G)$. Applying the Formula of Universal Coefficients, we conclude that $H_k(U,U\setminus A;H)\ne0$, which means that $A$ fails to be an $H$-homolo\-gical $Z_n$-set in $X$. This proves the ``if'' part of the theorem.
\smallskip

To prove the ``only if'' part, assume that $A$ fails to be a $H$-homological $Z_n$-set in $X$ for some group $H\in\sigma(G)$. Then for some $k<n+1$ and an open set $U\subset X$ the group $H_k(U,U\setminus A;H)$ is not trivial. The Formula of Universal Coefficients yields that either $H_k(U,U\setminus A)\otimes H\ne0$ or $H_{k-1}(U,U\setminus A)*H\ne0$.

In the first case the tensor product $H_k(U,U\setminus A)\otimes G\ne0$ by Lemma~\ref{dyer}. In the second case, $H_{k-1}(U,U\setminus A)*H$ contains an element of a prime order $p$ and so do the groups $H_{k-1}(U,U\setminus A)$ and $H$. The inclusion $H\in\sigma(G)$ implies then that $G$ contains an element of order $p$ and hence $H_{k-1}(U,U\setminus A)*G\ne0$. In both cases the Formula of Universal Coefficients implies that $H_k(U,U\setminus A;G)\ne0$, which means that $A$ fails to be a $G$-homological $Z_n$-set in $X$.
\end{proof}

Next, we study the interplay between $G$-homological $Z_n$-sets
for various Bockstein groups $G$.

\begin{theorem}\label{bockstein2} Let $A$ be a closed subset of a space $X$ and $p$ be a prime number.
\begin{enumerate}
\item If $A$ is a $\Rp$-homological $Z_n$-set in $X$, then $A$ is a $\IQ$-homological and $\IZ_p$-homological $Z_n$-set in $X$.
\item If $A$ is a $\IZ_p$-homological $Z_n$-set in $X$, then $A$ is a $\IQ_p$-homological $Z_n$-set in $X$.
\item If $A$ is a $\IQ_p$-homological $Z_{n+1}$-set in $X$, then $A$ is a $\IZ_p$-homological $Z_n$-set in $X$.
\item $A$ is a $\Rp$-homological $Z_n$-set in $X$ provided $A$ is a $\IQ$-homological $Z_n$-set in $X$ and a $\IQ_p$-homological $Z_{n+1}$-set in $X$.
\end{enumerate}
\end{theorem}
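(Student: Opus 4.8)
The plan is to convert each of the four implications, by means of the Universal Coefficients Formula (Lemma~\ref{univcoef}), into a purely arithmetic comparison of torsion and divisibility properties of the integral relative homology groups. Throughout write $H_k=H_k(U,U\setminus A)$ and understand every condition to be quantified over all open $U\subset X$. Splitting the natural sequence
$$0\to H_k\otimes G\to H_k(U,U\setminus A;G)\to H_{k-1}*G\to 0$$
shows at once that $A$ is a $G$-homological $Z_n$-set if and only if $H_k\otimes G=0$ for all $k\le n$ and $H_k*G=0$ for all $k\le n-1$; the one-step gap between the two ranges records the absence of the torsion summand in degree zero and will turn out to be the source of the ``$Z_{n+1}$'' hypotheses in items (3) and (4).

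Next I would record a dictionary, for an arbitrary abelian group $H$ and the fixed prime $p$, of what the vanishing of $H\otimes B$ and $H*B$ means for the four Bockstein groups. The entries for $\IQ$ and $\IZ_p$ are elementary: $H\otimes\IQ=0$ iff $H$ is torsion, $H\otimes\IZ_p=H/pH=0$ iff $H$ is $p$-divisible, and $H*\IQ=0$ always. The entries for $\Rp$ and $\IQ_p$ I would read off from Lemma~\ref{dyer}:
$$H\otimes\Rp=0\iff H\text{ is torsion and }\pTor(H)=0,\qquad H\otimes\IQ_p=0\iff H/\pTor(H)\text{ is }p\text{-divisible},$$
$$H*\Rp=0\ \text{always},\qquad H*\IZ_p=0\iff H*\IQ_p=0\iff\pTor(H)=0.$$
The last line holds because torsion products are torsion groups and, by Lemma~\ref{dyer}(2), both $\IZ_p$ and $\IQ_p$ (each having an element of order $p$ and none of any other prime order) detect exactly the $p$-torsion of $H$.

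The arithmetic heart is then three elementary facts about an abelian group $H$: (i) if $H$ is torsion with $\pTor(H)=0$ then $H=pH$ (multiplication by $p$ is a bijection of a torsion group free of $p$-torsion); (ii) if $H=pH$ then the quotient $H/\pTor(H)$ is $p$-divisible, i.e.\ $H\otimes\IQ_p=0$; and (iii) if $H\otimes\IQ_p=0$ and $\pTor(H)=0$ then, since $H=H/\pTor(H)$ is $p$-divisible, $H=pH$. With the dictionary and (i)--(iii) each item becomes a degree-by-degree verification. For (1), $\Rp$-vanishing gives ``$H_k$ torsion, $\pTor(H_k)=0$'' for $k\le n$, which is the $\IQ$-condition directly and the $\IZ_p$-condition via (i). For (2), $p$-divisibility of $H_k$ for $k\le n$ gives $H_k\otimes\IQ_p=0$ by (ii), while $\pTor(H_k)=0$ for $k\le n-1$ is common to both sides. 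For (3), a $\IQ_p$-homological $Z_{n+1}$-set supplies $H_k\otimes\IQ_p=0$ and $\pTor(H_k)=0$ for all $k\le n$, and (iii) upgrades these to $H_k=pH_k$ for $k\le n$. For (4), the $\IQ$-condition gives $H_k$ torsion and the $\IQ_p$-condition at level $Z_{n+1}$ gives $\pTor(H_k)=0$, both for $k\le n$, which is precisely $\Rp$-vanishing.

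The step to watch is the index bookkeeping, and it is exactly what forces the shift to $Z_{n+1}$ in (3) and (4). A divisibility condition at degree $k$ (a tensor-type, hence ``degree-$k$'', datum) can be produced only by combining the $\IQ_p$-tensor information at degree $k$ with the $p$-torsion-freeness information at the \emph{same} degree $k$; but in a $\IQ_p$-homological $Z_m$-set the latter is guaranteed only up to degree $m-1$, so reaching degree $n$ requires starting from $Z_{n+1}$. Beyond making sure each ingredient is available at precisely the degree where it is consumed, the only thing to check carefully is that the three arithmetic implications (i)--(iii) genuinely follow from Lemma~\ref{dyer} (and the splitting of Lemma~\ref{univcoef}) rather than from finer structure of the groups; once the Universal Coefficients reduction is installed, no topology remains.
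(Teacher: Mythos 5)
Your proof is correct, and it rests on the same two ingredients as the paper's --- the split Universal Coefficients Formula (Lemma~\ref{univcoef}) and Dyer's lemma (Lemma~\ref{dyer}) --- but it organizes them in a genuinely different way. The paper proves each item by contraposition: it assumes the conclusion fails in some degree $k\le n$, splits $H_k(U,U\setminus A;G)$ into its tensor and torsion summands, and chases cases through Lemma~\ref{dyer} to exhibit a nonvanishing homology group with the hypothesis coefficients, handling the degree bookkeeping ad hoc in each case (``fails to be a $Z_{k-1}$-set'', ``fails to be a $Z_{k+1}$-set'', and so on). You instead first upgrade the splitting to an if-and-only-if reformulation --- $A$ is a $G$-homological $Z_n$-set iff $H_k\otimes G=0$ for $k\le n$ and $H_k*G=0$ for $k\le n-1$ --- then establish the dictionary for the four Bockstein groups, after which every item is a two-line implication among the conditions ``torsion'', ``$p$-divisible'', ``$p$-torsion-free'', with the index logic fully explicit; your explanation of why items (3) and (4) require the $Z_{n+1}$ hypothesis is exactly the right mechanism and matches the degree shift of the torsion summand $H_{k-1}*G$. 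What your route buys is a reusable arithmetic characterization of Bockstein-coefficient homological $Z_n$-sets and transparent degree accounting; what it costs is that you must verify the \emph{converse} directions of the dictionary, which go slightly beyond the literal statement of Lemma~\ref{dyer}: for instance, $H\otimes\Rp=0$ iff $H$ is torsion with $\pTor(H)=0$ needs Dyer's items (1) and (3) together with the observation that a nonzero abelian group contains an element of infinite or prime order, and $H\otimes\IQ_p=0$ iff $H/\pTor(H)$ is $p$-divisible needs a short computation with Dyer's item (3) applied to $\IQ_p$ (the paper, arguing contrapositively, only ever needs the one-directional nonvanishing transfers, so it can quote Lemma~\ref{dyer} as stated). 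These verifications, like your arithmetic facts (i)--(iii), are routine, and you correctly flagged them as the point to check; I see no gap.
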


\begin{proof} 1. Assuming that $A$ is not a $\IQ$-homological $Z_n$-set in $X$, find a number $k<n+1$ and an open set $U\subset X$ with $H_k(U,U\setminus A;\IQ)\ne0$. Since $\IQ$ is torsion free, the formula of universal coefficients implies that $H_k(U,U\setminus A)$ contains an element of infinite order and then $H_k(U,U\setminus A)\otimes\Rp\ne0$. Applying the Formula of Universal Coefficients once more, we obtain that $H_k(U,U\setminus A;\Rp)\ne0$, which means that $A$ is not a $\Rp$-homological $Z_n$-set in $X$.

Next, assume that $A$ is not a $\IZ_p$-homological $Z_n$-set in $X$ and find an integer number $k\le n$ and an open set $U\subset X$ with  $H_k(U,U\setminus A;\IZ_p)\ne0$. The Formula of Universal Coefficients implies that either $H_k(U,U\setminus A)\otimes\IZ_p\ne0$ or $H_{k-1}(U,U\setminus A)*\IZ_p\ne0$. In the latter case, the group $H_{k-1}(U,U\setminus A)$ contains an element of order $p$ and by Lemma~\ref{dyer}, $H_{k-1}(U,U\setminus A)\otimes\Rp\ne0$. Now the Formula of Universal Coefficients implies that $H_{k-1}(U,U\setminus A;\Rp)\ne0$, which means that $A$ fails to be a $\Rp$-homological $Z_{k-1}$-set in $X$.

So assume that $H_k(U,U\setminus A)\otimes\IZ_p\ne0$. If $H_k(U,U\setminus A)$ contains an element of order $p$, then we can proceed as in the preceding case to prove that $A$ fails to be a $\Rp$-homological $Z_k$-set in $X$. So we can assume that $\pTor(H_k(U,U\setminus A))=0$, which implies that the torsion part $\Tor(H_k(U,U\setminus A))$ is divisible by $p$. Taking into account that $H_k(U,U\setminus A)\otimes\IZ_p\ne0$, we can apply Lemma~\ref{dyer}(3) to find an element $a\in H_k(U,U\setminus A)$, not divisible by $p$. This element cannot belong to the torsion part of $H_k(U,U\setminus A)$. Hence the group $H_k(U,U\setminus A)$ contains an element of infinite order and so does the tensor product $H_k(U,U\setminus A)\otimes\Rp=H_k(U,U\setminus A;\Rp)$. This means that $A$ fails to be a $\Rp$-homological $Z_k$-set in $X$.
\smallskip

2. Now assume that $A$ fails to be a $\IQ_p$-homological $Z_n$-set in $X$ and find $k\le n$ and an open set $U\subset X$ with \mbox{$H_k(U,U\setminus A;\IQ_p)\ne0$}. Applying the Formula of Universal Coefficients, we get  $H_k(U,U\setminus A)\otimes\IQ_p\ne0$ or $H_{k-1}(U,U\setminus A)*\IQ_p\ne0$. In the latter case, $H_{k-1}(U,U\setminus A)*\IZ_p\ne0$ and hence $H_k(U,U\setminus A;\IZ_p)\ne0$.
If $H_k(U,U\setminus A)\otimes\IQ_p\ne0$, then by Lemma~\ref{dyer}, the group $H_k(U,U\backslash A)$ contains an element not divisible by $p$  and then $H_k(U,U\setminus A)\otimes\IZ_p\ne0$ by Lemma~\ref{dyer}. In both the cases, the Formula of Universal Coefficients implies that $H_k(U,U\setminus A;\IZ_p)\ne0$, which means that $A$ fails to be a $\IZ_p$-homological $Z_n$-set in $X$.
\smallskip

3. Assume that $A$ fails to be a $\IZ_p$-homological $Z_n$-set in $X$
 and find $k\le n$ and an open set $U\subset X$ with $H_k(U,U\setminus A;\IZ_p)\ne0$. The Formula of Universal Coefficients yields $H_k(U,U\setminus A)\otimes\IZ_p\ne0$ or $H_{k-1}(U,U\setminus A)*\IZ_p\ne0$. In the latter case $H_{k-1}(U,U\setminus A)$ contains an element of order $p$ and then $H_{k-1}(U,U\setminus A)*\IQ_p\ne0$. Applying the Formula of Universal Coefficients once more, we get $H_k(U,U\setminus A;\IQ_p)\ne0$, which means that $A$ fails to be a $\IQ_p$-homological $Z_k$-set.

Now assume that $H_k(U,U\setminus A)\otimes\IZ_p\ne0$. If $H_k(U,U\setminus A)$ contains an element of order $p$, then $H_k(U,U\setminus A)*\IQ_p\ne0$ and $H_{k+1}(U,U\setminus A;\IQ_p)\ne0$ by the Formula of Universal Coefficients. This means that $A$ fails to be a $\IQ_p$-homological $Z_{k+1}$-set in $X$. So it remains to consider the case when the group $H=H_k(U,U\setminus A)$ has trivial $p$-torsion part $\pTor(H)$.  Since $H\otimes \IZ_p\ne0$, the group $H=H/\pTor(H)$ contains an element $a$ not divisible by $p$, see Lemma~\ref{dyer}. Then $H_k(U,U\setminus A)\otimes\IQ_p=H\otimes\IQ_p\ne0$ according to Lemma~\ref{dyer}.
Now the Formula of Universal Coefficients implies that $H_k(U,U\setminus A;\IQ_p)\ne0$, which means that $A$ fails to be a $\IQ_p$-homological $Z_k$-set in $X$.

4. Assume that $A$ fails to be a $\Rp$-homological $Z_n$-set in $X$. Then for some $k\le n$ and an open set $U\subset X$ the homology group $H_k(U,U\setminus A;\Rp)=H_k(U,U\setminus A)\otimes\Rp$ is not trivial. By Lemma~\ref{dyer} the group $H_k(U,U\setminus A)$ contains
an element of infinite order or an element of order $p$. In the first case the group $H_k(U,U\setminus A;\IQ)=H_k(U,U\setminus A)\otimes \IQ$ is not trivial, which means that $A$ fails to be a $\IQ$-homological $Z_k$-set. In the second case the subgroup $H_k(U,U\setminus A)*\IQ_p\subset H_{k+1}(U,U\setminus A;\IQ_p)$ is not trivial, which means that $A$ fails to be a $\IQ_p$-homological $Z_{k+1}$-set in $X$.
\end{proof}

Combining Theorems~\ref{bockstein} and~\ref{bockstein2}, we get

\begin{corollary} Let $A$ be a closed subset of a topological space $X$.
\begin{enumerate}
\item If $A$ is a $G$-homological $Z_n$-set in $X$ for some coefficient group $G$, then $A$ is an $H$-homological $Z_n$ in $X$ for some divisible group $H\in\{\IQ,\IQ_p:p\in\Pi\}$.
\item $A$ is an $F$-homological $Z_n$-set in $X$ for a field $F$ if and only if $A$ is a $G$-homological $Z_n$-set in $X$ for the field $G\in\{\IQ,\IZ_p:p\in\Pi\}$ with $\{G\}=\sigma(F)$.
\end{enumerate}
\end{corollary}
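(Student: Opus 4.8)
The plan is to derive both statements directly from the Bockstein reduction Theorem~\ref{bockstein} together with the comparison relations of Theorem~\ref{bockstein2}; the only genuine work is a case analysis of the Bockstein family $\sigma(G)$. I will use repeatedly that $\sigma(G)\ne\emptyset$ for every non-trivial $G$: if $G/\Tor(G)\ne0$, then either this torsion-free quotient is divisible, so $\IQ\in\sigma(G)$, or it fails to be divisible by some prime $p$, in which case $G/\pTor(G)$ fails to be divisible by $p$ as well (since $G/\Tor(G)$ is a quotient of $G/\pTor(G)$ and divisibility by $p$ passes to quotients), whence $\Rp\in\sigma(G)$; if instead $G=\Tor(G)\ne0$, pick a prime $p$ with $\pTor(G)\ne0$ and note that $\sigma(G)$ contains $\IQ_p$ or $\IZ_p$ according as $\pTor(G)$ is or is not divisible by $p$.

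To prove (1), suppose $A$ is a $G$-homological $Z_n$-set and choose any $H\in\sigma(G)$. By Theorem~\ref{bockstein}, $A$ is an $H$-homological $Z_n$-set. If $H\in\{\IQ,\IQ_p:p\in\Pi\}$ we are done, as these groups are divisible. The two remaining Bockstein groups are converted by Theorem~\ref{bockstein2}: if $H=\Rp$, item (1) of that theorem makes $A$ a $\IQ$-homological $Z_n$-set, and if $H=\IZ_p$, item (2) makes $A$ a $\IQ_p$-homological $Z_n$-set. In every case $A$ is an $H'$-homological $Z_n$-set for a divisible $H'\in\{\IQ,\IQ_p:p\in\Pi\}$.

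For (2) the key computation is that for a field $F$ the family $\sigma(F)$ is the singleton consisting of the prime field of $F$. If $\mathrm{char}\,F=0$, then $F$ is torsion-free and divisible (being a $\IQ$-vector space), so $\Tor(F)=0$, the quotient $F/\Tor(F)=F$ is divisible, and every condition that would force $\IZ_p$, $\IQ_p$ or $\Rp$ into $\sigma(F)$ fails; hence $\sigma(F)=\{\IQ\}$. If $\mathrm{char}\,F=p$, then $F=\pTor(F)=\Tor(F)$ is annihilated by $p$, so it is not divisible by $p$, giving $\IZ_p\in\sigma(F)$; on the other hand $F/\Tor(F)=0$ excludes $\IQ$, the non-divisibility of $\pTor(F)=F$ by $p$ excludes $\IQ_p$, the trivial divisibility of $F/\pTor(F)=0$ by $p$ excludes $\Rp$, and for each prime $q\ne p$ the $q$-torsion of $F$ is trivial while multiplication by $q$ is invertible on $F$ (as $q\cdot1_F$ is a unit), so none of $\IZ_q,\IQ_q,R_q$ enters $\sigma(F)$; hence $\sigma(F)=\{\IZ_p\}$. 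In both cases $\sigma(F)=\{G\}$ for the field $G\in\{\IQ,\IZ_p:p\in\Pi\}$ named in the statement, and Theorem~\ref{bockstein} says precisely that $A$ is an $F$-homological $Z_n$-set if and only if $A$ is an $H$-homological $Z_n$-set for every $H\in\sigma(F)=\{G\}$, that is, if and only if $A$ is a $G$-homological $Z_n$-set.

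I do not anticipate a serious obstacle: both parts reduce to bookkeeping over the four types of Bockstein groups once Theorems~\ref{bockstein} and~\ref{bockstein2} are in hand. The one point deserving care is the characteristic-$p$ verification that $\sigma(F)$ is a singleton — in particular that the primes $q\ne p$ contribute nothing — since an error there would invalidate the clean identity $\sigma(F)=\{G\}$ on which the whole of (2) rests.
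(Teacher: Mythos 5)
Your proposal is correct and follows exactly the route the paper intends, since the paper derives this corollary with no further argument beyond ``combining Theorems~\ref{bockstein} and~\ref{bockstein2}'': part (1) is Theorem~\ref{bockstein} plus the conversions $\Rp\Rightarrow\IQ$ and $\IZ_p\Rightarrow\IQ_p$ of Theorem~\ref{bockstein2}(1,2), and part (2) is Theorem~\ref{bockstein} plus the identification $\sigma(F)=\{\IQ\}$ or $\{\IZ_p\}$ according to the characteristic of $F$. You merely make explicit two points the paper leaves tacit --- the nonemptiness of $\sigma(G)$ and the verification that primes $q\ne p$ contribute nothing to $\sigma(F)$ in characteristic $p$ --- and both of your verifications are accurate.
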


\section{Multiplication Theorem for homotopical and homological $Z_n$-sets}\label{products}

In this section we discuss so-called Multiplication Theorems for homotopical and $R$-homological $Z_n$ sets with $R$ being a
{\em principal ideal domain}. The latter means that $R$ is a commutative ring with unit and without zero divisors in which each proper ideal is generated by a single element. A typical example of a principal ideal domain is the ring $\IZ$ of integers.

According to the K\"unneth Formula \cite[Th.5.3.10]{Spa}, for closed subsets $A\subset X$, $B\subset Y$ in topological spaces $X,Y$ and a principal ideal domain $R$ the relative homology group 
$H_n(X\times Y,X\times Y\setminus A\times B;R)$ is isomorphic to the direct sum of the $R$-modules 
$$
[H_*(X,X\setminus A;R)\otimes_R H_*(Y,Y\setminus B;R)]_n=
\oplus_{i+j=n}H_i(X,X\setminus A;R)\otimes_R H_j(Y,Y\setminus B;R)
$$and
$$
[H_*(X,X\setminus A;R)*_R H_*(Y,Y\setminus B;R)]_{n-1}=
\oplus_{i+j=n-1}H_i(X,X\setminus A;R)*_R H_j(Y,Y\setminus B;R).
$$
Here $G\otimes_R H$ and $G*_RH$ stand for the tensor and torsion products of $R$-modules $G,H$ over $R$. If $R=\IZ$, then we omit the subscript and write $G\otimes H$ and $G*H$. It is known that the torsion product over a field $F$ always is trivial. In this case, $$H_n(X\times Y,X\times Y\setminus A\times B;F)=[H_*(X,X\setminus A;F)\otimes_F H_*(Y,Y\setminus B;F)]_n.$$

With help of the K\"unneth Formula and Theorem~\ref{bockstein} we shall prove Multiplication Formulas for homological and homotopical $Z_n$-sets.

\begin{theorem}\label{multZn} Let $A\subset X$, $B\subset Y$ be closed subsets in Tychonov spaces $X,Y$, and $G$ be a coefficient group.
\begin{enumerate}
\item If $A$ is a $G$-homological $Z_n$-set in $X$ and $B$ is a $G$-homological $Z_m$-set in $Y$, then $A\times B$ is a $G$-homological $Z_{n+m}$-set in $X\times Y$. Moreover, if $\sigma(G)\subset\{\IQ,\IZ_p,\Rp:p\in\Pi\}$, then $A\times B$ is a $G$-homological $Z_{n+m+1}$-set in $X\times Y$.
\item If $A$ is a homotopical $Z_n$-set in $X$ and $B$ is a homotopical $Z_m$-set in $Y$, then $A\times B$ is a homotopical $Z_{n+m+1}$-set in $X\times Y$.
\end{enumerate}
\end{theorem}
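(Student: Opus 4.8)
The plan is to prove the homological statement (1) and the homotopical statement (2) by entirely different mechanisms: (1) by the K\"unneth formula after a Bockstein reduction, and (2) by a direct negligibility argument. For (1) I would first reduce the coefficient group to a Bockstein group: by Theorem~\ref{bockstein}, $A\times B$ is a $G$-homological $Z_k$-set if and only if it is an $H$-homological $Z_k$-set for every $H\in\sigma(G)$, and the same theorem rephrases the hypotheses as ``$A$ is an $H$-homological $Z_n$-set and $B$ is an $H$-homological $Z_m$-set for every $H\in\sigma(G)$''. Hence it suffices to fix a Bockstein group $H$ and show the product is an $H$-homological $Z_{n+m}$-set, and moreover an $H$-homological $Z_{n+m+1}$-set when $H\ne\IQ_p$; assembling over $H\in\sigma(G)$ then yields both assertions, since $\sigma(G)\subset\{\IQ,\IZ_p,\Rp\}$ means exactly that $\IQ_p$ does not occur. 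By Proposition~\ref{mult} I may test the $Z$-set property only on boxes $U\times V$ with $U\subset X$, $V\subset Y$ open. The key observation is that $(U\times V)\setminus(A\times B)=((U\setminus A)\times V)\cup(U\times(V\setminus B))$, so the pair $(U\times V,(U\times V)\setminus A\times B)$ is precisely the product pair to which the K\"unneth formula applies, with factors $(U,U\setminus A)$ and $(V,V\setminus B)$; here $U\setminus(A\cap U)=U\setminus A$, so the factor homologies are the ones governed by the hypotheses.

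When $H\in\{\IQ,\IZ_p,\Rp\}$ is a field or a principal ideal domain, I would apply the K\"unneth formula over $R=H$ directly. Since $H_i(U,U\setminus A;H)=0$ for $i\le n$ and $H_l(V,V\setminus B;H)=0$ for $l\le m$, every tensor summand $H_i\otimes_R H_l$ with $i+l=j$ and every torsion summand $H_i*_R H_l$ with $i+l=j-1$ has a vanishing factor unless $i\ge n+1$ and $l\ge m+1$, forcing $i+l\ge n+m+2$. Thus $H_j(U\times V,\dots;H)=0$ for all $j\le n+m+1$, so the product is an $H$-homological $Z_{n+m+1}$-set.

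The delicate case is $H=\IQ_p$, which is not a ring, so I would instead combine the integral K\"unneth formula with the Universal Coefficients Formula (Lemma~\ref{univcoef}). Writing $P_i=H_i(U,U\setminus A)$, $Q_l=H_l(V,V\setminus B)$ and $P^\times_j=H_j(U\times V,\dots)$, I record the identities $M\otimes\IQ_p=0\Leftrightarrow M/\pTor(M)$ is $p$-divisible, and $M*\IQ_p=\pTor(M)$. Via Universal Coefficients the hypotheses become: $P_i/\pTor(P_i)$ is $p$-divisible for $i\le n$ and $\pTor(P_i)=0$ for $i\le n-1$ (similarly for $Q_l$ with $m$). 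Using associativity of $\otimes$, the fact that a torsion product is a torsion group (so its prime-to-$p$ quotient is automatically $p$-divisible), and Lemma~\ref{dyer}(2,3) to control the $p$-torsion of the K\"unneth summands, one checks that $P^\times_j\otimes\IQ_p=0$ for $j\le n+m$ and $\pTor(P^\times_j)=0$ for $j\le n+m-1$; the Universal Coefficients sequence then gives $H_j(U\times V,\dots;\IQ_p)=0$ for $j\le n+m$. The main obstacle is precisely degree $n+m+1$: the summand $P_n\otimes Q_m$ of $P^\times_{n+m}$ may carry $p$-torsion, so $P^\times_{n+m}*\IQ_p$ can be non-trivial and feed a non-zero class into $H_{n+m+1}(\,\cdot\,;\IQ_p)$. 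This is exactly why $\IQ_p$ yields only $Z_{n+m}$ and accounts for the hypothesis $\sigma(G)\subset\{\IQ,\IZ_p,\Rp\}$ in the sharper conclusion.

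For the homotopical statement (2) I would argue geometrically, reading ``homotopical $Z_k$-set'' as ``closed and locally $k$-negligible'' via Theorem~\ref{Zsets}(1) and using the extension form of negligibility, Theorem~\ref{torunczyk}(2). Given an open cover $\U$ of $X\times Y$ and a map $f=(f_1,f_2):I^{n+m+1}\to X\times Y$, I would first extract (through Lemma~\ref{metric}) pseudometrics controlling $\U$ on the two factors and then a triangulation $K$ of $I^{n+m+1}$ so fine that $f$ carries each closed star into a set of small diameter, so that all homotopies below stay $\U$-small. Over the $n$-skeleton $K^{(n)}$ (dimension $\le n$) I use local $n$-negligibility of $A$ to push $f_1$ off $A$, and extend the homotopy over $I^{n+m+1}$ by the simplicial homotopy extension property; its endpoint $g_1$ satisfies $g_1^{-1}(X\setminus A)\supset|K^{(n)}|$, hence contains a regular neighborhood $N$ of $|K^{(n)}|$. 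On the complementary region $C=I^{n+m+1}\setminus\mathrm{int}\,N$, which deformation retracts onto the dual subcomplex $D$ spanned by the barycentres of the simplices of dimension $\ge n+1$ and so has $\dim D\le(n+m+1)-(n+1)=m$, I compose with the retraction and use local $m$-negligibility of $B$ to push $f_2$ off $B$ over $D$, forcing the second coordinate off $B$ on all of $C$. Gluing the two pushes produces a map $\U$-homotopic to $f$ that misses $A\times B$: on $N$ the first coordinate already avoids $A$, on $C$ the second avoids $B$. The hard part is the bookkeeping of the controls and the compatibility of the two homotopies across $N\cap C$ (arranged by performing the $B$-push rel a collar of $\partial C$, where the first coordinate is already off $A$); the dimension split $n+(m+1)$ is exactly the codimension addition responsible for the index $n+m+1$.
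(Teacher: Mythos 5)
Your proposal is correct and follows essentially the same route as the paper's proof: the Bockstein reduction to groups $H\in\sigma(G)$, the K\"unneth formula over the field/PID cases $\IQ,\IZ_p,\Rp$ tested on basic boxes $U\times V$ (Proposition~\ref{mult}), and the integral K\"unneth formula combined with Universal Coefficients and Lemma~\ref{dyer} for $\IQ_p$, with exactly the paper's explanation of why $\IQ_p$ costs one dimension and hence why the sharper bound needs $\sigma(G)\subset\{\IQ,\IZ_p,\Rp:p\in\Pi\}$. Your homotopical argument is likewise the paper's decomposition of $I^{n+m+1}$ into a neighborhood of the $n$-skeleton and a region retracting to the dual $m$-dimensional skeleton, pushing each coordinate off $A$ resp.\ $B$ with pseudometric control; the paper merely packages your regular-neighborhood-and-collar gluing more cleanly through the join structure $K_0*K_1$ and the globally defined maps $h_0,h_1$ composed with Borsuk homotopy extensions.
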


\begin{proof}
1. In light of Theorem~\ref{bockstein}, it suffices to prove the first item only for a Bockstein group $G\in\{\IQ,\IZ_p,\Rp,\IQ_p:p\in\Pi\}$.
 Assume that $A$ is a $G$-homological $Z_n$-set in $X$ and $B$ is a $G$-homological $Z_m$-set in $Y$.

First, we prove the second part of the first item, assuming that $G\in\{\IQ,\IZ_p,\Rp:p\in\Pi\}$ is a principal ideal domain.
We have to check that $A\times B$ is a $G$-homological $Z_{n+m+1}$-set in $X\times Y$, which means that $H_k(W,W\setminus A\times B;G)=0$ for every $k\le n+m+1$ and every open set $W\subset X\times Y$. By Lemma~\ref{mult} it suffices to consider the case $W=U\times V$ for some open sets $U\subset X$ and $V\subset Y$.

By the K\"unneth Formula, the homology group 
$H_k(U\times V,  U\times V\,\setminus \,A\times B;G)$ is isomorphic to the direct sum of the groups
$$\bigoplus_{i+j=k} H_i(U,U\setminus A;G)\otimes_G H_j(V,V\setminus B;G)\mbox{ \ and \ }
\bigoplus_{i+j=k-1} H_i(U,U\setminus A;G)*_G H_j(V,V\setminus B;G).$$Observe that for every $i,j$ with $i+j\le n+m+1$ either $i\le n$ or $j\le m$. In the first case the group $H_i(U,U\setminus A;G)$ is trivial (because $A$ is a $G$-homological $Z_n$-set in $X$), in the second case the group $H_j(V,V\setminus B;G)=0$. Hence the above tensor and torsion products are trivial and so is the group $H_k(U\times V,U\times V\setminus A\times B;G)$.
\smallskip

Next, assume that $G=\IQ_p$ for a prime $p$. We need to prove that $A\times B$ is a $G$-homological $Z_{n+m}$-set in $X\times Y$. As in the preceding case this reduces to showing that for every $k\le n+m$ and open sets $U\subset X$, $V\subset Y$ the homology group $H_k(U\times V,U\times V\setminus A\times B;\IQ_p)$ is trivial.
By the Formula of Universal Coefficients, this group is isomorphic to the direct sum of the groups $H_k(U\times V,U\times V\setminus A\times B)\otimes\IQ_p$ and $H_{k-1}(U\times V,U\times V\setminus A\times B)*\IQ_p$. So it suffices to prove the triviality of these two groups.

The triviality of the group \mbox{$H_{k-1}(U\times V, U\times V\setminus A\times B)*\IQ_p$}  will follow as soon as we prove that the group $H_{k-1}(U\times V,U\times V\setminus A\times B)$ has no $p$-torsion. Assuming the converse and using the K\"unneth Formula, we conclude that either the group $$\bigoplus_{i+j=k-1}H_i(U,U\setminus A)\otimes H_j(V,V\setminus B)\mbox{ \ \ or the group \ }
\bigoplus_{i+j=k-2}H_i(U,U\setminus A)*H_j(V,V\setminus B)$$ contains an element of order $p$.

In the latter case there are $i,j$ with $i+j=k-2$ such that $H_i(U,U\setminus A)*H_j(V,V\setminus B)$ contains an element of order $p$. By Lemma~\ref{dyer} both the groups $H_i(U,U\setminus A)$ and $H_j(V,V\setminus B)$ contain elements of order $p$ and then $H_i(U,U\setminus A)*\IQ_p$ and $H_j(U,U\setminus B)*\IQ_p$ are not trivial and so are the homology group $H_{i+1}(U,U\setminus A;\IQ_p)$ and $H_{j+1}(V,V\setminus B;\IQ_p)$ by the Formula of Universal Coefficients. Since $A$ is a $\IQ_p$-homological $Z_n$-set in $X$ and $B$ is a $\IQ_p$-homological $Z_m$-set in $Y$, the non-triviality of the two latter homology groups implies that $i\ge n$ and $j\ge m$ and hence $k-2=i+j\ge n+m\ge k$, which is a contradiction.

Next, we consider the case when for some $i,j$ with $i+j=k-1$ the
group $H_i(U,U\setminus A)\otimes H_j(V,V\setminus B)$ contains
an element of order $p$. To simplify notation let
$H_i=H_i(U,U\setminus A)$, $H_j=H_j(V,V\setminus B)$.  Since
$H_i\otimes H_j$ contains an element of order $p$, we can apply
Lemma~\ref{dyer}(3) to conclude that either $H_i$ or $H_j$
contains an element of order $p$. Without loss of generality,
$H_i$ contains an element of order $p$. It follows from the
Formula of Universal Coefficients and the fact that $A$ is a
$\IQ_p$-homological $Z_n$-set in $X$ that $i\ge n$. Then
$j=k-1-i\le n+m-1-i< m$. Since $B$ is a $\IQ_p$-homological
$Z_m$-set in $Y$, the Formula of Universal Coefficients implies
that $H_j=H_j(V,V\setminus B)$ has no $p$-torsion. Taking into
account that $H_i$ has $p$-torsion and $H_i\otimes H_j$ contains
an element of order $p$, we can apply Lemma~\ref{dyer}(3) to
conclude that $H_j=H_j/\pTor(H_j)$ is not divisible by $p$ and
hence $H_j(V,V\setminus B)\otimes\IQ_p=H_j\otimes\IQ_p\ne0$ by
Lemma~\ref{dyer}. The Formula of Universal Coefficients implies
now that $H_j(V,V\setminus B;\IQ_p)\ne0$, which contradicts the
fact that $B$ is a $\IQ_p$-homological $Z_m$-set (because $j\le
m$). This completes the proof of the triviality of the group
$H_{k-1}(U\times V,U\times V\setminus A\times B)*\IQ_p$.

Next, we check the triviality of the group $H_{k}(U\times V,U\times V\setminus A\times B)\otimes \IQ_p.$ By the K\"unneth Formula, the group $H_k(U\times V,U\times V\setminus A\times B)$ is isomorphic to the direct sum of the groups
$$\bigoplus_{i+j=k}H_i(U,U\setminus A)\otimes H_j(V,V\setminus B)\mbox{ \ and \ }
\bigoplus_{i+j=k-1}H_i(U,U\setminus A)*H_j(V,V\setminus B).$$
The second sum is a torsion group and hence its tensor product with $\IQ_p$ is trivial. So, it suffices to prove that
$(H_i(U,U\setminus A)\otimes H_j(V,V\setminus B))\otimes\IQ_p=0$ for any $i,j$ with $i+j=k$. Since $k\le n+m$ either $i\le n$ or $j\le m$. If $i\le n$, we can use the fact that $A$ is a $\IQ_p$-homological $Z_n$-set in $X$ to conclude that the homology group $H_i(U,U\setminus A;\IQ_p)$ is trivial and so is the tensor product $H_i(U,U\setminus A)\otimes\IQ_p$ according to the Formula of Universal Coefficients. The associativity of the tensor product implies that  $\big(H_i(U,U\setminus A)\otimes H_j(V,V\setminus B)\big)\otimes\IQ_p$ is trivial  as well. If $j\le m$, then the triviality of the above tensor product follows from the $\IQ_p$-homological $Z_m$-set property of $B$.
\smallskip

2. Let $A$ be a homotopical $Z_n$-set in $X$ and $B$ is a homotopical $Z_m$-set in $Y$. It will be convenient to uniformize the notations and put $X_0=X$, $X_1=Y$, $A_0=A$, $A_1=B$, $k_0=n$, $k_1=m$ and $k=k_0+k_1+1$. So we need to prove that the product $A_0\times A_1$ is a homotopical $Z_k$-set in $X_0\times X_1$ provided $A_i$ is a homotopical $Z_{k_i}$-set in $X_i$ for $i\in\{0,1\}$.

Let $\U$ be an open cover of $X_0\times X_1$ and $f=(f_0,f_1):I^{k}\to X_0\times X_1$ be a map of the $k$-dimensional cube. Then the sets $f_i(I^{k})\subset X_i$, $i\in\{0,1\}$, are compact and so is their product. A standard compactness argument yields finite covers $\U_i$ of $f_i(I^{k})$ by open subsets of $X_i$ for $i\in\{0,1\}$ such that for any sets $U_0\in \U_0$, $U_1\in\U_1$ the product $U_0\times U_1$ lies in some set $U\in\U$.

 By Lemma~\ref{metric}, each space $X_i$ has a continuous pseudometric $\rho_i$  such that any 1-ball $B(x,1)=\{x'\in X_i:\rho_i(x,x')<1\}$ centered at a point $x\in f_i(I^{k})$ lies in some set $U\in\U_i$. Then the pseudometric $\rho=\max\{\rho_0,\rho_1\}$ $$\rho((x_0,x_1),(x'_0,y'_1))=\max\{\rho_0(x_0,x'_0),\rho_1(x_1,x'_1)\}$$
on $X_0\times X_1$ has a similar feature: any 1-ball $B(a,1)$ centered at a point $a\in f_0(I^k)\times f_1(I^k)$ lies in some set $U\in\U$.

Let $T$ be a triangulation of $I^{k}$ so fine that the image
$f(\sigma)$ of any simplex $\sigma\in T$ has $\rho$-diameter
$<1/3$. Let $K_0$ be the $k_0$-dimensional skeleton of the
triangulation $T$ and $K_1$ be the dual skeleton consisting of all
simplexes of the barycentric subdivision of $T$ that do not meet
the skeleton $K_0$. It is well-known (and easy to see) that $K_1$
has dimension $k_1=k-k_0-1$ and each point $z\in I^{k}$ lying in a
simplex $\sigma\in T$ can be written as
$z=(1-\lambda(z))z_0+\lambda(z)z_1$ for some points $z_i\in K_i$,
$i\in\{0,1\}$, and some real number $\lambda(z)\in[0,1]$. This
number is uniquely determined by the point $z$ and is equal to
zero iff $z\in K_0$ and equal to 1 iff $z\in K_1$. Moreover, the
point $z_i$ is uniquely determined by $z$ iff $z\notin K_{1-i}$
for $i\in\{0,1\}$. This means that the cube $I^k$ has the
structure of a subset of the join $K_0*K_1$. This structure
allows to write the cube $I^k$ as $I^k=K_{\le 1/2}\cup K_{\ge
1/2}$, where $K_{\le 1/2}=\{z\in I^k:\lambda(z)\le 1/2\}$ and
$K_{\ge 1/2}=\{z\in I^k:\lambda(z)\ge 1/2\}$.

Let $\ell:[0,1]\to[0,1]$ be the piecewise linear map determined by the
conditions $\ell(0)=0=\ell(1/2)$ and $\ell(1)=1$. Combined with
the joint structure $K_0*K_1$, the map $\ell$ induces two
piece-linear maps $h_i:I^k\to I^k$, $i\in\{0,1\}$, assigning to
each point $z=\lambda_0z_0+\lambda_1 z_1$ with $z_i\in K_i$,
$\lambda_0+\lambda_1=1$ the point
$h_i(z)=\lambda'_0z_0+\lambda_1'z_1$ where
$\lambda_i'=\ell(\lambda_i)$ and $\lambda'_{1-i}=1-\lambda'_i$.
The crucial property of the maps $h_i$ is that $h_0(K_{\le
1/2})\subset K_0$, $h_1(K_{\ge 1/2})\subset K_1$ and both $h_0$
and $h_1$ are $\mathcal S$-homotopic to the identity map of $I^k$
with respect to the cover $\mathcal S$ of $I^k$ by maximal
simplexes of the triangulation $T$.

Applying Theorem~\ref{torunczyk} to the homotopical $Z_{k_i}$-set $A_i\subset X_i$ find a map $g_i:K_i\to X_i\setminus A_i$, $1/6$-homotopic to the map $f_i|K_i:K_i\to X_i$ with respect to the pseudometric $\rho_i$. Since $K_i$ is a subcomplex of $I^{k}$, we may apply Borsuk Homotopy Extension Theorem (see \cite[1.D]{Spa}) and extend the map $g_i:K_i\to X_i$ to a map $\bar g_i:I^k\to X_i$, $1/6$-homotopic to $f_i$. Finally consider the map $\tilde f=(\tilde f_0,\tilde f_1):I^k\to X_0\times X_1$, where $\tilde f_i=\bar g_i\circ h_i$. We claim that $\tilde f$ is $\U$-homotopic to $f$ and $\tilde f(I^k)\cap (A_0\times A_1)=\emptyset$.

The $\mathcal S$-homotopy of the maps $h_i$ to the identity implies the $\bar g_i(\mathcal S)$-homotopy of $\bar g_i\circ h_i$ to $\bar g_i$. Now observe that for each simplex $\sigma$ of the triangulation $T$ we get
$$\diam(\bar g_i(\sigma))\le \diam (f_i(\sigma))+2\dist(f_i,\bar g_i)<\frac13+2\frac16=\frac23.$$
Consequently, $\tilde f_i$ is $2/3$-homotopic to $\bar g_i$. Since $\bar g_i$ is $1/6$-homotopic to $f_i$, we get that $\tilde f_i$ is 1-homotopic to $f_i$ and consequently, $\tilde f$ is $1$-homotopic to $f$. Now the choice of the pseudometric $\rho$ implies that $\tilde f$ is $\U$-homotopic to $f$.

So it remains to prove that $\tilde f(z)\notin A_1\times A_2$  for every point $z\in I^k$. Indeed, if $z\in K_{\le 1/2}$, then $h_0(z)\in K_0$ and $\tilde f_0(z)=\bar g_0\circ h_0(z)\in\bar g_0(K_0)\subset X_0\setminus A_0$. A similar argument yields $\tilde f_1(z)\notin A_1$ provided $z\in K_{\ge 1/2}$.
\end{proof}

\section{Functions $\zetaa(A,X)$ and $\zetaa^G(A,X)$}

It will be convenient to write Theorem~\ref{multZn} in terms of the functions
\smallskip

$\zetaa(A,X)=\sup\{n\in\w: A$ is a homotopical $Z_n$-set in $X\}$ and

$\zetaa^G(A,X)=\sup\{n\in\w: A$ is a $G$-homological $Z_n$-set in $X\}$
\smallskip

\noindent defined for a closed subset $A$ of a space $X$ and a coefficient
group $G$. In this definition we put $\sup\emptyset=-1$. So
$\zetaa^G(A,X)=-1$ iff $A$ is not a $G$-homotopical $Z_0$-set in
$X$. For a point $x\in X$ of a topological space $X$ we write
$\zetaa^G(x,X)$ instead of $\zetaa^G(\{x\},X)$.

Rewriting Theorems~\ref{Zsets},  \ref{Z2+hZn=Zn}, \ref{indP1}, \ref{bockstein},
and \ref{bockstein2} in the terms of the functions $\zetaa(A,X)$ and $\zetaa^G(A,X)$ we obtain

\begin{theorem}\label{zeta} Let $A$ be a closed subset of a topological space $X$, and $G$ be a coefficient group. Then
\begin{enumerate}
\item $\zetaa(A,X)\le\zetaa^{\IZ}(A,X)\le\zetaa^G(A,X)=\min_{H\in\sigma(G)}\zetaa^H(A,X)$;
\item $\zetaa(A,X)=\zetaa^{\IZ}(A,X)$ if $X$ is an $\LC[1]$-space and $\zetaa(A,X)\ge2$;
\item $\zetaa^G(A,X)+\trt(A)\ge\min_{a\in A}\zetaa^G(a,X)$;
\item $\min\{\zetaa^{\IQ}(A,X),\zetaa^{\IQ_p}(A,X)-1\}\le
\zetaa^{\Rp}(A,X)$ and  $\zetaa^{\Rp}(A,X)\le\min\{\zetaa^{\IQ}(A,X),\zetaa^{\IZ_p}(A,X)\}$;
\item $\zetaa^{\IZ_p}(A,X)\le\zetaa^{\IQ_p}(A,X)\le\zetaa^{\IZ_p}(A,X)+1$.
\end{enumerate}
\end{theorem}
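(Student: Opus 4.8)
The plan is to read off each of the five inequalities directly from the theorems cited in the statement, after recording one structural observation. The key point is that being a homotopical (resp. $G$-homological) $Z_n$-set is \emph{downward closed} in $n$: if all the groups $\pi_k(U,U\setminus A)$ (resp. $H_k(U,U\setminus A;G)$) vanish for $k<n+1$, then they vanish for $k<m+1$ whenever $m\le n$. Consequently $\zetaa(A,X)$ and $\zetaa^G(A,X)$ are genuine thresholds: for $n\ge 0$, the set $A$ is a homotopical $Z_n$-set iff $n\le\zetaa(A,X)$, and a $G$-homological $Z_n$-set iff $n\le\zetaa^G(A,X)$. With this dictionary every assertion becomes a statement about when one threshold dominates another, and each such statement is the numerical shadow of an implication already proved.

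First I would settle item (1). The inequality $\zetaa(A,X)\le\zetaa^{\IZ}(A,X)$ is exactly Theorem~\ref{Zsets}(4) (a homotopical $Z_n$-set is a $\IZ$-homological $Z_n$-set), and $\zetaa^{\IZ}(A,X)\le\zetaa^G(A,X)$ is Proposition~\ref{Z->G}. For the equality $\zetaa^G(A,X)=\min_{H\in\sigma(G)}\zetaa^H(A,X)$, Theorem~\ref{bockstein} says that $A$ is a $G$-homological $Z_n$-set iff it is an $H$-homological $Z_n$-set for every $H\in\sigma(G)$; translated through the threshold dictionary, $n\le\zetaa^G(A,X)$ iff $n\le\zetaa^H(A,X)$ for all $H\in\sigma(G)$, which is precisely the claimed minimum. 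Item (2) is the threshold form of Theorem~\ref{Z2+hZn=Zn}: once $\zetaa(A,X)\ge 2$ the set $A$ is a homotopical $Z_2$-set in the $\LC[1]$-space $X$, so for every $n$ it is a homotopical $Z_n$-set iff it is a homological $Z_n$-set, forcing $\zetaa(A,X)=\zetaa^{\IZ}(A,X)$.

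Items (4) and (5) are the direct transcriptions of Theorem~\ref{bockstein2}. The second inequality in (4) comes from part (1) of that theorem (an $\Rp$-homological $Z_n$-set is both $\IQ$- and $\IZ_p$-homological), and the first inequality from part (4): taking the largest $n$ with $n\le\zetaa^{\IQ}(A,X)$ and $n+1\le\zetaa^{\IQ_p}(A,X)$ yields $\min\{\zetaa^{\IQ}(A,X),\zetaa^{\IQ_p}(A,X)-1\}\le\zetaa^{\Rp}(A,X)$. Likewise $\zetaa^{\IZ_p}(A,X)\le\zetaa^{\IQ_p}(A,X)$ restates part (2), and the shift $\zetaa^{\IQ_p}(A,X)\le\zetaa^{\IZ_p}(A,X)+1$ restates part (3), since a $\IQ_p$-homological $Z_{n+1}$-set is $\IZ_p$-homological $Z_n$.

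The only item requiring genuine care is (3), which encodes Theorem~\ref{indP1}. When $m=\trt(A)<\w$, setting $n=\min_{a\in A}\zetaa^G(a,X)-m$ and applying Theorem~\ref{indP1} (each point being a $G$-homological $Z_{n+m}$-point makes $A$ a $G$-homological $Z_n$-set) gives $\zetaa^G(A,X)\ge n$, which is the desired inequality; the degenerate subcases where this $n$ is negative are absorbed by $\zetaa^G(A,X)\ge-1$. The main obstacle is handling the transfinite and infinite values of $\trt(A)$: when $\trt(A)$ is infinite the left-hand side is $\infty$ and the inequality is vacuous, while the borderline case in which every point is a $G$-homological $Z_\infty$-point and $A$ is merely $\trt$-dimensional must be routed through Theorem~\ref{trans} to conclude $\zetaa^G(A,X)=\infty$. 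Fixing the bookkeeping conventions for these infinite quantities is the one place where the translation is not completely mechanical.
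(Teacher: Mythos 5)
Your proposal is correct and matches the paper's approach exactly: the paper gives no separate argument, stating only that Theorem~\ref{zeta} is obtained by rewriting Theorems~\ref{Zsets}, \ref{Z2+hZn=Zn}, \ref{indP1}, \ref{bockstein} and \ref{bockstein2} in terms of the functions $\zetaa(A,X)$ and $\zetaa^G(A,X)$, which is precisely the threshold translation you carry out item by item. If anything you are slightly more careful than the paper in item (3), correctly observing that the borderline case of a $\trt$-dimensional $A$ all of whose points are $G$-homological $Z_\infty$-points requires Theorem~\ref{trans}, which the paper's list of rewritten theorems omits.
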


Also, Theorem~\ref{multZn} can be rewritten in the form of
Multiplication Formulas.

\begin{theorem}[Multiplication Formulas]\label{multform} Let $A,B$ be closed subsets in Tychonov spaces $X,Y$, and $G$ be a coefficient group. Then
\begin{enumerate}
\item $\zetaa^G(A\times B,X\times Y)\ge \zetaa^G(A,X)+\zetaa^G(B,Y)$;
\item $\zetaa^G(A\times B,X\times Y)\ge \zetaa^G(A,X)+\zetaa^G(B,Y)+1$ if   $\sigma(G)\subset\{\IQ,\IZ_p,\Rp:p\in\Pi\}$;
\item $\zetaa(A\times B,X\times Y)\ge \zetaa(A,X)+\zetaa(B,Y)+1$.
\end{enumerate}
\end{theorem}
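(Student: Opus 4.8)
The plan is to read all three inequalities directly off the Multiplication Theorem~\ref{multZn}, using only the definitions of $\zetaa$ and $\zetaa^G$. First I would record that the property of being a (homotopical, or $G$-homological) $Z_n$-set is monotone in $n$: the defining requirement that $H_k(U,U\setminus A;G)=0$ for $k\le n$ becomes weaker as $n$ decreases, so a $G$-homological $Z_n$-set is automatically a $G$-homological $Z_{n'}$-set for every $n'\le n$, and similarly for homotopical $Z_n$-sets. Hence the integers $n$ for which $A$ is a $G$-homological $Z_n$-set form an initial segment of $\w$, and for every $n\ge 0$ one has the equivalence $\zetaa^G(A,X)\ge n$ if and only if $A$ is a $G$-homological $Z_n$-set, with the analogous statement for $\zetaa$.

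Granting this, item (1) follows by setting $n=\zetaa^G(A,X)$ and $m=\zetaa^G(B,Y)$: when $n,m\ge 0$ the sets $A$ and $B$ are $G$-homological $Z_n$- and $Z_m$-sets, so the first assertion of Theorem~\ref{multZn}(1) makes $A\times B$ a $G$-homological $Z_{n+m}$-set, whence $\zetaa^G(A\times B,X\times Y)\ge n+m$. Item (2) is proved identically, except that I would invoke the ``Moreover'' clause of Theorem~\ref{multZn}(1), which under the hypothesis $\sigma(G)\subset\{\IQ,\IZ_p,\Rp:p\in\Pi\}$ upgrades $A\times B$ to a $G$-homological $Z_{n+m+1}$-set. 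For item (3) I would put $n=\zetaa(A,X)$ and $m=\zetaa(B,Y)$ and apply the homotopical assertion Theorem~\ref{multZn}(2), obtaining a homotopical $Z_{n+m+1}$-set and hence $\zetaa(A\times B,X\times Y)\ge n+m+1$.

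The one point that requires genuine care, and which I expect to be the sole obstacle, is the behaviour at the two boundary values $\zetaa^G=\infty$ and $\zetaa^G=-1$, which lie outside the range covered by a literal reading of Theorem~\ref{multZn}. If one factor, say $A$, is a $Z_\infty$-set, then every K\"unneth summand carrying a factor of the form $H_i(U,U\setminus A;G)$ vanishes, so $A\times B$ is again a $Z_\infty$-set and the inequality holds under the convention $\infty+m=\infty$. If instead $\zetaa^G(A,X)=-1$, the combinatorial observation driving the proof of Theorem~\ref{multZn} --- that $i+j\le n+m+1$ forces $i\le n$ or $j\le m$ --- degenerates, since $i\ge 0$ while $n=-1$, into the single surviving condition $j\le m$; carrying this condition through the argument of Theorem~\ref{multZn}, together with its Bockstein reduction via Theorem~\ref{bockstein}, shows that $A\times B$ is a $G$-homological $Z_{m-1}$-set in the setting of item (1) and a $G$-homological $Z_m$-set in the setting of item (2), which are exactly $n+m$ and $n+m+1$. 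The homotopical boundary case is handled analogously: when $n=-1$ the skeleton $K_0$ in the proof of Theorem~\ref{multZn}(2) collapses to the empty set, so the whole cube lies in the $K_1$-part and $\tilde f$ is pushed off $A\times B$ through its second coordinate alone. Apart from this boundary bookkeeping, the theorem is a direct transcription of Theorem~\ref{multZn}.
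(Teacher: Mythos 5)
Your proof is correct and takes exactly the paper's route: the paper states Theorem~\ref{multform} with no argument beyond the remark that it is Theorem~\ref{multZn} ``rewritten'' in terms of $\zetaa$ and $\zetaa^G$, which is precisely your translation via the monotonicity-based equivalence ``$\zetaa^G(A,X)\ge n$ iff $A$ is a $G$-homological $Z_n$-set.'' Your boundary bookkeeping at the values $-1$ and $\infty$ --- in particular that for arbitrary closed $A$ the product $A\times B$ is a $G$-homological $Z_{m-1}$-set (resp.\ $Z_m$-set under the hypothesis of item (2)) whenever $B$ is a $G$-homological $Z_m$-set, and the degenerate-join case in the homotopical argument --- is sound (it checks out through the K\"unneth/Universal Coefficients computation, including the non-PID group $\IQ_p$) and merely makes explicit what the paper leaves implicit.
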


\section{Division and $k$-Root Formulas for homological $Z_n$-sets}\label{div}

It turns out that inequalities in the Multiplication Formulas~\ref{multform} can be partly reversed, which leads to so-called Division and $k$-Root Formulas.
We start with Division Formulas for Bockstein coefficient groups.

\begin{lemma}\label{divbock} Let $A\subset X$, $B\subset Y$, $C\subset Z$ be closed subsets in topological spaces $X,Y,Z$, and $p$ be a prime number. Then
\begin{enumerate}
\item $\zetaa^F(A\times B,X\times Y)=\zetaa^F(A,X)+\zetaa^F(B,X)+1$ for a field $F$;
\item $\zetaa^{\IQ_p}(A\times B,X\times Y)\le \zetaa^{\IQ_p}(A,X)+\zetaa^{\IZ_p}(B,Y)+2$;
\item $\zetaa^{\Rp}(A,X){+}1\ge \min\{\zetaa^{\Rp}(A{\times} B,X{\times} Y){-}\zetaa^\IQ(B,Y),\zetaa^{\Rp}(A{\times} C,  X{\times} Z){-}\zetaa^{\IQ_p}(C,Z)\}$ if\newline   $\max\{\zetaa^{\IQ}(B,Y),\zetaa^{\IQ_p}(C,Z)\}<\infty$.
\end{enumerate}
\end{lemma}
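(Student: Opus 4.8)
The plan is to read each formula as a partial converse to the Multiplication Formulas~\ref{multform}: the lower bounds on products are already known, so in every item the genuinely new content is an \emph{upper} bound. I would obtain each such bound by choosing open sets on which the relevant homology of $A$, $B$, $C$ is nonzero and then invoking the K\"unneth Formula to exhibit a nonzero homology class of the product in a prescribed degree. Item (1) is the clean ``field'' case, item (2) will follow from it formally, and item (3) is the substantial one, proved by splitting according to the two ways $\Rp$-homology can fail to vanish.

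For item (1), the inequality ``$\ge$'' is free: since $\sigma(F)\subset\{\IQ,\IZ_p\}$ for every field $F$, Theorem~\ref{multform}(2) gives $\zetaa^F(A\times B,X\times Y)\ge\zetaa^F(A,X)+\zetaa^F(B,Y)+1$, the infinite cases being immediate because a $Z_\infty$-factor kills all relative homology of the product by K\"unneth. For ``$\le$'' I assume $n=\zetaa^F(A,X)$ and $m=\zetaa^F(B,Y)$ are finite and pick open $U\subset X$, $V\subset Y$ with $H_{n+1}(U,U\setminus A;F)\ne0$ and $H_{m+1}(V,V\setminus B;F)\ne0$. Over a field the torsion term in the K\"unneth Formula vanishes, so $H_{n+m+2}(U\times V,U\times V\setminus A\times B;F)$ contains the summand $H_{n+1}(U,U\setminus A;F)\otimes_F H_{m+1}(V,V\setminus B;F)$, which is nonzero since a tensor product of nonzero $F$-vector spaces is nonzero. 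Hence $A\times B$ is not an $F$-homological $Z_{n+m+2}$-set and $\zetaa^F(A\times B)\le n+m+1$.

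Item (2) then needs no further geometry: applying Theorem~\ref{zeta}(5) twice together with the already-established item (1) for $F=\IZ_p$,
\[
\zetaa^{\IQ_p}(A\times B)\le \zetaa^{\IZ_p}(A\times B)+1=\zetaa^{\IZ_p}(A,X)+\zetaa^{\IZ_p}(B,Y)+2\le \zetaa^{\IQ_p}(A,X)+\zetaa^{\IZ_p}(B,Y)+2,
\]
with the infinite cases again trivial.

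For item (3) I would argue by cases on the torsion type of the integral homology witnessing $r:=\zetaa^{\Rp}(A,X)$ (finite, else the bound is vacuous). Choosing open $U$ with $H_{r+1}(U,U\setminus A;\Rp)=H_{r+1}(U,U\setminus A)\otimes\Rp\ne0$, Lemma~\ref{dyer} shows that $H_{r+1}(U,U\setminus A)$ contains either an element of infinite order or an element of order $p$. In the infinite-order case $H_{r+1}(U,U\setminus A;\IQ)\ne0$, so $\zetaa^{\IQ}(A,X)\le r$; combining item (1) for $F=\IQ$ with $\zetaa^{\Rp}\le\zetaa^{\IQ}$ from Theorem~\ref{zeta}(4) yields $\zetaa^{\Rp}(A\times B)\le \zetaa^{\IQ}(A,X)+\zetaa^{\IQ}(B,Y)+1\le r+\zetaa^{\IQ}(B,Y)+1$, so the first quantity under the minimum is $\le r+1$. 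In the order-$p$ case I instead pick open $V$ witnessing $\gamma:=\zetaa^{\IQ_p}(C,Z)$, i.e.\ $H_{\gamma+1}(V,V\setminus C;\IQ_p)\ne0$, and work with the integral K\"unneth decomposition of $H_*(U\times V,U\times V\setminus A\times C)$: from the $p$-torsion in $H_{r+1}(U,U\setminus A)$ and the two possibilities for $C$ (either $p$-torsion in $H_{\gamma}(V,V\setminus C)$, feeding the torsion-product summand $H_{r+1}*H_{\gamma}$, or a class of $H_{\gamma+1}(V,V\setminus C)$ not divisible by $p$ modulo torsion, feeding the tensor summand $H_{r+1}\otimes H_{\gamma+1}$) one produces, via Lemma~\ref{dyer}, a $p$-torsion element in $H_{r+\gamma+2}(U\times V,U\times V\setminus A\times C)$, whence $H_{r+\gamma+2}(\,\cdot\,;\Rp)\ne0$ and $\zetaa^{\Rp}(A\times C)\le r+\gamma+1$, so the second quantity is $\le r+1$. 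Since at least one case occurs, the minimum is $\le r+1$, as claimed. The main obstacle is the order-$p$ case: verifying through the divisibility criteria of Lemma~\ref{dyer}(3) that the tensor summand $H_{r+1}\otimes H_{\gamma+1}$ really carries a $p$-torsion element, despite the various ways in which the $p$-torsion and $p$-divisibility of the two factors can interact; the finiteness hypothesis on $\zetaa^{\IQ}(B,Y)$ and $\zetaa^{\IQ_p}(C,Z)$ is precisely what keeps the two differences under the minimum well defined.
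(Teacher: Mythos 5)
Your proposal is correct and follows essentially the same route as the paper's proof: item (1) via the field K\"unneth formula applied to witnesses $H_{n+1}(U,U\setminus A;F)\ne0$ and $H_{m+1}(V,V\setminus B;F)\ne0$, item (2) formally from (1) with $F=\IZ_p$ together with Theorem~\ref{zeta}(5), and item (3) by the Lemma~\ref{dyer} dichotomy between an infinite-order and an order-$p$ element in $H_{r+1}(U,U\setminus A)$, with the order-$p$ case split (as in the paper) according to whether $H_{\gamma}(V,V\setminus C)*\IQ_p\ne0$ or $H_{\gamma+1}(V,V\setminus C)\otimes\IQ_p\ne0$, the latter handled by the divisibility criteria of Lemma~\ref{dyer}(3) exactly as you indicate. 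The only cosmetic deviation is in the infinite-order case of (3), where you route the bound through $\zetaa^{\IQ}(A,X)\le r$, item (1) for $F=\IQ$, and $\zetaa^{\Rp}\le\zetaa^{\IQ}$ from Theorem~\ref{zeta}(4), whereas the paper reruns the integral K\"unneth argument directly with a witness for $B$ --- these are equivalent, and your version is a harmless (slightly slicker) repackaging.
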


\begin{proof} 1. Let $F$ be a field. By the Multiplication Theorem~\ref{multZn}(1), $\zetaa^F(A\times B,X\times Y)\ge \zetaa^F(A,X)+\zetaa^F(B,Y)+1$. So it remains to prove the reverse inequality, which is trivial if one of the numbers $n=\zetaa^F(A,X)$ or $m=\zetaa^F(B,Y)$ is infinite. So assume that $n,m<\infty$ and find
open sets $U\subset X$ and $V\subset Y$ such that the homology groups $H_{n+1}(U,U\setminus A;F)$ and $H_{m+1}(V,V\setminus B;F)$ are not trivial.
Their tensor product $H_{n+1}(U,U\setminus A;F)\otimes_{F} H_{m+1}(V,V\setminus B;F)$ over the field $F$ is not trivial as well. Now the K\"unneth Formula implies that the group $H_{n+m+2}(U\times V,U\times V\setminus A\times B;F)$ is not trivial, which means that $A\times B$ is not an $F$-homological $Z_{n+m+2}$-set in $X\times Y$ and hence $\zetaa^F(A\times B,X\times Y)\le n+m+1=\zetaa^F(A,X)+\zetaa^F(B,Y)+1$.
\smallskip

2. The second item follows from the first one and Theorem~\ref{zeta}(5): $$\zetaa^{\IQ_p}(A{\times} B,X{\times} Y)\le \zetaa^{\IZ_p}(A{\times} B,X{\times} Y){+}1=
\zetaa^{\IZ_p}(A,X){+}\zetaa^{\IZ_p}(B,Y){+}2\le\zetaa^{\IQ_p}(A,X){+}\zetaa^{\IZ_p}(B,Y){+}2.$$

3. To prove the third item, let $n=\zetaa^{\Rp}(A,X)$ and find an
open set $U\subset X$ and  $i\le n+1$ with
$H_{i}(U,U\setminus A;\Rp)\ne0$. The Formula of Universal
Coefficients yields $H_i(U,U\setminus A)\otimes\Rp\ne0$. By
Lemma~\ref{dyer}, the group $H_i(U,U\setminus A)$ contains an
element of infinite order or of order $p$.

In the first case, use the fact that $B$ is not a $\IQ$-homological $Z_{m+1}$-set in $Y$ for $m=\zetaa^{\IQ}(B,Y)<\infty$ to find an open set $V\subset Y$ with $H_{m+1}(V,V\setminus B;\IQ)\ne0$. By the Formula of Universal Coefficients, $H_{m+1}(V,V\setminus B)\otimes\IQ\ne0$ and hence $H_{m+1}(V,V\setminus B)$ contains an element of infinite order and so does the tensor product
 $H_i(U,U\setminus A)\otimes H_{m+1}(V,V\setminus B)$, which lies in the homology group $H_{i+m+1}(U\times V,U\times V\setminus A\times B)$ according to the K\"unneth Formula. Then the tensor product $$H_{i+m+1}(U\times V,U\times V\setminus A\times B)\otimes\Rp=H_{i+m+1}(U\times V,U\times V\setminus A\times B;\Rp)$$is not trivial. This means that $A\times B$ fails to be a
$\Rp$-homological $Z_{i+m+1}$-set in $X\times Y$ and thus $\zetaa^{\Rp}(A\times B,X\times Y)\le i+m\le 1+n+m$. Consequently, $$\zetaa^{\Rp}(A,X)+1=n+1\ge \zetaa^{\Rp}(A\times B,X\times Y)-\zetaa^{\IQ}(B,Y).$$

Next, assume that $H_i(U,U\setminus A)$ contains an element of
order $p$ and use the fact that the set $C$ fails to be a
$\IQ_p$-homological $Z_{m+1}$-set in $Z$ for the number
$m=\zetaa^{\IQ_p}(C,Z)<\infty$ to find an open set $V\subset Z$
with $H_{m+1}(V,V\setminus C;\IQ_p)\ne0$. Then either
$H_{m+1}(V,V\setminus C)\otimes\IQ_p\ne0$ or $H_{m}(V,V\setminus
C)*\IQ_p\ne0$. In the latter case $H_{m}(V,V\setminus C)$
contains an element of order $p$ and so does the torsion product
$H_i(U,U\setminus A)*H_{m}(V,V\setminus C)$ which lies in
$H_{i+m+1}(U\times V,U\times V\setminus A\times C)$ by the
K\"unneth Formula. Applying Lemma~\ref{dyer}, we see that
$$H_{i+m+1}(U\times V,U\times V\setminus A\times
C)\otimes\Rp=H_{i+m+1}(U\times V,U\times V\setminus A\times
C;\Rp)$$ is not trivial. This means that $A\times C$ is not a
$\Rp$-homological $Z_{i+m+1}$-set in $X\times Z$.

Finally assume that  $H_{m+1}(V,V\setminus C)\otimes\IQ_p\ne0$. By Lemma~\ref{dyer}, the group $H_{m+1}(V,V\setminus C)/\pTor(H_{m+1}(V,V\setminus C))$ is not divisible by $p$ and hence the tensor product $H_i(U,U\setminus A)\otimes H_{m+1}(V,V\setminus C)$ contains an element of order $p$.
By the K\"unneth Formula, the latter tensor product lies in $H_{i+m+1}(U\times V,U\times V\setminus A\times C)$. Applying Lemma~\ref{dyer} again, we obtain that the tensor product $$H_{i+m+1}(U\times V,U\times V\setminus A\times C)\otimes\Rp=H_{i+m+1}(U\times V,U\times V\setminus A\otimes C;\Rp)$$ is not trivial. In both the cases $A\times C$ is not a $\Rp$-homological $Z_{i+m+1}$-set in $X\times Z$ and hence $$\zetaa^{\Rp}(A\times C,X\times Z)\le i+m\le n+1+\zetaa^{\IQ_p}(C,Z).$$ Then $\zetaa^{\Rp}(A,X)+1=n+1\ge \zetaa^{\Rp}(A\times C,X\times Z)-\zetaa^{\IQ_p}(C,Z)$.
\end{proof}

Now we can establish Division Formulas in the general case. First, to each coefficient group $G$ assign two families $d(G),\varphi(G)\subset\{\IQ,\IZ_p,\IQ_p:p\in\Pi\}$ as follows. Put
\begin{itemize}
\item $d(\IQ)=\varphi(\IQ)=\{\IQ\}$,
\item $d(\IZ_p)=\varphi(\IZ_p)=\{\IZ_p\}$,
\item $d(\IQ_p)=\varphi(\IQ_p)=\{\IZ_p\}$,
\item $d(\Rp)=\{\IQ,\IQ_p\}$, $\varphi(\Rp)=\{\IQ,\IZ_p\}$
\end{itemize}and let
$$d(G)=\bigcup_{H\in\sigma(G)}d(H)\mbox{\;\; and\;\;}\varphi(G)=\bigcup_{H\in\sigma(G)}\varphi(H).$$
In particular, $d(\IZ)=\{\IQ,\IQ_p:p\in\Pi\}$ is the family of divisible Bockstein groups and $\varphi(\IZ)=\{\IQ,\IZ_p:p\in\Pi\}$ is the family of Bockstein fields.

%We define a coefficient group $G$ to be {\em ring-like} if $\sigma(G)\subset\{\IQ,\IZ_p,\Rp:p\in\Pi\}$.

\begin{theorem}[Division Formulas]\label{divisionZ} Let $A\subset X$, $B\subset Y$ be closed subsets in topological spaces and $G$ be a coefficient group. Then
\begin{enumerate}
\item $\zetaa^G(A\times B,X\times Y)\le 2+\zetaa^G(A,X)+\sup_{F\in\varphi(G)}\zetaa^{F}(B,Y)$;
\item $\zetaa^G(A\times B,X\times Y)\le 1+\zetaa^G(A,X)+\sup_{H\in d(G)}\zetaa^H(B,Y)$ provided $\sigma(G)\subset\{\IQ,\IZ_p,\Rp:p\in\Pi\}$.
\end{enumerate}
\end{theorem}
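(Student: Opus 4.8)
The plan is to deduce both inequalities from the Bockstein Division Formulas of Lemma~\ref{divbock} together with the identity
$$\zetaa^G(A\times B,X\times Y)=\min_{H\in\sigma(G)}\zetaa^H(A\times B,X\times Y)$$
supplied by Theorem~\ref{zeta}(1). Since the left-hand side is a minimum over $\sigma(G)$, to bound it from above it suffices to exhibit \emph{one} group $H_0\in\sigma(G)$ for which $\zetaa^{H_0}(A\times B,X\times Y)$ is at most the claimed right-hand side. If that right-hand side is infinite there is nothing to prove, so I may assume $\zetaa^G(A,X)<\infty$ together with finiteness of the relevant supremum over $B$; in particular the minimum $\zetaa^G(A,X)=\min_{H\in\sigma(G)}\zetaa^H(A,X)$ is attained. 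The decisive choice is to take $H_0\in\sigma(G)$ realizing this minimum, so that $\zetaa^{H_0}(A,X)=\zetaa^G(A,X)$: this fixes the contribution of the factor $A$ exactly, and the division formulas then handle the factor $B$.

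With such an $H_0$ fixed I would argue by cases on its Bockstein type. If $H_0$ is a field, i.e. $H_0\in\{\IQ,\IZ_p\}$, then Lemma~\ref{divbock}(1) yields the exact equality $\zetaa^{H_0}(A\times B,X\times Y)=\zetaa^{H_0}(A,X)+\zetaa^{H_0}(B,Y)+1=\zetaa^G(A,X)+\zetaa^{H_0}(B,Y)+1$; since $H_0$ lies in both $\varphi(G)$ and $d(G)$, the middle summand is dominated by the corresponding supremum over $B$, and this already gives the bound $1+\zetaa^G(A,X)+\sup$ of both items. If $H_0=\IQ_p$ (possible only under the hypotheses of item~(1)), Lemma~\ref{divbock}(2) gives $\zetaa^{\IQ_p}(A\times B,X\times Y)\le\zetaa^G(A,X)+\zetaa^{\IZ_p}(B,Y)+2$, and the inclusion $\IZ_p\in\varphi(\IQ_p)\subset\varphi(G)$ bounds $\zetaa^{\IZ_p}(B,Y)$ by $\sup_{F\in\varphi(G)}\zetaa^F(B,Y)$, producing the constant $2$ of item~(1).

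The remaining, and most delicate, case is $H_0=\Rp$, which is where I expect the real work to lie. Here I would invoke Lemma~\ref{divbock}(3) with $C=B$ and $Z=Y$; pulling the maximum out of its right-hand minimum (the minimum of two differences with a common first term equals that term minus the maximum of the subtracted quantities), this rearranges to $\zetaa^{\Rp}(A\times B,X\times Y)\le\zetaa^G(A,X)+1+\max\{\zetaa^{\IQ}(B,Y),\zetaa^{\IQ_p}(B,Y)\}$. Before applying it one must verify its finiteness hypothesis $\max\{\zetaa^{\IQ}(B,Y),\zetaa^{\IQ_p}(B,Y)\}<\infty$; this follows because $\IQ,\IZ_p\in\varphi(\Rp)\subset\varphi(G)$ (so $\zetaa^{\IQ}(B,Y)$ and $\zetaa^{\IZ_p}(B,Y)$ are finite) and because $\zetaa^{\IQ_p}(B,Y)\le\zetaa^{\IZ_p}(B,Y)+1$ by Theorem~\ref{zeta}(5). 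For item~(2), where the stronger hypothesis $\sigma(G)\subset\{\IQ,\IZ_p,\Rp\}$ forces $\IQ,\IQ_p\in d(\Rp)\subset d(G)$, both terms of the maximum are bounded directly by $\sup_{H\in d(G)}\zetaa^H(B,Y)$, which yields the sharp constant $1$. For item~(1) only the fields $\IQ,\IZ_p$ are available in $\varphi(G)$, so I would replace $\zetaa^{\IQ_p}(B,Y)$ by $\zetaa^{\IZ_p}(B,Y)+1$ using Theorem~\ref{zeta}(5); the spare $+1$ thus created is exactly what raises the constant from $1$ to $2$ in item~(1). Collecting the four cases, the chosen $H_0$ always satisfies the required estimate, and the minimum formula completes the proof.
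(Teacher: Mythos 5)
Your proposal is correct and takes essentially the same route as the paper's proof: both reduce via Theorem~\ref{zeta}(1) to a Bockstein group $D\in\sigma(G)$ attaining $\zetaa^{D}(A,X)=\zetaa^{G}(A,X)$, then split into the field, $\IQ_p$, and $\Rp$ cases using Lemma~\ref{divbock}(1)--(3) together with the estimate $\zetaa^{\IQ_p}(B,Y)\le\zetaa^{\IZ_p}(B,Y)+1$ from Theorem~\ref{zeta}(5), with the $\IQ_p$ case excluded under the hypothesis of item~(2). Your explicit verification of the finiteness hypothesis of Lemma~\ref{divbock}(3) is a detail the paper leaves implicit, but it does not alter the argument.
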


\begin{proof} 1. The inequality  $\zetaa^G(A\times B,X\times Y)\le 2+\zetaa^G(A,X)+\sup_{F\in\varphi(G)}\zetaa^{F}(B,Y)$ is trivial if $n=\zetaa^G(A,X)$ is infinite. So assume that $n$ is finite and using Theorem~\ref{zeta}(1), find a Bockstein group $D\in\sigma(G)$ with $\zetaa^D(A,X)=n$. Theorem~\ref{bockstein} implies that $\zetaa^G(A\times B,X\times Y)\le \zetaa^D(A\times B,X\times Y)$. If $D$ is a field, then $\{D\}=\varphi(D)\subset\varphi (G)$ and $$\zetaa^G(A\times B,X\times Y)\le\zetaa^D(A\times B,X\times Y)=1+\zetaa^D(A,X)+\zetaa^D(B,Y)\le 1+\zetaa^G(A,X)+\sup_{H\in \varphi(G)}\zetaa^H(B,Y)$$ by Lemma~\ref{divbock}.

If $D=\Rp$ for some $p$, then
$\varphi(\Rp)=\{\IQ,\IZ_p\}\subset\varphi(G)$ and by Lemma~\ref{divbock} we get
\begin{multline*}
\zetaa^G(A\times B,X\times Y)\le\zetaa^{\Rp}(A\times B,X\times Y)\le 1+\zetaa^{R_p}(A,X)+\max\{\zetaa^{\IQ}(B,Y),\zetaa^{\IQ_p}(B,Y)\}\le\\
\le 1+\zetaa^{\Rp}(A,X)+\max\{\zetaa^{\IQ}(B,Y),\zetaa^{\IZ_p}(B,Y)+1\}\le\\ \le 2+\zetaa^{D}(A,X)+\max\{\zetaa^{\IQ}(B,Y),\zetaa^{\IZ_p}(B,Y)\} 
\le 2+\zetaa^G(A,X)+\sup_{F\in\varphi(G)}\zetaa^F(B,Y).
\end{multline*}

If $D=\IQ_p$ for some $p$, then
$$\zetaa^G(A{\times} B,X{\times} Y)\le \zetaa^{\IQ_p}(A{\times} B,X{\times} Y)\le
 2{+}\zetaa^{\IQ_p}(A,X){+}\zetaa^{\IZ_p}(B,Y)\le 2{+}\zetaa^{G}(A,X){+}\sup_{F\in\varphi(G)}\zetaa^F(B,Y)$$according to Lemma~\ref{divbock}(2).
\smallskip

2. If $\sigma(G)\subset\{\IQ,\IZ_p,\Rp:p\in\Pi\}$ then the last case in the preceding item is excluded and we can repeat the preceding argument to prove the inequality
$$\zetaa^G(A\times B,X\times Y)\le 1+\zetaa^G(A,X)+\sup_{H\in d(G)}\zetaa^H(B,Y).$$
\end{proof}

\begin{theorem}\label{divhomZ} Let $A\subset X$, $B\subset Y$ be closed nowhere dense subsets in Tychonov spaces $X,Y$. Then
\begin{enumerate}
\item $1+\zetaa(A,X)+\zetaa(B,Y)\le \zetaa(A\times B,X\times Y)\le\zetaa^{\IZ}(A\times B,X\times Y)$;
\item $\zetaa(A\times B,X\times Y)=\zetaa^{\IZ}(A\times B,X\times Y)$ provided $X,Y$ are $\LC[1]$-spaces.
\end{enumerate}
\end{theorem}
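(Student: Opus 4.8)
The plan is to read off item (1) from the formulas already at our disposal and to reduce item (2) to the coincidence of homotopical and homological $Z_n$-sets in $\LC[1]$-spaces.

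For item (1) there is nothing new to do: the left inequality $1+\zetaa(A,X)+\zetaa(B,Y)\le\zetaa(A\times B,X\times Y)$ is exactly the homotopical Multiplication Formula~\ref{multform}(3), while the right inequality $\zetaa(A\times B,X\times Y)\le\zetaa^{\IZ}(A\times B,X\times Y)$ is the general comparison $\zetaa\le\zetaa^{\IZ}$ of Theorem~\ref{zeta}(1) applied to the pair $A\times B\subset X\times Y$.

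For item (2) I would first record that the class of $\LC[1]$-spaces is stable under finite products: given a neighbourhood $U_0\times U_1$ of a point, one chooses $V_i\subset U_i$ witnessing local $1$-connectedness in each factor, and then extends the two coordinates of any map $\partial I^k\to V_0\times V_1$ separately, so $V_0\times V_1$ works and $X\times Y$ is again $\LC[1]$. By item (1) only the reverse inequality $\zetaa^{\IZ}(A\times B,X\times Y)\le\zetaa(A\times B,X\times Y)$ remains, that is, each homological $Z_n$-property of $A\times B$ must be upgraded to a homotopical one. The regime $\zetaa(A\times B,X\times Y)\ge 2$ is settled at once by Theorem~\ref{zeta}(2). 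In the complementary regime the nowhere density of $A$ and $B$ guarantees $\zetaa(A,X),\zetaa(B,Y)\ge 0$ (a closed nowhere dense subset of an $\LC[0]$-space is a homotopical $Z_0$-set), so Theorem~\ref{multform}(3) already gives $\zetaa(A\times B,X\times Y)\ge 1$; moreover $A\times B$ is nowhere dense in $X\times Y$. The levels $-1,0,1$ are then pinned down by the low-dimensional coincidences: homotopical $Z_0\Leftrightarrow$ homological $Z_0$ (Theorem~\ref{Zsets}(4),(5), valid in any Tychonov space) and, in the $\LC[1]$-space $X\times Y$, the implications homological $Z_1\Rightarrow Z_1\Rightarrow$ homotopical $Z_1$ (Theorem~\ref{Zsets}(6),(3)). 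Tracing these through shows that whenever $A\times B$ fails to be a homological $Z_2$-set the two numbers agree.

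The crux, and the step I expect to be the main obstacle, is therefore the remaining case: I must show that $A\times B$ is a \emph{homotopical} $Z_2$-set as soon as it is a homological $Z_2$-set, for then Theorem~\ref{Z2+hZn=Zn} identifies homotopical and homological $Z_n$-sets for all $n$ and delivers the equality. The difficulty is that the homotopical Multiplication Theorem~\ref{multZn}(2) yields only a homotopical $Z_1$-set when both factors are merely homotopical $Z_0$-sets, since in the join decomposition $I^2=K_0*K_1$ one of the dual skeleta is forced to be $1$-dimensional; so homotopical $Z_2$-ness cannot be produced factorwise. Instead I would run the relative Hurewicz argument of Theorem~\ref{torunczyk}: being a homological $Z_2$-set, $A\times B$ is in particular homological $Z_1$, hence locally $1$-negligible, so each pair $(W,W\setminus A\times B)$ with $W$ a small product neighbourhood is $1$-connected, and relative Hurewicz collapses $\pi_2$ to the subgroup generated by the action of $\pi_1$ of the complement. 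Everything then reduces to the triviality of the induced map on fundamental groups of the complements, equivalently to pushing the disks bounding such loops off $A\times B$ inside the product; this is precisely where the product form of $A\times B$ together with the nowhere density of the factors has to be exploited, and it is the technical heart of the proof.
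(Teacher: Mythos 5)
Your item (1) and the easy regimes of item (2) agree with the paper: the bounds come from Theorem~\ref{multZn}(2) (equivalently Theorem~\ref{multform}(3)) together with Theorem~\ref{zeta}(1), and whenever $A\times B$ fails to be a homological $Z_2$-set the squeeze $1\le\zetaa(A\times B,X\times Y)\le\zetaa^{\IZ}(A\times B,X\times Y)\le 1$ settles the equality. But at the decisive point you stop: you reduce everything to the implication ``$A\times B$ homological $Z_2$ $\Rightarrow$ $A\times B$ homotopical $Z_2$'' and then only sketch a relative-Hurewicz attack, explicitly deferring the killing of the $\pi_1$-action (``pushing the disks bounding such loops off $A\times B$'') as ``the technical heart''. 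That step is never carried out, and it is not a routine one: Theorem~\ref{Z2+hZn=Zn} takes homotopical $Z_2$-ness as a \emph{hypothesis} precisely because a homological $Z_2$-set in an $\LC[1]$-space is not claimed to upgrade by any direct homotopy-theoretic argument, and the paper nowhere runs such an argument. So the proposal has a genuine gap exactly where you predicted the difficulty would lie.

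Moreover, your guiding assertion that ``homotopical $Z_2$-ness cannot be produced factorwise'' is what leads you astray: the paper produces it exactly factorwise, by a case analysis on the \emph{factors} instead of on the product, using the asymmetric split $Z_1\times Z_0$ rather than the symmetric $Z_0\times Z_0$ you considered. If $\zetaa^{\IZ}(A,X)>0$, then $A$ is a homological $Z_1$-set, hence a $Z_1$-set in \emph{any} space by the elementary path-connectivity argument of Theorem~\ref{Zsets}(6), hence a homotopical $Z_1$-set in the $\LC[1]$-space $X$ by Theorem~\ref{Zsets}(3); since the nowhere dense set $B$ is a homotopical $Z_0$-set in $Y$, Theorem~\ref{multZn}(2) makes $A\times B$ a homotopical $Z_{1+0+1}=Z_2$-set, and Theorem~\ref{Z2+hZn=Zn} then yields $\zetaa(A\times B,X\times Y)=\zetaa^{\IZ}(A\times B,X\times Y)$. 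If instead $\zetaa^{\IZ}(A,X)=\zetaa^{\IZ}(B,Y)=0$, then neither $A$ nor $B$ is a $Z_1$-set (by Theorem~\ref{Zsets}(3,4)), so by Theorem~\ref{Zsets}(6) applied with $G=\IQ$ one gets $\zetaa^{\IQ}(A,X)=\zetaa^{\IQ}(B,Y)=0$, and the field Division Formula of Lemma~\ref{divbock}(1) caps $\zetaa^{\IZ}(A\times B,X\times Y)\le\zetaa^{\IQ}(A\times B,X\times Y)=1$; hence your hard case (the product is homological $Z_2$ but neither factor helps) simply never occurs. To repair your proof, replace the unproven Hurewicz step by this dichotomy on $\zetaa^{\IZ}(A,X)$ and $\zetaa^{\IZ}(B,Y)$.
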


\begin{proof} The first item follows from Theorem~\ref{multZn}(2) and \ref{zeta}(1).

To prove the second item assume that $X,Y$ are $\LC[1]$-spaces, consider three cases.

(i) $\zetaa^{\IZ}(A,X)=\zetaa^{\IZ}(B,X)=0$. In this case
 $\zetaa(A,X)=\zetaa^{\IZ}(A,X)=\zetaa^{\IQ}(A,X)$ and $\zetaa(B,Y)=\zetaa^{\IZ}(B,Y)=\zetaa^{\IQ}(B,Y)$. So 
$\zetaa(A,X)+\zetaa(B,X)+1=\zetaa^{\IQ}(A,X)+\zetaa^{\IQ}(B,Y)+1=\zetaa^{\IQ}(A\times B,X\times Y)\ge \zetaa(A\times B,X\times Y)\ge \zetaa(A,X)+\zetaa(B,Y)+1$
by Lemma~\ref{divbock}(1) and Theorem~\ref{multZn}(2).

(ii) $\zetaa^{\IZ}(A,X)>0$. In this case $A$ is a homological $Z_1$-set in $X$ and a homotopical $Z_1$-set in the $\LC[1]$-space $X$ by Theorem~\ref{Zsets}(6,3). The set $B$, being nowhere dense in the $\LC[1]$-space $Y$, is a homotopical $Z_0$-set in $Y$. Then $A\times B$, being the product of a homotopical $Z_1$-set and a homotopical $Z_0$-set, is a homotopical $Z_2$-set in $X\times Y$ by Theorem~\ref{multZn}(2). By Theorem~\ref{Z2+hZn=Zn}, $A\times B$ is a  homotopical $Z_n$-set in $X\times Y$ if and only if it is a homological $Z_n$-set in $X\times Y$, which  implies the desired equality $\zetaa(A\times B,X\times Y)=\zetaa^{\IZ}(A\times B,X\times Y)$.

(iii) The case $\zetaa^{\IZ}(B,Y)>0$ can be considered by analogy.
\end{proof}

Theorem~\ref{divhomZ} implies

\begin{corollary}\label{prodcor} A subset $A$ of a Tychonov $\LC[1]$-space $X$ is a homological $Z_n$-set in $X$ if and only if $A\times\{0\}$ is a homotopical $Z_{n+1}$-set in $X\times [-1,1]$.
\end{corollary}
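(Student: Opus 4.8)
The plan is to translate everything into the functions $\zetaa$ and $\zetaa^{\IZ}$ and to feed the product $A\times\{0\}\subset X\times[-1,1]$ into the Multiplication and Division Formulas, using Theorem~\ref{divhomZ}(2) to pass between the homotopical and homological sizes of this product. The statement to be proved is exactly the equivalence $\zetaa^{\IZ}(A,X)\ge n \Leftrightarrow \zetaa(A\times\{0\},X\times[-1,1])\ge n+1$. First I would record the base case: $[-1,1]$ is an $\LC[1]$-space, $\{0\}$ is closed and nowhere dense in it, and for every coefficient group $G$ one has $\zetaa^G(\{0\},[-1,1])=0$. Indeed, for a connected open neighbourhood $V$ of $0$ the exact sequence of the pair gives $H_0(V,V\setminus\{0\};G)=0$ and $H_1(V,V\setminus\{0\};G)\cong G\ne0$ (and $H_k=0$ for $k\ge2$), so $\{0\}$ is a $G$-homological $Z_0$-set but not a $G$-homological $Z_1$-set; likewise $\zetaa(\{0\},[-1,1])=0$, since a path running from the negative to the positive part of $[-1,1]$ cannot be pushed off $0$ by a small homotopy.

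For the direct implication assume $A$ is a homological $Z_n$-set, i.e.\ $\zetaa^{\IZ}(A,X)\ge n\ge0$. Then $A$ is in particular a homological $Z_0$-set, hence a homotopical $Z_0$-set by Theorem~\ref{Zsets}(5), hence closed and nowhere dense by Theorem~\ref{Zsets}(1). Since $\sigma(\IZ)=\{\Rp:p\in\Pi\}\subset\{\IQ,\IZ_p,\Rp:p\in\Pi\}$, the sharper Multiplication Formula~\ref{multform}(2) applies and yields $\zetaa^{\IZ}(A\times\{0\},X\times[-1,1])\ge \zetaa^{\IZ}(A,X)+\zetaa^{\IZ}(\{0\},[-1,1])+1\ge n+1$. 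As both $A$ and $\{0\}$ are now closed nowhere dense in the $\LC[1]$-spaces $X$ and $[-1,1]$, Theorem~\ref{divhomZ}(2) gives $\zetaa(A\times\{0\},X\times[-1,1])=\zetaa^{\IZ}(A\times\{0\},X\times[-1,1])\ge n+1$, that is, $A\times\{0\}$ is a homotopical $Z_{n+1}$-set.

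For the converse assume $A\times\{0\}$ is a homotopical $Z_{n+1}$-set. By Theorem~\ref{Zsets}(4) it is then a homological $Z_{n+1}$-set, so $\zetaa^{\IZ}(A\times\{0\},X\times[-1,1])\ge n+1$. Again $\sigma(\IZ)\subset\{\IQ,\IZ_p,\Rp:p\in\Pi\}$, so the Division Formula~\ref{divisionZ}(2) applies with $d(\IZ)=\{\IQ,\IQ_p:p\in\Pi\}$; since $\zetaa^{\IQ}(\{0\},[-1,1])=\zetaa^{\IQ_p}(\{0\},[-1,1])=0$ by the base case, it yields $\zetaa^{\IZ}(A\times\{0\},X\times[-1,1])\le 1+\zetaa^{\IZ}(A,X)+0$. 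Combining the two inequalities gives $\zetaa^{\IZ}(A,X)\ge n$, i.e.\ $A$ is a homological $Z_n$-set in $X$.

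I expect the only real work to be bookkeeping rather than a new geometric idea: verifying the base-case computation for $\{0\}\subset[-1,1]$ and, above all, checking the side conditions that unlock the two one-sided formulas, namely that $\sigma(\IZ)$ is contained in $\{\IQ,\IZ_p,\Rp:p\in\Pi\}$ (so that both the ``$+1$'' Multiplication Formula and the Division Formula are available for integral coefficients) and that the nowhere-density hypothesis of Theorem~\ref{divhomZ}(2) is present, which in the direct implication is supplied automatically by $A$ being a homological $Z_0$-set.
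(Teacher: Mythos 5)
Your proof is correct and follows essentially the paper's own route: the paper deduces this corollary in one line from Theorem~\ref{divhomZ}(2), and your argument simply spells out the two inequalities that derivation implicitly uses --- the sharper Multiplication Formula~\ref{multform}(2), available because $\sigma(\IZ)=\{\Rp:p\in\Pi\}$, for the forward direction, and Division Formula~\ref{divisionZ}(2) (equivalently, a direct K\"unneth computation, since $H_1(V,V\setminus\{0\})\cong\IZ$ is free) for the converse, together with the base computation $\zetaa^G(\{0\},[-1,1])=0$. Your check that $A$ is closed and nowhere dense (via Theorem~\ref{Zsets}(5), so that Theorem~\ref{divhomZ}(2) is applicable) is exactly the bookkeeping the paper leaves to the reader.
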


Next we turn to the $k$-Root Theorem.

\begin{theorem}[$k$-Root Theorem]\label{kRootZ} Let $A$ be a closed subset in a topological space $X$, $k\in\IN$, and $G$ be a coefficient group. Then
\begin{enumerate}
\item $k\cdot\zetaa^G(A,X)\le \zetaa^G(A^k,X^k)\le  k\cdot\zetaa^G(A,X)+2k-2$.
\item $k\cdot\zetaa^G(A,X)+k-1\le \zetaa^G(A^k,X^k)$ provided  $\sigma(G)\subset\{\IQ,\IZ_p,\Rp:p\in\Pi\}$;
\item $\zetaa^G(A^k,X^k)\le k\cdot\zetaa^G(A,X)+k$,
provided $\sigma(G)\subset\{\IQ,\IZ_p,\IQ_p:p\in\Pi\}$;
\item $\zetaa^G(A^k,X^k)=k\cdot\zetaa^G(A,X)+k-1$, provided  $\sigma(G)\subset\{\IQ,\IZ_p:p\in\Pi\}$;
\end{enumerate}
\end{theorem}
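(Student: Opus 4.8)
The plan is to establish all four inequalities by induction on $k$ through the splitting $A^k=A\times A^{k-1}$, using the Multiplication and Division Formulas of Theorems~\ref{multform} and~\ref{divisionZ} (together with the finer Lemma~\ref{divbock}) and the reduction $\zetaa^G(\cdot)=\min_{H\in\sigma(G)}\zetaa^H(\cdot)$ from Theorem~\ref{zeta}(1). The computational backbone is the exact field identity obtained by iterating Lemma~\ref{divbock}(1): for any field $F\in\{\IQ,\IZ_p\}$ one has $\zetaa^F(A^k,X^k)=k\,\zetaa^F(A,X)+(k-1)$. Item (4) follows at once, since for $\sigma(G)\subset\{\IQ,\IZ_p\}$ every member of $\sigma(G)$ is a field and the monotone map $x\mapsto kx+(k-1)$ commutes with taking minima, giving $\zetaa^G(A^k,X^k)=k\,\zetaa^G(A,X)+(k-1)$.

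I would next dispatch the lower bounds. Iterating the Multiplication Formula~\ref{multform}(1) gives $\zetaa^G(A^k,X^k)\ge k\,\zetaa^G(A,X)$, the left inequality of (1); under the hypothesis $\sigma(G)\subset\{\IQ,\IZ_p,\Rp\}$ of (2) the sharper Multiplication Formula~\ref{multform}(2) applies, and its iteration yields $\zetaa^G(A^k,X^k)\ge k\,\zetaa^G(A,X)+(k-1)$.

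For the upper bounds of (1) and (3) the key device is again Theorem~\ref{zeta}(1): choose a Bockstein group $H_0\in\sigma(G)$ with $\zetaa^{H_0}(A,X)=\zetaa^G(A,X)=:n$, so that $\zetaa^G(A^k,X^k)\le\zetaa^{H_0}(A^k,X^k)$, and bound the right-hand side by the type of $H_0$. When $H_0$ is a field the exact identity gives $kn+(k-1)$. When $H_0=\IQ_p$, the Division Formula~\ref{divisionZ}(1) (here $\varphi(\IQ_p)=\{\IZ_p\}$), combined with $\zetaa^{\IZ_p}(A,X)\le\zetaa^{\IQ_p}(A,X)$ from Theorem~\ref{zeta}(5) and the field identity for $\IZ_p$, gives $\zetaa^{\IQ_p}(A^k,X^k)\le 2+n+\bigl((k-1)n+(k-2)\bigr)=kn+k$. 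As $\Rp\notin\sigma(G)$ in (3), this already proves (3); and since $kn+k\le kn+2k-2$ for $k\ge2$ (with $k=1$ trivial), it settles every case of (1) except $H_0=\Rp$.

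The main obstacle is precisely the case $H_0=\Rp$ of item (1), where the elementary estimates lose one unit. Indeed, feeding the exact field values into Theorem~\ref{zeta}(4) at the level of $A^k$, together with the base relation $\min\{\zetaa^{\IQ}(A,X),\zetaa^{\IZ_p}(A,X)\}\le\zetaa^{\Rp}(A,X)+1$ extracted from Theorem~\ref{zeta}(4),(5), yields only $\zetaa^{\Rp}(A^k,X^k)\le k(n+1)+(k-1)=kn+2k-1$. To remove the extra unit I would argue directly from the K\"unneth Formula. Since $\zetaa^{\Rp}(A,X)=n<\infty$, there is an open $U$ with $H_{n+1}(U,U\setminus A)\otimes\Rp\ne0$; taking $W=U^k$ and using $(U,U\setminus A)^{\times k}=(U^k,U^k\setminus A^k)$, I compute $H_*(W,W\setminus A^k;\Rp)$ by iterating the K\"unneth Formula. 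If the class in $H_{n+1}(U,U\setminus A)$ has infinite order, its tensor power already produces a nonzero $\Rp$-class in degree $k(n+1)$, so $\zetaa^{\Rp}(A^k,X^k)\le kn+k-1$. Otherwise the relevant part is $p$-torsion and $p$-divisible, so by Lemma~\ref{dyer} the tensor of two such groups vanishes (e.g. $\IQ_p\otimes\IQ_p=0$), killing every K\"unneth term that pairs two factors by $\otimes$, while their iterated torsion product survives and, each $\mathrm{Tor}$ raising the homological degree by one, lands in degree $k(n+1)+(k-1)=kn+2k-1$, remaining nonzero after $\otimes\,\Rp$. Thus the lowest $\Rp$-obstruction of $A^k$ occurs in degree $kn+2k-1$, giving $\zetaa^{\Rp}(A^k,X^k)\le kn+2k-2$ and completing item (1). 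This sharpening is mirrored at the formula level by Lemma~\ref{divbock}(3) applied with $B=C=A^{k-1}$, once the $\IQ_p$-root estimate is upgraded to its field-like form in the regime $\zetaa^{\IZ_p}(A,X)=\zetaa^{\IQ_p}(A,X)$.
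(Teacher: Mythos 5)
Your proposal follows the paper's own proof almost step for step: the paper also proves the items in the order (2), (4), (3), (1), derives the lower bounds by iterating the Multiplication Formulas~\ref{multform}(1),(2), gets the field identity $\zetaa^F(A^k,X^k)=k\zetaa^F(A,X)+k-1$ by applying Lemma~\ref{divbock}(1) $k-1$ times, reduces via Theorem~\ref{zeta}(1) to a Bockstein group $H$ realizing $\zetaa^G(A,X)$, bounds the $\IQ_p$ case by $kn+k$ through $\IZ_p$ (the paper uses $\zetaa^{\IQ_p}(A^k,X^k)\le\zetaa^{\IZ_p}(A^k,X^k)+1$ from Theorem~\ref{zeta}(5) where you route through Division Formula~\ref{divisionZ}(1); both give the same bound), and handles the residual case $H=\Rp$ of item (1) by exactly your direct K\"unneth argument with the dichotomy ``element of infinite order versus element of order $p$'' in the first nonvanishing group $H_{n+1}(U,U\setminus A)$.

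There is, however, one incorrect subclaim in your $\Rp$-torsion case, though it is excisable. You assert that if no infinite-order element exists then ``the relevant part is $p$-torsion and $p$-divisible, so the tensor of two such groups vanishes, killing every K\"unneth term that pairs two factors by $\otimes$.'' Nothing forces $p$-divisibility here: the hypothesis $H_{n+1}(U,U\setminus A)\otimes\Rp\ne0$ is perfectly compatible with non-divisible $p$-torsion (e.g.\ a $\IZ_p$-summand, and $\IZ_p\otimes\IZ_p=\IZ_p\ne0$), so the tensor terms need not die. Fortunately this step is superfluous: the K\"unneth Formula presents $H_m(U^k,U^k\setminus A^k)$ as a \emph{direct sum} of the tensor and torsion terms, so there is no cancellation to fear --- once $\mathrm{Tor}(H_i,H_j)$ contains an element of order $p$ (Lemma~\ref{dyer}(2)), so does the full homology group in the corresponding degree, and the induction lands an order-$p$ element in $H_{kn+2k-1}(U^k,U^k\setminus A^k)$, whence $H_{kn+2k-1}(U^k,U^k\setminus A^k)\otimes\Rp\ne0$; this is precisely how the paper argues. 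Relatedly, your phrase ``the lowest $\Rp$-obstruction of $A^k$ occurs in degree $kn+2k-1$'' is an overclaim you neither need nor establish: for the upper bound $\zetaa^{\Rp}(A^k,X^k)\le kn+2k-2$ it suffices to exhibit \emph{some} obstruction in degree at most $kn+2k-1$, and any lower-degree obstruction would only improve the bound. The closing remark about Lemma~\ref{divbock}(3) with $B=C=A^{k-1}$ is vague but carries no weight in the argument. With the spurious divisibility step deleted, your proof is correct and coincides with the paper's.
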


\begin{proof} The items of this theorem will be proved in the following order: 2,4,3,1. There is nothing to prove if $k=1$. So we assume that $k\ge 2$.

2. The second item follows by induction from the Multiplication Formula~\ref{multform}(2).

4. Assume that $\sigma(G)\subset\{\IQ,\IZ_p:p\in\Pi\}$. By Theorem~\ref{zeta}(1), there is a Bockstein group $F\in\sigma(G)$ with $\zetaa^F(A,X)=\zetaa^G(A,X)$.
Since $F$ is a field, we may apply Lemma~\ref{divbock}(1) $k-1$ times and get the equality $\zetaa^F(A^k,X^k)=k\cdot\zetaa^F(A,X)+k-1$. Then $$k\cdot \zetaa^G(A,X)+k-1=k\cdot \zetaa^F(A,X)+k-1= \zetaa^F(A^k,X^k)\ge \zetaa^G(A^k,X^k)\ge k\cdot \zetaa^G(A,X)+k-1$$ (the last inequality follows from the preceding item)
which yields the equality from the fourth item of the theorem.
\smallskip

3. Assume that $\sigma(G)\subset\{\IQ,\IZ_p,\IQ_p:p\in\Pi\}$ and using Theorem~\ref{zeta}(1), find a Bockstein group $H\in\sigma(G)$ with $\zetaa^G(A,X)=\zetaa^H(A,X)$.
 For this group we also get $\zetaa^G(A^k,X^k)\le\zetaa^H(A^k,X^k)$ according to Theorem~\ref{zeta}(1).
If  $H\in\{\IQ,\IZ_p:p\in\Pi\}$, then $$\zetaa^G(A^k,X^k)\le\zetaa^H(A^k,X^k)=k\cdot \zetaa^H(A,X)+k-1=k\cdot \zetaa^G(A,X)+k-1$$ by the preceding case.

So it remains to consider the case $H=\IQ_p$ for a prime $p$. Applying Theorem~\ref{zeta}(5) and the second item of this theorem, we get
$$
\zetaa^G(A^k,X^k)\le \zetaa^{\IQ_p}(A^k,X^k)\le \zetaa^{\IZ_p}(A^k,X^k)+1=\\
k\cdot\zetaa^{\IZ_p}(A,X)+k\le k\cdot \zetaa^{\IQ_p}(A,X)+k =k\cdot \zetaa^G(A,X)+k.
$$

1. The inequality $k\cdot\zetaa^G(A,X)\le \zetaa^G(A^k,X^k)$ follows by induction from Theorem~\ref{multform}(1). To prove the inequality $\zetaa^G(A^k,X^k)\le   k\cdot \zetaa^G(A,X)+2k-2$, apply Theorem~\ref{zeta}(1) to find a group $H\in\sigma(G)$ such that $\zetaa^H(A,X)=\zetaa^G(A,X)$. If $H\subset\{\IQ,\IZ_p,\IQ_p:p\in\Pi\}$, then $$
\zetaa^G(A^k,X^k)\le\zetaa^H(A^k,X^k)\le k\cdot \zetaa^H(A,X)+k=
k\cdot \zetaa^G(A,X)+k\le \zetaa^G(A,X)+2k-2.$$

So it remains to consider the case of $H=\Rp$ for a prime $p$ and
prove that $\zetaa^G(A^k,X^k)\le  k\cdot \zetaa^G(A,X)+2k-2$.
Assuming the converse, we conclude that $A^k$ is a
$G$-homological $Z_{kn+k-1}$-set in $X^k$ for
$n=\zetaa^G(A,X)+1=\zetaa^{\Rp}(A,X)+1$. It follows from the
definition of $\zetaa^{\Rp}(A,X)=n-1$ that the homology group
$H_{n}(U,U\setminus A;\Rp)=H_{n}(U,U\setminus A)\otimes\Rp$ is
not trivial for some open set $U\subset X$. Applying
Lemma~\ref{dyer}, we get that  $H_{n}(U,U\setminus A)$ contains
an element $a$ that has either infinite order or order $p$.

If $a$ has infinite order then the tensor product $H_n(U,U\setminus A)\otimes H_n(U,U\setminus A)$ contains an element of infinite order and so does group $H_{2n}(U^2,U^2\setminus A^2)$ according to the K\"unneth Formula.
Now, we can show by induction that the group $H_{kn}(U^k,U^k\setminus A^k)$ contains an element of infinite order, which implies that
$H_{kn}(U^k,U^k\setminus A^k)\otimes\Rp=H_{kn}(U^k,U^k\setminus A^k;\Rp)\ne0$ and hence $A^k$ fails to be a $\Rp$-homological $Z_{kn}$-set in $X^k$. Then we get a contradiction: $$\zetaa^G(A^k,X^k)\le\zetaa^{\Rp}(A^k,X^k)<kn\le kn+k-1\le \zetaa^G(A^k,X^k).$$
If $a$ is of order $p$, then the torsion product \mbox{$H_n(U,U\backslash A)*H_n(U,U\backslash A)$} contains an element of order $p$ and so does group $H_{2n+1}(U^2,U^2\setminus A^2)$ according to the K\"unneth Formula.  Proceeding by induction, we can show that the homology group $H_{i(n+1)-1}(U^i,U^i\setminus A^i)$ contains an element of order $p$. For $i=k$, the group  $H_{kn+k-1}(U^k,U^k\setminus A^k)$ contains an element of order $p$ and hence $H_{kn+k-1}(U^k,U^k\setminus A^k)\otimes \Rp=H_{kn+k-1}(U^k,U^k\setminus A^k;\Rp)$ is not trivial by Lemma~\ref{dyer}. This means that $A^k$ is not a $\Rp$-homological $Z_{kn+k-1}$-set in $X$ and thus $$\zetaa^G(A^k,X^k)\le\zetaa^{\Rp}(A^k,X^k)\le kn+k-2<\zetaa^G(A^k,X^k),$$
which is a contradiction.
\end{proof}

\section{$Z_n$-points}\label{Zpoints}

A point $x$ of a space $X$ is defined to be a {\em $Z_n$-point} if
its singleton $\{x\}$ is a $Z_n$-set in $X$. By analogy we define
homotopical and $G$-homological $Z_n$-points. It is clear that all
results proved in the preceding sections for $Z_n$-sets concern
also $Z_n$-points. For $Z_n$-points there are however some
simplifications. In particular, the Excision Property for Singular
Homology Theory (see Theorem 4 in \cite[4.6]{Spa}) allows us to
characterize homological $Z_n$-points as follows.

\begin{proposition} For a point $x$ of a
regular topological space $X$ and a coefficient group $G$
the following conditions are equivalent:
\begin{enumerate}
\item $x$ is a $G$-homological $Z_n$-point in $X$;
\item $H_k(X,X\setminus\{x\};G)=0$ for all $k<n+1$;
\item there is an open neighborhood $U\subset X$ of $x$ such that 
$H_k(U,U\setminus\{x\};G)=0$ for all $k<n+1$.
\end{enumerate}
\end{proposition}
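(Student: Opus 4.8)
The plan is to reduce all three conditions to the single assertion that the local homology group $H_k(X,X\setminus\{x\};G)$ vanishes for $k<n+1$, using the Excision Property for singular homology. The key observation I would isolate first is that for \emph{every} open neighborhood $W$ of $x$ the inclusion of pairs $(W,W\setminus\{x\})\hookrightarrow(X,X\setminus\{x\})$ induces isomorphisms $H_k(W,W\setminus\{x\};G)\cong H_k(X,X\setminus\{x\};G)$ in every dimension $k$. This single lemma collapses the three conditions, since each of them asserts the vanishing of these (isomorphic) groups for a particular choice of neighborhood.

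To prove this excision isomorphism I would excise the set $Z=X\setminus W$ from the pair $(X,X\setminus\{x\})$. Since $X$ is regular (in particular $T_1$), the singleton $\{x\}$ is closed, so $X\setminus\{x\}$ is open and coincides with its own interior; and since $W$ is open with $x\in W$, the set $Z=X\setminus W$ is closed and misses $x$, whence $\overline Z=Z\subset X\setminus\{x\}=\mathrm{int}(X\setminus\{x\})$. The Excision Theorem (Theorem 4 in \cite[4.6]{Spa}) then applies and yields the desired isomorphism, because $X\setminus Z=W$ and $(X\setminus\{x\})\setminus Z=W\setminus\{x\}$.

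With this lemma in hand the three implications are almost immediate. The implication $(1)\Rightarrow(2)$ is the special case $U=X$ of condition $(1)$, and $(2)\Rightarrow(3)$ is witnessed by the neighborhood $U=X$. For the remaining, and only substantive, implication $(3)\Rightarrow(1)$, I would let $U_0$ be the neighborhood supplied by $(3)$; the excision lemma gives $H_k(X,X\setminus\{x\};G)\cong H_k(U_0,U_0\setminus\{x\};G)=0$ for $k<n+1$. Then for an arbitrary open $V\subset X$ I would split into two cases: if $x\notin V$ then $V\setminus\{x\}=V$ and $H_k(V,V;G)=0$ trivially; if $x\in V$ then $V$ is an open neighborhood of $x$ and the excision lemma gives $H_k(V,V\setminus\{x\};G)\cong H_k(X,X\setminus\{x\};G)=0$. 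Hence $H_k(V,V\setminus\{x\};G)=0$ for all open $V$ and all $k<n+1$, which is precisely condition $(1)$.

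The only genuine content is the verification of the excision hypothesis, and the one point demanding care is confirming that $x\notin\overline{X\setminus W}$; this is where the separation axiom enters, guaranteeing simultaneously that $\{x\}$ is closed (so that the interior condition is automatic) and that the closed complement of the open neighborhood $W$ stays off $x$. Everything else in the argument is formal.
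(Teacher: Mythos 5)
Your proposal is correct and is essentially the paper's own argument: the paper states this proposition as an immediate consequence of the Excision Property for singular homology (the very Theorem 4 of \cite[4.6]{Spa} that you invoke), leaving the verification of the excision hypotheses implicit. Your check that $\overline{X\setminus W}=X\setminus W\subset X\setminus\{x\}=\mathrm{int}(X\setminus\{x\})$, together with the trivial case $x\notin V$, is exactly the routine detail the paper omits.
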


In this section, given a topological space $X$ and a coefficient group $G$ we shall study the Borel complexity of the sets:
\begin{itemize}
\item $\Z_n(X)$ of all homotopical $Z_n$-points in $X$, and
\item $\Z_n^G(X)$ of all $G$-homological $Z_n$-points in $X$.
\end{itemize}

\begin{theorem}\label{Gdelta}  Let $X$ be a metrizable separable space and $G$ be a coefficient group.
 Then
\begin{enumerate}
\item the set  of $Z_n$-points in $X$ is a $G_\delta$-set in $X$;
\item the set $\Z_n(X)$ of homotopical $Z_n$-points is a $G_\delta$-set in $X$ provided $X$ is an $\LC[n]$-space;
\item the set of $G$-homological $Z_n$-points is a $G_\delta$-set in
$X$ if $|H_k(U;G)|\le\aleph_0$ for all open sets $U\subset X$ and all
$k<n+1$;
\item the set $\Z^G_n(X)$ of $G$-homological $Z_n$-points is a $G_\delta$-set in
$X$ provided is an $\lc[n]$-space;
\item the set of homotopical $Z_n$-points is a $G_\delta$-set in
$X$ if $X$ is an $\LC[2]$-space and $|H_k(U)|\le\aleph_0$ for all
open subsets $U\subset X$ and all $k<n+1$.
\end{enumerate}
\end{theorem}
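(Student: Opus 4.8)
The plan is to prove each item by exhibiting the relevant point-set as a countable intersection of open sets, exploiting throughout that a compact set avoiding a point $x$ avoids a whole ball around $x$, so that any membership condition ``witnessed by a map or chain whose support misses $x$'' defines an open set.

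For item (1), I would fix a metric $d$ on the separable space $X$ and a countable dense set $\{f_j:j\in\w\}$ in the (separable) mapping space $C(I^n,X)$ with the sup-metric. A density argument shows that $x$ is a $Z_n$-point if and only if for every $j$ and every $m$ there is a map $g:I^n\to X\setminus\{x\}$ with $\dist(f_j,g)<1/m$: any $f$ and tolerance $\e$ are handled by choosing $f_j$ within $\e/2$ of $f$ and $1/m<\e/2$. Writing $A_{j,m}$ for the set of $x$ admitting such a $g$, I note that $A_{j,m}$ is open, since if $g$ witnesses $x\in A_{j,m}$ then $g(I^n)$ is compact and misses $x$, hence misses a ball $B(x,\delta)$, and the same $g$ witnesses $x'\in A_{j,m}$ for all $x'\in B(x,\delta)$. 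Thus the set of $Z_n$-points is $\bigcap_{j,m}A_{j,m}$, a $G_\delta$. Item (2) is then immediate: in an $\LC[n]$-space the notions of $Z_n$-set and homotopical $Z_n$-set coincide (Theorem~\ref{Zsets}(2),(3)), so $\Z_n(X)$ equals the set of $Z_n$-points.

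Item (5) I would reduce to (2) and (3). For $n\le2$ the homotopical $Z_n$-points again coincide with the $Z_n$-points (as $\LC[2]\subset\LC[n]$), so (1) applies. For $n\ge2$, Theorem~\ref{Zsets}(4) together with Theorem~\ref{Z2+hZn=Zn} (valid since $\LC[2]\subset\LC[1]$) gives that the homotopical $Z_n$-points are exactly the points that are simultaneously homotopical $Z_2$-points and homological $Z_n$-points; the first set is $G_\delta$ by (2) applied with $n=2$ (here $X$ is $\LC[2]$), the second is $G_\delta$ by (3) with $G=\IZ$ under the hypothesis $|H_k(U)|\le\aleph_0$, and the intersection of two $G_\delta$ sets is $G_\delta$. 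Thus everything hinges on the homological items (3) and (4). By the preceding Proposition and the excision isomorphism $H_k(U,U\setminus\{x\};G)\cong H_k(X,X\setminus\{x\};G)$ (valid for every open $U\ni x$), being a $G$-homological $Z_n$-point is equivalent to $H_k(X,X\setminus\{x\};G)=0$ for all $k\le n$. Feeding the inclusion $i_x:X\setminus\{x\}\hookrightarrow X$ into the long exact sequence of the pair yields the short exact sequences $0\to\mathrm{coker}(i_{x*}^{(k)})\to H_k(X,X\setminus\{x\};G)\to\ker(i_{x*}^{(k-1)})\to0$, so the $Z_n$-point condition is equivalent to surjectivity of $i_{x*}^{(k)}:H_k(X\setminus\{x\};G)\to H_k(X;G)$ for all $k\le n$ together with injectivity of $i_{x*}^{(k)}$ for all $k\le n-1$.

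For item (3) the surjectivity half is painless: the target $H_k(X;G)$ is countable by hypothesis, and for a fixed class $\alpha$ the set $\{x:\alpha\in\mathrm{im}(i_{x*}^{(k)})\}=\bigcup\{X\setminus|z|:z\text{ a }k\text{-cycle with }[z]_X=\alpha\}$ is open, so the surjectivity-failure set is a countable union of closed sets. I would then write the complement of the $Z_n$-point set as the union of these surjectivity-failure sets and the injectivity-failure sets $I_k=\{x:i_{x*}^{(k)}\text{ not injective}\}$ for $k\le n-1$. The injectivity half is where I expect the main obstacle. For a fixed bounding chain $w$ the set $\{x:x\notin|\partial w|\text{ and }\partial w\text{ does not bound in }X\setminus\{x\}\}$ is again of the form (open)$\cap$(closed), hence $F_\sigma$, because the complement of ``$\partial w$ bounds in $X\setminus\{x\}$'' is the open set $\bigcup_{w':\partial w'=\partial w}(X\setminus|w'|)$; but $I_k$ is a priori the union of these over the uncountably many $(k+1)$-chains $w$. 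The role of the countability hypothesis is precisely to collapse this to a countable subunion: since $H_k(X\setminus\{x\};G)$ is countable and the supports of its cycles lie on compacta drawn from a fixed countable family (finite unions of closures of basic open sets), the kernel elements can be witnessed by chains from a countable list, making $I_k$ an $F_\sigma$.

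For item (4) I would first observe that in an $\lc[n]$-space surjectivity of $i_{x*}^{(k)}$ for $k\le n$ is automatic: any cycle can be pushed off the single point $x$ up to homology by subdividing it and replacing the pieces meeting a small neighborhood of $x$ using the local homological triviality built into the definition of $\lc[n]$ (cf. Lemma~\ref{lcn}). Hence the $Z_n$-point condition collapses to injectivity of $i_{x*}^{(k)}$ for $k\le n-1$, and the same $\lc[n]$-local structure localizes the non-bounding obstruction to a neighborhood of $x$, after which a countable-base argument shows $I_k$ is $F_\sigma$. In both (3) and (4) the delicate step is identical and is the heart of the matter: converting the condition ``$\partial w$ does not bound near $x$'', which is stable only under an uncountable family of witnesses, into a countable union of locally closed sets — achieved via countability of the homology groups in (3) and via the local homological structure in (4).
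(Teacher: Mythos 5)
Your treatment of items (1), (2), (3) and (5) is essentially the paper's own argument: the same countable dense family in $C(I^n,X)$ with the compact-support openness argument for (1)--(2), the same decomposition of the homological condition into surjectivity of $i_{x*}^{(k)}:H_k(X\setminus\{x\};G)\to H_k(X;G)$ for $k\le n$ and injectivity for $k\le n-1$ (you even get the degree ranges right, which the paper's own text blurs), and the same uniformization device for (3) --- though your phrase ``supports of its cycles lie on compacta drawn from a fixed countable family'' is not literally true and should be replaced by the paper's precise version: each compact support misses $\overline{U}_i$ for some basic $U_i\ni x$, and one fixes, for each $i$, representatives of the countable groups $H_k(X\setminus\overline{U}_i;G)$, noting that homology in $X\setminus\overline{U}_i$ transfers to homology in $X\setminus\{x\}$. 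The genuine gap is in item (4). Your opening claim there --- that in an $\lc[n]$-space surjectivity of $i_{x*}^{(k)}$ for $k\le n$ is automatic because ``any cycle can be pushed off the single point $x$ up to homology'' --- is false. The sphere $S^n$ is a compact $\LC[\infty]$- (hence $\lc[n]$-) space, and $i_{x*}^{(n)}$ is the zero map $0=H_n(S^n\setminus\{x\})\to H_n(S^n)=\IZ$ at every point; no point of $S^n$ is a homological $Z_n$-point, whereas your reduction to injectivity (which holds at every point of $S^n$, since $S^n\setminus\{x\}\cong\IR^n$) would declare all of them to be. The $\lc[n]$-property only lets you contract \emph{small} cycles in slightly larger neighborhoods; the obstruction to pushing a global cycle off $x$ is exactly the group $H_k(X,X\setminus\{x\};G)$ you are trying to characterize, so the argument is circular. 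Your follow-up claim that the ``local structure localizes the non-bounding obstruction'' is likewise unsupported: without countability of the homology groups you still face an uncountable family of witnesses, and $\lc[n]$ by itself supplies no countable list.

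What is actually needed for (4) --- and what the paper does --- is to \emph{derive the hypothesis of item (3)} from the $\lc[n]$-property rather than bypass it. First, by Theorem~\ref{bockstein}, $\Z_n^G(X)=\bigcap_{H\in\sigma(G)}\Z_n^H(X)$, which reduces the problem to the countably many \emph{countable} Bockstein coefficient groups $H$. Second, for a separable metrizable $\lc[n]$-space one proves that $H_i(U)$ is countable for every open $U\subset X$ and $i\le n$: every class is of the form $f_*(\beta)$ for a map $f:K\to U$ of a compact polyhedron from a fixed countable family $\K$, and by Lemma~\ref{lcn} the map $f$ may be replaced by a member of a fixed countable dense subset $\mathcal F_K\subset C(K,U)$ inducing the \emph{same} homomorphism on $H_i$; hence $H_i(U)=\bigcup_{K\in\K}\bigcup_{f\in\mathcal F_K}f_*(H_i(K))$ is a countable union of finitely generated groups. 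The Universal Coefficients Formula then makes $H_i(U;H)$ countable for countable $H$, and item (3) applies verbatim. So your (4) must be rewritten along these lines; as proposed, it proves a statement about the wrong set.
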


\begin{proof} 1. Let $\rho$ be a metric on $X$ generating the topology of $X$.
Since the space $X$ is metrizable and separable, so is the
function space $C(I^n,X)$ endowed with the sup-metric
$$\tilde\rho(f,g)=\sup_{z\in I^n}\rho(f(z),g(z)).$$ So, we may fix a
countable dense set $\{f_i\}_{i=1}^\infty$ in $C(I^n,X)$. Observe
that a closed subset $A\subset X$ fails to be a $Z_n$-set if and
only if there are a function $f\in C(I^k,X)$ and $\e>0$ such that
for each function $g\in C(I^n,X)$ with $\tilde \rho(f,g)<\e$ the
image $g(I^n)$ meets the set $A$. The density of $\{f_i\}$ implies
that $\tilde \rho(f,f_i)<\e/2$ for some $i\in\w$. Consequently,
$g(I^k)\cap A\ne\emptyset$ for all $g\in C(I^n,X)$ with $\tilde
\rho(g,f_i)<\e/2$.

Now for each $i,k\in\IN$ consider the set $$F_{i,k}=\{x\in
X:\forall g\in C(I^n,X)\; \;\;\tilde \rho(g,f_i)<1/k\Ra x\in
g(I^n)\}.$$ It is easy to see that the set $F_{i,k}$ is closed in
$X$. Moreover the preceding discussion implies that
$F=\bigcup_{i,k=1}^\infty F_{i,k}$ is the set of all points of $X$
that fail to be $Z_n$-points in $X$. Then its complement is a
$G_\delta$-set coinciding with the set of all $Z_n$-points in $X$.
\smallskip

2. If $X$ is an $\LC[n]$-space, then each $Z_n$-point is a homotopical $Z_n$-point in $X$ and consequently the set $\Z_n(X)$ of homotopical $Z_n$-points coincides with the $G_\delta$-set $X\setminus F$ of all $Z_n$-points.
\smallskip

3. Assume that the homology groups $H_k(U;G)$ are countable for
all $k<n+1$ and all open sets $U\subset X$. Let $\mathcal
B=\{U_i:i\in\w\}$ be a countable base of the topology for $X$ with
$U_0=\emptyset$. For every $i\in\w$ and $k<n+1$ use the
countability of the groups $H_k(X\setminus\overline{U}_i;G)$ to
find a countable sequence $(\alpha_{i,j})_{j\in\w}$ of cycles in
$X\setminus\overline{U}_i$ whose representatives $[\alpha_{i,j}]$
exhaust all non-zero elements of the groups $H_k(X\setminus\bar
U_i;G)$, $k<n+1$.

For every $j\in\w$ consider the open set $W_{0,j}=\{x\in X:
\alpha_{0,j}$ is homologous to some cycle in $X\setminus\{x\}\}$.
Also for every $i\in\IN$ and $j\in\w$ consider the open set
$W_{i,j}=\{x\in U_i:$ if $\alpha_{i,j}$ is null-homological in
$X$, then it is null-homological in $X\setminus\{x\}\}$. It
remains to prove that the $G_\delta$-set
$Z=\bigcap_{i,j\in\w}W_{i,j}$ coincides with the set $\Z_n^G(X)$ of all
$G$-homological $Z_n$-points in $X$. It is clear that $Z$ contains
all $G$-homological $Z_n$-points of $X$. Now take any point $x\in
Z$. Assuming that  $x$ is not a $G$-homological $Z_n$-point, find
$k<n+1$ such that $H_k(X,X\setminus\{x\};G)\ne0$. The exact
sequence $$H_{k}(X\setminus\{x\};G)\to H_{k}(X;G)\to
H_k(X,X\setminus\{x\};G)\to H_{k-1}(X\setminus\{x\};G)\to
H_{k-1}(X;G)$$ of the pair $(X,X\setminus\{x\})$ now implies that
for $m=k$ or $m=k-1$ the inclusion homomorphism
$i:H_m(X\setminus\{x\};G)\to H_m(X;G)$ fails to be an isomorphism.

If $i$ is not onto, then for some $j\in\w$ the element
$\alpha_{0,j}$ is homologous to no cycle in $X\setminus\{x\}$.
This means that $x\notin W_{0,j}$ and thus $x\notin Z$ which is a
contradiction.

If $i$ is not injective, then there is a $k$-cycle $\alpha$ in
$X\setminus\{x\}$ which is homologous to zero in $X$ but not in
$X\setminus\{x\}$. Since $\alpha$ has compact support, there is a
basic neighborhood $U_i$ of $x$ such that $\alpha$ is supported by
the set $X\setminus \overline{U}_i$. Find $j\in\w$ such that the
cycle $\alpha_{i,j}$ is homologous to $\alpha$ in $X\setminus
\overline{U}_i$. It follows that $\alpha_{i,j}$ is homologous to
zero in $X$ but not in $X\setminus \{x\}$. Then $x\notin W_{i,j}$
and hence $x\notin Z$. This contradiction completes the proof of
the third item.
\medskip

4. Assuming that $X$ is an $\lc[n]$-space we shall show that the set $\Z^G_n(X)$ is of type $G_\delta$ in $X$. Since $\Z_n^G(X)=\bigcap_{H\in\sigma(G)}\Z_n^H(X)$, it suffices to check that $\Z_n^H(X)$ is a $G_\delta$-set in $X$ for every countable group $H$. This will follows from the preceding item as soon as we show that for every open set $U\subset X$ the homology groups $H_i(U;H)$, $i\le n$, are at most countable.
In its turn, this will follow from the Formula of Universal Coefficients as soon as we check that the homology groups $H_i(U)$, $i\le n$, are at most countable.

Fix a countable family $\mathcal K$ of compact polyhedra containing a topological copy of each compact polyhedron. For each polyhedron $K\in\K$ fix a countable dense set $\mathcal F_K$ in the function space $C(K,U)$. Note that for each polyhedron $K\in\K$ the homology group $H_*(K)$ is finitely generated and hence at most countable.

For every homology element $\alpha\in H_i(U)$ with $i\le n$ there is a continuous map $f:K\to U$ of a compact polyhedron $K\in\K$ such that $\alpha\in f_*(H_i(K))$. Moreover, according to Lemma~\ref{lcn} we can assume that $f\in\mathcal F_K$. Then the homology group$$H_i(U)=\bigcup_{K\in\K}\bigcup_{f\in\mathcal F_K}f_*(H_i(K))$$is countable, being the countable union of finitely generated groups.
\smallskip

5. The fifth item follows from items (2), (4), and the characterization of homotopical $Z_n$-sets given by Theorem~\ref{Z2+hZn=Zn}.
\end{proof}

\section{On spaces whose all points are $Z_n$-points}\label{Zn}

In this section we introduce three classes of Tychonov spaces related to $Z_n$-points:
\begin{itemize}
\item $\Z_n$ the class of spaces $X$ with $X=\Z_n(X)$,
\item $\Z_n^G$ the class of spaces $X$ with $X=\Z_n^G(X)$;
\item $\cup_G\Z_n^G$ the union of classes $\Z_n^G$ over all coefficient groups.
\end{itemize}

For example, $\IR^{n+1}$ belongs to all of these classes while $\IR^n$ belongs to none of them. The classes $\Z_n$ play an important role in studying the general position properties from \cite{BV}.

By $\LC[n]$ (resp. $\lc[n]$) we shall denote the class of metrizable  $\LC[n]$-spaces (resp. $\lc[n]$-spaces).

The following corollary describes the relation between the introduced classes and can be easily derived from Theorems~\ref{Zsets}, \ref{Z2+hZn=Zn},  \ref{bockstein}, and \ref{bockstein2}.

\begin{corollary}\label{Zn-classes} Let $n\in\w\cup\{\infty\}$ and $G$ be a coefficient group. Then
\begin{enumerate}
\item $\Z_n\subset\Z_n^{\IZ}\subset\Z_n^G=\bigcap_{H\in\sigma(G)}\Z_n^H$;
\item $\Z_0=\Z_0^{\IZ}=\Z_0^G$;
\item $\LC[1]\cap \Z_1^{G}\subset\Z_1$;
\item $\LC[1]\cap\Z_2\cap\Z_n^{\IZ}\subset\Z_n$;
\item $\cup_G\Z^G_n=\Z^{\IQ}_n\cup\bigcup_{p\in\Pi}\Z_n^{\IQ_p}$;
\item $\Z_n^{\IZ}=\bigcap_{p\in\Pi}\Z_n^{\Rp}$;
\item $\Z_n^{\Rp}\subset\Z_n^{\IQ}\cap\Z_n^{\IZ_p}$, $\Z_n^{\IZ_p}\subset\Z_n^{\IQ_p}\subset\IZ_{n-1}^{\IZ_p}$, $\Z_n^{\IQ}\cap \Z_{n+1}^{\IQ_p}\subset\Z_n^{\Rp}$ for every prime number $p$.
\end{enumerate}
\end{corollary}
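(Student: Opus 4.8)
The plan is to exploit that every class membership in the statement is a pointwise condition: $X\in\Z_n^G$ exactly when each singleton $\{x\}$ is a $G$-homological $Z_n$-set in $X$, and $X\in\Z_n$ exactly when each $\{x\}$ is a homotopical $Z_n$-set in $X$. Consequently every one of the seven items is the specialization to singletons of a set-level implication already established for arbitrary closed sets, and the whole corollary reduces to quoting the relevant theorems and chasing the Bockstein families $\sigma(G)$. I would prove the items in the order $(1),(2),(3),(4),(7),(5),(6)$ so that later items may invoke earlier ones.

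First, for $(1)$ I would apply Theorem~\ref{Zsets}(4) pointwise (every homotopical $Z_n$-set is a $G$-homological $Z_n$-set) to get $\Z_n\subset\Z_n^G$, and Proposition~\ref{Z->G} to get $\Z_n^{\IZ}\subset\Z_n^G$; the identity $\Z_n^G=\bigcap_{H\in\sigma(G)}\Z_n^H$ is simply the pointwise reading of Theorem~\ref{bockstein}. Item $(2)$ then closes the chain at $n=0$: Theorem~\ref{Zsets}(5) says each $G$-homological $Z_0$-set is a homotopical $Z_0$-set, so $\Z_0^G\subset\Z_0$, and together with $(1)$ this forces equality throughout.

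Next, the items $(3)$ and $(4)$ involving $\LC[1]$-spaces combine the homotopical--homological comparison with local connectivity. For $(3)$ I would push a $G$-homological $Z_1$-point through Theorem~\ref{Zsets}(6) to get a $Z_1$-point, then through Theorem~\ref{Zsets}(3) (legitimate since $X$ is $\LC[1]$) to upgrade it to a homotopical $Z_1$-point, giving $\LC[1]\cap\Z_1^G\subset\Z_1$. For $(4)$ I would apply Theorem~\ref{Z2+hZn=Zn} to singletons: in an $\LC[1]$-space a homotopical $Z_2$-point that is also a homological $Z_n$-point is automatically a homotopical $Z_n$-point. The Bockstein comparisons in $(7)$ are the pointwise readings of Theorem~\ref{bockstein2}: parts (1) and (2) yield $\Z_n^{\Rp}\subset\Z_n^{\IQ}\cap\Z_n^{\IZ_p}$ and $\Z_n^{\IZ_p}\subset\Z_n^{\IQ_p}$, part (3) reindexed from $Z_{n+1}$ to $Z_n$ yields $\Z_n^{\IQ_p}\subset\Z_{n-1}^{\IZ_p}$, and part (4) yields $\Z_n^{\IQ}\cap\Z_{n+1}^{\IQ_p}\subset\Z_n^{\Rp}$.

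The two items requiring a genuine computation of $\sigma$ are $(5)$ and $(6)$, and I expect $(5)$ to be the main obstacle. For $(6)$ I would compute $\sigma(\IZ)=\{\Rp:p\in\Pi\}$ directly from the definition (the quotient $\IZ/\Tor(\IZ)=\IZ$ is not divisible, while $\pTor(\IZ)=0$ is divisible by every $p$, so neither $\IQ$ nor $\IZ_p$ nor $\IQ_p$ occurs) and then invoke $(1)$. For $(5)$ the inclusion $\supseteq$ is immediate since $\IQ$ and each $\IQ_p$ are coefficient groups; the delicate direction is $\subseteq$, where the union $\cup_G\Z_n^G$ ranges over \emph{all} coefficient groups, so I must show that membership in any single $\Z_n^G$ already forces membership in $\Z_n^{\IQ}$ or in some $\Z_n^{\IQ_p}$. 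The plan is to pick any $H\in\sigma(G)$ (nonempty for nontrivial $G$), use $\Z_n^G\subset\Z_n^H$ from $(1)$, and then collapse the four possible shapes of $H$: if $H=\IQ$ or $H=\IQ_p$ we land in the target union directly, if $H=\IZ_p$ then $\Z_n^{\IZ_p}\subset\Z_n^{\IQ_p}$ by $(7)$, and if $H=\Rp$ then $\Z_n^{\Rp}\subset\Z_n^{\IQ}$ by $(7)$. The only point to watch is that each reduction must funnel into a divisible Bockstein group, which is precisely what parts (1) and (2) of Theorem~\ref{bockstein2} guarantee.
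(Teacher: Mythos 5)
Your proposal is correct and is essentially the paper's own argument: the paper gives no detailed proof, stating only that the corollary ``can be easily derived from Theorems~\ref{Zsets}, \ref{Z2+hZn=Zn}, \ref{bockstein}, and \ref{bockstein2}'', and your pointwise specializations of exactly these results (plus Proposition~\ref{Z->G}, the computation $\sigma(\IZ)=\{\Rp:p\in\Pi\}$ for item (6), and the four-case Bockstein reduction into the divisible groups for item (5)) constitute precisely the intended derivation.
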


For a better visual presentation of our subsequent results, let us introduce the following operations on subclasses $\mathcal A,\mathcal B\subset\Top$ of the class $\Top$ of topological spaces:
$$\begin{gathered}
\mathcal A\times\mathcal B=\{A\times B:A\in\mathcal A,\; B\in\mathcal B\},\\
\frac{\mathcal A}{\mathcal B}=\{X\in\Top:\exists B\in\mathcal B\mbox{ with } X\times B\in\mathcal A\},\\
\mathcal A^k=\{A^k:A\in\mathcal A\}\mbox{  and }\sqrt[k]{\mathcal A}=\{A\in\Top:A^k\in\mathcal A\}.
\end{gathered}
$$

Multiplication Formulas~\ref{multform} imply the following three Multiplication Formulas for the classes $\Z_n$ and $\Z_n^G$.
\smallskip

\begin{theorem}[Multiplication Formulas] Let $n,m\in\w\cup\infty$, $X,Y$ be Tychonov spaces.
\begin{enumerate}
\item If $X\in\Z_n$ and $Y\in\Z_m$, then $X\times Y\in\Z_{n+m+1}$:
\smallskip

\frame{\phantom{$\Big|$}
$\Z_n\times\Z_m\subset \Z_{n+m+1}$\phantom{$\Big|$}}
\smallskip

\item If  $X\in\Z^G_n$, $Y\in\Z^G_m$ for a coefficient group $G$, then $X\times Y\in\Z^G_{n+m}$:
\smallskip

\frame{\phantom{$\Big|$}
$\Z^G_n\times\Z^G_m\subset \Z^G_{n+m}$\phantom{$\Big|$}}

\item If  $X\in\Z^R_n$, $Y\in\Z^R_m$ for a coefficient group $R$ with
$\sigma(R)\subset\{\IQ,\IZ_p,\Rp:p\in\Pi\}$, then $X\times
Y\in\Z^R_{n+m+1}$:
\smallskip

\frame{\phantom{$\Big|$}
$\Z^R_n\times\Z^R_m\subset \Z^R_{n+m+1}$\phantom{$\Big|$}}
\end{enumerate}
\end{theorem}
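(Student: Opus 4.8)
The plan is to derive all three inclusions by a single pointwise reduction to the Multiplication Formulas~\ref{multform}, applied to singletons. The crucial dictionary is that membership $X\in\Z_n$ (resp.\ $X\in\Z_n^G$) means exactly that $\zetaa(x,X)\ge n$ (resp.\ $\zetaa^G(x,X)\ge n$) for \emph{every} point $x\in X$, since by definition $\zetaa(x,X)=\zetaa(\{x\},X)$ records the largest $k$ for which the singleton $\{x\}$ is a homotopical $Z_k$-set, and similarly in the homological case. The second key observation is that an arbitrary point of the product $X\times Y$ has the form $(x,y)$ with singleton $\{(x,y)\}=\{x\}\times\{y\}$, so the $Z$-set behaviour of a point of the product is governed by that of the two factor singletons.

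First I would fix an arbitrary point $(x,y)\in X\times Y$. Because $X$ and $Y$ are Tychonov, hence $T_1$, the singletons $\{x\}\subset X$ and $\{y\}\subset Y$ are closed, so Theorem~\ref{multform} applies verbatim to the closed sets $A=\{x\}$ and $B=\{y\}$. For item (1), from $X\in\Z_n$ and $Y\in\Z_m$ I have $\zetaa(x,X)\ge n$ and $\zetaa(y,Y)\ge m$, and Multiplication Formula~\ref{multform}(3) gives $\zetaa((x,y),X\times Y)=\zetaa(\{x\}\times\{y\},X\times Y)\ge\zetaa(x,X)+\zetaa(y,Y)+1\ge n+m+1$. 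Thus $(x,y)$ is a homotopical $Z_{n+m+1}$-point, and since $(x,y)$ was arbitrary, $X\times Y\in\Z_{n+m+1}$.

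The remaining two items follow the same template with the homological versions of the formula. For item (2) I apply Multiplication Formula~\ref{multform}(1) to the factor singletons to obtain $\zetaa^G((x,y),X\times Y)\ge\zetaa^G(x,X)+\zetaa^G(y,Y)\ge n+m$, so that each point of $X\times Y$ is a $G$-homological $Z_{n+m}$-point and hence $X\times Y\in\Z_{n+m}^G$. For item (3), under the hypothesis $\sigma(R)\subset\{\IQ,\IZ_p,\Rp:p\in\Pi\}$ I instead invoke Multiplication Formula~\ref{multform}(2), which supplies the extra summand $+1$, yielding $\zetaa^R((x,y),X\times Y)\ge\zetaa^R(x,X)+\zetaa^R(y,Y)+1\ge n+m+1$ and therefore $X\times Y\in\Z_{n+m+1}^R$.

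I do not anticipate a genuine obstacle: all the real work has already been absorbed into the Multiplication Formulas, and the present statement is simply their reformulation at the level of classes of spaces. The only points requiring mild care are bookkeeping ones, namely that the values $n,m$ are allowed to be infinite, where the convention $\infty+k=\infty$ makes every one of the three inequalities degenerate correctly, and that the pointwise estimates $\zetaa(x,X)\ge n$ must hold \emph{simultaneously} for all points, which is precisely the defining condition of the classes $\Z_n$ and $\Z_n^G$.
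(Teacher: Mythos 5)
Your proposal is correct and is precisely the argument the paper intends: the paper states the theorem as an immediate consequence of Multiplication Formulas~\ref{multform}, and your pointwise reduction --- applying those formulas to the closed singletons $\{x\}$, $\{y\}$ with $\{(x,y)\}=\{x\}\times\{y\}$ and matching items (3), (1), (2) of~\ref{multform} to items (1), (2), (3) of the statement --- is exactly that deduction, spelled out. Your bookkeeping remarks (singletons are closed since Tychonov spaces are $T_1$; the convention for $n,m=\infty$) are the only details the paper leaves implicit, and you handle them correctly.
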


The Multiplication Formulas for the classes $\Z_n^G$ can be reversed.

\begin{theorem}[Division Formulas] Let $n,m\in\w\cup\infty$, $X,Y$ be Tychonov spaces.
\begin{enumerate}
\item If $X\times Y\in\Z_{n+m+1}^{G}$ for a coefficient group $G$, then either $X\in\Z_n^{G}$ or $Y\in\bigcup_{F\in\varphi(G)}\Z_{m}^{F}$. This can be written as
\smallskip

\frame{\phantom{$\Big|^{\Big|}$}
$\dfrac{\Z^{G}_{n+m+1}}{\Top\setminus \Z^G_n}\subset\bigcup_{F\in\varphi(G)}\Z_{m}^{F}
$\phantom{$\Big|_{\Big|}$}}
\smallskip

\item If $X\times Y\in\Z_{n+m}^{R}$ for a coefficient group $R$ with $\sigma(R)\subset\{\IQ,\IZ_p,\Rp:p\in\Pi\}$, then either $X\in\Z_n^{R}$ or $Y\in\bigcup_{H\in d(R)}\Z_{m}^{H}$. This can be written as
\smallskip

\frame{\phantom{$\Big|^{\Big|}$}
$\dfrac{\Z^{R}_{n+m}}{\Top\setminus \Z^R_n}\subset\bigcup_{H\in d(R)}\Z_{m}^{H}
$\phantom{$\Big|_{\Big|}$}}
\end{enumerate}
\end{theorem}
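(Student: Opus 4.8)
The plan is to deduce both items from the point-level Division results of the preceding section, chiefly Lemma~\ref{divbock}, by unfolding the meaning of the quotient class and of membership in $\Z^G_m$, and then splitting into Bockstein cases. Recall that $X\in\Z^G_k$ means exactly $\zetaa^G(x,X)\ge k$ for every $x\in X$, and that by the definition of $\frac{\mathcal A}{\mathcal B}$ item (1) is equivalent to the implication: if $X\notin\Z^G_n$ and $X\times Y\in\Z^G_{n+m+1}$, then $Y\in\Z^F_m$ for some $F\in\varphi(G)$. So I would fix a point $x_0\in X$ witnessing $X\notin\Z^G_n$, i.e. $\zetaa^G(x_0,X)<n$, and, using Theorem~\ref{zeta}(1), choose a Bockstein group $D\in\sigma(G)$ with $\zetaa^D(x_0,X)=\zetaa^G(x_0,X)<n$. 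Since $X\times Y\in\Z^G_{n+m+1}\subset\Z^D_{n+m+1}$ by Corollary~\ref{Zn-classes}(1), the problem reduces to producing an $F\in\varphi(D)\subset\varphi(G)$ with $Y\in\Z^F_m$, and everything is now expressed through the single Bockstein coefficient group $D$.

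The argument then proceeds by cases on the type of $D$, each handled by the matching part of Lemma~\ref{divbock} applied to the singletons $\{x_0\}$ and $\{y\}$. If $D\in\{\IQ,\IZ_p\}$ is a field, Lemma~\ref{divbock}(1) gives $\zetaa^D((x_0,y),X\times Y)=\zetaa^D(x_0,X)+\zetaa^D(y,Y)+1$ for each $y\in Y$; together with $\zetaa^D((x_0,y),X\times Y)\ge n+m+1$ and $\zetaa^D(x_0,X)\le n-1$ this forces $\zetaa^D(y,Y)\ge m+1$, so $Y\in\Z^D_m$ with $D\in\varphi(G)$. If $D=\IQ_p$, Lemma~\ref{divbock}(2) analogously yields $\zetaa^{\IZ_p}(y,Y)\ge m$ for all $y$, hence $Y\in\Z^{\IZ_p}_m$ with $\IZ_p\in\varphi(\IQ_p)$. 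For item (2) the same two subcases run with $\Z_{n+m}$ in place of $\Z_{n+m+1}$ and with the family $d(G)$; the $\IQ_p$ type is excluded there by the hypothesis $\sigma(G)\subset\{\IQ,\IZ_p,\Rp:p\in\Pi\}$.

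The genuine obstacle is the case $D=\Rp$, where $\varphi(\Rp)=\{\IQ,\IZ_p\}$ has two elements and a naive use of the sup-bound in Theorem~\ref{divisionZ}(1) only shows that \emph{each} point $y$ is good for \emph{some} field, i.e. $Y=\Z^{\IQ}_m(Y)\cup\Z^{\IZ_p}_m(Y)$, which is strictly weaker than the required $Y\in\Z^{\IQ}_m$ or $Y\in\Z^{\IZ_p}_m$. To pass from this pointwise dichotomy to a uniform one I would argue by contradiction, exploiting the $\min$-structure of Lemma~\ref{divbock}(3) at \emph{two} witness points simultaneously: assuming $Y\notin\Z^{\IQ}_m$ and $Y\notin\Z^{\IZ_p}_m$, pick $y_1$ with $\zetaa^{\IQ}(y_1,Y)\le m-1$ and $y_2$ with $\zetaa^{\IZ_p}(y_2,Y)\le m-1$ (so $\zetaa^{\IQ_p}(y_2,Y)\le m$ by Theorem~\ref{zeta}(5)), and apply Lemma~\ref{divbock}(3) to $A=\{x_0\}\subset X$, $B=\{y_1\}\subset Y$, $C=\{y_2\}\subset Y$. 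Since $\zetaa^{\Rp}((x_0,y_i),X\times Y)\ge n+m+1$ while $\zetaa^{\Rp}(x_0,X)+1\le n$, the two entries of the minimum are bounded below by $(n+m+1)-(m-1)=n+2$ and $(n+m+1)-m=n+1$, forcing $n\ge n+1$, a contradiction. For item (2) the $\Rp$ subcase is even cleaner, since $d(\Rp)=\{\IQ,\IQ_p\}$ matches the two coefficient groups appearing in Lemma~\ref{divbock}(3) directly, with no slack. The finite cases carry all the content; the cases $n=\infty$ or $m=\infty$ follow from the very same inequalities read with the convention $\alpha<\infty$, or by reducing to finite levels via $\Z^G_\infty=\bigcap_k\Z^G_k$.
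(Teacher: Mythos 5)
Your proposal is correct and follows essentially the same route as the paper's proof: reduce to a single Bockstein group $D\in\sigma(G)$ attaining $\zetaa^G(x_0,X)$ via Theorem~\ref{zeta}(1), then split into the field, $\IQ_p$, and $\Rp$ cases using Lemma~\ref{divbock}(1), (2), (3) respectively, with the $\Rp$ case handled exactly as in the paper by applying Lemma~\ref{divbock}(3) to two witness points (for $\IQ$ and for $\IZ_p$, using $\zetaa^{\IQ_p}\le\zetaa^{\IZ_p}+1$) simultaneously. Your direct (rather than by-contradiction) phrasing of the field and $\IQ_p$ cases, and your explicit treatment of $n=\infty$ or $m=\infty$, are only cosmetic refinements of the paper's argument.
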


\begin{proof} 1. Assume that $X\notin\Z_n^G$ and $Y\notin\bigcup_{F\in\varphi(G)}\Z^F_m$. Then there is a point $x\in X$ with $\zetaa^G(x,X)<n$ and for every field $F\in\varphi(G)$ there is a point $y_F\in Y$ with $\zetaa^F(y_F,Y)<m$. By Theorem~\ref{bockstein}, $\zetaa^G(x,X)=\zetaa^{H}(x,X)$ for some group $H\in\sigma(G)$.
If $H$ is a field, then $H\in\varphi(H)\subset\varphi(G)$ and by Lemma~\ref{divbock}, $\zetaa^H((x,y_H),X\times Y)=\zetaa^H(x,X)+\zetaa^H(y_H,Y)+1\le (n-1)+(m-1)+1=n+m-1$, which means that $(x,y_H)$ is not an $H$-homological $Z_{n+m}$-set in $X\times Y$ and thus $X\times Y\notin\Z_{n+m}^H\supset \Z_{n+m}^G$.

If $H=\Rp$ for some $p$, then $\{\IQ,\IQ_p\}=d(\Rp)$ and $\IZ_p\in \varphi(G)$. By Lemma~\ref{divbock}(3),
$$\zetaa^H(x,X)+1\ge \min\{\zetaa^H((x,y_{\IQ}),X\times Y)-\zetaa^{\IQ}(y_{\IQ},Y),  \zetaa^H((x,y_{\IZ_p}),X\times Y)-\zetaa^{\IQ_p}(y_{\IZ_p},Y)\}.$$ Therefore, either  $\zetaa^H((x,y_{\IQ}),X\times Y)\le 1+\zetaa^H(x,X)+\zetaa^{\IQ}(y_{\IQ},Y)\le (n-1)+(m-1)+1$ or $\zetaa^H((x,y_{\IZ_p}),X\times Y)\le 1+ \zetaa^H(x,X)+\zetaa^{\IQ_p}(y_{\IZ_p},Y)\le    1+(n-1)+\zetaa^{\IZ_p}(y_{\IZ_p},Y)+1=n+m.$
In both cases, $X\times Y\notin \Z^H_{n+m+1}$.

If $H=\IQ_p$, then $\IZ_p\in\varphi(G)$. By Lemma~\ref{divbock},
$$\zetaa^H((x,y_{\IZ_p}),X\times Y)\le \zetaa^H(x,X)+\zetaa^{\IZ_p}(y_{\IZ_p},Y)+2\le (n-1)+(m-1)+2=n+m,$$
which implies that $X\times Y\notin\Z^H_{n+m+1}$.
\smallskip

2. Assume that $X\notin\Z_n^R$ and $Y\notin\bigcup_{F\in d(R)}\Z^F_m$ for a group $R$ with $\sigma(R)\subset\{\IQ,\IZ_p,\Rp:p\in\Pi\}$. Then there is a point $x\in X$ with $\zetaa^R(x,X)<n$ and for every group $H\in d(R)$ there is a point $y_H\in Y$ with $\zetaa^H(y_H,Y)<m$.
  By Theorem~\ref{bockstein}, $\zetaa^R(x,X)=\zetaa^{H}(x,X)$ for some group $H\in\sigma(R)\subset\{\IQ,\IZ_p,\Rp:p\in\Pi\}$.  Theorem~\ref{bockstein} implies that $\Z^R_{n+m}\subset\Z_{n+m}^H$.

If $H$ is a field, then repeating the reasoning from the preceding item, we can prove that $$\zetaa^H((x,y_H),X\times Y)=\\ \zetaa^H(x,X)+\zetaa^H(y_H,Y)+1\le (n-1)+(m-1)+1=n+m-1$$ which means that $X\times Y\notin\Z^H_{n+m}$.

If $H=\Rp$ for some $p$, then $\{\IQ,\IQ_p\}=d(\Rp)\subset d(G)$. By Lemma~\ref{divbock}(3),
$$\zetaa^H(x,X)+1\ge\\ \min\{\zetaa^H((x,y_{\IQ}),X{\times} Y)-\zetaa^{\IQ}(y_{\IQ},Y),\zetaa^H((x,y_{\IZ_p}),X{\times} Y)-\zetaa^{\IQ_p}(y_{\IQ_p},Y)\}.$$ Therefore, either $$\zetaa^H((x,y_{\IQ}),X\times Y)\le 1+\zetaa^H(x,X)+\zetaa^{\IQ}(y_{\IQ},Y)\le (n-1)+(m-1)+1=n+m-1\mbox{ or}$$
$$\zetaa^H((x,y_{\IZ_p}),X\times Y)\le 1+ \zetaa^H(x,X)+\zetaa^{\IQ_p}(y_{\IQ_p},Y)\le 1+(n-1)+(m-1)=n+m-1.$$ 
In both cases, $X\times Y\notin \Z^H_{n+m}\supset\Z^R_{n+m}$.
\end{proof}

Finally, we prove  $k$-Root Formulas for the classes $\Z_n^G$.

\begin{theorem}[$k$-Root Formulas] Let $n\in\w\cup\infty$, $k\in\IN$, $X$ be a topological space, and $G$ be a coefficient group.
\begin{enumerate}
\item  If $X^k\in\Z^{G}_{kn+k-1}$, then $X\in\Z_n^{G}$:
\smallskip

\frame{\phantom{$\Bigg|$}
$\sqrt[k]{\Z^{G}_{nk+k-1}}\subset\Z^{G}_n$ \phantom{$\Bigg|$}}
\smallskip

\item If $\sigma(G)\subset\{\IQ,\IZ_p,\IQ_p:p\in\Pi\}$ and $X^k\in\Z^{G}_{kn+1}$, then $X\in\Z_n^{G}$:
\smallskip

\frame{\phantom{$\Bigg|$}
$\sigma(G)\subset\{\IQ,\IZ_p,\IQ_p:p\in\Pi\}\;\Rightarrow\;\sqrt[k]{\Z^G_{nk+1}}\subset\Z^{G}_n$ \phantom{$\Bigg|$}}
\smallskip

\item If $\sigma(G)\subset\{\IQ,\IZ_p:p\in\Pi\}$ and $X^k\in\Z^{G}_{kn}$, then $X\in\Z_n^{G}$:
\smallskip

\frame{\phantom{$\Bigg|$}
$\sigma(G)\subset\{\IQ,\IZ_p:p\in\Pi\}\;\Rightarrow\;\sqrt[k]{\Z^G_{nk}}=\Z^{G}_n$ \phantom{$\Bigg|$}}
\smallskip
\end{enumerate}
\end{theorem}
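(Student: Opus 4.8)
The plan is to reduce all three formulas to the pointwise $k$-Root Theorem~\ref{kRootZ} by applying it to singletons. Recall that $X\in\Z^G_m$ precisely when $\zetaa^G(x,X)\ge m$ for every point $x\in X$, and likewise $X^k\in\Z^G_m$ precisely when $\zetaa^G(p,X^k)\ge m$ for every $p\in X^k$. The crucial observation is that for a point $x\in X$ the singleton $A=\{x\}$ has $A^k=\{x\}^k=\{(x,\dots,x)\}$, the diagonal point $\bar x\in X^k$. Hence Theorem~\ref{kRootZ}, applied to the closed set $A=\{x\}$, relates $\zetaa^G(x,X)=\zetaa^G(A,X)$ to $\zetaa^G(\bar x,X^k)=\zetaa^G(A^k,X^k)$. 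Since membership of $X^k$ in a class $\Z^G_m$ forces all points, in particular all diagonal points, to be $Z_m$-points, we obtain a lower bound on $\zetaa^G(\bar x,X^k)$ that can be fed into Theorem~\ref{kRootZ}.

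For the first formula I would fix $x\in X$ and use the universal upper bound $\zetaa^G(A^k,X^k)\le k\cdot\zetaa^G(A,X)+2k-2$ from Theorem~\ref{kRootZ}(1). If $X^k\in\Z^G_{nk+k-1}$ then $nk+k-1\le\zetaa^G(\bar x,X^k)\le k\cdot\zetaa^G(x,X)+2k-2$, so $k\cdot\zetaa^G(x,X)\ge nk-k+1$, i.e. $\zetaa^G(x,X)\ge n-1+\tfrac1k$. As $\zetaa^G(x,X)$ is an integer (or $\infty$), this yields $\zetaa^G(x,X)\ge n$, and since $x$ was arbitrary, $X\in\Z^G_n$. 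The second formula is identical in structure, now invoking the sharper bound $\zetaa^G(A^k,X^k)\le k\cdot\zetaa^G(A,X)+k$ of Theorem~\ref{kRootZ}(3), valid under the hypothesis $\sigma(G)\subset\{\IQ,\IZ_p,\IQ_p:p\in\Pi\}$: from $nk+1\le k\cdot\zetaa^G(x,X)+k$ one again gets $\zetaa^G(x,X)\ge n-1+\tfrac1k$, hence $\ge n$.

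The third formula asserts an equality $\sqrt[k]{\Z^G_{nk}}=\Z^G_n$, so both inclusions are needed. For $\sqrt[k]{\Z^G_{nk}}\subset\Z^G_n$ I would run the same argument using the exact equality $\zetaa^G(A^k,X^k)=k\cdot\zetaa^G(A,X)+k-1$ of Theorem~\ref{kRootZ}(4), which holds because $\sigma(G)\subset\{\IQ,\IZ_p:p\in\Pi\}$; from $nk\le k\cdot\zetaa^G(x,X)+k-1$ the same rounding gives $\zetaa^G(x,X)\ge n$. For the reverse inclusion $\Z^G_n\subset\sqrt[k]{\Z^G_{nk}}$ the diagonal trick is not enough, since an arbitrary point $p=(x_1,\dots,x_k)\in X^k$ has $\{p\}=\{x_1\}\times\cdots\times\{x_k\}$ rather than a $k$-th power. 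Here I would instead iterate the Multiplication Formula~\ref{multform}(2): since $\{\IQ,\IZ_p:p\in\Pi\}\subset\{\IQ,\IZ_p,\Rp:p\in\Pi\}$, the inequalities $\zetaa^G(x_i,X)\ge n$ combine to give $\zetaa^G(\{p\},X^k)\ge\sum_{i=1}^k\zetaa^G(x_i,X)+(k-1)\ge nk+k-1\ge nk$, so every point of $X^k$ is a $Z_{nk}$-point and $X^k\in\Z^G_{nk}$.

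The only delicate points are bookkeeping ones. First, the passage from $\zetaa^G(x,X)\ge n-1+\tfrac1k$ to $\zetaa^G(x,X)\ge n$ relies on integrality of the function $\zetaa^G(\cdot,\cdot)$ together with $k\ge 2$ (the case $k=1$ being trivial). Second, the case $n=\infty$ must be read off separately: then $nk+k-1=nk+1=nk=\infty$, and each of the cited upper bounds forces $\zetaa^G(x,X)=\infty$ as well, so $X\in\Z^G_\infty$. No genuinely new argument beyond Theorem~\ref{kRootZ} and the Multiplication Formula~\ref{multform} is required.
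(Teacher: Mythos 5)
Your proof is correct and is essentially the paper's own argument: the paper likewise reduces each formula to the pointwise Theorem~\ref{kRootZ} applied to a singleton $\{x\}$ (whose $k$-th power is the diagonal point), merely phrased contrapositively --- assuming $\zetaa^G(x,X)\le n-1$ and bounding $\zetaa^G(\{x\}^k,X^k)$ above by $kn+k-2$, $kn$ and $kn-1$ in the three cases --- which avoids your rounding step $\zetaa^G(x,X)\ge n-1+\tfrac1k\Rightarrow\zetaa^G(x,X)\ge n$ but is otherwise the same computation, and both versions handle $n=\infty$ the same way. Your explicit verification of the reverse inclusion $\Z^G_n\subset\sqrt[k]{\Z^G_{nk}}$ in item 3 via the Multiplication Formula~\ref{multform}(2) is a small completeness bonus: the paper's written proof covers only the forward implications and leaves the equality in the third box to the previously established Multiplication Formulas.
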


\begin{proof} Assume that $X\notin\Z^G_n$ and find a point $x\in X$ with  $\zetaa^G(\{x\},X)<n$.

1. Applying Theorem~\ref{kRootZ}(1), we get $\zetaa^G(\{x\}^k,X^k)<k\cdot \zetaa^G(x,X)+2k-1\le k(n-1)+2k-1=kn+k-1$  and hence $X^k\notin\Z^G_{kn+k-1}$.
\smallskip

2. If $\sigma(G)\subset\{\IQ,\IZ_p,\IQ_p:p\in\Pi\}$, then we can apply Theorem~\ref{kRootZ}(3) to conclude that $\zetaa^G(\{x\}^k,X^k)\le k\cdot \zetaa^G(x,X)+k\le k(n-1)+k=kn$ and $X^k\notin\Z^G_{kn+1}$.
\smallskip

3. If $\sigma(G)\subset\{\IQ,\IZ_p:p\in\Pi\}$, then we can apply Theorem~\ref{kRootZ}(4) to conclude that $\zetaa^G(\{x\}^k,X^k)=k\cdot \zetaa^G(x,X)+k-1\le k(n-1)+k-1=kn-1$ and hence $X^k\notin\Z^G_{kn}$.
\end{proof}

\section{On spaces containing a dense set of (homological) $Z_n$-points}\label{DZn}

In this section we consider classes of Tychonov spaces containing dense sets of (homological) $Z_n$-points. More precisely, let
\begin{itemize}
\item $\DZ_n$ be the class of spaces $X$ with dense set $\Z_n(X)$ of  homotopical $Z_n$-points in $X$;
\item $\DZ_n^G$ be the class of spaces $X$ with dense set $\Z_n^G(X)$ of  $G$-homological $Z_n$-points;
\item $\cup_G\DZ_n^G$ be the union of classes $\DZ_n^G$ over all coefficient groups.
\end{itemize}

The classes $\DZ_n$ play an important role in the paper \cite{BV} devoted to general position properties.

It is clear that $\Z_n\subset\DZ_n$. On the other hand, a dendrite $D$ with dense set of end-points belongs to $\DZ_\infty$ but not to $\Z_1$.
By $\Baire$ we shall denote the class of metrizable separable Baire spaces.

The following proposition describes relation between the introduced classes.

\begin{proposition}\label{Zn-classes-p} Let $n\in\w\cup\{\infty\}$ and $G$ be a coefficient group. Then
\begin{enumerate}
\item $\DZ_n\subset\DZ_n^{\IZ}\subset\DZ_n^G\subset\bigcap_{H\in\sigma(G)}\DZ^H_n$;
\item $\Baire\cap \lc[n]\cap\bigcap_{H\in\sigma(G)}\DZ^H_n\subset\DZ^G_n$;
\item $\DZ_0=\DZ_0^G$;
\item $\LC[1]\cap \DZ_1^{G}\subset\DZ_1$;
\item $\LC[1]\cap\Z_2\cap\DZ_n^{\IZ}\subset\DZ_n$;
\item $\LC[n]\cap\Baire\cap\DZ_2\cap\DZ_n^{\IZ}\subset\DZ_n$.
\end{enumerate}
\end{proposition}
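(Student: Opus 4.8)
The plan is to reduce each of the six inclusions to a relation between the point-sets $\Z_n(X)$, $\Z_n^{\IZ}(X)$, $\Z_n^G(X)$ and $\Z_n^H(X)$ that was already established, for individual points, in the preceding sections, and then to combine it with one of two elementary density principles: (a) if $S\subseteq T\subseteq X$ and $S$ is dense in $X$, then $T$ is dense; and (b) in a Baire space a countable intersection of dense $G_\delta$-sets is dense. Items (1), (3), (4), (5) will use only principle (a), while (2) and (6) will additionally require (b) together with the $G_\delta$-results of Theorem~\ref{Gdelta}.

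First I would dispose of items (1), (3), (4), (5), which become purely set-theoretic once specialized to singletons. For (1) I would read off the chain of pointwise inclusions $\Z_n(X)\subseteq\Z_n^{\IZ}(X)\subseteq\Z_n^G(X)\subseteq\Z_n^H(X)$ (for $H\in\sigma(G)$) from Theorem~\ref{Zsets}(4), Proposition~\ref{Z->G} and Theorem~\ref{bockstein}, and apply (a). For (3), Theorem~\ref{Zsets}(4,5) give the equality $\Z_0(X)=\Z_0^G(X)$, so each density is equivalent to the other. For (4), on an $\LC[1]$-space the composition of Theorem~\ref{Zsets}(6) with Theorem~\ref{Zsets}(3) yields $\Z_1^G(X)\subseteq\Z_1(X)$, whence $\LC[1]\cap\DZ_1^G\subseteq\DZ_1$. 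For (5), the hypothesis $X\in\Z_2$ makes every point of $X$ a homotopical $Z_2$-point, so Theorem~\ref{Z2+hZn=Zn} applied to each singleton gives $\Z_n(X)=\Z_n^{\IZ}(X)$, and density transfers in the required direction.

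The substance lies in items (2) and (6). For (2) I would note that the hypotheses force $X$ to be metrizable, separable, $\lc[n]$ and Baire; that $\sigma(G)$ is countable, being a subset of the countable Bockstein family; and that, by Theorem~\ref{Gdelta}(4), each $\Z_n^H(X)$ with $H\in\sigma(G)$ is a $G_\delta$-set, dense because $X\in\DZ_n^H$. Principle (b) then makes $\bigcap_{H\in\sigma(G)}\Z_n^H(X)$ dense, and Theorem~\ref{bockstein} identifies this intersection with $\Z_n^G(X)$, giving $X\in\DZ_n^G$. For (6), after reducing to $n\ge2$ (the case $n\le2$ being immediate from $\Z_2(X)\subseteq\Z_n(X)$), I would establish the pointwise identity $\Z_n(X)=\Z_2(X)\cap\Z_n^{\IZ}(X)$ from Theorem~\ref{Z2+hZn=Zn}, observe that $\Z_2(X)$ is a dense $G_\delta$ by Theorem~\ref{Gdelta}(2) (using $X\in\LC[2]$) while $\Z_n^{\IZ}(X)$ is a dense $G_\delta$ by Theorem~\ref{Gdelta}(4) (using $\LC[n]\subseteq\lc[n]$), and conclude by (b).

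The main obstacle I anticipate is bookkeeping rather than conceptual: matching the precise hypotheses of Theorem~\ref{Gdelta} ($\LC[n]$ versus $\lc[n]$ versus $\LC[2]$, together with metrizability and separability) to what each item actually supplies, and checking that the passage to the auxiliary point-sets $\Z_2(X)$ and $\Z_n^{\IZ}(X)$ is legitimate. In particular one must handle the value $n=\infty$, where the relevant $G_\delta$-sets arise as countable intersections and where it is Theorem~\ref{Z2+hZn=Zn} (in its $Z_\infty$ form) that still yields $\Z_\infty(X)=\Z_2(X)\cap\Z_\infty^{\IZ}(X)$.
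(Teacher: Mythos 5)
Your proposal is correct and takes essentially the same route as the paper's own proof: items (1) and (3)--(5) via the pointwise inclusions among $\Z_n(X)$, $\Z_n^{\IZ}(X)$, $\Z_n^G(X)$ supplied by Theorems~\ref{Zsets} and \ref{Z2+hZn=Zn}, Proposition~\ref{Z->G} and Theorem~\ref{bockstein}, and items (2) and (6) via the $G_\delta$-statements of Theorem~\ref{Gdelta} combined with the Baire category argument (using that $\sigma(G)$ is countable and that $\Z_n^G(X)=\bigcap_{H\in\sigma(G)}\Z_n^H(X)$, resp.\ $\Z_n(X)=\Z_2(X)\cap\Z_n^{\IZ}(X)$ on $\LC[n]$-spaces). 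Your added bookkeeping --- the reduction to $n\ge 2$ in item (6) and the treatment of $n=\infty$ as a countable intersection --- is sound and merely makes explicit what the paper leaves implicit.
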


\begin{proof} 1. The first item follows from Theorem~\ref{Zsets}(4), Proposition~\ref{Z->G}, and Theorem~\ref{bockstein}.
\smallskip

2. If $X\in\bigcap_{H\in\sigma(G)}\DZ^H_n$ is a metrizable separable Baire $\lc[n]$-space, then for every group $H\in\sigma(G)$ the set $\Z_n^H(X)$ of $H$-homological $Z_n$-points is a dense $G_\delta$-set in X according to Theorem~\ref{Gdelta}(4). By Theorem~\ref{bockstein}, the intersection $\bigcap_{H\in\sigma(G)}\Z_n^H(X)$ coincides with $\Z^G_n(X)$ and is dense in  $X$, being the countable intersection of dense $G_\delta$-sets in the Baire space $X$. Hence $X\in\DZ_n^G$.
\smallskip

3-5. The items (3)--(5) follow immediately from Theorems~\ref{Zsets} and \ref{Z2+hZn=Zn}.

6. Assume that $X\in\DZ_2\cap\DZ^{\IZ}$ is a metrizable separable Baire $\LC[n]$-space. By Theorem~\ref{Gdelta}, the sets $\Z_2(X)$ and $\Z_n^{\IZ}(X)$ are dense $G_\delta$ in $X$. Since $X$ is Baire, the intersection $\Z_2(X)\cap \Z^{\IZ}_n(X)$ is dense $G_\delta$ in $X$. By Theorem~\ref{Z2+hZn=Zn}, the latter intersection consists of homotopical $Z_n$-points. So $X\in\DZ_n$.
\end{proof}

Multiplication Formulas for the classes $\DZ_n$ and $\DZ^G_n$ follow immediately from Multiplication Theorem~\ref{multZn} for homotopical and homological $Z_n$-sets.

\begin{theorem}[Multiplication Formulas]\label{multDZ} Let $n,m\in\w\cup\infty$ and $X,Y$ be Tychonov spaces.
\begin{enumerate}
\item If $X\in\DZ_n$ and $Y\in\DZ_m$, then $X\times Y\in\DZ_{n+m+1}$\textup{:}
\smallskip

\frame{\phantom{$\Big|$}
$\DZ_n\times\DZ_m\subset \DZ_{n+m+1}$\phantom{$\Big|$}}
\smallskip

\item If $X\in\DZ^G_n$ and $Y\in\DZ^G_m$ for a coefficient group $G$, then $X\times Y\in\DZ^G_{n+m}$\textup{:}
\smallskip

\frame{\phantom{$\Bigg|$}
$\DZ^G_n\times\DZ^G_m\subset \DZ^G_{n+m}$\phantom{$\Big|$}}

\item If $X\in\DZ^R_n$ and $Y\in\DZ^R_m$ for a coefficient group $R$ with $\sigma(R)\subset\{\IQ,\IZ_p,\Rp:p\in\Pi\}$, then $X\times Y\in\DZ^R_{n+m+1}$\textup{:}
\smallskip

\frame{\phantom{$\Bigg|$}
$\DZ^R_n\times\DZ^R_m\subset \DZ^R_{n+m+1}$\phantom{$\Big|$}}

\end{enumerate}
\end{theorem}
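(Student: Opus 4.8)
The plan is to deduce all three formulas from the Multiplication Theorem~\ref{multZn} applied to singletons, together with the elementary fact that a product of dense subsets is dense in the product space. The starting observation is that a point $x$ belongs to $\Z_n(X)$ (resp.\ to $\Z_n^G(X)$) precisely when the singleton $\{x\}$ is a homotopical (resp.\ $G$-homological) $Z_n$-set in $X$. Thus the dense sets of points appearing in the definitions of $\DZ_n$ and $\DZ_n^G$ are governed directly by the product behaviour of $Z_n$-sets, and the whole argument reduces to transporting that behaviour along singletons.

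For item (1) I would take arbitrary $x\in\Z_n(X)$ and $y\in\Z_m(Y)$. Then $\{x\}$ is a homotopical $Z_n$-set in $X$ and $\{y\}$ is a homotopical $Z_m$-set in $Y$, so by Theorem~\ref{multZn}(2) the singleton $\{(x,y)\}=\{x\}\times\{y\}$ is a homotopical $Z_{n+m+1}$-set in $X\times Y$; that is, $(x,y)\in\Z_{n+m+1}(X\times Y)$. This yields the inclusion $\Z_n(X)\times\Z_m(Y)\subset\Z_{n+m+1}(X\times Y)$. Since $\Z_n(X)$ is dense in $X$ and $\Z_m(Y)$ is dense in $Y$, their product is dense in $X\times Y$, and hence so is the larger set $\Z_{n+m+1}(X\times Y)$. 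Therefore $X\times Y\in\DZ_{n+m+1}$.

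Items (2) and (3) follow by the same two-step scheme, now invoking Theorem~\ref{multZn}(1) instead of its homotopical counterpart. For (2), the first assertion of Theorem~\ref{multZn}(1) shows that $(x,y)\in\Z_{n+m}^G(X\times Y)$ whenever $x\in\Z_n^G(X)$ and $y\in\Z_m^G(Y)$, giving $\Z_n^G(X)\times\Z_m^G(Y)\subset\Z_{n+m}^G(X\times Y)$ and hence density of $\Z_{n+m}^G(X\times Y)$. For (3), under the hypothesis $\sigma(R)\subset\{\IQ,\IZ_p,\Rp:p\in\Pi\}$ the \emph{Moreover} clause of Theorem~\ref{multZn}(1) upgrades the conclusion to an $R$-homological $Z_{n+m+1}$-set, producing $\Z_n^R(X)\times\Z_m^R(Y)\subset\Z_{n+m+1}^R(X\times Y)$ together with the corresponding density statement.

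I do not expect a serious obstacle here: all the homological and homotopical content is already packaged into Theorem~\ref{multZn}, and the only additional input is the routine fact that a product of two dense sets is dense in the product topology. The mildest point requiring care is simply to confirm that the definitions of the point-classes $\Z_n$ and $\Z_n^G$ match the singleton hypotheses of the Multiplication Theorem, so that the inclusions of point-sets are legitimate and the passage from ``dense set of $Z_n$-points'' to ``membership in $\DZ_n$'' is exactly the definitional one.
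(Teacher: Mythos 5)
Your proof is correct and matches the paper's argument exactly: the paper itself states that these Multiplication Formulas ``follow immediately from Multiplication Theorem~\ref{multZn}'', and the intended deduction is precisely your two-step scheme of applying Theorem~\ref{multZn} to singletons and then using density of a product of dense sets. Nothing further is needed.
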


Division and $k$Root Formulas for the classes $\DZ^G_n$ look as follows. For a class $\mathsf C$ of topological spaces by $\exists_\circ\kern-1pt\mathsf C$ we denote the class of topological spaces $X$ containing a non-empty open subspace $U\in\mathsf C$.

\begin{theorem}[Division Formulas]\label{division3} Let $X,Y$ be topological spaces.
\begin{enumerate}
\item If $X\times Y\in\DZ_{n+m}^F$ for some field $F$, then either $X\in\DZ_n^F$ or $Y\in\DZ_m^F$\textup{:}
\smallskip

\frame{\phantom{$\Big|^{\Big|}$}
$\dfrac{\DZ^F_{n+m}}{\Top\setminus \DZ^F_m}=\DZ^F_n$ \phantom{$\Big|_{\Big|}$}}
\smallskip

\item Assume that $X\times Y\in\DZ_{n+m}^{R}$ for a coefficient group $R$ with $\sigma(R)\subset\{\IQ,\IZ_p,\Rp:p\in\Pi\}$  and either $\sigma(R)$ is finite or $X$ is a metrizable separable Baire $\lc[n]$-space. Then either $X\in\DZ_n^{R}$ or else some non-empty open set $U\subset Y$ belongs to the class $\bigcup_{H\in d(R)}\DZ_{m}^{H}$\textup{:}
\smallskip

\frame{\phantom{$\Big|^{\Big|}$}
$\dfrac{\DZ^{R}_{n+m} }
{\Baire\cap\lc[n]\setminus\DZ^R_n}\subset \exists_\circ\kern-2pt\bigcup_{H\in d(R)}\DZ^{H}_{m}$\phantom{$\Big|_{\Big|}$}}

\item Assume that $X\times Y\in\DZ_{n+m+1}^{G}$ for a coefficient group $G$ and either $\sigma(G)$ is finite or $X$ is a metrizable separable Baire $\lc[n]$-space. Then either $X\in\DZ_n^{G}$ or else some non-empty open set $U\subset Y$ belongs to the class $\bigcup_{F\in\varphi(G)}\DZ^F_{m}$\textup{:}
\smallskip

\frame{\phantom{$\Big|^{\Big|}$}
$\dfrac{\DZ^{G}_{n+m+1} }
{\Baire\cap\lc[n]\setminus\DZ^G_n}\subset \exists_\circ\kern-2pt\bigcup_{F\in\varphi(G)}\DZ^F_{m}$\phantom{$\Big|_{\Big|}$}}
\end{enumerate}
\end{theorem}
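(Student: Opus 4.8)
The plan is to reduce all three items to the pointwise Division inequalities already in hand (Lemma~\ref{divbock} and Theorem~\ref{divisionZ}) combined with two soft facts: a projection argument pushing density of a point set in $X\times Y$ down to $Y$, and the observation that a \emph{finite} union of subsets which is dense must have one member dense on some non-empty open set. Throughout I translate ``$X\in\DZ^{\bullet}_k$'' into ``the set $\Z^{\bullet}_k(X)$ meets every non-empty open subset of $X$'', and I use that, by the excision characterization of homological $Z_k$-points, the property of being an $F$-homological $Z_k$-point is local, so $\Z^F_k(U)=\Z^F_k(Y)\cap U$ for every open $U\subset Y$.

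For item (1) I treat the boxed equality by two inclusions. For $\frac{\DZ^F_{n+m}}{\Top\setminus\DZ^F_m}\subset\DZ^F_n$, take $Y\notin\DZ^F_m$ with $X\times Y\in\DZ^F_{n+m}$ and pick a non-empty open $W\subset Y$ on which $\zetaa^F(y,Y)\le m-1$; for arbitrary non-empty open $V\subset X$, density of $\Z^F_{n+m}(X\times Y)$ gives $(x,y)\in V\times W$ with $\zetaa^F((x,y),X\times Y)\ge n+m$, whence Lemma~\ref{divbock}(1) forces $\zetaa^F(x,X)\ge n$, so $\Z^F_n(X)$ meets $V$. For the reverse inclusion I exhibit the witness $Y=\IR^m$ (finite $m$), each of whose points has $\zetaa^F=m-1$, so $\IR^m\notin\DZ^F_m$, while Lemma~\ref{divbock}(1) makes $\Z^F_n(X)\times\IR^m$ a dense set of $F$-homological $Z_{n+m}$-points of $X\times\IR^m$.

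For items (2) and (3) I would run a single common engine; assume $X\notin\DZ^G_n$ (resp.\ $\DZ^R_n$). First I secure one coefficient group whose associated index family is finite. If $\sigma(G)$ is finite I keep $G$ itself and note $\varphi(G)$ (resp.\ $d(R)$) is finite; otherwise $X$ is a metrizable separable Baire $\lc[n]$-space and the contrapositive of Proposition~\ref{Zn-classes-p}(2) furnishes a single Bockstein group $H\in\sigma(G)$ with $X\notin\DZ^H_n$, for which $\varphi(H)$ (resp.\ $d(H)$) is automatically finite and contained in $\varphi(G)$ (resp.\ $d(R)$). Call the chosen group $D$; fix a non-empty open $V\subset X$ with $\zetaa^D(x,X)\le n-1$ on $V$, and observe that $\Z^G_{n+m+1}(X\times Y)\subset\Z^D_{n+m+1}(X\times Y)$ by Theorem~\ref{bockstein} (and Theorem~\ref{zeta}(1)), so $X\times Y\in\DZ^D_{n+m+1}$ (resp.\ $\DZ^D_{n+m}$). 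Now apply Theorem~\ref{divisionZ}(1) (resp.\ (2)) pointwise: for each $(x,y)$ in the dense set $\Z^D_{n+m+1}(X\times Y)$ with $x\in V$ one gets $\sup_{F\in\varphi(D)}\zetaa^F(y,Y)\ge m$ (resp.\ $\sup_{H\in d(D)}\zetaa^H(y,Y)\ge m$), so $y$ lies in the \emph{finite} union $\bigcup_{F\in\varphi(D)}\Z^F_m(Y)$. A projection argument (every non-empty open $W\subset Y$ meets the dense set through $V\times W$) shows this finite union is dense in $Y$; therefore one member $\Z^F_m(Y)$ is dense on some non-empty open $U\subset Y$, and by locality $\Z^F_m(U)$ is dense in $U$, i.e.\ $U\in\DZ^F_m$ with $F\in\varphi(D)\subset\varphi(G)$ (resp.\ $d(D)\subset d(R)$), giving $Y\in\exists_\circ\bigcup_{F\in\varphi(G)}\DZ^F_m$.

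The hard part will be exactly the passage from a dense union $\bigcup_F\Z^F_m(Y)$ to a single coefficient field on an open set: when $\sigma(G)$ is infinite this union is infinite and a dense union of infinitely many sets need not localize to a single dense member, so the finite-union argument collapses. This is precisely where the hypothesis that $X$ be a Baire $\lc[n]$-space is indispensable; it enters through Proposition~\ref{Zn-classes-p}(2), which in turn rests on the $G_\delta$-property of homological $Z_n$-point sets (Theorem~\ref{Gdelta}(4)) and the Baire category theorem, and collapses the entire Bockstein family $\sigma(G)$ to one group $H$ with $\varphi(H)$ finite. I expect the bookkeeping of which finite index family ($\varphi$ versus $d$) appears, and checking that the chosen Bockstein group satisfies the structural hypothesis $\sigma(D)\subset\{\IQ,\IZ_p,\Rp:p\in\Pi\}$ needed to invoke Theorem~\ref{divisionZ}(2) in item (2), to be the only remaining routine points.
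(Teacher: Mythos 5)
Your proposal is correct and follows essentially the same route as the paper's own proof: item (1) via Lemma~\ref{divbock}(1) applied to a homological $Z_{n+m}$-point found in a product of bad open sets, and items (2),(3) via the pointwise Division inequalities of Theorem~\ref{divisionZ}(2),(1) together with a finite-union-of-nowhere-dense-sets localization on $Y$, where in the infinite-$\sigma$ case your appeal to the contrapositive of Proposition~\ref{Zn-classes-p}(2) encapsulates exactly the paper's inline reduction (Theorem~\ref{Gdelta}(4) plus Theorem~\ref{bockstein} plus Baire category) to a single Bockstein group $H$ with finite $\varphi(H)$ (resp.\ $d(H)$), and your checks that $\varphi(H)\subset\varphi(G)$, $d(H)\subset d(R)$ and $\sigma(H)\subset\{\IQ,\IZ_p,\Rp:p\in\Pi\}$ are precisely the bookkeeping the paper performs. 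Your only genuine addition is the explicit witness $Y=\IR^m$ for the reverse inclusion of the boxed equality in item (1) — a direction the paper's written proof leaves untreated — and your restriction of that witness to finite $m$ is appropriate.
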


\begin{proof} 1. Assume that $X\times Y\in\DZ^F_{n+m}$ for some field $F$, but $X\notin\Z^F_n$ and $Y\notin\DZ_m^F$. Let $U$ be the interior of the set $X\setminus\Z^F_n(X)$ and $V$ be the interior of the set $Y\setminus \Z^F_m(Y)$. The product $U\times V$ is a non-empty open subset of $X\times Y$ and thus contains some $F$-homological $Z_{n+m}$-point $(x,y)$ in $X\times Y$. By Lemma~\ref{divbock}(1) either $x\in \Z^F_n(X)$ or $y\in \Z^F_m(Y)$. Both cases are not possible by the choice of $U,V$. This contradiction shows that $X\in\DZ^F_n$ or $Y\in\DZ_m^F$.
\smallskip

2. Assume that $X\times Y\in\DZ_{n+m}^R$ for a group $R$ with $\sigma(R)\subset\{\IQ,\IZ_p,\Rp:p\in\Pi\}$  but $X\notin\DZ^{R}_n$. The latter means that $X$ contains a non-empty open set $W\subset X$ disjoint with the set $\Z_n^{R}(X)$ of $R$-homological $Z_n$-points of $X$.
It is necessary to find an open set $U\in \bigcup_{H\in d(R)}\DZ_m^H$.
Assume conversely that no such a set $U$ exists.
This means that for every $H\in d(G)$ the set $\Z_m^H(Y)$ is nowhere dense in $Y$.

If $\sigma(R)$ is finite, then so is the set $d(R)$ and we can
find a non-empty open set $U\subset X$ disjoint with
$\bigcup_{H\in d(R)}\Z_m^H(Y)$.

By Theorem~\ref{divisionZ}(2), no point $(x,y)\in W\times U$ is an $R$-homological $Z_{n+m}$-point in $X\times Y$. But this contradicts the density of the set $\Z_{n+m}^{R}(X\times Y)$ in $X\times Y$.

Next, we consider the case when $\sigma(R)$ is infinite and $X$ is a metrizable separable Baire $\lc[n]$-space. By Theorem~\ref{Gdelta}, for every group $H\in\sigma(R)$ the set $\Z_n^H(X)$ is of type $G_\delta$ in $X$ and by Theorem~\ref{bockstein}, $\Z_n^G(X)=\bigcap_{H\in\sigma(R)}\Z_n^H(X)$. Assuming that $\Z_n^R(X)$ is not dense in the Baire space $X$, we conclude that for some group $H\in\sigma(R)$ the set $\Z_n^H(X)$ is not dense in $X$.  Since $\sigma(H)=\{H\}$ is finite, we may apply the preceding case to conclude that some nonempty open set $U\subset Y$ belongs to the class $$\bigcup_{D\in d(H)}\DZ^D_m\subset \bigcup_{D\in d(G)}\DZ^D_m.$$

3. Assume that $X\times Y\in\DZ_{n+m+1}^G$ for a coefficient group $G$ but $X\notin\DZ^{G}_n$. The latter means that $X$ contains a non-empty open set $W\subset X$ disjoint with the set $\Z_n^{G}(X)$ of $G$-homological $Z_n$-points of $X$.
It is necessary to find an open set $U\in \bigcup_{F\in\varphi(G)}\DZ_{m}^F$.
Assume conversely that no such a set $U$ exists.
This means that for every $F\in \varphi(G)$ the set $\Z_{m}^F(Y)$ is nowhere dense in $Y$.

If $\sigma(G)$ is finite, then so is the set $\varphi(G)$, and we
can find a non-empty open set $U\subset X$ disjoint with
$\bigcup_{F\in\varphi(G)}\Z_{m}^F(Y)$. By
Theorem~\ref{divisionZ}(1), no point $(x,y)\in W\times U$ is a
$G$-homological $Z_{n+m+1}$-point in $X\times Y$. But this
contradicts the density of the set $\Z_{n+m+1}^{G}(X\times Y)$ in
$X\times Y$.

The case of infinite $\sigma(G)$ can be reduced to the previous case by the same argument as in the item 2.\end{proof}

\begin{theorem}[$k$-Root Formulas] Let $n\in\w\cup\{\infty\}$, $k\in\IN$,  $X$ be a topological space, and $G$ be a coefficient group.
\begin{enumerate}
\item $X\in\DZ^F_n$ for some field $F$ if and only if $X^k\in\DZ_{nk}^{F}$:
\smallskip

\frame{\phantom{$\Bigg|$}
$\sqrt[k]{\DZ^F_{nk}}= \DZ^{F}_n$\phantom{ \;$\Bigg|$}}
\smallskip

\item If $X$ is a metrizable separable Baire $\lc[n]$-space and  $X^k\in\DZ_{nk+k}^{G}$, then $X\in\DZ_n^{G}$:
\smallskip

\frame{\phantom{$\Bigg|$}
$\Baire\cap\lc[n]\cap\sqrt[k]{\DZ^{G}_{nk+k}}\subset \DZ^{G}_n$\phantom{$\Bigg|$\;}}

\item If $X$ is a metrizable separable Baire $\lc[nk+k-1]$-space and $X^k\in\DZ_{nk+k-1}^{G}$, then $X\in \DZ^G_n$:
\smallskip

\frame{\phantom{$\Bigg|$}
$\Baire\cap\lc[kn+k-1]\cap\sqrt[k]{\DZ^{G}_{nk+k-1}}= \DZ^{G}_n$\phantom{$\Bigg|$\;}}
\end{enumerate}
\end{theorem}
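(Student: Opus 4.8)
The plan is to prove all three formulas by reducing density statements about the coefficient group $G$ to density statements about the individual Bockstein groups $H\in\sigma(G)$, and then to settle each Bockstein case using the field formula of item (1) together with the comparison inequalities of Theorem~\ref{zeta}. The Multiplication Formulas of Theorem~\ref{multDZ} will supply the forward inclusions in the equalities (items (1) and (3)), the ``$+1$ per factor'' being available exactly under the stated coefficient restrictions; the substance lies in the reverse inclusions.

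First I would dispose of item (1), which needs neither the Baire nor the $\lc[n]$ hypothesis because $\sigma(F)=\{F\}$ is a single field. For the forward inclusion I take a basic box $U_1\times\cdots\times U_k$ inside a given nonempty open $W\subset X^k$, choose an $F$-homological $Z_n$-point $x_i\in U_i$ in each factor, and invoke Lemma~\ref{divbock}(1) $k-1$ times to get $\zetaa^F((x_1,\dots,x_k),X^k)=\sum_i\zetaa^F(x_i,X)+(k-1)\ge nk$, so $W$ meets $\Z^F_{nk}(X^k)$. For the reverse inclusion I take a nonempty open $W\subset X$, apply density of $\Z^F_{nk}(X^k)$ to the box $W^k$ to obtain a point $(x_1,\dots,x_k)\in W^k$ with $\sum_i\zetaa^F(x_i,X)\ge nk-(k-1)=k(n-1)+1$, and conclude by pigeonhole that some coordinate $x_i\in W$ has $\zetaa^F(x_i,X)\ge n$. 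Both arguments work verbatim for $n=\infty$.

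For items (2) and (3) the reduction runs as follows: to prove $X\in\DZ^G_n$ it suffices, by Theorem~\ref{bockstein}, to show each $\Z^H_n(X)$ with $H\in\sigma(G)$ is dense, since $\Z^G_n(X)=\bigcap_{H\in\sigma(G)}\Z^H_n(X)$; as $X$ is separable metrizable $\lc[n]$, each such set is of type $G_\delta$ by Theorem~\ref{Gdelta}(4), and the Baire property turns a countable intersection of dense $G_\delta$ sets into a dense set. This is precisely why the Baire and $\lc$ hypotheses enter here but not in item (1). By Proposition~\ref{Zn-classes-p}(1) the hypothesis $X^k\in\DZ^G_M$ passes to $X^k\in\DZ^H_M$ for every $H\in\sigma(G)$, so it remains to treat each Bockstein $H$. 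When $H$ is a field ($\IQ$ or $\IZ_p$), item (1) applies directly since $\DZ^H_M\subset\DZ^H_{nk}$ for $M\ge nk$. When $H=\IQ_p$ I would use Theorem~\ref{zeta}(5), $\zetaa^{\IZ_p}\le\zetaa^{\IQ_p}\le\zetaa^{\IZ_p}+1$, to squeeze the $\IQ_p$-index of the product point between the exactly computable $\IZ_p$-indices and then apply the pigeonhole of item (1); the generous bounds $M=nk+k$ (item (2)) and $M=nk+k-1$ (item (3)) are exactly what absorb the unit discrepancy.

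The main obstacle is the case $H=\Rp$, where there is no exact product formula and the two-sided comparison of Theorem~\ref{zeta}(4), namely $\min\{\zetaa^\IQ,\zetaa^{\IQ_p}-1\}\le\zetaa^{\Rp}\le\min\{\zetaa^\IQ,\zetaa^{\IZ_p}\}$, is asymmetric. A naive box/pigeonhole fails: the upper bound yields $X^k\in\DZ^\IQ_M\cap\DZ^{\IZ_p}_M$, and item (1) produces the dense sets $\Z^\IQ_{n+1}(X)$ and $\Z^{\IZ_p}_{n+1}(X)$ \emph{separately}, whereas the lower bound requires a single point at which $\zetaa^\IQ\ge n$ and $\zetaa^{\IQ_p}\ge n+1$ hold simultaneously. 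I would recover such a common point by intersecting the dense $G_\delta$ sets $\Z^\IQ_n(X)$ and $\Z^{\IZ_p}_{n+1}(X)$ in the Baire space $X$, their intersection lying in $\Z^{\Rp}_n(X)$ by Theorem~\ref{zeta}(4),(5). Here the delicate point is that $\Z^{\IZ_p}_{n+1}(X)$ is $G_\delta$ only when $X$ is $\lc[n+1]$, and it is exactly this level shift that forces the larger local-connectedness degree $kn+k-1$ (which is $\ge n+1$) in item (3); keeping careful track of these index and degree bookkeeping issues, and verifying that the prescribed bounds $nk+k$ and $nk+k-1$ leave enough slack for the $\Rp$ comparison, is the part I expect to require the most care.
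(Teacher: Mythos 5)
Your treatment of item (1), and of the field and $\IQ_p$ Bockstein cases of items (2) and (3), is sound and essentially the paper's own: the paper derives item (1) by induction from Theorems~\ref{multDZ}(2) and \ref{division3}(1), and your box-plus-pigeonhole argument is just the proof of Theorem~\ref{division3}(1) inlined and iterated. The genuine gap is the case $H=\Rp$, in two respects. First, in item (2) your plan needs $\Z^{\IZ_p}_{n+1}(X)$ to be a $G_\delta$-set, but Theorem~\ref{Gdelta}(4) gives this only for $\lc[n+1]$-spaces, while item (2) assumes only $\lc[n]$; so the ``delicate point'' you flagged blocks your own item (2), and deferring it to item (3) is a misdiagnosis (see below). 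Second, and decisively, in item (3) the squeeze cannot close at all: from $X^k\in\DZ^{\Rp}_{nk+k-1}\subset\DZ^{\IQ}_{nk+k-1}\cap\DZ^{\IZ_p}_{nk+k-1}$ and item (1) you get only $X\in\DZ^{\IQ}_n\cap\DZ^{\IZ_p}_n$, since $nk+k-1=(n+1)k-1$ falls exactly one short of $(n+1)k$; intersecting the dense $G_\delta$-sets $\Z^{\IQ}_n(X)$ and $\Z^{\IZ_p}_n(X)$ and applying Theorem~\ref{zeta}(4,5) then yields only $\zetaa^{\Rp}(x,X)\ge\min\{n,\ \zetaa^{\IQ_p}(x,X)-1\}\ge n-1$, i.e.\ $X\in\DZ^{\Rp}_{n-1}$, one short of the claim. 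There is no slack to absorb: $nk+k-1$ is the critical exponent, not a generous one, and since Lemma~\ref{divbock}(1) is an \emph{equality} for fields, no pigeonhole can upgrade $\DZ^{\IZ_p}_{nk+k-1}$ of the power to $\DZ^{\IZ_p}_{n+1}$ of the factor.

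The paper's proof of item (3) for $\Rp$ is a genuinely different argument, and it shows where $\lc[nk+k-1]$ is really used: not to make $\Z^{\IZ_p}_{n+1}(X)$ a $G_\delta$-set, but to make $\Z^{\Rp}_{nk+k-1}(X^k)$ a dense $G_\delta$-set in the \emph{product} $X^k$ (Theorem~\ref{Gdelta}(4) applied to $X^k$). Supposing some nonempty open $U\subset X$ misses $\Z^{\Rp}_n(X)$, the paper picks a single point $\vec x=(x_1,\dots,x_k)\in D^k\cap\Z^{\Rp}_{nk+k-1}(X^k)$, where $D=U\cap\Z^{\IQ}_n(X)$. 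Each $x_i$ is then a $\IQ$-homological but not $\Rp$-homological $Z_n$-point, so $H_{n_i}(X,X\setminus\{x_i\})$ contains an element of order $p$ for some $n_i\le n$ (infinite order being excluded by the $\IQ$-property). Iterating the K\"unneth Formula through the torsion products, which raise degree by one at each of the $k-1$ steps, produces $p$-torsion in $H_{m_k}(X^k,X^k\setminus\{\vec x\})$ with $m_k=k-1+\sum_i n_i\le nk+k-1$, whence $H_{m_k}(X^k,X^k\setminus\{\vec x\};\Rp)\ne0$, contradicting the choice of $\vec x$. It is exactly this torsion-product degree shift, unavailable to the Bockstein inequalities of Theorem~\ref{zeta}, that recovers the missing unit, and it is the idea your proposal lacks. (For what it is worth, the paper's own write-up of the $\Rp$ case of item (2) is terse at the same common-point step, asserting $\DZ^{\IQ}_n\cap\DZ^{\IQ_p}_{n+1}\subset\DZ^{\Rp}_n$ via the pointwise Theorem~\ref{bockstein2}(4), so your instinct that something delicate happens there was right --- but your fix does not follow from the stated hypotheses, and the item (3) failure is independent of it.)
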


\begin{proof} 1. The first item can be derived by induction from Theorems~\ref{multDZ}(2) and \ref{division3}(1).
\smallskip

2. Assume that $X$ is a metrizable separable Baire $\lc[n]$-space with $X^k\in\DZ^G_{kn+k}$. It follows from Theorem~\ref{Gdelta} that for every group $H\in\sigma(G)$ the set $\Z^H_n(X)$ is of type $G_\delta$ in $X$. Since $\Z_n^G(X)=\bigcap_{H\in\sigma(G)}\Z_n^H(X)$, the density of $\Z_n^G(X)$ in the Baire space $X$ will follow as soon as we prove the density of $\Z_n^H(X)$ in $X$ for each group $H\in\sigma(G)$.

If $H$ is a field, then the inclusion $X^k\in\DZ^G_{kn+k}\subset\DZ^H_{kn+k}$ combined with the first item implies that $X\in\DZ^H_{n+1}\subset\DZ^H_n$, which means that the set $\Z_n^H(X)$ is dense in $X$.

If $H=\Rp$ for some $p$, then Theorem~\ref{bockstein2}(1) implies that $X^k\in\DZ^{\Rp}_{nk+k}\subset\DZ^{\IQ}_{nk+k}\cap\DZ^{\IZ_p}_{nk+k}$ and the first item yields $X\in\DZ^{\IQ}_{n+1}\cap\DZ^{\IZ_p}_{n+1}$. Applying
Theorem~\ref{bockstein2}(2,4), we get $$X\in \DZ^{\IQ}_{n+1}\cap \DZ^{\IZ_p}_{n+1}\subset\DZ^{\IQ}_n\cap\DZ^{\IQ_p}_{n+1}\subset\DZ^{\Rp}_n=\DZ^H_n.$$

If $H=\IQ_p$ for some $p$, then Theorem~\ref{bockstein2}(3) implies that $X^k\in\DZ^{\IQ_p}_{nk+k}\subset \DZ^{\IZ_p}_{nk+k-1}\subset \DZ^{\IZ_p}_{nk}$ and the first item combined with Theorem~\ref{bockstein2}(2) implies $X\in\DZ^{\IZ_p}_n\subset\DZ^{\IQ_p}_n=\DZ^H_n$.
\smallskip

3. Assume that $X$ is a metrizable separable Baire $\lc[nk+k-1]$-space and $X^k\in\DZ^G_{nk+k-1}$.  Arguing as in the preceding case, we can reduce the problem to the case $G=\Rp$ for some prime number $p$.
By Theorem~\ref{Gdelta}, $\Z_{kn+k-1}^{\Rp}(X^k)$ is a dense $G_\delta$-set in $X^k$. Assuming that $\Z_n^{\Rp}(X)$ is not dense in $X$, find a non-empty open set $U\subset X$ disjoint with $\Z^{\Rp}_n(X)$.
Since $\DZ^{\Rp}_{nk+k-1}\subset\DZ^{\IQ}_{nk+k-1}\subset\DZ^{\IQ}_{nk}$, the first item implies that $X\in\DZ^{\IQ}_n$. Then $D=U\cap \DZ^{\IQ}_n(X)$ is a dense $G_\delta$-set in $U$ by Theorem~\ref{Gdelta} and hence $D^k$ is a dense $G_\delta$ set in $U^k$. Since $X^k$ is a Baire space, there is a point $\vec x\in D^k\cap \Z_{nk+k-1}^{\Rp}(X^k)$ which can be written as $\vec x=(x_1,\dots,x_k)$.

Each point $x_i$ is a $\IQ$-homological but not $\Rp$-homological $Z_n$-point in $X$. This means that for some $n_i\le n$ the group $H_{n_i}(X,X\setminus\{x_i\};\Rp)=H_{n_i}(X,X\setminus \{x_i\})\otimes\Rp$ is not trivial. Since $x_i$ is a $\IQ$-homological $Z_n$-set in $X$, the group $H_{n_i}(X,X\setminus\{x_i\})$ cannot contain an element of infinite order. Since $H_{n_i}(X,X\setminus\{x_i\})\otimes\Rp\ne0$, the group $H_{n_i}(X,X\setminus \{x_i\})$ contains an element of order $p$.

Let $m_i=i-1+\sum_{j=1}^i n_j$ for $i\le k$.
The torsion product $H_{n_1}(X,X\setminus\{x_1\})*H_{n_2}(X,X\setminus\{x_2\})$ contains an element of order $p$ and so does the group $H_{m_2}(X^2,X^2\setminus\{(x_1,x_2)\})$ by the K\"unneth Formula.
Now by induction we can show that for every $i\le k$ the homology group $H_{m_i}(X^i,X^i\setminus\{(x_1,\dots,x_i)\})$ contains an element of order $p$. For $i=k$, we get that the group $H_{m_k}(X^k,X^k\setminus\{\vec x\})$ contains an element of order $p$ and hence the tensor product
$$H_{m_k}(X^k,X^k\setminus\{\vec x\})\otimes\Rp=H_{m_k}(X^k,X^k\setminus\{\vec x\};\Rp)$$ is not trivial which is not possible because $m_k\le nk+k-1$ and $\vec x$ is a $\Rp$-homological $Z_{nk+k-1}$-point in $X^k$.
\end{proof}

\begin{remark} An example of a space in which $Z_\infty$-points
form a dense $G_\delta$-set is a dendrite $D$ with dense set of
end-points. Being large in sense of Baire category, the set of
$Z_\infty$-point of $D$ is small in geometric sense: it is locally
$\infty$-negligible in $D$. On the other hand, W.Kuperberg
\cite{Kuper} has constructed a finite polyhedron $P$ in which the
set of all $Z_\infty$-points fails to be locally
$\infty$-negligible. Yet, according to  I.Namioka \cite{Na}, the
set $Z$ of $Z_\infty$-points in a finite-dimensional
$\LC[\infty]$-space $X$ is small in a homological sense:
$H_k(U,U\setminus Z)=0$ for any open set $U\subset X$ and any
$k\in\w$. By its spirit this Namioka's result is near to our
Theorem~\ref{trans} asserting that a closed $\trt$-dimensional subspace $A\subset X$
 consisting of $G$-homological
$Z_\infty$-points of $X$ is a $G$-homological $Z_\infty$-set in
$X$.
\end{remark}

\section{$Z_n$-points and dimension}\label{sect:dim}

In this section we study the dimension properties of spaces whose all points are homological $Z_n$-points.
First we note a simple corollary of Theorem~\ref{indP1}.

\begin{theorem}\label{ind} If a \textup{(}separable metrizable\textup{)} topological space  $X\in\cup_G\Z_n^G$, then $\mathrm{trind}(X)\ge\trt(X)\ge 1+n$ \textup{(}and hence $\dim(X)\ge 1+n$\textup{)}.
\end{theorem}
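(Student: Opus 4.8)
The plan is to reduce everything to the single substantive inequality $\trt(X)\ge 1+n$, since the other two assertions are essentially bookkeeping: the bound $\mathrm{trind}(X)\ge\trt(X)$ is exactly the estimate recorded after the definition of $\trt$ (transfinite induction, cf. \cite[2.9]{ACP}), valid for arbitrary Tychonov $X$, and the parenthetical $\dim(X)\ge 1+n$ will follow from it in the separable metrizable setting: if $\dim(X)=\infty$ the conclusion is trivial, while if $\dim(X)<\infty$ the coincidence theorem gives $\mathrm{trind}(X)=\dim(X)$, so $\trt(X)\le\mathrm{trind}(X)=\dim(X)$ and hence $\dim(X)\ge\trt(X)\ge 1+n$. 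So I would fix a nonempty $X\in\Z_n^G$ for some (nontrivial) coefficient group $G$ and argue by contradiction, assuming $\trt(X)<1+n$.

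In the principal case $n<\infty$, the inequality $\trt(X)<1+n$ forces $\trt(X)=m$ with $m$ a finite ordinal and $0\le m\le n$ (here $m\ge 0$ because $X\ne\emptyset$). The key move is to apply Theorem~\ref{indP1} to the closed subspace $A=X$ of $X$ itself, with the free degree of that theorem set equal to $n-m\ge 0$. Since $(n-m)+m=n$ and, by hypothesis, every point of $X$ is a $G$-homological $Z_n$-point, the requirement ``each $a\in A$ is a $G$-homological $Z_{(n-m)+m}$-point'' is met, and Theorem~\ref{indP1} delivers that $X$ is a $G$-homological $Z_{n-m}$-set in $X$. As $n-m\ge 0$, this makes $X$ in particular a $G$-homological $Z_0$-set in itself, so taking $U=X$ and $k=0$ in the definition yields $H_0(X;G)=H_0(X,X\setminus X;G)=0$. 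This is absurd, because for a nonempty space and a nontrivial group $G$ the group $H_0(X;G)$ contains a copy of $G$ and is nonzero.

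The case $n=\infty$ I would handle separately, since then $\trt(X)<1+\infty=\infty$ only says that $X$ is $\trt$-dimensional and the value $\trt(X)$ may be an infinite ordinal, so the finite-degree Theorem~\ref{indP1} need not apply. Here I would invoke the infinite version, Theorem~\ref{trans}: as $X$ is a closed $\trt$-dimensional subspace of itself all of whose points are $G$-homological $Z_\infty$-points, that theorem makes $X$ a $G$-homological $Z_\infty$-set in $X$, hence again a $Z_0$-set, and the same contradiction $H_0(X;G)=0$ results. In either case the assumption $\trt(X)<1+n$ is untenable, so $\trt(X)\ge 1+n$, and the remaining inequalities follow as explained above.

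The only genuinely load-bearing point is the index bookkeeping in the application of Theorem~\ref{indP1}: one must lower the $Z$-degree by precisely $m=\trt(X)$ so as to land exactly on a $Z_0$-set, for which the defining condition is the transparent non-vanishing obstruction $H_0(X;G)\ne 0$. Everything else—the reduction to nonempty $X$, the split into finite and infinite $n$, and the passage from $\trt$ to $\dim$—is routine, which is why the result is rightly billed as a simple corollary of Theorem~\ref{indP1}.
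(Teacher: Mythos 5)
Your proposal is correct and matches the paper's intended argument exactly: the paper states Theorem~\ref{ind} without a written proof, introducing it as ``a simple corollary of Theorem~\ref{indP1},'' and your proof is precisely that corollary spelled out --- apply Theorem~\ref{indP1} to $A=X$ with degree $n-m$ where $m=\trt(X)\le n$ (and Theorem~\ref{trans} when $n=\infty$) to make $X$ a $G$-homological $Z_0$-set in itself, contradicting $H_0(X;G)\ne 0$ for nonempty $X$ and nontrivial $G$. The surrounding bookkeeping ($\trt\le\trind$ via \cite[2.9]{ACP}, and $\dim=\mathrm{ind}=\trind$ in the finite-dimensional separable metrizable case) is also handled correctly.
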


A similar lower bound holds also for the cohomological and extension dimensions.
Given a space $X$ and a CW-complex $L$ we write $\edim
X\le L$ if any map $f:A\to L$ defined on a closed subset $A\subset
X$ extends to a map $\bar f:X\to L$. The Extension Dimension $\edim$
generalizes both the usual covering dimension $\dim $ and the
cohomological dimension $\dim_G$ because:
\begin{itemize}
\item $\dim X\le n$ iff
$\edim X\le S^n$ and
\item $\dim_GX\le n$ iff $\edim X\le K(G,n)$,
\end{itemize}
where $K(G,n)$ is an Eilenberg-MacLane complex, i.e., a CW-complex
$K$ with a unique non-trivial homotopy group $\pi_n(K)=G$.

The main (and technically most difficult) result of this section is

\begin{theorem}\label{cohdim} If $X\in\Z_n^{\IZ}$ is a locally compact $\lc[n]$-space, then $\dim_GX\ge n+1$ for any non-trivial abelian group $G$.
\end{theorem}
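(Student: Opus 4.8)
The plan is to argue by contradiction and to reduce, via Bockstein theory, to coefficient groups for which a clean duality between cohomological dimension and local homology is available. Assume $X\ne\emptyset$ (otherwise there is nothing to prove). By the Bockstein theorem for cohomological dimension (\cite{Kuz}) one has $\dim_G X\ge\dim_H X$ for each $H\in\sigma(G)$; since $\sigma(G)\ne\emptyset$, it suffices to prove $\dim_H X\ge n+1$ for every Bockstein group $H\in\{\IQ,\IZ_p,\IQ_p,\Rp:p\in\Pi\}$. By Proposition~\ref{Z->G} the hypothesis $X\in\Z_n^{\IZ}$ gives $X\in\Z_n^H$, so every point $x\in X$ is an $H$-homological $Z_n$-point; applying the defining vanishing with the open set $U=X$ we obtain $H_k(X,X\setminus\{x\};H)=0$ for all $x\in X$ and all $k\le n$.

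Fix a Bockstein group $H$ and suppose, for contradiction, that $\dim_H X\le n$. The core of the argument is the classical identification, valid for a non-empty locally compact space of finite cohomological dimension, of $\dim_H X$ with the top degree in which local homology does not vanish:
$$\dim_H X=\max\bigl\{k:\ H_k(X,X\setminus\{x\};H)\ne0\ \text{for some}\ x\in X\bigr\}.$$
Granting this, the maximum on the right is attained: there are a point $x_0\in X$ and $k_0=\dim_H X\le n$ with $H_{k_0}(X,X\setminus\{x_0\};H)\ne0$. As $k_0\le n$, this contradicts the previous paragraph. Hence $\dim_H X\ge n+1$, and the reduction finishes the proof.

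The hard part, and the only real obstacle, is establishing the displayed identification in singular homology for a locally compact $\lc[n]$-space. I would obtain it from the sheaf-theoretic duality for locally compact spaces (\cite{Bre2}, \cite{Kuz}), which identifies $\dim_H X$ with the largest degree carrying a non-trivial local Borel--Moore homology group $\bar H_k(X,X\setminus\{x\};H)$ at some point $x$; here each Bockstein group is either a field ($\IQ,\IZ_p$), a principal ideal domain ($\Rp=\IZ_{(p)}$), or an injective divisible group ($\IQ_p$), so that the relevant universal-coefficient and duality formulas apply. The remaining step is to pass from Borel--Moore to singular local homology in the range $k\le n$, which is exactly where local compactness and the $\lc[n]$-property are used, through the finiteness supplied by the preceding lemmas. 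Indeed, writing the singular group as a direct limit $H_k(X,X\setminus\{x\};H)=\varinjlim_{V\ni x}H_k(X,X\setminus\bar V;H)$ over shrinking neighborhoods $V$ of $x$, Lemma~\ref{fingen} shows that for $k\le n$ the bonding maps of this system have finitely generated range, so the system is Mittag--Leffler and its colimit agrees with the \v{C}ech/Borel--Moore local homology in these degrees, while Lemma~\ref{homtriv} governs how vanishing propagates through the system. Thus in degrees $k\le n$ the Borel--Moore and singular local homology groups coincide, which is all that the identification requires at $k_0=\dim_H X\le n$.
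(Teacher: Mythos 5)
There is a genuine gap, and it sits precisely where you yourself locate the ``hard part.'' The displayed identification $\dim_H X=\max\{k: H_k(X,X\setminus\{x\};H)\ne0$ for some $x\in X\}$ is not a classical fact, and at the generality you state it (an arbitrary non-empty locally compact space of finite cohomological dimension) it would make the $\lc[n]$ hypothesis of Theorem~\ref{cohdim} superfluous; it is essentially equivalent to the theorem being proved, so appealing to it is circular. What the classical theory (Theorem 2 of \cite{Kuz}, Theorem 1.8 of \cite{Dr}) actually provides is strictly weaker: if $\dim_GX=n<\infty$, there exist a point $x$ and a neighborhood $U$ with compact closure such that for \emph{every} smaller neighborhood $V$ the bonding homomorphism $\check H^n(X,X\setminus\bar V;G)\to\check H^n(X,X\setminus\bar U;G)$ is non-trivial --- an \emph{essentiality} statement about the pro-system of local cohomology, not the non-vanishing of any single local (co)homology group at a point. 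The two are genuinely different: a direct system can have all bonding maps non-zero while its colimit vanishes (take $A_i=\bigoplus_{k\ge i}\IZ$ with the coordinate-deleting maps), so from $\dim_HX=k_0\le n$ one cannot extract a point $x_0$ with $H_{k_0}(X,X\setminus\{x_0\};H)\ne0$.

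The final paragraph, which is supposed to establish the identification, does not do so. Mittag--Leffler is a condition on \emph{inverse} systems controlling $\varprojlim^1$; it says nothing about the direct system $H_k(X,X\setminus\{x\};H)=\varinjlim_V H_k(X,X\setminus\bar V;H)$, and finitely generated bonding ranges (Lemma~\ref{fingen}, which moreover is proved for integral coefficients) do not identify this colimit with Borel--Moore or \v Cech local homology. Also $\IQ_p$ is not a principal ideal domain, so the duality apparatus you invoke is unavailable in one of your four coefficient cases. The paper bridges the pointwise/pro-system gap in the opposite --- and correct --- direction: pointwise vanishing $H_k(X,X\setminus\{x\})=0$ for $k\le n$, together with the finite generation supplied by local compactness and the $\lc[n]$ property, forces triviality of the bonding maps themselves (Lemma~\ref{homtriv}). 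Choosing three nested compact $G_\delta$-neighborhoods $K_3\subset K_2\subset K_1$ with two consecutive trivial bonding maps in singular homology, the natural $\mathrm{Ext}$--$\mathrm{Hom}$ universal coefficients sequence kills the composite $H^n(X,X\setminus K_3;G)\to H^n(X,X\setminus K_1;G)$ in singular cohomology (this is exactly why two composed trivial maps, hence three neighborhoods, are needed), and the coincidence of singular and \v Cech cohomology on paracompact $\lc[n]$-spaces, applied to suitable $\sigma$-compact open subsets, converts this into the triviality of a \v Cech bonding map, contradicting Kuz'minov's essentiality. Note that this argument works directly for an arbitrary non-trivial $G$, so your Bockstein reduction is unnecessary (and itself quietly assumes Bockstein theory beyond the compact metric setting in which it is usually stated).
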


\begin{proof} Assume that $\dim_GX=n<\infty$ for some abelian group $G$. By
Theorem 2 of \cite{Kuz} (or Theorem 1.8 of \cite{Dr}), the space
$X$ contains a point $x$ having an open neighborhood $U\subset X$
with compact closure such that for any smaller neighborhood
$V\subset U$ of $x$ the homomorphism in the relative \v Cech
cohomology groups $i_{V,U}:\check H^n(X,X\setminus \bar V;G)\to \check
H^n(X,X\setminus \bar U;G)$, induced by the inclusion $(X,X\setminus
\bar U)\subset (X,X\setminus \bar V)$ is non-trivial.
The complete regularity of the locally compact space $X$ allows us to find a compact $G_\delta$-set $K_1\subset U$ containing $x$ in its interior.

It is well-known (see \cite[VI.\S9]{Spa}) that in paracompact $\lc[n]$-spaces \v Cech cohomology coincide with singular cohomology.
%Since $X$ is an $\lc[n]$-space, the \v Cech cohomologies in the preceding assertion can be replaced with singular cohomologies, see \cite[VI.\S9]{Spa}.
Singular cohomology relates to singular homology via the
following exact sequence, see \cite[\S3.1]{Hat}: {\small $$
0\to\mathrm{Ext}(H_{n-1}(X,A),G)\to H^n(X,A;G)\to \mathrm
{Hom}(H_n(X,A),G)\to 0.$$} This sequence will be applied to the
pairs $$(X,X\setminus K_1)\subset (X,X\setminus K_2)\subset(X,X\setminus K_3)$$ where $K_3\subset K_2\subset K_1$ are compact $G_\delta$-neighborhoods of $x$ so small that
the inclusion homomorphisms $$H_k(X,X\setminus
K_1)\to H_k(X,X\setminus K_2)\to H_k(X,X\setminus K_3)$$ are trivial for all $k\le n$ (the existence of such neighborhoods $K_2,K_3$ follows from Lemma~\ref{homtriv}).

 These
trivial homomorphisms induce trivial homomorphisms
$$e_{2,1}:\mathrm{Ext}(H_{n-1}(X,X\setminus K_2),G)\to
\mathrm{Ext}(H_{n-1}(X,X\setminus K_1),G)$$ and
$$h_{3,2}:\mathrm{Hom}(H_n(X,X\setminus
K_3),G)\to\mathrm{Hom}(H_n(X,X\setminus K_2),G).$$ Now consider the
commutative diagram { $$
\hskip-3pt\xymatrix{
\mathrm{Ext}(H_{n{-}1}(X,X\backslash K_3),G)\ar[d]\ar[r] &H^n(X,X\backslash 
K_3;G)\ar[d]^{i_{3,2}} \ar[r]&\mathrm{Hom}(H_n(X,X\backslash  K_3),G)\ar[d]^{h_{3,2}}\\
\mathrm{Ext}(H_{n{-}1}(X,X\backslash  K_2),G)\ar[r]\ar[d]_{e_{2,1}}& H^n(X,X\backslash 
K_2;G)\ar[r]\ar[d]^{i_{2,1}} &\mathrm{Hom}(H_n(X,X\backslash  K_2),G)\ar[d]\\
\mathrm{Ext}(H_{n{-}1}(X,X\backslash  K_1),G)\ar[r]& H^n(X,X\backslash 
  K_1;G)\ar[r]&\mathrm{Hom}(H_n(X,X\backslash  K_1),G)
}
$$} The exactness of rows of the diagram and the triviality of the
homomorphisms $e_{2,1}$ and $h_{3,2}$ imply the triviality of the
homomorphism $i_{3,1}=i_{2,1}\circ i_{3,2}:H^n(X,X\setminus
K_3;G)\to H^n(X,X\setminus K_1;G)$. The local compactness of $X$
allows us to find an open $\sigma$-compact subset $W\subset X$
containing the compact set $K_1$. Since the sets $K_i$,
$i\in\{1,3\}$, are of type $G_\delta$ in $X$, the spaces
$W\setminus K_i$ are $\sigma$-compact and thus paracompact.

 The Excision Axiom for singular cohomology (see \cite[V.\S 4]{Spa}) implies that $H^n(X,X\setminus K_i;G)=H^n(W,W\setminus K_i;G)$ for $i\in\{1,3\}$. This observation implies that the inclusion homomorphism $i_{3,1}: H^n(W,W\setminus K_3;G)\to H^n(W,W\setminus K_1;G)$ is trivial. Since $W$, $W\setminus K_i$, $i\in\{1,3\}$, are paracompact $\lc[n]$-spaces, the singular cohomology group $H^n(W,W\setminus K_i;G)$ coincides with the \v Cech cohomology group $\check H^n(W,W\setminus K_i;G)$.
Consequently, the inclusion homomorphism $\check H^n(W,W\setminus K_3;G)\to \check H^n(W,W\setminus K_1;G)$ is trivial. By Excision Axiom for \v Cech cohomology, $\check H^n(W,W\setminus K_i;G)=\check H^n(X,X\setminus K_i;G)$ for $i\in\{1,3\}$. Consequently, the inclusion homomorphism $\check H^n(X,X\setminus K_3;G)\to \check H^n(X,X\setminus K_1;G)$ is trivial and so is the inclusion homomorphism $\check H^n(X,X\setminus \bar V;G)\to \check H^n(X,X\setminus \bar U;G)$, where $V$ is the interior of $K_3$. But this contradicts the choice of the neighborhood $U$.
\end{proof}

Theorem~\ref{cohdim} will help us to evaluate the extension dimension of a locally compact $\LC[n]$-space whose all points are homological $Z_n$-points.

\begin{theorem}\label{extdim} If a metrizable locally compact $\LC[n]$-space $X\in\Z^{\IZ}_n$ has $\edim(X)\le L$ for some CW-complex $L$, then $\pi_k(L)=0$ for all $k\le n$.
\end{theorem}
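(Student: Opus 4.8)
The plan is to argue by contradiction, playing the cohomological lower bound of Theorem~\ref{cohdim} against the extension-theoretic upper bound forced by the hypothesis $\edim X\le L$. First I would record that, since every $\LC[n]$-space is an $\lc[n]$-space, our $X$ is a locally compact $\lc[n]$-space lying in $\Z^{\IZ}_n$; hence Theorem~\ref{cohdim} gives $\dim_G X\ge n+1$ for every non-trivial abelian group $G$, while Theorem~\ref{ind} gives $\dim X\ge n+1$. Assuming $\edim X\le L$, suppose toward a contradiction that $\pi_k(L)\ne 0$ for some $k\le n$, and let $k$ be the least such index; then $L$ is $(k-1)$-connected.

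For the principal case $k\ge 2$ the group $G:=\pi_k(L)\cong H_k(L)$ is non-trivial and abelian by the Hurewicz Isomorphism Theorem, and the first Postnikov map $\tau\colon L\to K(G,k)$ is a $(k+1)$-equivalence (its homotopy fibre is $k$-connected). The decisive step is to transfer the extension property along $\tau$: given a closed set $A\subset X$ and a test map $f\colon A\to K(G,k)$, I would lift $f$ through $\tau$ to $\tilde f\colon A\to L$, extend $\tilde f$ over $X$ using $\edim X\le L$, and then compose with $\tau$ to extend $f$. This is exactly the content of the extension theorem on the first non-trivial homotopy group (in the spirit of \cite{Dr}), and it yields $\edim X\le K(G,k)$, that is $\dim_G X\le k$. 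Since $k\le n<n+1\le\dim_G X$, this contradicts Theorem~\ref{cohdim}. The case $k=1$ with $H_1(L)=\pi_1(L)^{\mathrm{ab}}\ne 0$ is handled identically, using the coefficient group $G=H_1(L)$ and the target $K(G,1)$.

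The delicate case, and the main obstacle, is $k=1$ with $\pi_1(L)$ a non-trivial \emph{perfect} group, where $H_1(L)=0$ and the abelian machinery of Theorem~\ref{cohdim} no longer applies directly. Here I would fall back on covering dimension: by Theorem~\ref{ind} we have $\dim X\ge n+1\ge 2$, and a path-connected complex with non-trivial fundamental group fails to be an absolute extensor in the relevant range, so $\edim X\le L$ should be incompatible with $\dim X\ge 2$. Making this rigorous is precisely where the local hypotheses enter: the $\LC[n]$ and local-compactness assumptions on $X$ should be used to exclude the cohomological-dimension gaps (which occur only for badly non-locally-connected compacta) that would otherwise block a covering-dimension obstruction argument. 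Overall, I expect the two genuinely hard points to be (i) justifying the lift--extend--project transfer of $\edim X\le L$ to the Postnikov stage $K(\pi_k(L),k)$ over an arbitrary, possibly infinite-dimensional closed set $A$, where the naive obstructions to lifting live in $\check H^{\ge k+2}$ and need not vanish, and (ii) the nonabelian $k=1$ case just described.
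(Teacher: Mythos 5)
Your case $k\ge 2$ coincides with the paper's argument: after Hurewicz gives $H_k(L)=\pi_k(L)\ne 0$ for the first non-trivial $k$, the paper does not carry out the lift--extend--project construction through the Postnikov stage by hand, but simply invokes Theorem 7.14 of \cite{Dydak} (Dranishnikov's theorem relating extension dimension to cohomological dimension) to conclude $\dim_{H_k(L)}X\le k\le n$ directly from $\edim X\le L$, contradicting $\dim_G X\ge n+1$ from Theorem~\ref{cohdim}. So your worry (i) is legitimate --- the naive lifting is obstructed in degrees $\ge k+2$ --- but it is resolved exactly as you half-suggest, by citing the known theorem rather than redoing obstruction theory.

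The genuine gap is in dimensions $k\le 1$. For $k=1$ with $\pi_1(L)$ perfect, your fallback is not an argument: the assertion that $\edim X\le L$ is incompatible with $\dim X\ge 2$ whenever $\pi_1(L)\ne 0$ is precisely what must be proved, and no invariant of the form $\dim_G$ can detect it, since $H_1(L)=0$; your appeal to excluding ``cohomological-dimension gaps'' does not produce a mechanism. The paper's proof of this case avoids homology entirely and is purely geometric: by Theorem~\ref{ind}, $\dim X>1$, so some point $x$ has all neighborhoods of dimension $>1$; local compactness and the $\LC[1]$-property yield a closed neighborhood $N$ of $x$ which is a Peano continuum all of whose loops are null-homotopic \emph{in} $X$; since $\dim N>1$, $N$ is not a dendrite and hence contains a simple closed curve $S$; any essential map $f:S\to L$ (which exists whenever $\pi_1(L)\ne0$, perfect or not) then cannot extend over $X$, because an extension composed with the null-homotopy of the inclusion $S\hookrightarrow X$ would null-homotope $f$ in $L$. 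This settles all of $k=1$ at once, making your split into $H_1(L)\ne0$ versus perfect $\pi_1(L)$ unnecessary --- which is fortunate, because your $K(H_1(L),1)$ reduction is itself not ``identical'' to the $k\ge2$ case: the homotopy fibre of $L\to K(H_1(L),1)$ has fundamental group the commutator subgroup of $\pi_1(L)$, so the connectivity hypothesis behind the lifting fails. Finally, you never treat $k=0$: the conclusion includes $\pi_0(L)=0$, and the paper proves connectedness of $L$ by noting that $X$ has no isolated points (every point is a homological $Z_0$-point), hence contains an arc joining two points $a\ne b$, and a map sending $a,b$ into distinct components of $L$ could not extend over $X$.
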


\begin{proof} Separately we shall consider the cases of $n=0,1$ and $n\ge 2$.

0. If $n=0$, then it suffices to check that the CW-complex $L$ is connected. Since each point of the $\LC[0]$-space $X$ is a homological $Z_0$-point, $X$ contains no isolated point and thus $X$ contains an arc $J$ connecting two distinct points $a,b\in X$. Assuming that the complex $L$ is disconnected,
consider any map $f:\{a,b\}\to L$ sending the points $a,b$ to different
components of $L$. Because of the connectedness of $J\subset X$
the map $f$ does not extend to $X$, which contradicts $\edim X\le
L$.
\smallskip

1. For $n=1$ we should prove the simple-connectedness of $L$.
Theorem~\ref{ind} implies that $\dim(X)>1$.
Then there is a point $x\in X$ whose any neighborhood
$U\subset X$ has dimension $\dim U>1$. Since $X$ is an $\LC[1]$-space, the
point $x$ has a closed neighborhood $N$ such that any map $f:\partial I^2\to N$ is null-homotopic in $X$. Moreover, we can assume that $N$ is a Peano continuum. Since
$\dim N>1$, the continuum $N$ is not a dendrite and consequently,
contains a simple closed curve $S\subset N$. Assuming that the
CW-complex $L$ is not simply-connected, we can find a map $f:S\to
L$ that is not homotopic to a constant map. Then the map $f$ cannot be extended over $X$ since the identity map of $S$ is null-homotopic in $X$. But this contradicts $\edim X\le L$.
\smallskip

2. Finally, we consider the case of $n\ge 2$. Suppose that $\pi_k(L)\ne 0$ for
some $k\le n$. We can assume that $k$ is the smallest number with
$\pi_k(L)\ne 0$. The simple connectedness of $L$ implies that
$k>1$. Applying Hurewicz Isomorphism Theorem, we conclude that
$H_k(L)=\pi_k(L)\ne 0$. Since $\edim X\le L$, we may apply
Theorem 7.14 of \cite{Dydak} to conclude that $\dim_{H_n(L)}X\le n$. But this
contradicts Theorem~\ref{cohdim}.
\end{proof}

 \section{Dimension of spaces whose all points are
$Z_\infty$-points}\label{dimension}

In this section we study the dimension properties of spaces whose all points are homological $Z_\infty$. We shall show that locally compact ANR's
with this property are infinite-dimensional in a rather strong
sense: they cannot be $C$-spaces and have infinite cohomological
dimension.

We recall that a topological space $X$ is defined to be a {\em
$C$-space} if for any sequence $\{\V_n\colon n\in\omega\}$ of open
covers of $X$ there exists a sequence $\{\U_n\colon n\in\omega\}$
of disjoint families of open sets in $X$ such that each $\U_n$
refines $\V_n$ and $\bigcup\{\U_n\colon n\in\omega\}$ is a cover
of $X$. By  Theorem 6.3.8 \cite{En} each metrizable countable-dimensional space is a $C$-space.

\begin{theorem}\label{infdim} If  $X\in\Z^{\IZ}_\infty$ is a locally compact metrizable  $\LC[0]$-space, then
\begin{enumerate}
\item $X$ fails to be $\trt$-dimensional and is not countable-dimensional;
\item if $X$ is an $\lc[\infty]$-space, then $\dim_GX=\infty$ for any abelian group $G$;
\item if $X$ is an $\LC[\infty]$-space, then $\edim X\not\le L$ for any non-contractible CW-complex $L$;
\item if $X$ is locally contractible, then $X$ fails to be a $C$-space.
\end{enumerate}
\end{theorem}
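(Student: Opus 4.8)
The plan is to obtain all four items as $n\to\infty$ limits of the finite-level theorems already proved, reducing item (4) to item (3). For item (1) I would apply Theorem~\ref{trans} to the closed subspace $A=X$ of $X$ itself. If $X$ were $\trt$-dimensional, then, since every point of $X\in\Z^{\IZ}_\infty$ is a $\IZ$-homological $Z_\infty$-point, Theorem~\ref{trans} would make $X$ a $\IZ$-homological $Z_\infty$-set in $X$; that is, $H_k(U,U\setminus X)=0$ for every open $U\subset X$ and every $k$. Since $U\setminus X=\emptyset$ this reads $H_k(U)=0$, and for any nonempty open $U$ and $k=0$ it contradicts $H_0(U)\ne0$. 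Hence $X$ is not $\trt$-dimensional. As $X$ is locally compact and metrizable it is completely metrizable, so a countable-dimensional $X$ would satisfy $\trind(X)<\w_1$ and thus be $\trt$-dimensional, as recalled in Section~1; therefore $X$ is not countable-dimensional either.

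Items (2) and (3) are straightforward passages to the limit. For (2) I would use that an $\lc[\infty]$-space is $\lc[n]$ for every finite $n$ and that $\Z^{\IZ}_\infty\subset\Z^{\IZ}_n$, so Theorem~\ref{cohdim} gives $\dim_G X\ge n+1$ for every $n$ and every nontrivial abelian group $G$, whence $\dim_G X=\infty$. For (3) I would note that an $\LC[\infty]$-space is $\LC[n]$ for every finite $n$; if $\edim X\le L$ for a CW-complex $L$, then Theorem~\ref{extdim} applied for each $n$ forces $\pi_k(L)=0$ for all $k\le n$, hence for all $k$, and a CW-complex with all homotopy groups trivial is contractible by the Whitehead theorem. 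Equivalently, $\edim X\not\le L$ for every non-contractible CW-complex $L$.

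For item (4) I would first observe that local contractibility implies the $\LC[\infty]$-property: a neighborhood $V$ of a point that is contractible inside a given neighborhood $U$ lets every map $\partial I^k\to V$ be filled in $U$, so item (3) is available. I would then argue by contradiction: assuming $X$ is a $C$-space, the goal is to produce a \emph{non-contractible} CW-complex $L_0$ with $\edim X\le L_0$, in direct conflict with item (3). To manufacture $L_0$ one uses local compactness and local contractibility (so that $X$ is locally an ANR) together with the extension/selection theory for $C$-spaces: a $C$-space is weakly infinite-dimensional and, unlike the spaces isolated in item (3), admits extensions of maps into some non-contractible target, yielding the required bound $\edim X\le L_0$.

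The routine portions are items (1)--(3), which are limits of the already-established finite-level statements together with the Whitehead theorem. \textbf{The main obstacle is item (4)}: bridging the purely combinatorial property $C$ to a concrete extension-dimension inequality $\edim X\le L_0$ with $L_0$ non-contractible. This is precisely where the local compactness and local contractibility hypotheses are indispensable---they fail for $\IR^\infty$, which is a countable-dimensional (hence $C$-space) member of $\Z^{\IZ}_\infty$ that \emph{is} a $C$-space---and where one must call on the selection/extension machinery for $C$-spaces rather than the homological tools developed in this paper.
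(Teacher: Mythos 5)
Your treatment of items (1)--(3) is correct and coincides with the paper's: the paper derives these three items from Theorems~\ref{trans}, \ref{cohdim} and \ref{extdim} exactly as you do, and the details you supply (taking $A=X$ in Theorem~\ref{trans} and noting $H_0(U,U\setminus X)=H_0(U)\ne0$ for nonempty open $U$; the passage $n\to\infty$ in (2) and (3); the $\trind$/$\trt$ reduction for countable-dimensionality; Whitehead's theorem to pass from $\pi_k(L)=0$ for all $k$ to contractibility) are precisely the routine steps the paper leaves implicit.

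Item (4) is where your proposal breaks down, and the gap is exactly the step you yourself flagged as the main obstacle: there is no result, in this paper or in the cited literature, that converts the $C$-space property into a bound $\edim X\le L_0$ for some \emph{non-contractible} CW-complex $L_0$. Item (3) excludes every non-contractible complex simultaneously, so to contradict it you would need a single such $L_0$ attached to an arbitrary (locally compact, locally contractible) $C$-space; whether weak infinite-dimensionality or the $C$-property admits any extension-theoretic characterization of this kind is a well-known open problem, and the phrase ``selection/extension machinery for $C$-spaces'' does not produce one --- for instance, an infinite-dimensional countable-dimensional compactum is a $C$-space with $\edim\le S^n$ for no $n$, and no substitute complex is available. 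The paper's actual proof of (4) never passes through (3). It first assumes all points of $X$ are \emph{homotopical} $Z_\infty$-points and argues via fixed points: if $X$ were a $C$-space, Gresham's theorem would make the locally contractible $C$-space $X$ an ANR, Edwards' theorem would make $X\times Q$ a Hilbert cube manifold, and inside it one finds an open $U$ with $\overline{U}$ homeomorphic to $Q$ and $\partial U$ a $Z_\infty$-set in $\overline{U}$. Uspenskij's selection theorem --- this is the correct use of the selection machinery, with the $C$-property entering as a \emph{hypothesis} of that theorem rather than as a source of extensions --- applies to the multivalued map $\Phi(x)=\overline{U}\setminus(\{x\}\times Q)$, because $\overline{U}\cap(\{x\}\times Q)$ is a $Z_\infty$-set in $\overline{U}$ and $\{x:\Phi(x)\supset K\}=X\setminus\pr(K)$ is open; the resulting selection $s$ yields the fixed-point-free map $s\circ\pr:\overline{U}\to\overline{U}$ of the Hilbert cube, a contradiction. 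The general homological case then reduces to this one via Corollary~\ref{prodcor}: all points of $X\times[0,1]$ are homotopical $Z_\infty$-points, hence $X\times[0,1]$ is not a $C$-space, and since the product of a metrizable $C$-space with the interval is a $C$-space (Addis--Gresham), neither is $X$. Finally, your side remark about $\IR^\infty$ needs care: the product $\IR^\w$ contains the Hilbert cube as a closed subspace and so is \emph{not} a $C$-space; the example you want is the direct limit of the spaces $\IR^n$, which is strongly countable-dimensional but neither metrizable nor locally compact.
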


\begin{proof}
1--3. The first three items follow from Theorems~\ref{trans},
\ref{cohdim}, and \ref{extdim}, respectively.
\smallskip

4. First we prove the fourth item under a stronger assumption that
all points of $X$ are homotopical $Z_\infty$-points. Assume that
$X$ is a $C$-space. By Gresham's Theorem \cite{Gre}, the $C$-space $X$, being locally contractible, is an ANR. Then the product $X\times Q$ is a Hilbert cube manifold according to the Edwards' ANR-theorem, see
\cite[44.1]{Chap}. The product $X\times Q$, being a $Q$-manifold,
contains an open subset $U\subset X\times Q$ whose closure
$\overline{U}$ is homeomorphic to the Hilbert cube $Q$ and whose
boundary $\partial U$ is a $Z_\infty$-set in $\overline{U}$. Now
consider the multivalued map $\Phi:X\to \bar U$ assigning to each
point $x\in X$ the set $\Phi(x)=\overline{U}\setminus (\{x\}\times Q)$. Our
goal is to show that the map $\Phi$ has a continuous selection
$s:X\to \overline{U}$. Assuming that this is done, consider the
map $s\circ \pr:\overline{U}\to \overline{U}$, where
$\pr:\overline{U}\to X$ stands for the natural projection of
$\overline{U}\subset X\times Q$ onto the first factor. This map is
continuous and has no fixed point, which is a contradiction.

So, it remains to construct the continuous selection $s$ of the
multivalued map $\Phi$. The existence of such a selection will follow
from Uspenskij's Selection Theorem \cite{Us} as soon as we shall verify that
\begin{itemize}
\item for each $x\in X$ the complement
$\overline{U}\setminus \Phi(x)=\overline{U}\cap(\{x\}\times Q)$ is a
$Z_\infty$-set in $\overline{U}$;
\item for any compact set $K\subset\bar U$ the set $\{x\in
X:\Phi(x)\supset K\}$ is open in $K$.
\end{itemize}

The first condition holds since each point of $X$ is a
$Z_\infty$-point in $X$ and the boundary $\partial U$ is a
$Z_\infty$-set in $\overline{U}$. The second condition holds
because $\{x\in X:F(x)\supset K\}=X\setminus \pr(K)$.

This completes the proof of the special case when all points of
$X$ are $Z_\infty$-points. Now assume that all points of $X$ are
merely homological $Z_\infty$-points in $X$. Then the points of
the product $X\times [0,1]$ are (homotopical) $Z_\infty$-points by
Corollary~\ref{prodcor}. Now the preceding discussion implies that
$X\times [0,1]$ fails to be a $C$-space. Taking into account that
the product of a metrizable $C$-spaces with the interval is a
$C$-space \cite[Theorem 2.2.3]{AG}, we conclude that $X$ is not a
$C$-space.
\end{proof}

\begin{remark} The last item of Theorem~\ref{infdim} is true in a bit stronger form:
each locally compact locally contractible space $X\in\cup_G\Z^G_\infty$  fails to be a $C$-space, see \cite{BC}. However, the proof of this stronger result requires non-elementary tools like homological version of Uspenskij's Selection Theorem combined with a homological version of the Brouwer Fixed Point Theorem. On the other hand, this stronger result would follow from Theorem~\ref{trans} if each compact C-space were $\trt$-dimensional. However we are not sure that this is true.
\end{remark}

\begin{remark} In light of Theorem~\ref{infdim}, it is interesting to mention that a compact AR whose all points are $Z_\infty$-points need not be homeomorphic to the Hilbert cube. A suitable counterexample was constructed in \cite[9.3]{DW}.
\end{remark}

\begin{problem} Let $X$ be a compact absolute retract whose all
points are $Z_\infty$-points. Is $X$ strongly
infinite-dimensional? Is $X\times I$ homeomorphic to the
Hilbert cube?
\end{problem}

In this respect let us mention the following characterization of
Hilbert cube manifolds \cite{BR} which can be deduced from Theorem~\ref{indP1} and the homological characterization of Q-manifolds due to R.Daverman and J.Walsh \cite{DW}, see also \cite{BV}.

\begin{theorem} A locally compact ANR-space $X$ is a Hilbert cube manifold if and only if
\begin{enumerate}
\item $X$ has the Disjoint Disk Property;
\item each point of $X$ is a homological $Z_\infty$-point;
\item each map $f:K\to X$ of a compact polyhedron can be approximated by a map with $\trt$-dimensional image.
\end{enumerate}
\end{theorem}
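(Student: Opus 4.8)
The plan is to reduce the statement to the homological characterization of Hilbert cube manifolds due to Daverman and Walsh \cite{DW} (see also \cite{BV}), which asserts that a locally compact ANR $X$ is a Hilbert cube manifold if and only if $X$ has the Disjoint Disk Property and every map $f\colon K\to X$ of a compact polyhedron can be approximated by maps whose images have infinite codimension in $X$ — that is, are homological $Z_\infty$-sets (recall from the Remark following Theorem~\ref{Z2+hZn=Zn} that closed sets of infinite codimension coincide with homological $Z_\infty$-sets, i.e. $\IZ$-homological $Z_\infty$-sets). Thus the whole content of the theorem is to show that, for a locally compact ANR $X$ with the Disjoint Disk Property, the conjunction of conditions (2) and (3) is equivalent to this infinite-codimension approximation property.

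For the necessity of (1)--(3) I would invoke the standard structure theory of $Q$-manifolds. A Hilbert cube manifold has the Disjoint Disk Property, giving (1); each of its points is a homotopical $Z_\infty$-point and hence, by Theorem~\ref{Zsets}(4), a homological $Z_\infty$-point, giving (2); and any map of a compact polyhedron $K$ can be approximated by a $Z$-embedding onto a tame finite-dimensional $Z$-set, whose image is a finite-dimensional metrizable compactum and therefore $\trt$-dimensional (on such compacta $\trt=\dim<\infty$), giving (3).

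The heart of the proof is the sufficiency, where Theorem~\ref{trans} serves as the bridge. Assume $X$ satisfies (1)--(3). Given a map $f\colon K\to X$ of a compact polyhedron and a number $\varepsilon>0$, condition (3) produces an $\varepsilon$-close map $f'\colon K\to X$ whose image $A=f'(K)$ is $\trt$-dimensional; since $K$ is compact and $X$ is Tychonov, $A$ is a compact, hence closed, $\trt$-dimensional subset of $X$. By condition (2) every point of $A$ is a homological $Z_\infty$-point of $X$, so Theorem~\ref{trans} (with $G=\IZ$) applies and shows that $A$ is a homological $Z_\infty$-set in $X$; in the special case of a finite-dimensional image the same conclusion follows by applying Theorem~\ref{indP1} for each finite $n$. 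Consequently every map of a compact polyhedron into $X$ can be approximated by maps with images that are homological $Z_\infty$-sets, which is precisely the infinite-codimension approximation hypothesis of the Daverman--Walsh criterion; combined with (1) this yields that $X$ is a Hilbert cube manifold.

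The main obstacle I anticipate is not the homological bookkeeping, which Theorem~\ref{trans} dispatches cleanly, but rather the careful matching of hypotheses with the Daverman--Walsh theorem: one must verify that their notion of \emph{infinite codimension} is literally the $\IZ$-homological $Z_\infty$-set property, that their approximation criterion is formulated for maps of compact polyhedra into an arbitrary locally compact ANR (so that no extra separability or global finite-dimensionality of $X$ is silently assumed), and that the $\trt$-dimensional image delivered by (3) is exactly the kind of set to which Theorem~\ref{trans} applies. Pinning down these three compatibility points is where the delicate work lies.
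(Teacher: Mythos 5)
Your proof is correct and follows essentially the same route the paper intends: the paper states this theorem without a detailed proof, remarking only that it can be deduced from Theorem~\ref{indP1} (whose transfinite version, Theorem~\ref{trans}, is the one actually needed for possibly transfinite $\trt$-dimensional images) together with the homological characterization of $Q$-manifolds of Daverman and Walsh \cite{DW}, which is precisely your reduction. Your use of Theorem~\ref{trans} to upgrade an approximating map with $\trt$-dimensional image consisting of homological $Z_\infty$-points to one whose image is a homological $Z_\infty$-set, combined with the identification of such sets with the Daverman--Walsh closed sets of infinite codimension and the standard $Z$-embedding argument for necessity, fills in the intended deduction faithfully.
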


We recall that a space $X$ has the {\em Disjoint Disk Property} if any two maps $f,g:I^2\to X$ can be approximated by maps with disjoint images.

Theorem~\ref{trans} implies that each closed $\trt$-dimensional subspace of the Hilbert cube $Q$ is a homological $Z_\infty$-set in $Q$. By Proposition 4.7 of \cite{ACP},  each compact $\trt$-dimensional space is a $C$-space.

\begin{question} Is a closed subset $A\subset Q$ a homological $Z_\infty$-set in $Q$ if $A$ is weakly infinite-dimensional or a $C$-space? \end{question}

%\newpage

\end{document}